\theoremstyle{plain}
    \newtheorem{thm}{Theorem}
    \newtheorem{corol}[thm]{Corollary}
    \newtheorem{prop}{Proposition}[section]
    \newtheorem{othertheorem}[prop]{Theorem}
    \newtheorem{lemma}[prop]{Lemma}
    \newtheorem{subl}[prop]{Sublemma}
    \newtheorem*{main}{Main Lemma}
\theoremstyle{definition}
\theoremstyle{remark}
    \newtheorem{rem}[prop]{Remark}
    \newtheorem*{ack}{Acknowledgements}
\numberwithin{equation}{section}
\def\bysame{\leavevmode\hbox to3em{\hrulefill}\thinspace} %necessary if class=article (not amsart)
\newcommand{\R}{\mathbb{R}}\newcommand{\Z}{\mathbb{Z}}\newcommand{\N}{\mathbb{N}}
\newcommand{\D}{\mathbb{D}}
\renewcommand{\P}{\mathbb{P}}
\newcommand{\cA}{\mathcal{A}}\newcommand{\cB}{\mathcal{B}}\newcommand{\cC}{\mathcal{C}}
\newcommand{\cJ}{\mathcal{J}}\newcommand{\cL}{\mathcal{L}}
\newcommand{\cN}{\mathcal{N}}
\newcommand{\cR}{\mathcal{R}}
\newcommand{\cV}{\mathcal{V}}
\newcommand{\eps}{\varepsilon}
\renewcommand{\setminus}{\smallsetminus}
\newcommand{\LE}{\mathit{LE}}
\newcommand{\Id}{\mathit{Id}}
\newcommand{\Diff}{\mathrm{Diff}}
\newcommand{\PH}{\mathit{PH}}
\renewcommand{\angle}{\measuredangle}
\newcommand{\m}{\mathbf{m}}
\newcommand{\vetor}[1]{\tfrac{\partial}{\partial {#1}}}
\newcommand{\KII}{K_{\mathrm{II}}}
\newcommand{\KIII}{K_{\mathrm{III}}}
\newcommand{\KIV}{K_{\mathrm{IV}}}
\renewcommand{\v}{\mathbf{v}}
\newcommand{\ivec}{{\vec{\imath}}}
\newcommand{\ccircs}{\mathbin{\mathord{\circ}\mathord{\circ}\mathord{\circ}}}
\newcommand{\nor}[1]{\mathopen{|{\kern-.05em}|{\kern-.05em}|}{#1}\mathclose{{|{\kern-.05em}|{\kern-.05em}|}}}%triple bar
\newcommand{\bignor}[1]{\mathopen{{\big |}{\kern-.05em}{\big |}{\kern-.05em}{\big |}}{#1}\mathclose{{{\big |}{\kern-.05em}{\big |}{\kern-.05em}{\big |}}}} %larger triple bar
\newcommand{\circulo}{\nicefrac{\R}{\pi\Z}}
\begin{document}

\title%[Partial hyperbolicity and Lyapunov exponents]
{$C^1$-Generic Symplectic Diffeomorphisms: Partial Hyperbolicity and Zero Center Lyapunov Exponents}

\author{Jairo Bochi\thanks{Partially supported by a CNPq--Brazil research grant.}}

\date{September 14, 2008}
\maketitle

\begin{abstract}
We prove that if $f$ is a $C^1$-generic symplectic diffeomorphism then
the Oseledets splitting along almost every orbit is either trivial or partially hyperbolic.
In addition, if $f$ is not Anosov then all the exponents in the center bundle vanish.
This establishes in full a result announced by R.~Ma\~{n}\'{e} in the ICM~1983.
The main technical novelty is a probabilistic method
for the construction of perturbations, using random walks.
\end{abstract}

%\keywords{Symplectic diffeomorphisms, partial hyperbolicity, Lyapunov exponents, generic properties}

%\subjclass[2000]{37D30; 37D25}
%37D25 Non-uniformly hyperbolic systems (Lyapunov exponents, Pesin theory, etc.)
%37D30 Partially hyperbolic systems and dominated splittings
%37D20 Uniformly hyperbolic systems (expanding, Anosov, Axiom A, etc.)
%37C20 Generic properties, structural stability
%37J10 Symplectic mappings, fixed points

%\newpage
\tableofcontents
%\newpage

%%%%%%%%%%%%%%%%%%%%%%%%%%%%%%%%%%%%%%%%%%%%%%%%%%%%%%%%%%%%%%%%%%
\section{Introduction}
%%%%%%%%%%%%%%%%%%%%%%%%%%%%%%%%%%%%%%%%%%%%%%%%%%%%%%%%%%%%%%%%%%

One of the cornerstones of differentiable ergodic theory is
the Theorem of Oseledets~\cite{Oseledets}.
Given a diffeomorphism $f \colon M \to M$ of a closed manifold $M$,
a point $x\in M$ is called \emph{regular} if
there exists a \emph{Oseledets (or Lyapunov) splitting}
$E^1(x) \oplus \cdots \oplus E^{k(x)}(x)$
of the tangent space $T_x M$,
and
corresponding \emph{Lyapunov exponents}
$\hat{\lambda}_1(x) > \cdots > \hat{\lambda}_{k(x)}(x)$, so that
\begin{equation}\label{e.def Lyapunov}
\lim_{n \to \pm \infty} \frac{1}{n}\log\|Df^n(x) \cdot v\| = \hat{\lambda}_j(x)
\quad \text{for all non-zero $v\in E^j(x)$.}
\end{equation}
(Here $\|\mathord{\cdot}\|$ is any Riemannian metric on $M$.)
The Theorem of Oseledets asserts that
regular points form a full probability subset $R$ of $M$
(meaning that $\nu(R)=1$ for any $f$-invariant probability measure~$\nu$).
Now, quoting Ma\~{n}\'{e}~\cite{Mane ICM},
\begin{quote}
Oseledets' theorem is essentially a measure theoretical result and therefore the information
it provides holds only in that category.
For instance, the Lyapunov splitting is just a measurable function of the point and the limits
defining the Lyapunov exponents are not uniform.
It is clear that this is not a deficiency of the theorem
but the natural counterweight to its remarkable generality.
However, one can pose the problem \dots \   
of whether these aspects can be substantially improved by working under generic
conditions.
\end{quote}
These words suggest that
a theory of generic dynamical systems must include improved versions of the Oseledets'  Theorem.
Indeed, the paper \cite{BV Annals} by Viana and the author establishes such a result
for the class of volume-preserving $C^1$-diffeomorphisms.

The present work obtains the $C^1$-generic improvement of the Oseledets' Theorem for the class
of \emph{symplectic diffeomorphisms}.
Our main result is precisely the strongest one stated and left open by Ma\~{n}\'{e} in 1983 \cite{Mane ICM}.

\medskip

Let $\Lambda \subset M$ be an invariant set
for a diffeomorphism $F \colon M \to M$ of a closed manifold.
A $Df$-invariant splitting
$T_\Lambda M = E^1 \oplus \cdots \oplus E^k$ into $k \ge 2$ non-zero bundles of constant dimensions
is called a \emph{dominated splitting} if
there is a constant $\tau>1$ such that,
up to a change\footnote{The usual definition
without change of the metric is explained in \S\ref{ss.review}.
Here we are using Gourmelon's \emph{adapted metric} \cite{Gourmelon}
to simplify the exposition.}
of the Riemannian metric on $M$,
\begin{equation}\label{e.def DS adapted}
\frac{\| Df(x) \cdot v_i \|}{\|v_i\|} > \tau \,
\frac{\| Df(x) \cdot v_j \|}{\|v_j\|}
\quad \text{for all $x \in \Lambda$, non-zero $v_i\in E^i(x)$, $v_j\in E^j(x)$ with $i<j$.}
\end{equation}
Dominated splittings enjoy strong properties:
they can be uniquely extended to the closure of $\Lambda$,
the spaces $E^i$ vary continuously,
and the angles between them are uniformly bounded away from zero.
%See \S\ref{ss.review}.
Domination is also called \emph{projective hyperbolicity},
see~\cite{BV IHP}.

\medskip

From now on we assume that the closed manifold $M$ is \emph{symplectic},
that is, it supports a closed non-degenerate $2$-form $\omega$.
Let $2N$ be the dimension of $M$.
Let $\Diff^1_\omega(M)$ be the space of $\omega$-preserving $C^1$ diffeomorphisms,
endowed with the $C^1$ topology.
Let $\mu$ be the measure induced by the volume form $\omega^{\wedge N}$.
We assume that $\omega$ is normalized so that $\mu(M)=1$.
All the ``almost sure'' statements in the sequel refer to this measure.

Here is the generic improvement of the Oseledets' Theorem obtained in this paper:

\begin{thm}\label{t.main}
There exists a residual $\cR \subset \Diff_\omega^1(M)$ such that if $f \in \cR$
then for almost every point $x$, the Oseledets splitting $T_x M = E^1(x) \oplus \cdots \oplus E^{k(x)}(x)$
is either trivial  or dominated along the orbit of $x$.
\end{thm}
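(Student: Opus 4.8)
\emph{Overall strategy.} The plan is to adapt the variational scheme of Mañé, Bochi and Viana \cite{BV Annals}, the new ingredient being a probabilistic construction of the perturbations needed inside $\Diff^1_\omega(M)$. For each dimension $1 \le j \le N$ introduce
\[
\LE_j(g) \;=\; \int_M \big(\hat\lambda_1(g,x) + \cdots + \hat\lambda_j(g,x)\big)\, d\mu(x),
\qquad g \in \Diff^1_\omega(M),
\]
where $\hat\lambda_1(g,x) \ge \cdots \ge \hat\lambda_{2N}(g,x)$ are the Lyapunov exponents of $g$ at $x$ counted with multiplicity. Since $n \mapsto \int_M \log\|\wed^j Dg^n\|\,d\mu$ is subadditive, one has $\LE_j(g) = \inf_{n\ge 1} \tfrac1n \int_M \log\|\wed^j Dg^n\|\,d\mu$; every functional in this infimum is continuous on $(\Diff^1_\omega(M), C^1)$, so $\LE_j$ is upper semicontinuous and hence continuous on a residual set. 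Let $\cR$ be the intersection over $j = 1, \dots, N$ of these residual sets. By the symmetry $\hat\lambda_{2N+1-i} = -\hat\lambda_i$ of the spectrum of a symplectic map one gets $\LE_p = \LE_{2N-p}$ for all $p$, so at points of $\cR$ all of $\LE_1, \dots, \LE_{2N-1}$ are continuous. I claim this $\cR$ witnesses the theorem.

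\emph{Reduction to a perturbation lemma.} The Oseledets splitting $E^1(x) \oplus \cdots \oplus E^{k(x)}(x)$ is dominated along $\mathrm{orbit}(x)$ precisely when every ``coarsening'' obtained by grouping the first few terms against the rest is dominated along $\mathrm{orbit}(x)$; since $\omega$ pairs each Oseledets bundle with the one of opposite exponent, a coarsening of dimension $p$ is dominated iff the one of dimension $2N-p$ is, so it suffices to treat dimensions $j \le N$. Suppose some $f \in \cR$ failed the conclusion. Then the invariant set of points whose Oseledets splitting is nontrivial but not dominated along their orbit would have positive $\mu$-measure; splitting it according to the finitely many possibilities we would obtain an invariant set $Z$ with $\mu(Z) > 0$, a dimension $1 \le j \le N$, and a measurable $Df$-invariant splitting $F(x) \oplus G(x)$ of $T_x M$ over $Z$ with $\dim F = j$, arising from a jump of the Oseledets flag, which is \emph{not} dominated along the orbits in $Z$. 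Everything then reduces to the following.

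\medskip\noindent\textbf{Perturbation Lemma.} \emph{Under the above hypotheses, for every $\eps > 0$ there is $g \in \Diff^1_\omega(M)$ with $\mathrm{dist}_{C^1}(f, g) < \eps$ and $\LE_j(g) \le \LE_j(f) - c$, where $c = c(f, Z) > 0$ is independent of $\eps$.}\medskip

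\noindent Because $\LE_j$ is continuous at $f \in \cR$, this Lemma produces a contradiction, which proves the theorem.

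\emph{How the Perturbation Lemma is proved, and where the difficulty lies.} Failure of domination of $F \oplus G$ along the orbits in $Z$ means that, for any prescribed $m$ and $\tau > 1$, a positive-measure subset of $Z$ violates (at its own points) the $m$-step version of \eqref{e.def DS adapted} between $F$ and $G$; a recurrence/Kakutani-tower argument then yields, along $\mu$-a.e.\ orbit in $Z$, a positive-density family of pairwise disjoint time windows on each of which the expansions along $F$ and along $G$ are so close that some symplectic rotation mixing $F$ into $G$ — of \emph{bounded} size, but a priori not $\eps$-small — would equalize the relevant singular values and strictly decrease the local contribution to $\log\|\wed^j Df^n\|$. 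The genuine problem is to realize the combined effect of all these mixings by a diffeomorphism that is honestly $\eps$-close to $f$ and honestly symplectic: $\Sp(2N,\R)$ offers fewer ``elementary rotations'' than $\GL$ (the bundles must be rotated in $\omega$-compatible pairs) and, worse, naively chaining infinitely many small perturbations along one orbit can blow up. The resolution — the technical heart of the argument — is to \emph{randomize}: at each selected window one inserts an independent random symplectic rotation of size $\le \eps$ drawn from a fixed nondegenerate law adapted to the $\omega$-pairing of $F$ and $G$, so that the perturbed derivative cocycle over $Z$ becomes a random product of symplectic matrices whose norm performs a bounded random walk, a martingale/stopping-time estimate keeping every realization $\eps$-$C^1$-close to $f$. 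In expectation this random product has strictly smaller $\wed^j$-growth than the cocycle of $f$, because a random rotation de-aligns, on average, the most-expanded $j$-dimensional subspaces of consecutive blocks, and lack of domination is exactly the condition making a \emph{small} such rotation effective for this. By Fubini some realization of the random rotations has the desired effect; assembling the corresponding symplectic perturbations in disjoint Darboux flow-boxes around the chosen orbit segments (via a symplectic analogue of Franks's realization lemma) and leaving $f$ unchanged elsewhere produces a diffeomorphism $g$ with $\LE_j(g) \le \LE_j(f) - c\,\mu(Z)$. This last construction is the main obstacle, and is where I expect essentially all of the work to be; once it is in place the theorem follows by the argument above.
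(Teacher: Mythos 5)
Your high-level architecture is exactly the paper's (Theorem~\ref{t.continuity} plus Proposition~\ref{p.jump}): introduce the integrated exponents $\LE_j$, observe they are upper semicontinuous hence continuous on a residual set, and show that continuity forces domination a.e.\ via a perturbation lemma that would otherwise drop some $\LE_j$ by a definite constant. The gap lies in the description of the random walk, the part you yourself flag as the technical heart. In the paper the perturbation $g$ is \emph{deterministic}; the probability space is $(D,\mathrm{Leb})$, where $D$ is the support of the first local perturbation, and the random variable $S_n(x)$ is the \emph{angle} (measured in a distinguished $p_1 p_2$-plane inside a hyperbolic $P\oplus Q$ decomposition) by which the invariant line field has been rotated up to time $n$ along the $g$-orbit of $x\in D$. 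The construction makes the increments of $S_n$ i.i.d.\ with positive variance, so $S_n$ a.s.\ escapes every compact interval (Lemma~\ref{l.probabilistic}); installing an absorbing barrier at $\pm\pi/2$, for a large-measure set of $x\in D$ the angle hits $\pm\pi/2$ in finite time, and then $E$ has been rotated \emph{fully} into $F$. That complete collision is what produces a drop in $\LE_j$ that is independent of $\eps$. Your version --- independent $\eps$-small random rotations inserted at successive windows, a ``random walk in the norm,'' and an appeal to decreased $\wed^j$-growth ``in expectation'' by de-alignment --- does not obviously give a uniform drop: an $\eps$-small de-alignment per step decreases the growth by $O(\eps)$, so your $c$ degenerates as $\eps\to0$ unless you articulate the accumulation-to-$\pi/2$ mechanism, and the ``martingale/stopping-time estimate keeping every realization $\eps$-$C^1$-close'' is a red herring, since each step is $\eps$-small by construction.

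There is a second structural omission: you treat the random walk as a blanket fix for all failures of domination, but the paper first classifies non-dominance into four types (Lemma~\ref{l.4 types}). Types I--III (small angle between $E^u$ and $E^{cs}$; unbalanced finite-time growth; a symplectically non-degenerate, isometrically-behaving $2$-plane) are dispatched by the nested-rotation technique already available from \cite{BV Annals}. Only type IV --- a uniformly expanding, conformal, $\omega$-null plane $P$ spanned by directions in $E$ and $F$, which can occur only when $\dim E<\dim F$ and is the genuinely symplectic obstruction --- requires the random walk, and it is precisely there that nested rotations fail: a linear map rotating the null plane $P$ and fixing a complement cannot preserve $\omega$, and the hyperbolic drift in $P\oplus Q$ collapses any nested family of supports. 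The i.i.d.\ angular increments in the paper are not a free-floating device; they come out of the specific geometry of type IV (the conformal expansion/contraction in $P\oplus Q$ makes each sub-box $D_i\subset g(D)$ an affine shrunk copy of $D$), and without the classification you cannot even set up the probability space on which $S_n$ lives.
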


The first alternative means that $k(x)=1$, that is, all Lyapunov exponents at $x$ are zero.
In the second alternative, we can in fact obtain even sharper information,
using the general fact (proven in \cite{BV Cambridge})
that \emph{for symplectic maps, dominated splittings are automatically partially hyperbolic}.
Let us postpone the precise statement to \S\ref{ss.review},
and explain the consequences for the generic maps from Theorem~\ref{t.main}.

First, the Lyapunov exponents of any symplectic diffeomorphism are \emph{symmetric}:
if $\lambda$ is an exponent at the point $x$ then so is $-\lambda$,
and they have the same multiplicity.
(The multiplicity of the Lyapunov exponent $\hat{\lambda}_j(x)$ as in~\eqref{e.def Lyapunov}
is defined as $\dim E^j(x)$.)

From the Oseledets splitting at a regular point $x$,
we form the \emph{zipped Oseledets splitting}:
\begin{equation}\label{e.zip}
T_x M = E^+(x) \oplus E^0(x) \oplus E^-(x),
\end{equation}
where $E^+(x)$, $E^0(x)$, and $E^-(x)$ are the sums of
the spaces $E^j(x)$ corresponding to positive, zero, and negative $\hat{\lambda}_j(x)$, respectively.
By symplectic symmetry, $\dim E^+(x) = \dim E^-(x)$ and $\dim E^0(x)$ is even.

Assume that the point $x$ is such that
the full Oseledets splitting along the orbit of $x$ is dominated.
Then so is the zipped splitting $E^+\oplus E^0 \oplus E^-$.
Besides,
\emph{the space $E^+$ is uniformly expanding and the space $E^-$ is uniformly contracting}.
In other words, there is a constant $\sigma>1$ such that, up to a change of the Riemannian metric on $M$,
$$
\left.
\begin{array}{l}
\|Df(y) \cdot v_+\|  \ge \sigma \| v_+\| \\
\|Df(y) \cdot v_-\|  \le \sigma^{-1} \| v_-\|
\end{array}
\right\}
\text{ for all $y = f^n(x)$, $n \in \Z$, $v_+ \in E^+(y)$, $v_- \in E^-(y)$.}
$$
We say that the zipped Oseledets splitting is \emph{partially hyperbolic}.
It is evident that this is a much stronger conclusion
than just the asymptotic expansion/contraction provided by the bare Oseledets Theorem.

In the case that $E^0=\{0\}$, partial hyperbolicity becomes
the usual notion of \emph{uniform hyperbolicity}.
Another useful fact (also from \cite{BV Cambridge})
is that uniformly hyperbolic sets generically have either zero or full volume.
Thus (see \S\ref{ss.review} for full details)
we obtain the following complement of Theorem~\ref{t.main}:

\begin{corol}\label{c.main}
A $C^1$-generic symplectic diffeomorphism $f$
satisfies one and only one of the alternatives below:
\begin{enumerate}
\item $f$ is an Anosov diffeomorphism; that is,
there exists a uniformly hyperbolic splitting $TM = E^+ \oplus E^-$
that coincides with the zipped Oseledets splitting at a.~e.\ point.

\item For almost every point $x \in M$, either all Lyapunov exponents at $x$ are zero,
or the zipped Oseledets splitting $T_\Lambda M = E^+ \oplus E^0 \oplus E^-$ over the orbit $\Lambda$ of $x$
is partially hyperbolic with center dimension $\dim E^0$ at least $2$.
\end{enumerate}
\end{corol}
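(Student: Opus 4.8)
The plan is to derive Corollary~\ref{c.main} from Theorem~\ref{t.main} together with the two facts recalled above from \cite{BV Cambridge} --- that symplectic dominated splittings are automatically partially hyperbolic, and that a uniformly hyperbolic invariant set of a $C^1$-generic symplectic diffeomorphism has volume $0$ or $1$. First I would take $\cR$ as in Theorem~\ref{t.main}, intersect it with the residual set $\cR'$ furnished by the latter fact to get a residual $\cR'' := \cR \cap \cR'$, and fix $f \in \cR''$. By symplectic symmetry of the Lyapunov spectrum, in the zipped splitting $T_xM = E^+(x)\oplus E^0(x) \oplus E^-(x)$ one has $\dim E^+(x) = \dim E^-(x)$ and $\dim E^0(x)$ even. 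For a.e.\ $x$, Theorem~\ref{t.main} leaves two possibilities. If $k(x)=1$ then all exponents vanish and we are in the first sub-alternative of~(2). If $k(x)\ge 2$, the full Oseledets splitting is dominated along the orbit of $x$; since a coarsening of a dominated splitting is again dominated (a routine check using continuity of the bundles and the uniform bound on their angles), the zipped splitting $E^+\oplus E^0\oplus E^-$ is dominated along the orbit, hence partially hyperbolic by the symplectic partial hyperbolicity fact, and if $E^0(x)\ne\{0\}$ then $\dim E^0(x)\ge 2$ by parity, giving the second sub-alternative of~(2). Thus the only points not covered by alternative~(2) form the invariant set $A := \{x : k(x)\ge 2 \text{ and } E^0(x)=\{0\}\}$, along whose orbits the splitting $TM = E^+\oplus E^-$ is uniformly hyperbolic.

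It then remains to show that $\mu(A)>0$ forces $f$ to be Anosov. For $\tau>1$ let $B_\tau\subset A$ be the invariant set of points whose orbit is $\tau$-dominated in the sense of \eqref{e.def DS adapted} for $E^+\oplus E^-$; by Theorem~\ref{t.main}, $A=\bigcup_{\tau>1}B_\tau$ up to a null set, so $\mu(A)>0$ gives $\mu(B_\tau)>0$ for some $\tau$. On $B_\tau$ the splitting $E^+\oplus E^-$ satisfies \eqref{e.def DS adapted} pointwise, so by the properties of dominated splittings the bundles are continuous on $B_\tau$ and extend continuously to the compact invariant set $\overline{B_\tau}$, still $\tau$-dominated, with $\dim E^+=\dim E^-=N$ throughout since $E^0\equiv\{0\}$. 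Applying the symplectic partial hyperbolicity fact now to $\overline{B_\tau}$ itself shows $\overline{B_\tau}$ is a uniformly hyperbolic set with $\mu(\overline{B_\tau})\ge\mu(B_\tau)>0$; since $f\in\cR'$ it has full volume, and being closed with $\mu$ of full support it equals $M$, so $TM=E^+\oplus E^-$ is globally uniformly hyperbolic, i.e.\ $f$ is Anosov, and this splitting coincides a.e.\ with the zipped Oseledets splitting. Conversely, if $f$ is Anosov then every vector in the stable (resp.\ unstable) bundle has negative (resp.\ positive) exponent, so a.e.\ $x$ has $E^0=\{0\}$ and no zero exponents; hence alternatives~(1) and~(2) are mutually exclusive, and since we have shown that the failure of~(1) forces $\mu(A)=0$ and hence~(2), the dichotomy of Corollary~\ref{c.main} follows.

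I expect that essentially all of the substance here is imported: the real work is Theorem~\ref{t.main} and the two results of \cite{BV Cambridge}, and the deduction above is largely bookkeeping. The one step where a little genuine care is needed is the passage from ``a positive-measure set of points with uniformly hyperbolic orbit closures'' to ``a single uniformly hyperbolic set of positive measure'' --- that is, the uniformization over the domination constant $\tau$ and the verification that $\overline{B_\tau}$ remains dominated (and therefore, by symplectic symmetry, hyperbolic). A fully detailed version of this step, along with the precise statements quoted from \cite{BV Cambridge}, is what I would record in \S\ref{ss.review}.
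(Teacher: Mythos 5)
Your proof is correct and follows essentially the same route as the paper: combine Theorem~\ref{t.main} with Theorems~\ref{t.DS is PH} and~\ref{t.0 or 1}, classify a.e.\ orbit by $\dim E^0(x)$, and observe that a positive-measure set of orbits with $E^0=\{0\}$ forces $f$ to be Anosov. You supply more detail than the paper's one-line assertion of that last implication (uniformizing the domination constant and passing to the closure to get a single compact hyperbolic set of positive measure); a cosmetic improvement would be to index your exhausting sets by the integer $m$ of $m$-domination rather than by the constant $\tau$ of \eqref{e.def DS adapted}, since Gourmelon's adapted metric is a priori orbit-dependent and $m$-domination is the metric-independent formulation used to extend to $\overline{B_m}$.
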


%The present results prove Theorem~4 of \cite{BV IHP}.
The statement of Corollary~\ref{c.main} is due to Ma\~{n}\'{e}, see \cite{Mane ICM}.
Its $2$-dimensional version, asserting that a generic area-preserving diffeomorphism either is Anosov
or has zero metric entropy, was established by the author in \cite{B GZLE}.
Some of the key ideas of the proof in \cite{B GZLE} came from the outline \cite{Mane sketch} left by Ma\~{n}\'{e}.
In \cite{BV Annals}, Viana and the author proved
a weaker version of Corollary~\ref{c.main} (without the partial hyperbolicity).
The paper \cite{BV Annals} also proves the full version of Theorem~\ref{t.main} for volume-preserving diffeomorphisms.
(The statement is word-by-word the same, only replacing the symplectic form $\omega$ by a volume form.)

There are results of similar nature for
volume-preserving and hamiltonian flows (currently only in low dimensions),
see \cite{Bessa}, \cite{Bessa LopesD},
and
for linear cocycles (deterministic products of matrices),
see \cite{BV Annals}, \cite{B Fayad}.

While this paper is the symplectic counterpart to \cite{BV Annals},
the present proofs required much more than technical adaptations.
To achieve our goal, we develop here a new perturbation method
that uses random walks.
See \S\ref{ss.preview} for an overview.
Other examples in the literature where probabilistic arguments are used 
to find dynamical systems with special properties are \cite{Moreira Yoccoz}, \cite[page~196]{Dolgo random}.

\medskip

Let us explore some consequences of the results above.
If $f$ is a generic non-Anosov map then the manifold is covered mod~$0$
by two disjoint invariant sets $Z$ and $D$ such that in $Z$ all exponents vanish,
and $D$ can be written as a non-decreasing union
$D = \bigcup_{n \in \N} D_n$ of compact invariant sets,
each admitting a partially hyperbolic splitting of the tangent bundle,
with zero center exponents.
Of course it would be nicer if we could conclude that $\mu(Z)=1$ or $D_n = M$ for some $n$.
That is the case if one of the following holds:
\begin{itemize}
\item if $f$ happens to be ergodic;
\item if $\dim M = 2$: then we must have $\mu(Z) = 1$ (so we reobtain the main result from~\cite{B GZLE});
\item if some $D_n$ has non-empty interior:
since the generic $f$ is transitive by \cite{Arnaud BC},
we conclude that $D_n=M$.
\end{itemize}

There is a fourth situation where we can improve the conclusions of Corollary~\ref{c.main}:
when considering globally partially hyperbolic diffeomorphisms,
that is, those that have a partially hyperbolic splitting defined on the whole tangent bundle.
(See \S\ref{ss.review} for the definition.)
There is no need to stress their relevance;
see e.g.\ the surveys \cite{Hass Pesin}, \cite{RRU survey}.

Let $\PH^1_\omega(M)$ indicate the (open) subset of $\Diff^1_\omega(M)$ formed by partially hyperbolic maps.
Then we have:
\begin{thm}\label{t.global PH}
For the generic $f$ in $\PH^1_\omega(M)$,
there is a partially hyperbolic splitting
$TM = E^u \oplus E^c \oplus E^s$
such that all Lyapunov exponents in the center bundle
vanish for a.~e.\ point.
\end{thm}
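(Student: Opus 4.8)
\medskip\noindent
The plan is to deduce Theorem~\ref{t.global PH} from Theorem~\ref{t.main}, together with the two facts quoted from \cite{BV Cambridge} (symplectic dominated splittings are partially hyperbolic; generic positive-volume uniformly hyperbolic sets are open) and generic transitivity \cite{Arnaud BC}. Since partial hyperbolicity is an open property and, once the dimensions of the three bundles are fixed, the splitting varies continuously with $f$, the set $\PH^1_\omega(M)$ is open and $\cR\cap\PH^1_\omega(M)$ is residual in it, where $\cR$ is the residual set of Theorem~\ref{t.main}. Fix $f$ there, and among all partially hyperbolic splittings of $f$ choose one, $TM=E^u\oplus E^c\oplus E^s$, with $\dim E^c$ minimal; a short symplectic argument then shows that $E^c$ contains no $Df$-invariant sub-bundle that is uniformly expanded or uniformly contracted. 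If $E^c=\{0\}$ then $f$ is Anosov and the statement is vacuous, so assume $E^c\neq\{0\}$. As recalled in \S\ref{ss.review}, $E^c$ is a symplectic sub-bundle, so $A=Df|_{E^c}$ is a continuous symplectic linear cocycle over $f$, and the task is to show that all Lyapunov exponents of $A$ vanish $\mu$-almost everywhere.

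Next I would intersect the Oseledets data with $E^c$. For a.e.\ $x$, Theorem~\ref{t.main} gives that the Oseledets splitting along the orbit of $x$ is trivial — whence all exponents at $x$, and in particular the center ones, vanish — or dominated along that orbit. In the latter case the zipped splitting $E^+\oplus E^0\oplus E^-$ is dominated along the orbit, and being $Df$-invariant the continuous bundle $E^c$ splits as $(E^c\cap E^+)\oplus(E^c\cap E^0)\oplus(E^c\cap E^-)$; thus the Oseledets data of $A$ is dominated along a.e.\ orbit, with zipped decomposition $E^{c,+}\oplus E^{c,0}\oplus E^{c,-}$ into its positive, zero and negative parts. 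Suppose for contradiction that $A$ has a non-zero exponent on a set of positive measure. By the symplectic symmetry of $A$ the bundles $E^{c,+}$ and $E^{c,-}$ are both non-trivial on a positive-measure invariant set $\Gamma$, and applying the first fact of \cite{BV Cambridge} to $A$ along each orbit in $\Gamma$ shows that $E^{c,+}$ is uniformly expanded and $E^{c,-}$ uniformly contracted along that orbit.

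I would then upgrade this orbitwise information to a uniform structure. On a positive-measure invariant subset $\Gamma_0\subseteq\Gamma$ the dimensions of $E^{c,\pm}$ and the domination constants of $E^{c,+}\oplus E^{c,0}\oplus E^{c,-}$ are constant, so this dominated splitting extends to the compact invariant set $\Lambda=\overline{\Gamma_0}$, producing there a continuous $Df$-invariant splitting $\hat E^{c,+}\oplus\hat E^{c,0}\oplus\hat E^{c,-}$ of $E^c|_\Lambda$ with $\hat E^{c,+}\neq\{0\}$; by \cite{BV Cambridge} again, $\hat E^{c,+}$ is uniformly expanded and $\hat E^{c,-}$ uniformly contracted over $\Lambda$. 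Hence $E^u|_\Lambda\oplus\hat E^{c,+}$ is uniformly expanded and $\hat E^{c,-}\oplus E^s|_\Lambda$ uniformly contracted over a compact invariant set of positive volume. If $\hat E^{c,0}=\{0\}$ then $\Lambda$ is uniformly hyperbolic, so by the second fact of \cite{BV Cambridge} it has full volume; by density and continuity $E^u\oplus\hat E^{c,+}$ then extends to a uniformly expanded bundle over all of $M$, making $f$ Anosov and contradicting $E^c\neq\{0\}$. If $\hat E^{c,0}\neq\{0\}$, I would invoke the corresponding propagation statement for partially hyperbolic sets, together with transitivity \cite{Arnaud BC}, to conclude that the refined splitting is defined on all of $M$, contradicting the minimality of $\dim E^c$. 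Either way we reach a contradiction, so all exponents of $A$ vanish a.e.

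The step I expect to be the real obstacle is this last one: the second fact of \cite{BV Cambridge} concerns \emph{uniformly} hyperbolic sets, and its analogue for a center bundle that is only partially hyperbolic — propagating a positive-volume refined partially hyperbolic structure to the whole of $M$ — is the delicate point, precisely the interplay between almost-everywhere hyperbolicity and topological robustness. If it cannot be obtained directly, the fallback is to run the proof scheme of Theorem~\ref{t.main} with $E^c$ and $A$ in place of $TM$ and $Df$: the function $f\mapsto\int_M(\text{sum of the positive Lyapunov exponents lying in }E^c,\text{ counted with multiplicity})\,d\mu$ is upper semicontinuous on the open set $\PH^1_\omega(M)$, hence continuous on a residual subset; at a continuity point the merging-of-exponents and random-walk perturbations of this paper — which stay inside $\PH^1_\omega(M)$ since partial hyperbolicity is open — either lower this integral, contradicting continuity, or cannot be applied, in which case $A$ has a dominated Oseledets splitting a.e.\ and one is back to the situation of the previous paragraph. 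In all cases the generic value of the integral is $0$, which is the assertion.
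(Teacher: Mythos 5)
Your reduction to a symplectic center cocycle is fine as far as it goes, and the case $\hat E^{c,0}=\{0\}$ is indeed handled correctly by Theorem~\ref{t.0 or 1} (intersecting with that residual set). But the step you yourself flag is a genuine gap, and it is exactly where the whole difficulty of Theorem~\ref{t.global PH} sits: there is no ``propagation statement'' for partially hyperbolic sets analogous to Theorem~\ref{t.0 or 1}. A compact invariant set $\Lambda$ of positive but not full volume carrying a finer dominated splitting of $E^c$ is perfectly compatible with minimality of $\dim E^c$ among \emph{global} splittings, and transitivity from \cite{Arnaud BC} does not help because positive volume does not give nonempty interior (the paper only uses transitivity when some $D_n$ has interior). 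Your fallback does not close this gap: running the machinery of Theorem~\ref{t.continuity} on the restricted cocycle $A=Df|_{E^c}$ at a continuity point of the integrated center exponent yields only the dichotomy ``center exponents all zero \emph{or} center Oseledets splitting dominated along a.e.\ orbit'', and the second alternative is compatible with nonzero center exponents; the perturbations are blocked precisely by that domination, so continuity gives no contradiction and you land back in the unresolved case. (There is also a smaller issue: the integrated center exponent is only well defined and upper semicontinuous after fixing a continuously varying splitting of constant dimensions, and your ``minimal $\dim E^c$'' splitting need not vary continuously with $f$.)

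The missing ingredient is a mechanism forcing the orbit-wise dominated splittings of Theorem~\ref{t.main} to be restrictions of a single \emph{global} one, and this is what the paper supplies via accessibility rather than volume/transitivity arguments. Accessibility is open and dense in $\PH^1_\omega(M)$ (Theorem~\ref{t.DW}); combining it with Brin's theorem (Theorem~\ref{t.Brin}) applied to $C^\infty$ approximations provided by Zehnder (Theorem~\ref{t.Z}), and a $G_\delta$ argument, one gets Proposition~\ref{p.generic Brin}: for generic $f\in\PH^1_\omega(M)$ almost every orbit is \emph{dense}. Then, for $f$ also in the residual set of Theorem~\ref{t.main}, the dominated Oseledets splitting along a dense orbit extends to its closure, i.e.\ to all of $M$, hence coincides with the finest dominated splitting of $f$ (Theorem~\ref{t.finest}); in particular the Oseledets splitting is a.e.\ this fixed global splitting, and the symplectic symmetry of the exponents forces the exponent carried by the middle bundle to vanish, that bundle being the center of a partially hyperbolic splitting by Theorem~\ref{t.DS is PH}. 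If you want to salvage your scheme, you must prove some substitute for Proposition~\ref{p.generic Brin}; without it, the positive-measure, possibly nowhere dense set $\Gamma$ with a finer dominated center splitting and nonzero center exponents cannot be excluded.
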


If the partially hyperbolic map $f$ belongs to the residual set given by Corollary~\ref{c.main},
then to get the conclusion of Theorem~\ref{t.global PH}
we have to ensure that $\dim E^0(x)$ is almost everywhere constant.
In the lack of ergodicity, the key property we use is \emph{accessibility},
which is known to be $C^1$ open and dense, by~\cite{Dolgo Wilk}.
See Section~\ref{s.PH proof} for the detailed proof.

\medskip

Let us now discuss briefly the topic of abundance of ergodicity,
and the relevance of Theorem~\ref{t.global PH} in this context.

An important problem in the literature is to determine geometric conditions
on a volume preserving dynamics that imply ergodicity of the Lebesgue measure.
Partial hyperbolicity seems to be a natural condition to start with.
Maybe not much more is needed:
Pugh and Shub conjectured in \cite{Pugh Shub JEMS}
that ergodic maps must form a $C^2$-open and dense set
among the partially hyperbolic ones.

\begin{rem}\label{r.dominance and ergodicity}
A more natural (but more difficult) condition to be imposed in the search for ergodicity is
the existence of a global dominated splitting.
That is so because this condition is satisfied for stably ergodic maps\footnote{A
(volume-preserving or symplectic) diffeomorphism
$f$ is called \emph{stably ergodic} if
it is of class $C^2$ and every $C^2$ (volume-preserving or symplectic)
map sufficiently $C^1$-close to $f$ is ergodic.}
(see \cite{Arbieto Matheus})
and there exist stably ergodic diffeomorphisms that are not partially hyperbolic (see \cite{Tahzibi}).
The situation for \emph{symplectic} maps is simpler,
because partial hyperbolicity is the same as dominance.
Stably ergodic symplectomorphisms are indeed partially hyperbolic, see \cite{Horita Tah, SX robust trans}.
\end{rem}

Improving significantly the results of Pugh and Shub~\cite{Pugh Shub JEMS},
Burns and Wilkinson \cite{Burns Wilk} gave the following list of conditions that are sufficient for ergodicity:
partial hyperbolicity, $C^2$ smoothness,  essential accessibility, and center bunching.
The latter condition
roughly means that the derivative restricted to the center bundle is close to conformal.

On the other hand, Theorem~\ref{t.global PH} says that
generic maps in $\PH_\omega^1(M)$ have a \emph{non-uniform center bunching} property
(which by semicontinuity is transmitted to nearby $C^2$ maps).
It is natural to ask if this property has interesting consequences.
Indeed it does:
non-uniform center bunching is used in \cite{ABW} to prove that
\emph{generic diffeomorphisms in $\PH_\omega^1(M)$ are ergodic.}

\medskip

Let us close this introduction
with a few comments on the choice of the topology.\footnote{Here I borrowed some arguments
from \cite{Avila smoothening}.}
For $C^r$ topologies with $r\ge 2$,
the perturbations we make in this paper definitely do not apply, and
indeed the main results do not extend.

The knowledge of $C^1$-generic dynamics has seen recently very significant progress; see
Chapter~10 of \cite{BDV livro} and the references therein.
Despite the fact that some fundamental questions are still open, a broad understanding is perhaps starting to emerge.
In contrast, few generic properties are known for topologies $C^r$ with $r>1$
(with the notable exception of one-dimensional dynamics):
even the Closing Lemma is open.

Sometimes $C^1$-generic and smoother behaviors are much different.
This is especially true for measure-theoretical properties related to distortion.
Despite these differences,
concrete examples and phenomena that arise from the study of $C^1$-dynamics
often turn out to be important in smoother contexts.
Some situations that illustrate this point are:
\begin{itemize}
\item
The concept of dominated splitting in dynamical systems originated from
the research of Liao %~\cite{Liao}
and Ma\~{n}\'{e} %~\cite{Mane ergodic}
on the Smale $C^1$-stability conjecture.
It is increasingly important in smooth ergodic theory:
see e.g.~\cite{ABV mostly exp}, also Remark~\ref{r.dominance and ergodicity}.

\item
The proof \cite{Dolgo Pesin} that
for every compact manifold other than the circle
there is a volume-preserving Bernoulli diffeomorphism
uses $C^1$-perturbation techniques from \cite{B GZLE}.

\item
The blenders introduced in~\cite{BD blenders}
to create new examples of $C^1$-robustly transitive diffeomorphisms
now appear as a ingredient for ergodicity in~\cite{RRTU 2d center}.
\end{itemize}

%%%%%%%%%%%%%%%%%%%%%%%%%%%%%%%%%%%%%%%%%%%%%%%%
\section[Preliminaries and Plan of the Proof]{Preliminaries and Plan of the Proof}
%%%%%%%%%%%%%%%%%%%%%%%%%%%%%%%%%%%%%%%%%%%%%%%%

\subsection[Review on dominated and partially hyperbolic splittings]{Review on Dominated and Partially Hyperbolic Splittings} \label{ss.review}

Let $f\colon M \to M$ be a $C^1$ diffeomorphism, and let $\Lambda \subset M$ be an $f$-invariant set.

A splitting $T_\Lambda M = E \oplus F$
is called \emph{$m$-dominated}, where $m\in\N$, if
it is $Df$-invariant, the dimensions of $E$ and $F$ are constant and positive,
and\footnote{The \emph{co-norm} of a linear map $A$ is $\m(A) = \inf_{\|v\|=1} \|Av\|$;
it equals $\|A^{-1}\|^{-1}$ if $A$ is invertible.}
$$
\frac{\|Df^m | E(x)\|}{\m(Df^m | F(x))} \le \frac{1}{2} \quad \text{for all $x\in \Lambda$.}
$$
We call $T_\Lambda M = E \oplus F$ a \emph{dominated splitting} if it is $m$-dominated for some $m$.
We also say that \emph{$E$ dominates $F$}.
The dimension of $E$ is called the \emph{index} of the splitting.

More generally, a $Df$-invariant splitting  $T_\Lambda M = E^1 \oplus \cdots \oplus E^k$
into non-zero bundles of constant dimensions
is called \emph{dominated} if $E^1 \oplus \cdots \oplus E^j$
dominates $E^{j+1} \oplus \cdots \oplus E^k$ for each $j<k$.
This definition coincides with the one \eqref{e.def DS adapted} given at the Introduction,
due to a result of Gourmelon~\cite{Gourmelon}.

A dominated splitting over the invariant set $\Lambda$ extends continuously to its closure;
so $\Lambda$ can be assumed to be compact when necessary.
See e.g.\ \cite{BDV livro} for the proof of this and other properties of dominated splittings.

\medskip

A $Df$-invariant splitting $T_\Lambda M = E^u \oplus E^c \oplus E^s$ is called \emph{partially hyperbolic}
if it is dominated, the bundle $E^u$ is uniformly expanding, and the bundle $E^s$ is uniformly contracting.
The latter two conditions mean that there is a uniform $m \in \N$ such that
$\m(Df^m| E^u) \ge 2$ and $\|Df^m | E^s \| \le \tfrac{1}{2}$ on $\Lambda$.
As it is customary, we extend the definition of partial hyperbolicity to allow $E^c$ to be $\{0\}$,
that is, to include uniform hyperbolicity.

Let's us mention an equivalent definition of partial hyperbolicity that is also frequent in the literature:
there is a Riemannian metric $\|\mathord{\cdot}\|$ on $M$
(called an \emph{adapted metric})
and continuous functions  $\alpha$, $\beta$, $\gamma$, $\delta$ on
the compact set $\Lambda$ such that
the following inequalities hold at each point of $\Lambda$:
\begin{equation}\label{e.adapted again}
\begin{gathered}
\alpha > 1 > \delta \, , \\
\m (Df | E^u) \ge \alpha > \beta > \| Df|E^c \|
\ge \m (Df|E^c) \ge \gamma > \delta \ge \|Df|E^s\| \, .
\end{gathered}
\end{equation}
The equivalence of the two definitions is shown in \cite{Gourmelon}.

\begin{rem}\label{r.definitions PH}
If one asks $\alpha$, $\beta$, $\gamma$, $\delta$ in \eqref{e.adapted again} to be constants,
then one has a stronger notion of partial hyperbolicity, called \emph{absolute}.
The weaker notion used in this paper is called \emph{relative} (or \emph{pointwise}) partial hyperbolicity.
See~\cite{AV flavors} for a detailed discussion.
\end{rem}

The precise meaning of the sentence
``dominated splittings are automatically partially hyperbolic in the symplectic case''
is:

\begin{othertheorem}[Theorem~11 in \cite{BV Cambridge}] \label{t.DS is PH}
Let $f$ be a symplectic diffeomorphism
and let $T_\Lambda M = E \oplus F$ be a dominated splitting over a $f$-invariant set $\Lambda$.
Assume $\dim E \le \dim F$ and let $E^u =E$.
Then $F$ splits invariantly as $E^c \oplus E^s$ with $\dim E^u = \dim E^s$,
and the splitting $T_\Lambda M = E^u \oplus E^c \oplus E^s$ is partially hyperbolic.
\end{othertheorem}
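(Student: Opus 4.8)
The plan is to exploit the symplectic structure to pair up the bundles of the dominated splitting. Recall that for a symplectic map $f$ the derivative $Df(x) \colon T_xM \to T_{f(x)}M$ is a symplectic linear isomorphism with respect to the forms $\omega_x$ and $\omega_{f(x)}$. Given the dominated splitting $T_\Lambda M = E \oplus F$ with $\dim E \le \dim F$, the first step is to consider the $\omega$-orthogonal complement $E^\perp$ of $E$ (the \emph{symplectic complement}, i.e.\ the set of vectors $v$ with $\omega(v,\cdot)|_E = 0$), which has codimension $\dim E$, hence dimension $\dim E + \dim F - \dim E = \dim F$; since $Df$ preserves $\omega$, the subbundle $E^\perp$ is $Df$-invariant. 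The key linear-algebra observation is that $E^\perp \supset F^\perp$ is \emph{not} automatic, but rather that one should look at how $F$ sits relative to $E^\perp$: I would show that $G := F \cap E^\perp$ is an invariant subbundle with $\dim G = \dim F - \dim E$ (generically; one must check the intersection has constant dimension, which follows because the relevant angles are bounded below, by domination), and that $E$ pairs nondegenerately with a complementary $\dim E$-dimensional piece of $F$. This produces a $Df$-invariant splitting $F = E^c \oplus E^s$ with $\dim E^s = \dim E$ and $E^c = G$; symmetrically, $E^u := E$ and $E^s$ are $\omega$-conjugate.

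The second step is to upgrade domination to uniform expansion/contraction on $E^u$ and $E^s$. Since $E^u = E$ dominates all of $F = E^c \oplus E^s$, in particular $E^u$ dominates $E^s$. Now $E^u$ and $E^s$ are $\omega$-paired: $\omega$ restricts to a nondegenerate pairing $E^u \times E^s \to \R$, invariant under $Df$ (up to the obvious identification along the orbit). This pairing relates the conorm of $Df|E^u$ to the norm of $Df|E^s$: expansion of $E^u$ forces contraction of $E^s$. Concretely, choosing an adapted metric (Gourmelon) in which the domination inequality \eqref{e.def DS adapted} holds with constant $\tau$, I would estimate, for unit $v \in E^u(x)$ and $w \in E^s(x)$ with $\omega_x(v,w)$ bounded below (possible by the uniform lower bound on the angle, a consequence of domination), that
$$
1 \gtrsim |\omega_{f^n(x)}(Df^n v, Df^n w)| = |\omega_x(v,w)| \gtrsim 1,
$$
while $\|Df^n v\| \cdot \|Df^n w\| \gtrsim |\omega_{f^n(x)}(Df^n v, Df^n w)|$; combining this with domination between $E^u$ and $E^s$ (which says $\|Df^n|E^u\| / \m(Df^n|E^s) \to \infty$ exponentially) gives $\|Df^n|E^s\| \to 0$ exponentially and $\m(Df^n|E^u) \to \infty$ exponentially, uniformly on $\Lambda$. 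That is precisely uniform contraction of $E^s$ and uniform expansion of $E^u$, so $T_\Lambda M = E^u \oplus E^c \oplus E^s$ is partially hyperbolic.

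The main obstacle I anticipate is the first step: verifying that $E^c = F \cap E^\perp$ really is a subbundle of constant dimension and that the induced splitting $F = E^c \oplus E^s$ is $Df$-invariant and, crucially, still \emph{dominated} (so that the adapted-metric machinery applies). One must check that the symplectic complement operation interacts well with the bundle structure — i.e.\ that $E^\perp$ varies continuously (clear, since $E$ does and $\omega$ is smooth) and that its intersection with $F$ is transverse with constant-dimensional intersection. Transversality is where domination enters a second time: the angle between $E$ and $F$ is bounded below, and one deduces a lower bound on the angle between $E^\perp$ and a complement of $E^c$ inside $F$. A clean way to organize this is to pass to the compact closure of $\Lambda$ (legitimate, since dominated splittings extend continuously), where all the continuity and uniform-angle arguments are standard. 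After that, the invariance of $E^c$ and $E^s$ is forced: $E^s$ is characterized as the unique $Df$-invariant complement of $E^c$ in $F$ on which $E^u$-domination can be promoted to contraction, and $\omega$-duality pins it down. In any case, since this theorem is quoted from \cite{BV Cambridge}, it suffices here to indicate this symplectic-pairing strategy; I would refer the reader there for the details of the bundle-theoretic bookkeeping.
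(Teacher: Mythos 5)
The paper does not prove this statement (it is quoted from \cite{BV Cambridge}), so your proposal stands on its own, and it has a genuine gap at the first step: the invariant splitting $F = E^c \oplus E^s$ is asserted rather than constructed. Granting $E^c = F \cap E^\omega$ (your $G$, with $E^\omega$ the symplectic complement you denote $E^\perp$), you still need a \emph{$Df$-invariant} complement of $E^c$ inside $F$ of dimension $p = \dim E$; saying that ``$E$ pairs nondegenerately with a complementary $\dim E$-dimensional piece of $F$'' only produces non-canonical complements, and your closing characterization of $E^s$ as the unique invariant complement on which domination can be promoted to contraction is circular, since the existence of such an invariant complement is precisely what must be shown. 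The only natural candidate is $E^s = F^\omega$, and for that you must prove $F^\omega \subset F$, equivalently $E \cap F^\omega = \{0\}$, equivalently that $E$ is isotropic. None of this follows from the angle bound between $E$ and $F$ given by domination: that bound controls the position of $F$ relative to $E$, not relative to $E^\omega$ (in $\R^4$ one can have transverse symplectic $2$-planes $E$, $F$ with $E^\omega = F$, so $F \cap E^\omega = F$ and no candidate $E^s$ exists at all; ruling this out under domination is exactly the dynamical content). For the same reason your claim that $\dim(F\cap E^\omega) = \dim F - \dim E$ ``follows because the relevant angles are bounded below'' is not correct as stated: constancy of that dimension is equivalent to $E\cap F^\omega = \{0\}$, i.e.\ to the isotropy statement being proved.

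Two standard ways to close the gap, both using symplecticity beyond mere invariance of $E^\omega$: (a) the map $v \mapsto \omega(v,\cdot)$ conjugates $Df$ to the inverse-transpose cocycle, for which the annihilators give a dominated splitting; pulling back, $E^\omega \oplus F^\omega$ is a second dominated splitting of $T_\Lambda M$, of index $2N-p \ge p$, and the standard uniqueness/nesting of dominated splittings yields $E \subset E^\omega$ and $F^\omega \subset F$; then set $E^c = E^\omega \cap F$ and $E^s = F^\omega$. Or (b) first prove that $E$ is uniformly expanding, using $\dim E \le N$ together with the symmetry $\sigma \mapsto \sigma^{-1}$ of singular values of symplectic matrices and the gap between $\sigma_p$ and $\sigma_{p+1}$ coming from domination; isotropy of $E$ and coisotropy of $F$ then follow by iterating backwards, and the splitting of $F$ falls out as above. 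Once $E^s = F^\omega$ is in hand, your Step~2 is essentially right, with one correction: the displayed inequalities only bound the product $\|Df^n v\|\,\|Df^n w\|$ from \emph{below}, which combined with domination gives uniform expansion of $E^u$; to get uniform contraction of $E^s$ you must also run the pairing at the endpoint (pair $Df^n w/\|Df^n w\|$ with a suitable unit vector of $E^u(f^n x)$ and pull back), which gives the complementary bound $\m(Df^n|E^u)\,\|Df^n|E^s\| \le C$.
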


\begin{othertheorem}[Corollary~B.1 in \cite{BV Cambridge}] \label{t.0 or 1}
A hyperbolic set of a generic symplectic diffeomorphism has either zero or full volume.
\end{othertheorem}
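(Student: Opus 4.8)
The plan is to run a Baire category argument driven by a $C^1$-small symplectic perturbation that destroys uniform hyperbolicity on a set of definite volume. \emph{Reductions.} Since $C^1$-generic symplectic diffeomorphisms are transitive \cite{Arnaud BC}, and a transitive Anosov map is ergodic for $\mu$, in the Anosov case every invariant set --- in particular every hyperbolic set --- already has volume $0$ or $1$. So assume $f$ is not Anosov. Then every compact hyperbolic set $\Lambda$ is a proper subset of $M$, and if $\Lambda$ had nonempty interior, transitivity would make $\bigcup_{n}f^{n}(\interior\Lambda)$ dense, forcing the closed invariant set $\Lambda$ to equal $M$, a contradiction. Replacing $\Lambda$ by the maximal invariant set of a small filtrating neighborhood only increases the volume and keeps hyperbolicity (and empty interior, by the same argument), so we may assume $\Lambda$ is locally maximal; then by Poincar\'e recurrence $\mu(\Omega(f|_{\Lambda}))=\mu(\Lambda)>0$, so the spectral decomposition exhibits a basic set of positive volume inside $\Lambda$. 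Thus it suffices to prove: for a residual set of $f\in\Diff^{1}_{\omega}(M)$, every basic set (compact, locally maximal, transitive, hyperbolic) with empty interior has zero volume.

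\emph{Baire scheme.} For $m\in\N$ let $\cB_{m}\subset\Diff^{1}_{\omega}(M)$ be the set of $f$ admitting a basic set $\Lambda$ that is $m$-hyperbolic (meaning $\|Df^{m}|E^{s}\|\le\tfrac12$ and $\m(Df^{m}|E^{u})\ge 2$ on $\Lambda$), has empty interior, and satisfies $\mu(\Lambda)\ge 1/m$. Every pair $(f,\Lambda)$ as in the previous paragraph lies in some $\cB_{m}$, so the theorem follows once each $\overline{\cB_{m}}$ is shown to have empty interior: then $\cR=\bigcap_{m}\big(\Diff^{1}_{\omega}(M)\setminus\overline{\cB_{m}}\big)$, intersected with the residual set of transitive maps, is residual and does the job.

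\emph{The perturbation.} Fix $m$, $f$, and $\eps>0$; we must exhibit $g$ that is $\eps$-$C^{1}$-close to $f$ and has a $C^{1}$-neighborhood disjoint from $\cB_{m}$. If no map near $f$ lies in $\cB_{m}$ we are done; otherwise choose such a map $f_{0}$ with witness $\Lambda$, a Lebesgue density point $x$ of $\Lambda$, and a tiny ball $B\ni x$; using the local product structure $\Lambda\cap B\cong K^{s}\times K^{u}$ one finds a sub-ball in which $\Lambda$ occupies a definite fraction of the volume. On that sub-ball apply a $C^{1}$-small symplectic perturbation --- of Franks/Bochi--Viana type, or via the random-walk construction of this paper --- that rotates the $E^{s}$- and $E^{u}$-directions into one another, breaking the $m$-domination inequality on a set of definite relative measure; a point of that set on which the new derivative acts as a rotation cannot belong to any $m$-hyperbolic set. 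Carrying this out over the positive-volume basic pieces of a candidate witness --- only boundedly many of which can coexist, given the uniform lower bound $1/m$ on the volume --- produces $g$ for which no $m$-hyperbolic basic set has volume $\ge 1/m$; and since the relevant inequalities are strict and the rotation behavior is open, a whole $C^{1}$-neighborhood of $g$ stays out of $\cB_{m}$. A diagonal argument over $m$ finishes the proof.

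\emph{Main obstacle.} The entire difficulty lies in the perturbation step. There is no automatic openness to exploit: the volume $\mu(\Lambda_{g})$ of the hyperbolic continuation is only \emph{upper} semicontinuous in $g$, and it drops discontinuously precisely because $\Lambda$ has empty interior, so one must genuinely manufacture --- by an arbitrarily $C^{1}$-small symplectic perturbation --- a loss of hyperbolicity on a set whose relative volume near the chosen density point is bounded below independently of $f_{0}$. In the merely $C^{1}$ category the stable and unstable laminations need not be absolutely continuous, so a Lebesgue density point of $\Lambda$ need not be a product of density points of the slices $K^{s},K^{u}$; controlling this geometry while keeping the perturbation both symplectic and small is the technical heart of the matter, and is precisely what the Bochi--Viana perturbation estimates (and, in the present paper, the random-walk method) are designed to handle. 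Everything else --- Baire category, Poincar\'e recurrence, the spectral decomposition, and the repeated appeals to transitivity --- is soft.
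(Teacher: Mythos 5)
The paper does not prove this statement; it imports it verbatim from \cite{BV Cambridge} (Corollary~B.1 there) and uses it as a black box in the proof of Corollary~\ref{c.main}. So there is no ``paper proof'' to compare against, and your argument must stand or fall on its own.

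There is a genuine gap, and it is exactly at the step you yourself single out as the heart of the matter. You propose to take a Lebesgue density point of the positive-volume hyperbolic basic set $\Lambda$ and apply a $C^1$-small symplectic perturbation that ``rotates the $E^s$- and $E^u$-directions into one another, breaking the $m$-domination inequality on a set of definite relative measure.'' This is precisely the kind of move that a uniformly hyperbolic splitting forbids. An $m$-hyperbolic (in particular $m$-dominated) splitting carries a strictly $Df$-invariant cone field around $E^u$, and such a cone field survives every $C^1$-perturbation of size below some threshold depending only on the domination constant. Consequently, for $\eps$ small, the $g$-forward orbit of a point that stays near $\Lambda$ must keep carrying a nearby $m$-hyperbolic splitting; you cannot push $E^s$ across the cone into $E^u$ by small perturbations, no matter how many steps you take. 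The present paper says this explicitly just before the Main Lemma: ``If the splitting $E^u\oplus E^{cs}$ is dominated over the orbit of a point $z$, then, due to the existence of a strictly invariant cone field, no split sequence $\ldots$ can be $(\eps,\kappa)$-flexible, provided $\eps>0$ is small enough.'' The entire flexibility/random-walk machinery of Sections~\ref{s.flex 0}--\ref{s.flex II} is engineered to exploit a \emph{lack} of domination (cases I--IV of Lemma~\ref{l.4 types}); it applies only along non-dominated orbit segments and is of no help against a robustly dominated one. Invoking ``Franks/Bochi–Viana type'' or ``the random-walk construction of this paper'' for this purpose is therefore a category error, not merely a technicality to be filled in.

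Two secondary issues. First, your Anosov reduction rests on ``a transitive Anosov map is ergodic for $\mu$,'' which is false in the $C^1$ category: the Anosov--Sinai ergodicity theorem requires $C^{1+\alpha}$ (absolute continuity of the invariant foliations), and there are volume-preserving $C^1$ Anosov diffeomorphisms that are not ergodic. This is fixable (fold the Anosov case into the general empty-interior case), but as written it is wrong. Second, even granting the Baire scheme, openness of $\Diff^1_\omega(M)\setminus\overline{\cB_m}$ is not what you prove; you would need that the perturbed map $g$ has a whole neighbourhood avoiding $\cB_m$, which requires quantitative, stable estimates that the sketched ``kick out a set of definite measure'' step does not supply, given that $\mu(\Lambda_g)$ is only \emph{upper} semicontinuous and can jump back up for other nearby maps. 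The correct mechanism in \cite{BV Cambridge} is not to destroy the splitting along a persisting orbit (impossible) but to exploit this upper semicontinuity of $g\mapsto\mu\bigl(\bigcap_n g^n(\bar U)\bigr)$ together with a perturbation that ejects a definite fraction of orbits from the isolating neighbourhood --- a fundamentally different construction from the one you propose.
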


It is now easy how Corollary~\ref{c.main} reduces to Theorem~\ref{t.main}:

\begin{proof}[Proof of Corollary~\ref{c.main}]
By Theorem~\ref{t.0 or 1}, there is a residual subset $\cR_1 \subset \Diff^1_\omega(M)$
formed by maps that either are Anosov
or have no hyperbolic sets of positive measure.
Let $\cR_2$ be residual set given by Theorem~\ref{t.main},
and let $f \in \cR_1 \cap \cR_2$.
By Theorem~\ref{t.DS is PH},
the zipped Oseledets splitting along the orbit of a.e.\ point $x$
is either uniformly hyperbolic (if $\dim E^0(x) = 0$),
or partially hyperbolic with $3$ non-zero bundles (if $2 \le \dim E^0(x) \le 2N-2$),
or trivial (if $\dim E^0(x) = 2N$).
The first option occurs for a positive measure set if and only if $f$ is Anosov.
So $f$ satisfies the stated conclusions.
\end{proof}

\subsection[Discontinuity of the Lyapunov exponents]{Discontinuity of the Lyapunov Exponents}
\label{ss.discontinuity}

Given $f\in \Diff_\omega^1(M)$ and a regular point $x \in M$,
rewrite the list of Lyapunov exponents in non-increasing order and repeating each according to its multiplicity:
$$
\lambda_1(f,x) \ge \cdots \ge \lambda_{2N}(f,x)
$$

For $p = 1, \ldots, N$, we consider the \emph{integrated $p$-exponent} of the
diffeomorphism~$f$:
$$
\LE_p (f) = \int_M \big(\lambda_1(f,x) +\cdots + \lambda_p(f,x) \big) \, d\mu(x).
$$
The map $\LE_p\colon  \Diff_\omega^1(M) \to \R$ is upper-semicontinuous,
and therefore its points of continuity constitute a residual subset of $\Diff_\omega^1(M)$.
On the other hand, continuity of the integrated exponents has strong consequences:

\begin{thm}\label{t.continuity}
Let $f \in \Diff_\omega^1(M)$ be such that each map $\LE_1, \ldots, \LE_N$
is continuous at~$f$.
Then for $\mu$-almost every $x\in M$,
the Oseledets splitting of $f$ is either dominated or
trivial along the orbit of $x$.
\end{thm}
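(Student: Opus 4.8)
The strategy is the standard dichotomy argument going back to Mañé and developed in \cite{BV Annals}: the Oseledets splitting fails to be dominated precisely when one can mix up nearby Lyapunov directions by a small perturbation, and this mixing decreases some integrated exponent, contradicting continuity. Concretely, suppose for contradiction that there is a positive-measure set of regular points $x$ whose Oseledets splitting along the orbit is neither trivial nor dominated. After discarding a null set and using that the index and the finest dominated-splitting structure are measurable, I would localize to a positive-measure invariant set $\Gamma$ on which there is some consecutive block of Oseledets bundles $E^i(x)\oplus\cdots\oplus E^j(x)$ that carries \emph{no} dominated sub-splitting at all — equivalently, on $\Gamma$ the matrix cocycle restricted to this block has all its Oseledets exponents realized but no invariant dominated decomposition separating any of them. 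For the rest of the argument one works with this $p$-dimensional or $(2N-p)$-dimensional block; fix the integer $p$ so that $\lambda_p(f,\cdot)>\lambda_{p+1}(f,\cdot)$ fails to be enforced by domination, and aim to strictly decrease $\LE_p(f)$.

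The main tool is a perturbation lemma: given $\eps>0$, one constructs $g$ which is $\eps$-$C^1$-close to $f$ (and symplectic) such that $\LE_p(g)\le \LE_p(f)-c$ for a definite $c=c(\Gamma)>0$. The mechanism is Mañé-type: along a long orbit segment spending a definite fraction of time in $\Gamma$, the lack of domination between the top $p$ and bottom $2N-p$ directions means the angle between $E^{\le p}$ and $E^{>p}$ comes arbitrarily close to the critical exponential rate, so one can insert small rotations (realized as $C^1$-small symplectic perturbations supported on disjoint flow boxes along the segment) that rotate a piece of $E^{\le p}$ into $E^{>p}$. Summing the effect along the orbit and integrating via the ergodic theorem / Kingman, this forces $\lambda_1+\cdots+\lambda_p$ of $g$ to drop on average. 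This is exactly the type of statement proved in \cite{BV Annals} in the volume-preserving case, and here one invokes (or reproves) the symplectic version — in this paper that is where the random-walk perturbation method enters, but for the purposes of \emph{this} theorem I would simply cite the perturbation machinery as a black box, since Theorem~\ref{t.continuity} is meant to reduce the main theorem to that machinery.

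Running the perturbation for a sequence $\eps_n\to 0$ yields $g_n\to f$ in $\Diff^1_\omega(M)$ with $\limsup_n \LE_p(g_n)\le \LE_p(f)-c<\LE_p(f)$, contradicting continuity of $\LE_p$ at $f$. Hence for a.e.\ $x$ the Oseledets splitting along the orbit of $x$ is dominated or trivial, as claimed. The delicate point — and the one I expect to absorb essentially all the work — is the perturbation lemma itself: one must (i) make the rotations \emph{symplectic} and $C^1$-small simultaneously, which is far more rigid than in the volume-preserving setting and is the reason the paper introduces random walks; (ii) control the \emph{global} effect of infinitely many local perturbations so that the singular values along orbits genuinely merge rather than merely wobble; and (iii) handle the recurrence/return-time estimates so that a positive fraction of each long segment lies in $\Gamma$. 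Everything else — the measurable reduction to a block with no dominated sub-splitting, upper semicontinuity of $\LE_p$, and extracting the contradiction — is routine and parallels \cite{BV Annals}.
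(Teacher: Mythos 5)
Your overall strategy coincides with the paper's: continuity of the integrated exponents, combined with a perturbation result saying that failure of index-$p$ domination on a set where $\lambda_p>\lambda_{p+1}$ allows a $C^1$-small symplectic perturbation to lower $\LE_p$ by a definite amount. In the paper this is packaged quantitatively as Proposition~\ref{p.jump}, $\LE_p(g)<\LE_p(f)-J_p(f)+\delta$ with $J_p(f)=\int_{\Gamma_p(f,\infty)}\tfrac{\lambda_p-\lambda_{p+1}}{2}\,d\mu$, so continuity at $f$ forces $J_p(f)=0$ and hence an index-$p$ dominated splitting along the orbit of a.e.\ point where $\lambda_p>\lambda_{p+1}$; your contradiction with a uniform drop $c(\Gamma)>0$ is the same argument in a slightly less precise form. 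So at the level of scheme there is no divergence.

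The genuine gap is that you treat the perturbation machinery as a citable black box, and it is not: Theorem~\ref{t.continuity} is the main result of this paper, and its proof \emph{is} that machinery. What can actually be imported from \cite{BV Annals} handles only three of the four ways in which non-dominance manifests along an orbit segment; for symplectic maps with $p<N$ there is a fourth configuration --- a $2$-plane spanned by a vector of $E^u$ and a vector of $E^{c}$ that is uniformly expanded and conformal, and therefore $\omega$-null --- in which nested rotations cannot be made simultaneously symplectic and $C^1$-small, and the entire random-walk construction of \S\ref{ss.IV} (Lemma~\ref{l.case IV}, feeding into the Main Lemma and then Lemma~\ref{l.lower}) exists precisely to deal with it. You correctly identify this as the crux, but flagging it does not discharge it: a proof of the present theorem must supply that flexibility statement, not invoke it. Secondarily, the globalization is also only gestured at: passing from perturbations supported on towers to the estimate on $\int\Lambda_p(g)$ requires the castle construction and the return-time/Birkhoff argument of \S\ref{ss.globalization} (including the measurable base set $B$ and the handling of periodic points via aperiodicity, cf.\ Remark after Proposition~\ref{p.jump}), plus the final identification of the dominated splitting with the Oseledets blocks via Theorem~\ref{t.DS is PH}. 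As written, your argument reduces the theorem to results that this theorem is itself responsible for establishing.
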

The main result we prove is Theorem~\ref{t.continuity},
and Theorem~\ref{t.main} is itself an immediate corollary.
Theorem~\ref{t.continuity} has a more quantitative version,
Proposition~\ref{p.jump}, which is used in the proof of Theorem~\ref{t.global PH}.

%%%%%%%%%%%%%%%%%%%%%%%%%%%%%%%%%%%%%%%%%%%%%%%%%%%
\subsection[A preview of the proof]{A Preview of the Proof} \label{ss.preview}

This subsection contains
an informal outline of the proof of Theorem~\ref{t.continuity}.
It is logically independent from the rest of the paper.
However, it should help the reader to go through the complete proof.

\medskip

Assume that the Oseledets splitting of a symplectic diffeomorphism $f$
is non-trivial and not dominated.
To prove Theorem~\ref{t.continuity} (and hence \ref{t.main}),
we need to show that for some $p$, the integrated exponent $\LE_p$ is discontinuous at $f$.
The proof has two parts:
\begin{enumerate}
\item Assume that the Oseledets splitting $T_{\mathrm{orb(x)}} M = E^1 \oplus \cdots \oplus E^k$
along the orbit of some point $x$ is non-trivial and not dominated:
that is, for some $i$, $E = E^1 \oplus \cdots \oplus E^i$ does not dominate
$F = E^{i+1} \oplus \cdots \oplus E^k$.
Let $p = \dim E$; for symplectic reasons
it suffices to consider the case $p \le N =\tfrac{1}{2}\dim M$.

Some positive iterate $y$ of $x$ will enter a zone where
the non-dominance of the splitting $E\oplus F$
manifests itself. (More on this later.)
Then one constructs by hand a $C^1$-perturbation $g$ of $f$
with the following properties:
For some $m \in \N$,
$Dg^m(y)$ sends some (non-zero) vector in the space $E$ into the space $F$.
The support of the perturbation is a small neighborhood
${U \sqcup f(U) \sqcup \cdots \sqcup f^{m-1}(U)}$ (called a tower)
of the orbit segment $\{y, \ldots, f^{m-1} y\}$.
Furthermore, it is important that some vectors from
$E(\tilde y)$ are sent by $Dg^m(\tilde y)$ into $F(\tilde y)$ not only
at the point $\tilde y = y$, but also for most (in the sense of measure) points $\tilde y$ in
the base $U$ of the tower.

\item The global procedure is to cover most of the manifold by many disjoint tall and thin towers.
Approximately in the middle of each tower, a perturbation as sketched in part 1 above is performed.
The result is the different expansion rates of $E$ and $F$ are blended,
and the integrated $p$-exponent of the new diffeomorphism dropped.
So one concludes that $\LE_p$ is discontinuous at $f$, as desired.
\end{enumerate}

This general strategy is the same followed in the papers \cite{B GZLE} and \cite{BV Annals}.
More detailed (and still informal) descriptions of it
can be found in \cite{BV IHP} and \cite{BV Cambridge}.
It is clear that the methods would fail for topologies finer than~$C^1$.

\medskip

To explain the difficulties of the symplectic case, let us return to the first step of the strategy,
and look more closely
how the non-dominance of the splitting $E \oplus F$ manifests itself at the point $y$.
There are four possibilities:
\begin{enumerate}
\item[I.]
Either the angle $\angle (E,F)$ gets very small at $y$.

\item[II.]
Or there is some $m \in \N$ and there are
unit vectors $v \in E(y)$, $w \in F(y)$ such that $w$ gets much more expanded than $v$
by $Df^m(y)$.

\item[III.]
Or there is some large $m \in \N$ and
there are non-zero vectors $v \in E(y)$ and $w \in F(y)$ with $\omega(v,w) \neq 0$ and
such that no vector in the plane $P$ spanned by them
gets much expanded nor contracted by $Df^j(y)$ for all $j=1$, \ldots, $m$.
This means that after a bounded change of the Riemannian metric,
the restriction of $Df^j(y)$ to $P$ becomes an isometry,
for all $j=1,\ldots, m$.
Notice the symplectic form $\omega$ restricted to $P$ is non-degenerate (because $\omega (v,w)\neq 0$).

\item[IV.]
Or there is some large $m \in \N$ and
there are non-zero vectors $v \in E(y)$ and $w \in F(y)$
spanning a plane $P$ that is (up to time $m$)
\emph{uniformly expanding} and \emph{conformal}.
That is, there exists $\tau>1$ such that
after a bounded change of the Riemannian metric we have
that $Df^j (y)/ \|Df^j(y)\|$ is an isometry and
$\|Df^j(y) \| \ge \tau^j$
for all $j=1,\ldots, m$.
Since the plane $P$ is expanded it must be \emph{null} (meaning that
the symplectic form vanishes on $P\times P$).
\end{enumerate}

Let us explain how in each case one sends a vector from $E$ into $F$ by perturbing $f$.
Since we will work on very small neighborhoods of a segment of orbit,
we can assume $f$ is locally linear.

In case~I, one composes $f$ with a small rotation supported around $y$.
Let us be a little more precise.
If $\dim M =2$, pretend $M= \R^2$ and $y=0$, and let $\alpha = \angle(E(y), F(y))$;
then the perturbation will be given by $g(x) = f(R_{\theta(x)} (x))$,
where $\theta$ vanishes outside a small disk $D = B_r(0)$ and
is constant equal to $\alpha$ on a smaller $D_1 = B_{r_1}(0)$.
It is very important
that the measure of the \emph{buffer} $D \setminus D_1$ is small compared that
of the support $D$.
For $\dim M > 2$, the rotation is made around a codimension $2$ axis,
and disks are replaced by cylinders.

The second case is similar: we make two rotations, one around $y$ and other around $f^m y$.

Case~III is more delicate: one has to make small rotations
around each of the points $y$, $fy$, \ldots, $f^{m-1} y$.
The rotations must be \emph{nested}, that is,
the buffer of each rotation is mapped by $f$ to the next buffer.
(This is necessary to control the measure of the set where the perturbation will be effective.)
Since the ambient space $M$ has dimension $2N>2$,
each rotation is around an $(2N-2)$-dimensional axis $X$,
and the actual support is a thin cylinder along $X$.
Moreover, in order to preserve the symplectic form,
$X$ needs to be the symplectic complement of the plane $P$.
Thus the fact that $\omega$ is non-degenerate on $P$ is also used.

The treatment of the first three cases explained above is the same as in \cite{BV Annals}.
In fact, case IV does not occur if $\dim E = \dim F$.
That is the precise reason why it does not appear in \cite{BV Annals}.
(Let us remark that in the \emph{volume-preserving} situation dealt with in \cite{BV Annals}
there are only three cases, similar to those explained above.
The construction of the nested rotations has some extra subtleties, however.)

The main novelty of the present paper is a
perturbation method that permits us to treat the case~IV.
Before explaining it, let us see what the difficulties are.

It seems natural to try nested rotations again in case~IV,
because $Df$ acts conformally on the plane $P$.
However, a linear map that rotates $P$ and is the identity on a space complementary to $P$
\emph{cannot} preserve the symplectic form.
The reason is that $P$ is a null space.
To preserve the symplectic form,
one also needs to rotate another $2$-dimensional space $Q$;
then the linear map can be taken as the identity on a certain ``axis'' of dimension $(2N-4)$
(that is the symplectic complement of $P \oplus Q$).
Thus the situation becomes essentially four-dimensional.
Indeed, let us from now on assume $\dim M = 4$ (and pretend that $M=\R^4$)
to simplify the discussion.
Therefore $\dim E = 1$ and $\dim F = 3$.

Standard symplectic coordinates $p_1$, $p_2$, $q_1$, $q_2$ on $\R^4$
can be found with the following properties:
the $p_1 p_2$ and $q_1 q_2$-planes are $P$ and $Q$, respectively,
$E$ is the $p_1$ axis, and $F$ is the space $p_2 q_1 q_2$.
Moreover, the derivatives take the following form:
$$
Df(f^i y)\colon (p_1, p_2, q_1, q_2) \mapsto
\big(\tau_i p_1, \tau_i p_2, \tau_i^{-1} q_1, \tau_i^{-1} q_2 \big) , \quad
\text{where } \tau_i \ge \tau > 1
$$
(for $1 \le 1 \le m$.)
So the splitting $P \oplus Q$ has a uniformly hyperbolic behavior:
$P$ is expanded and $Q$ is contracted.

Now start with a nice domain $D$ (say, a disk in the plane $P$
times a disk of the same size in the plane $Q$)
for the support for the first perturbation.
By the uniform hyperbolicity of the splitting $P\oplus Q$,
the images $Df^i(y) (D)$ get quickly very deformed.
Nesting means that the effective support (that is, the support minus the buffer)
of each perturbation is the $f$-image of the previous one.
But the perturbations must also be $C^1$-small, so it becomes hard to rotate $P$ and $Q$
by a fixed angle.
This is the main obstacle for the use of nested rotations in case~IV.
(And there is another, more subtle, obstacle: if the support is a box $D$ as above,
it is unclear how to rotate by a constant angle while keeping a small buffer.
That is because the rotations we want arise
from the linear flow generated by the hamiltonian $H = p_2 q_1 - p_1 q_2$,
and since this quadratic form has no definite sign, it cannot
be flattened outside of $D$ like in the proof of Lemma~5.5 from \cite{BV Annals}.)

Finally, let us explain the main idea.
We abandon nested rotations and buffers.

Start with a small box neighborhood $D$ of $y$ as above,
and consider the field of directions $v_0$ spanned by the
constant vector field $\vetor{p_1}$.
Due to the hyperbolicity of the splitting $P \oplus Q$,
there is a strictly invariant cone around the expanding space $P$.
(Of course the cone field will be also invariant
under a perturbation $g$ of $f$.)
Given two directions in the cone, we project them on $P$ along $Q$,
and measure the obtained oriented angle;
let us call this the $p_1 p_2$-angle between the two directions.
Notice $f$ preserves $p_1 p_2$-angles.

Take a symplectic diffeomorphism $h_0\colon \R^4 \to \R^4$
that is $C^1$-close to the identity, is the identity outside of $D$,
and does \emph{not} leave the field $v_0$ invariant.
The perturbation of $f$ in the neighborhood of $y$ is $g = f \circ h_0$.
\emph{Any} $h_0$ with those properties works, and will be the base for the rest of the construction.

The perturbation around $f(y)$ must be supported on $f(D) = g(D)$.
On $g(D)$ we have a field of directions $v_1$ that is the image of
the constant field $v_0$ by $Dg$.

Then take many disjoint boxes $D_i \subset g(D)$ covering
all of $g(D)$, except for a set of very small measure.
The boxes are taken so small so that the variation of the field $v_1$
on each of them is very small.
So let us pretend that the linefield $v_1$ is constant in each $D_i$.
(See Figure~\ref{f.bla}.)

\begin{figure}[htb]
\begin{center}
\psfrag{p1p2}{{$p_1 p_2$}}
\psfrag{q1q2}{{$q_1 q_2$}}
\psfrag{hor}{{$\vetor{p_1}$}}
\psfrag{g}{{$g$}}
\includegraphics[width= {\textwidth}]{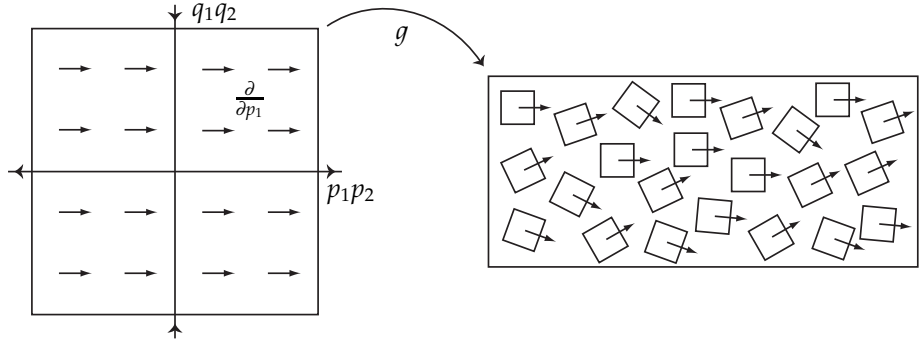}
\caption{First step of the perturbation: disjoint boxes $D_i$ cover
most of the image of the box $D$.}
\label{f.bla}
\end{center}
\end{figure}

Each $D_i$ is a shrunk copy of $D$: there is an affine map $T_i \colon D_i \to D$
that takes $v_1$ to $v_0$.
Let $h_1$ be a map that equals $T_i^{-1} \circ h_0 \circ T_i$ on each $D_i$,
and the identity outside of $\bigcup D_i$.
With the necessary precautions, $h_1$ becomes symplectic and $C^1$-close to the identity.
Now define the perturbation $g$ on $g(D)$ as equal to $f\circ h_1$.

Let $X_0$ and $X_1$ be the $p_1p_2$-angles turned
in the first and second steps, respectively.
That is, for $x\in D$, let $X_0 (x)$ be the (oriented)
$p_1p_2$-angle between $v_0$ and $Dh(x) \cdot v_0$,
and let $X_1(x)$ be the (oriented) $p_1 p_2$-angle between
$v_1 (g(x))$ and ${Dh_1( g(x)) \cdot v_1 (g(x))}$.
Notice that $X_0$ is not identically zero by construction.
Since the linefield $v_0$ is $Df$-invariant,
the $p_1 p_2$-angle between $v_0$ and ${Dg(x) \cdot v_0}$ equals $X_0$.
Also, the $p_1 p_2$-angle between $v_0$ and ${Dg^2(x) \cdot v_0}$
is $X_0 + X_1$.

Let us re-scale Lebesgue measure $\mu$ so that $\mu(D)=1$.
So $X_0$ and $X_1$ can be thought as a \emph{random variables}.
The key observation is that
they are \emph{independent and identically distributed}.

We continue in an analogous way: in the next step we cover each $g(D_i)$
by still smaller boxes $D_{ij}$, each of them so that the
field of directions $v_2 = Dg \cdot v_1$ is almost constant.
In each $D_{ij}$ the perturbation $g$ is modeled on the map $h_0$ as described above.
Continuing in this way, we obtain sequences of maps $g \colon  g^i(D) \to g^{i+1}(D)$ and
i.i.d.\ random variables $X_i$ such that
$Dg^n$ turns the vector $\vetor{p_1}$ by an angle $S_n = X_0 + \cdots + X_{n-1}$ in the
$p_1 p_2$-plane.

This construction gives a \emph{random walk} $S_n$ on the real line.
The probability that a path of the random walk says for all time confined in some compact interval is zero.
Moreover, the steps $X_n$ are small.
Thus for almost every orbit
there is a first time the angle $S_n$ becomes close to $\pm \pi/2$.
Then we modify the construction: we perturb one last time to make the angle exactly $\pm \pi/2$,
and then perturb no more along that orbit.
In other words, the angles behave as a random walk with absorbing barriers around $\pm \pi/2$.

The conclusion is that in some large but finite time,
for the majority of orbits of $g$, the images of the vector $\vetor{p_1}$ in $E$
eventually have $p_1 p_2$-angle equal to $\pm \pi/2$,
and this means the $1$-dimensional space $E$ has been sent into the $3$-dimensional space $p_2q_1q_2$, that is, $F$.
So the perturbation $g$ has the desired properties, and case~IV is settled.

\subsection[Organization of the rest of the paper]{Organization of the Rest of the Paper}

As explained in \S\ref{ss.preview},
the proof of Theorem~\ref{t.continuity} splits into a local and a global part.
The local part of the proof takes Sections~\ref{s.flex 0} to \ref{s.flex II}.

In Section~\ref{s.flex 0} we introduce the ad hoc concept of flexibility,
which summarizes the properties our perturbations need to have.
(Namely, to make two bundles of a splitting collide for a set of points of large measure.)
Flexibility replaces the notion of realizable sequences from \cite{BV Annals},
which is not sufficient for our purposes.

In Section~\ref{s.flex I} we show that lack of dominance
can be classified in four types.
The proof consists of symplectic linear algebra.

In Section~\ref{s.flex II} we show that each of the four cases has the desired flexibility property.
The fourth case is dealt with in \S\ref{ss.IV},
where the probabilistic method for the construction of the perturbations is
explained in detail.

In Section~\ref{s.flex III} we complete the proof of Theorem~\ref{t.continuity}
giving its global part.
This part is essentially contained in \cite{BV Annals},
but we will present a simplified proof.

In the final Section~\ref{s.PH proof} we prove Theorem~\ref{t.global PH}.

%%%%%%%%%%%%%%%%%%%%%%%%%%%%%%%%%%%%%%%%%%%%%%%%%%%%%%%%
\section{Flexibility}\label{s.flex 0}
%%%%%%%%%%%%%%%%%%%%%%%%%%%%%%%%%%%%%%%%%%%%%%%%%%%%%%%%

%%%%%%%%%%%%%%%%%%%%%%%%%%%%%%%%%%%%%%%%%%%
\subsection[Split sequences on $\R^{2N}$ and the flexibility property]{Split Sequences on $\R^{2N}$ and the Flexibility Property}

Let $N$ be fixed.
We consider $\R^{2N} = \{(p_1,\ldots,p_N, q_1,\ldots, q_N)\}$
endowed with the standard symplectic form $\omega = \sum_i dp_i \wedge dq_i$,
and with Lebesgue measure~$\mu$.
The euclidian norm on $\R^{2N}$ and also the induced operator norm are indicated by $\|\mathord{\cdot}\|$.

\smallskip

A \emph{split sequence}
of length $n$ is composed of the following objects:
\begin{itemize}
\item a (finite) sequence of linear $\omega$-preserving maps
$$
\R^{2N} \xrightarrow{A_0} \R^{2N} \xrightarrow{A_1} \cdots \xrightarrow{A_{n-1}} \R^{2N}
$$

\item non-trivial linear  splittings $\R^{2N} = E^1_i \oplus E^2_i$, for $0 \le i \le n$,
that are invariant in the sense that
$A_i \cdot E^*_i = E^*_{i+1}$, $* =1$, $2$.
\end{itemize}
The constant $p = \dim E_i$ is called the \emph{index} of the split sequence.

Let $\eps>0$ and $\kappa>0$.
We say that a split sequence
$\{A_i, E_i^{1,2}\}$ of length $n$
is \emph{$(\eps, \kappa)$-flexible} if for every $\gamma>0$,
there exists a bounded open neighborhood $U$ of $0$ in $\R^{2N}$ and
there exist symplectomorphisms $g_0$, \ldots, $g_{n-1}\colon  \R^{2N} \to \R^{2N}$
such that:
\begin{enumerate}
\item $g_i$ equals $A_i$ outside $A_{i-1} \ccircs A_0(U)$ for each $i=0,\ldots,n-1$;\footnote{$\ccircs = \circ \cdots \circ$}
\item $\| D(A_i^{-1} \circ g_i) - \Id\| < \eps$ uniformly, for each $i=0,\ldots,n-1$;
\item there is a set $G \subset U$ such that $\mu(G) > (1-\kappa) \mu(U)$ and
\footnote{The angle $\angle(E,F) \in [0, \pi/2]$ between non-zero linear subspaces
$E$, $F \subset \R^{2N}$ is defined as the minimum of the angles $\angle(v,w)$ over non-zero vectors $v\in E$, $w\in F$.}
$$
\angle \left( D(g_{n-1} \ccircs g_0) (x) \cdot E^1_0, \, E^2_n \right) < \gamma
\quad \forall x \in G \, .
$$
\end{enumerate}

Informally, the linear maps $A_i$ can be (non-linearly) perturbed so that
the space $E^1$ is sent after time $n$ very close to the space $E^2$,
for most points in the support of the perturbation.

\begin{rem}
Flexibility appears implicitly in~\cite{BV Annals}. 
The main difference is that in all situations considered there,
the map $x \mapsto D(g_{n-1} \ccircs g_0)(x)$ is
approximately (to error $\gamma$) constant on $G$.
This will not be always the case here.
\end{rem}

Loosely speaking, the next lemma says that flexibility is preserved by
changes of coordinates.

\begin{lemma}\label{l.flex change coord}
Consider two split sequences of the same length:
$$
\left\{ E_i^1 \oplus E_i^2 \xrightarrow{A_i} E_{i+1}^1 \oplus E_{i+1}^2 \right\}_{0\le i <n}
\quad \text{and} \quad
\left\{ F_i^1 \oplus F_i^2 \xrightarrow{B_i} F_{i+1}^1 \oplus F_{i+1}^2 \right\}_{0\le i <n}
$$
Assume that there are linear symplectic maps
$C_0$, \ldots, $C_n\colon  \R^{2N} \to \R^{2N}$ such that
${C_{i+1} \circ A_i} = {B_i \circ C_i}$ and $C_i (E_i^*) = F_i^*$.
Let $K = \max_i \|C_i\|$.
If the split sequence $\{A_i, E^{1,2}_i\}$ is $(\eps, \kappa)$-flexible then
$\{B_i, F^{1,2}_i\}$ is $(K^2\eps, \kappa)$-flexible.
\end{lemma}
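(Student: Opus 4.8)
The idea is that flexibility ought to be invariant under linear symplectic changes of coordinates, so I would simply transport the perturbations $g_i$ of the $A$-sequence through the intertwiners $C_i$. Fix $\gamma>0$ (we may assume $\gamma$ small, since subspace angles never exceed $\pi/2$). Apply the $(\eps,\kappa)$-flexibility of $\{A_i,E_i^{1,2}\}$ with an auxiliary parameter $\gamma'>0$ to be pinned down at the end, producing a bounded open $V\ni 0$, symplectomorphisms $g_0,\dots,g_{n-1}$, and a set $G\subset V$ with $\mu(G)>(1-\kappa)\mu(V)$ as in the definition. Then I would set $U:=C_0(V)$ and $\tilde g_i:=C_{i+1}\circ g_i\circ C_i^{-1}$ for $0\le i<n$; these are symplectomorphisms of $\R^{2N}$, and $U$ is again a bounded open neighbourhood of $0$ with $\mu(U)=\mu(V)$ because the symplectic map $C_0$ preserves Lebesgue measure. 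The verification rests entirely on the bookkeeping identities obtained by iterating $C_{i+1}\circ A_i=B_i\circ C_i$, namely
\[
C_i\circ(A_{i-1}\ccircs A_0)=(B_{i-1}\ccircs B_0)\circ C_0,\qquad B_i^{-1}\circ C_{i+1}=C_i\circ A_i^{-1},\qquad \tilde g_{n-1}\ccircs\tilde g_0=C_n\circ(g_{n-1}\ccircs g_0)\circ C_0^{-1}.
\]

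Conditions (1) and (2) then follow almost formally. For (1): if $x\notin(B_{i-1}\ccircs B_0)(U)$, the first identity shows $C_i^{-1}x$ lies outside $(A_{i-1}\ccircs A_0)(V)$, where $g_i=A_i$, hence $\tilde g_i(x)=C_{i+1}A_iC_i^{-1}x=B_ix$. For (2): the second identity gives $B_i^{-1}\circ\tilde g_i=C_i\circ(A_i^{-1}\circ g_i)\circ C_i^{-1}$, so $D(B_i^{-1}\circ\tilde g_i)-\Id=C_i\,[\,D(A_i^{-1}\circ g_i)-\Id\,]\,C_i^{-1}$, whose operator norm is at most $\|C_i\|\,\|C_i^{-1}\|\,\eps$. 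Here I would use the elementary symplectic fact $\|C_i^{-1}\|=\|C_i\|$ (immediate from $C_i^{-1}=-J C_i^{\mathrm T}J$ with $J$ orthogonal), which upgrades this to $\|C_i\|^2\eps\le K^2\eps$ — this is where the quadratic loss first appears.

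For condition (3) I would take the good set to be $G':=C_0(G)\subset U$, so that $\mu(G')=\mu(G)>(1-\kappa)\mu(U)$. For $x=C_0x'$ with $x'\in G$, the third identity yields $D(\tilde g_{n-1}\ccircs\tilde g_0)(x)\cdot F_0^1=C_n\big(D(g_{n-1}\ccircs g_0)(x')\cdot E_0^1\big)$ and $F_n^2=C_nE_n^2$, so what must be controlled is $\angle(C_nW,C_nE_n^2)$, where $W:=D(g_{n-1}\ccircs g_0)(x')\cdot E_0^1$ satisfies $\angle(W,E_n^2)<\gamma'$. This is the one step that is not pure bookkeeping, and I expect it to be the main (though soft) obstacle: linear isomorphisms do not preserve angles, and one must quantify the distortion. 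The estimate I have in mind is the standard one — if unit vectors $w\in W$, $e\in E_n^2$ realise $\angle(w,e)<\gamma'$, then $\|C_nw-C_ne\|\le\|C_n\|\gamma'$ while $\|C_nw\|,\|C_ne\|\ge\m(C_n)=\|C_n^{-1}\|^{-1}=\|C_n\|^{-1}$, so after normalising one gets $\angle(C_nW,C_nE_n^2)\le\angle(C_nw,C_ne)\le c\,\|C_n\|^2\gamma'\le cK^2\gamma'$ with $c$ an absolute constant. Choosing $\gamma':=\gamma/(cK^2)$ then closes condition (3), and one concludes that $\{B_i,F_i^{1,2}\}$ is $(K^2\eps,\kappa)$-flexible. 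It is reassuring that the symplecticity of the $C_i$ — through $\|C_i^{-1}\|=\|C_i\|$ — is precisely what makes both the $C^1$-size of the perturbation (condition (2)) and the angle distortion (condition (3)) come out with the same factor $K^2$.
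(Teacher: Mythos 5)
Your proof is correct and follows essentially the same route as the paper: conjugate the $g_i$ by the $C_i$, use $\|C_i^{-1}\|=\|C_i\|$ for symplectic matrices to get the $K^2$ loss in condition (2), and absorb the angle distortion (also of order $K^2$) into the free choice of $\gamma'$ in condition (3). The paper cites Lemma 2.7 of \cite{BV Annals} for the angle estimate where you argue it directly, but this is a cosmetic difference.
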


\begin{proof}
The proof is straightforward, but let us give it anyway.
Given $\gamma>0$, let $U$, $g_i$, and $G$ be given by the $(\eps, \kappa)$-flexibility of
the sequence $\{A_i, E^{1,2}_i\}$.
Define $\hat U = C_0(U)$, $\hat{g}_i = C_{i+1} \circ g_i \circ C_i^{-1}$, and $\hat{G} = C_0(G)$.
Let us check that these objects satisfy conditions 1, 2, and 3
in the definition of $(K^2\eps, \kappa)$-flexibility.
The first one is obvious.
Since the linear map $C_i$ is symplectic, $\|C_i\| = \|C_i^{-1}\|$ and so
$$
\big\| D(B_i^{-1} \circ \hat{g}_i) - \Id \big\| \le
\big\| C_i \circ \big(D(A_i^{-1} \circ g_i) - \Id\big) \circ C_i^{-1} \big\| < K^2\eps,
$$
which is condition 2.
Given $y \in \hat{G}$, let $x = C_0^{-1}(y)$.
The spaces
$D(\hat{g}_{n-1} \ccircs \hat{g}_0) (y) \cdot F^1_0$ and $F^2_n$
are the respective images by $C_n$ of the spaces
$D(g_{n-1} \ccircs g_0) (x) \cdot E^1_0$ and $E^2_n$.
The angle between the latter pair of spaces is less than $\gamma$,
therefore the angle formed by the earlier pair is at most $K' \gamma$,
where $K'= K'(K)$.
(In fact, $K'= \tfrac{\pi}{2} K^2$ works: see \cite[Lemma~2.7]{BV Annals}.)
Since $\gamma>0$ was arbitrarily chosen, condition 3 is verified.
\end{proof}

The following lemma is trivial:

\begin{lemma}\label{l.short seq}
Let $\left\{ E_i^1 \oplus E_i^2 \xrightarrow{A_i} E_{i+1}^1 \oplus E_{i+1}^2 \right\}_{0\le i <n}$
be a split sequence.
If there are $0\le i_0 < i_1 \le n$ such that the shorter split sequence
$\left\{ E_i^1 \oplus E_i^2 \xrightarrow{A_i} E_{i+1}^1 \oplus E_{i+1}^2 \right\}_{i_0\le i < i_1}$
is $(\eps,\kappa)$-flexible, then so is the whole split sequence of length $n$.
\end{lemma}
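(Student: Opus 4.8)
The plan is to reduce the claim about the length-$n$ split sequence to the flexibility of the sub-sequence by simply copying its perturbation data and extending by the linear maps elsewhere. First I would fix $\gamma>0$ and invoke the $(\eps,\kappa)$-flexibility of the shorter sequence $\{A_i, E^{1,2}_i\}_{i_0 \le i < i_1}$, which furnishes a bounded open neighborhood $U'$ of $0$, symplectomorphisms $g_{i_0}, \ldots, g_{i_1-1}$, and a set $G' \subset U'$ with $\mu(G') > (1-\kappa)\mu(U')$ and $\angle(D(g_{i_1-1} \ccircs g_{i_0})(x)\cdot E^1_{i_0},\, E^2_{i_1}) < \gamma$ for all $x \in G'$. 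The point is that the sub-sequence ``lives'' on the iterates of $\R^{2N}$ starting at index $i_0$, so I must transport these objects back to the start of the full sequence.

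Next I would set $U = (A_{i_0-1} \ccircs A_0)^{-1}(U')$ (interpreting the composition as the identity when $i_0 = 0$), which is again a bounded open neighborhood of $0$ because each $A_j$ is a linear isomorphism. For the full sequence I define the symplectomorphisms by $g_i = A_i$ for $0 \le i < i_0$ and for $i_1 \le i < n$, and $g_i$ equal to the given perturbation for $i_0 \le i < i_1$. Then $g_i$ equals $A_i$ outside $A_{i-1} \ccircs A_0(U)$: for the ``trivial'' indices this is automatic since $g_i = A_i$ everywhere; for $i_0 \le i < i_1$ it follows because $A_{i-1} \ccircs A_0(U) = A_{i-1} \ccircs A_{i_0}(U')$ (using $A_{i_0-1}\ccircs A_0(U) = U'$ and invariance), which is exactly the set outside of which the given $g_i$ agrees with $A_i$. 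Condition 2, $\|D(A_i^{-1}\circ g_i) - \Id\| < \eps$, holds for the trivial indices because there $A_i^{-1}\circ g_i = \Id$ and $\eps>0$, and for $i_0 \le i < i_1$ it is the hypothesis on the sub-sequence.

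Finally, for condition 3 I would take $G = (A_{i_0-1}\ccircs A_0)^{-1}(G') \subset U$. Since $A_{i_0-1}\ccircs A_0$ is a linear symplectomorphism it is $\mu$-preserving, so $\mu(G)/\mu(U) = \mu(G')/\mu(U') > 1-\kappa$. For $x \in G$, write $x' = A_{i_0-1}\ccircs A_0(x) \in G'$. Because $g_i = A_i$ for $i < i_0$, we have $(g_{i_0-1}\ccircs g_0)(x) = x'$, hence $D(g_{i_1-1}\ccircs g_0)(x) = D(g_{i_1-1}\ccircs g_{i_0})(x') \circ (A_{i_0-1}\ccircs A_0)$; and because $g_i = A_i$ for $i \ge i_1$, $D(g_{n-1}\ccircs g_0)(x) = (A_{n-1}\ccircs A_{i_1}) \circ D(g_{i_1-1}\ccircs g_{i_0})(x')\circ(A_{i_0-1}\ccircs A_0)$. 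Applying this to $E^1_0$ and using the invariances $A_{i_0-1}\ccircs A_0 \cdot E^1_0 = E^1_{i_0}$ and $A_{n-1}\ccircs A_{i_1}\cdot E^2_{i_1} = E^2_n$, the image $D(g_{n-1}\ccircs g_0)(x)\cdot E^1_0$ equals $(A_{n-1}\ccircs A_{i_1})$ applied to $D(g_{i_1-1}\ccircs g_{i_0})(x')\cdot E^1_{i_0}$, which makes angle less than $\gamma$ with $E^2_{i_1}$. Since the linear map $A_{n-1}\ccircs A_{i_1}$ carries this pair to the pair $(D(g_{n-1}\ccircs g_0)(x)\cdot E^1_0,\ E^2_n)$, the resulting angle is at most $K'\gamma$ for a constant $K'$ depending only on the $A_j$ (again by \cite[Lemma~2.7]{BV Annals}), and as $\gamma>0$ was arbitrary this gives condition 3. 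There is no real obstacle here — the lemma is genuinely trivial — the only thing to be careful about is the bookkeeping when $i_0 = 0$ or $i_1 = n$, where the relevant compositions degenerate to the identity, and noting that $\eps>0$ gives condition 2 for free on the untouched blocks.
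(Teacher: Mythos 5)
Your argument is correct. The paper states Lemma~\ref{l.short seq} without proof (calling it trivial), and what you wrote out is precisely the intended argument: pad the untouched indices with the linear maps $A_i$ themselves, transport $U'$ and $G'$ back to index $0$ by the measure-preserving linear conjugacy $A_{i_0-1}\ccircs A_0$, and absorb the angle-distortion constant $K'$ coming from $A_{n-1}\ccircs A_{i_1}$ by replacing $\gamma$ with $\gamma/K'$ at the outset, exactly as in the proof of Lemma~\ref{l.flex change coord}.
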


The next lemma says that the domain $U$ in the definition of flexibility can be chosen arbitrarily.

\begin{lemma} \label{l.choose U}
Assume that 
$\{A_i, E_i^{1,2}\}$
is a $(\eps, \kappa/2)$-flexible split sequence of length $n$.
Then for \emph{any} non-empty bounded open set $U \subset \R^{2N}$ 
and any $\gamma>0$, there exist maps $g_0$, \ldots, $g_{n-1}\colon  \R^{2N} \to \R^{2N}$
satisfying the three conditions in the definition of $(\eps,\kappa)$-flexibility.
\end{lemma}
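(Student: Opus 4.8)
The plan is to reduce the case of an arbitrary domain $U$ to the case of the special domain produced by the flexibility hypothesis, using a scaling/translation argument together with Lemma~\ref{l.short seq}. First I would observe that the statement of flexibility already allows us to shrink the domain freely: if $\{A_i, E_i^{1,2}\}$ is $(\eps,\kappa/2)$-flexible, then for the given $\gamma$ there is \emph{some} bounded open neighborhood $U_0$ of $0$ and maps $g_i^0$ supported on the tower over $U_0$ with the required properties on a set $G_0\subset U_0$ of relative measure $>1-\kappa/2$. The issue is that $U_0$ is dictated by the construction, while we are handed an arbitrary nonempty bounded open $U$. The key point is that conditions 1--3 are all invariant under a homothety (composed with a translation) of the ambient $\R^{2N}$, because the maps $A_i$ are linear, angles between subspaces are scale-invariant, and Lebesgue measure ratios $\mu(G)/\mu(U)$ are scale-invariant.

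The key steps, in order, would be: (i) Since $U$ is open and nonempty, pick a point $x_*\in U$ and a small ball $B_\rho(x_*)\subset U$. Choose a linear symplectic map of the form $x\mapsto \lambda x$ combined with a translation — more precisely, use the affine conjugacy $\Phi(x)=\lambda x + x_*$ with $\lambda>0$ small, so that $\Phi(U_0)\subset B_\rho(x_*)\subset U$ after rescaling $U_0$ to have small enough diameter. (Dilations by $\lambda$ are not symplectic for $\lambda\ne 1$, so here I would instead conjugate the whole split sequence by $C_i = \lambda\,\Id$ and appeal to Lemma~\ref{l.flex change coord}: this multiplies $\eps$ by $\lambda^2\cdot\lambda^{-2}=1$ since $\|C_i\|\|C_i^{-1}\|=1$, wait — actually $C_i=\lambda\Id$ is not symplectic, so the correct move is to use a symplectic scaling $(p,q)\mapsto(\lambda p, \lambda^{-1}q)$, which \emph{is} symplectic and whose norm is $\max(\lambda,\lambda^{-1})$; conjugating by this and its inverse at the two ends may distort $\eps$, which is a genuine subtlety — see obstacle below.) (ii) Conjugate the maps $g_i^0$ by $\Phi$ (resp. by the symplectic scaling) to obtain maps $g_i$ supported on the tower $A_{i-1}\ccircs A_0(U')$ for a small $U'\subset U$. (iii) Extend these maps by the identity composed appropriately with $A_i$ on the rest of $\R^{2N}$, so that $g_i = A_i$ outside $A_{i-1}\ccircs A_0(U)$; this is legitimate because the support is contained in the smaller tower. (iv) Finally, apply Lemma~\ref{l.short seq} with $U$ in place of $U'$: the flexibility witnessed on the sub-domain $U'$ suffices, since the definition only requires the \emph{existence} of a good set $G$ of relative measure $>1-\kappa$ inside the stated domain — but here I must be careful, because $\mu(G)/\mu(U')>1-\kappa/2$ does \emph{not} give $\mu(G)/\mu(U)>1-\kappa$ when $U'\subsetneq U$. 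So the actual mechanism must be: tile (most of) $U$ by disjoint affine copies $U'_k$ of $U_0$ (using that $U$ is open, so up to measure zero it is a countable disjoint union of such copies), run the flexibility construction independently on each $U'_k$, and take $G=\bigsqcup_k G_k$; then $\mu(G)/\mu(U)>1-\kappa/2>1-\kappa$, while the $C^1$-size estimate in condition~2 is unaffected because the perturbations on distinct $U'_k$ have disjoint supports.

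I expect the main obstacle to be \textbf{controlling the $C^1$-size of the perturbation under the rescaling}, i.e. keeping condition~2 with the \emph{same} $\eps$ (not $K^2\eps$) rather than a worse constant. A genuine homothety $x\mapsto\lambda x$ would be $C^1$-size-neutral but is not symplectic; the honest symplectic substitute $(p,q)\mapsto(\lambda p,\lambda^{-1}q)$ has operator norm $\lambda^{-1}$ (for $\lambda<1$), which via Lemma~\ref{l.flex change coord} would blow the error up to $\lambda^{-2}\eps$. The resolution — and the crux of the argument — is that one should \emph{not} conjugate by an anisotropic scaling at all, but rather observe that the construction underlying flexibility is itself scale-covariant: if $(U_0,\{g_i^0\},G_0)$ witnesses $(\eps,\kappa/2)$-flexibility for parameter $\gamma$, then for every $\lambda>0$ the triple $(\lambda U_0,\{x\mapsto\lambda\,g_i^0(\lambda^{-1}x)\},\lambda G_0)$ witnesses it for the \emph{same} $\eps$ and the same $\gamma$, because the rescaled maps have exactly the same derivatives, the angle condition involves only directions (hence is scale-free), and measure ratios are preserved. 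Isotropic rescaling $x\mapsto\lambda x$ commutes with the linear symplectic $A_i$ and does not affect whether a diffeomorphism is symplectic only up to the conformal factor — but in fact $x \mapsto \lambda x$ pulls $\omega$ back to $\lambda^2\omega$, so $g_i^0$ symplectic for $\omega$ implies $x\mapsto\lambda g_i^0(\lambda^{-1}x)$ symplectic for $\omega$ as well (the factors $\lambda^2$ cancel). Thus isotropic rescaling \emph{does} preserve the symplectic condition for our specific maps even though it is not itself a symplectomorphism, and this is precisely what makes the reduction work with no loss in $\eps$. Once this observation is in place, steps (i)--(iv) above — tile $U$ up to measure zero by small rescaled-and-translated copies of $U_0$, run the construction on each, take disjoint unions — complete the proof, with the only remaining bookkeeping being that translations are symplectomorphisms (clear) and that the supports over the tower stay disjoint across tiles (immediate, since $A_{i-1}\ccircs A_0$ is a bijection).
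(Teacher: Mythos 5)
Your proposal is correct and follows essentially the same route as the paper's proof: take the domain $U_0$ supplied by the $(\eps,\kappa/2)$-flexibility hypothesis, cover $U$ up to a small fraction of its measure by disjoint homothetic (translated, isotropically scaled) copies of $U_0$ via Vitali, conjugate the perturbations $\hat g_i$ through the tower by the corresponding homothety at each level, and observe that this conjugation preserves symplecticity, the $C^1$-bound, angles, and measure ratios, so the good sets assemble to a set of relative measure $>1-\kappa$. The only imprecision is your appeal to ``$U$ open, hence a countable disjoint union of copies up to measure zero''---one actually needs the Vitali covering theorem as the paper invokes---but this is a bookkeeping detail rather than a gap.
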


\begin{proof}
Given $\gamma>0$,
the $(\eps, \kappa/2)$-flexibility of the splitting sequence $\{A_i, E_i^*\}$
provides a set $\hat{U}$ and symplectomorphisms $\hat{g}_0$, \ldots, $\hat{g}_{n-1}$
with the following properties:
(1) each $\hat{g}_i$ equals $A_i$ outside $A_{i-1}\ccircs A_0(U)$;
(2) the derivative of $A_i^{-1} \circ \hat{g}_i$ is $\eps$-close to the identity;
and (3) the image of $E^1_0$ by the derivative of $\hat{g}_{n-1} \ccircs \hat{g}_0$ is $\gamma$-close to $E_n^2$
for all points in a set $\hat{G}$ with measure at least $(1-\kappa/2)\mu(\hat{U})$.

Now fix some non-empty bounded open set $U$.
By the Vitali Covering Lemma, we can find a finite family of disjoint sets $\hat{U}_j \subset U$
such that the measure of $U \setminus \bigsqcup_j \hat{U}_j$ is less than $\frac{\kappa}{2}\mu(U)$,
and each $\hat{U}_j$ is equal to $T_j(\hat{U})$,
where $T_j \colon  \R^{2N} \to \R^{2N}$ is a homothety composed with a translation.

For $i=0,\ldots,n-1$, let
$$
T_{j,i} = A_{i-1} \ccircs A_0 \circ T_j \circ (A_{i-1} \ccircs A_0)^{-1} .
$$
Of course, $T_{j,i}$ is a homothety composed with a translation.
Define $g_i \colon  \R^{2N} \to \R^{2N}$
as equal to $A_i$ outside $\bigsqcup_j A_{i-1} \ccircs A_0 (\hat{U}_j)$
and equal to
$$
A_i \circ T_{j,i} \circ A_i^{-1} \circ \hat{g}_i \circ T_{j,i}^{-1}
$$
inside each $A_{i-1} \ccircs A_0(\hat{U}_j)$.
Let us see that these maps satisfy the three conditions in the definition of $(\eps,\kappa)$-flexibility.
The first one is obvious.
We have $D(A_i^{-1} \circ g_i) (x) = D(A_i^{-1} \circ \hat{g}_i) (T_{j,i}^{-1}(x))$,
on $A_{i-1} \ccircs A_0(\hat{U}_j)$,
so the second condition holds (and $g_i$ is symplectic).
Finally, let $G = \bigsqcup_j T_j(\hat{G})$
Then $\mu(G) > (1-\kappa/2)^2 \mu(U)$.
Moreover,
the image of $E^1_0$ by the derivative of
$$
g_{n-1} \ccircs g_0 = T_{j,n} \circ \hat{g}_n \ccircs \hat{g}_0 \circ T_{j,0}^{-1}
$$
is $\gamma$-close to $E_n^2$ for all points in $T_j(\hat G) \subset G$.
This proves condition~3.
\end{proof}

%%%%%%%%%%%%%%%%%%%%%%%%%%%%%%%%%%%%%
\subsection[Flexibility on the tangent bundle]{Flexibility on the Tangent Bundle}\label{ss.flex TM}

Let $M$ be a fixed closed symplectic manifold of dimension $2N$.
By Darboux' Theorem, there exists an atlas $\{\phi_i\colon  V_i \to \R^{2N}\}$
formed by charts that take the symplectic form on $M$
to the standard symplectic form on $\R^{2N}$.
Let $K_\cA > 1$ be such that
such an atlas can be chosen with $\|D\phi_i\|$, $\|D\phi_i^{-1}\| < K_\cA$ everywhere.
\emph{Fix $K_\cA$ once and for all, and let $\cA$ be the maximal symplectic atlas
obeying the bounds above.}
That is, $\cA$ is the set of all symplectic maps
$\phi \colon  V \to \R^{2N}$, where $V \subset M$ is open,
such that $\|D\phi (x) \|< K_\cA$ for all $x\in V$ and
$\|D\phi^{-1}(y)\| < K_\cA$ for all $y \in \phi(V)$.

Choose a finite atlas $\cA_0 \subset \cA$.
For each $z \in M$, choose and fix some chart $\phi_z\colon  V_z \to \R^{2N}$ in $\cA_0$
with $V_z \ni z$.
For any $x\in V_z$, we define a linear isomorphism
\begin{equation}\label{e.i}
\mathrm{i}_x^z \colon  T_z M \to T_x M \quad \text{by} \quad
\mathrm{i}_x^z = [D\phi_z (x)]^{-1} \circ D\phi_z(z) \, .
\end{equation}

\smallskip

Now we extend the notions of split sequences and
flexibility to the tangent bundle $TM$.

Fixing $f\in \Diff_\omega^1(M)$ and a non-periodic point $z \in M$,
a \emph{split sequence on $TM$} is composed of the objects:
\begin{itemize}
\item the (finite) sequence of linear maps $Df(f^i z)$, where $0 \le i < n$;
\item non-trivial splittings $T_{f^i z} M = E_i^1 \oplus E_i^2$, for $0 \le i \le n$,
invariant in the sense that $Df (f^i z) \cdot E_i^* = E_{i+1}^*$.
\end{itemize}

Using charts, a split sequence on $TM$ induces a split sequence on $\R^{2N}$.
More precisely, for each $i =0,\ldots,n$, let
$\phi_i$ be a chart in the atlas $\cA$ whose domain contains $f^i z$.
Then we consider the split sequence on $\R^{2N}$
$$\left\{ \hat{E}_i^1 \oplus \hat{E}_i^2 \xrightarrow{A_i}
\hat{E}_{i+1}^1 \oplus \hat{E}_{i+1}^2 \right\}_{0\le i <n}
\text{where }
A_i = D (\phi_{i+1} \circ f \circ \phi_i^{-1})(\phi_i(f^i z)) , \
\hat{E}^*_i = D\phi_i(f^i z) \cdot E^*_i \, .
$$

A split sequence on $TM$ is called \emph{$(\eps,\kappa)$-flexible}
if so is a induced split sequence on $\R^{2N}$, for \emph{some} choice of the charts.

Given a split sequence on $TM$, we can find special perturbations of
the diffeomorphism $f$, as described in the lemma below:

\begin{lemma}\label{l.realization}
Given $f \in \Diff^1_\omega(M)$ and a neighborhood $\cV$ of $f$ in $\Diff^1_\omega(M)$,
there exists $\eps>0$ such that the following holds:
Let $z\in M$ be a non-periodic point for $f$.
Assume that
$
Df(f^i z)\colon E_i^1 \oplus E_i^2 \to E_{i+1}^1 \oplus E_{i+1}^2 \ (\ 0\le i <n)
$
is a $(\eps, \kappa)$-flexible split sequence.

Then for every $\gamma>0$ there exists $r>0$ with
the following properties:
First, the closed ball $\bar{B}_r(z)$ is disjoint from its $n$ first iterates.
Second, given any non-empty open set $U \subset B_r(z)$,
there exists $g \in \cV$
with the following properties:
\begin{enumerate}
\item $g$ equals $f$ outside $\bigsqcup_{i=0}^{n-1} f^i(U)$;
\item there is a set $G \subset U$ with $\mu(G) > (1-\kappa)\mu(U)$
such that
$$
\text{for every $x\in G$,} \quad
\angle \left( Dg^n(x) \; \mathrm{i}_x^z \cdot E^1_0, \,
              \mathrm{i}_{g^n x}^{f^n z} \cdot E^2_n \right)< \gamma \, .
$$
\end{enumerate}
\end{lemma}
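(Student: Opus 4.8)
The plan is to transfer the abstract flexibility property on $\R^{2N}$, which is scale-invariant by Lemma~\ref{l.choose U}, to a perturbation on $M$ supported on a tower of iterates of a small ball around $z$, and to control the $C^1$-size of the resulting perturbation using the charts in the atlas $\cA$. First I would fix the chart $\phi_i \in \cA$ near $f^i z$ (for $0 \le i \le n$) that realizes the flexibility of the induced split sequence $\{A_i, \hat E_i^{1,2}\}$ on $\R^{2N}$, with flexibility constant $\eps_0$ to be determined from $\cV$. The key numerical point is that conjugation by a linear map of norm $\le K_\cA$ inflates the $C^1$-distance to the identity by at most $K_\cA^2$ (Lemma~\ref{l.flex change coord}), and that composing $f$ locally with a map $h$ that is $C^1$-$\delta$-close to the identity yields a diffeomorphism $C^1$-$C\delta$-close to $f$ for a constant $C$ depending only on $\|Df\|$, $\|Df^{-1}\|$ and the chart bounds. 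So given $\cV$, I would choose $\eps>0$ small enough that the abstract $(\eps,\kappa)$-flexibility, once pushed to $M$ through charts of norm $\le K_\cA$ and then composed with $f$, lands inside $\cV$.

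Next, since $z$ is non-periodic, the points $z, f(z), \dots, f^n(z)$ are distinct, so I can pick $r>0$ with $\bar B_r(z) \subset V_{\phi_0}$, with $f^i(\bar B_r(z)) \subset V_{\phi_i}$ for $1 \le i \le n$, and with $\bar B_r(z)$ disjoint from its first $n$ iterates; shrinking $r$ further I may also assume that on $\phi_0(B_r(z))$ each transition map $\phi_{i+1}\circ f \circ \phi_i^{-1}$ is so close to its linearization $A_i$ (in $C^1$) that replacing the true nonlinear transition maps by the affine maps $A_i$ in the flexibility construction costs at most a fixed fraction of the error budget — this is the standard "the map is approximately linear on a small enough neighborhood" reduction. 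Now given $\gamma>0$ and a non-empty open $U \subset B_r(z)$, I apply Lemma~\ref{l.choose U} to the split sequence $\{A_i, \hat E_i^{1,2}\}$ with the prescribed domain $\phi_0(U) \subset \R^{2N}$ (using that the sequence is $(\eps,\kappa/2)$-flexible — here one takes the input flexibility constant to be $\kappa/2$, or equivalently reads $\kappa$ in the statement as already halved), and with tolerance $\gamma'$ chosen small compared to $\gamma$ and to the chart bound $K_\cA$. This produces symplectomorphisms $g_0,\dots,g_{n-1}$ of $\R^{2N}$, each equal to $A_i$ outside $A_{i-1}\ccircs A_0(\phi_0(U))$, each with $D(A_i^{-1}\circ g_i)$ within $\eps$ of $\Id$, and a set $\hat G \subset \phi_0(U)$ of relative measure $> 1-\kappa$ on which $D(g_{n-1}\ccircs g_0)$ sends $\hat E_0^1$ within $\gamma'$ of $\hat E_n^2$.

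Then I would assemble $g$ on $M$: on each $f^i(U)$, in the chart $\phi_i$, define $g := \phi_{i+1}^{-1} \circ \big(A_i^{-1}\circ g_i\big) \circ \big(\phi_{i+1}\circ f \circ \phi_i^{-1}\big) \circ \phi_i$ — that is, $g = f$ post-composed in the target chart with the "extra twist" $A_i^{-1}\circ g_i$, which is the identity outside the appropriate iterate of $\phi_0(U)$ — and $g := f$ elsewhere. Because $A_i^{-1}\circ g_i$ is symplectic and equals the identity off a compactly-contained set, $g$ is a well-defined symplectomorphism of $M$ equal to $f$ outside $\bigsqcup_{i=0}^{n-1} f^i(U)$, which is condition~1. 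Its $C^1$-distance to $f$ is bounded by $K_\cA^2\eps$ up to the already-absorbed error from nonlinearity, so $g\in\cV$ by the choice of $\eps$. For condition~2, set $G := \phi_0^{-1}(\hat G)\cap U$, so $\mu(G) > (1-\kappa)\mu(U)$ after rescaling by the bounded Jacobian of $\phi_0$ (which is why one works with $\kappa/2$ above, to have slack for the Jacobian distortion — alternatively absorb this into $r$ being small so $\phi_0$ is nearly affine on $B_r(z)$). Tracking the definitions \eqref{e.i} of $\mathrm{i}_x^z$ and unwinding the chart conjugations, $Dg^n(x)\,\mathrm{i}_x^z$ restricted to $E_0^1$ is, up to the linear identifications $D\phi_0(z)$ and $D\phi_n(f^n z)$ of norm $\le K_\cA$, the map $D(g_{n-1}\ccircs g_0)(\phi_0(x))$ restricted to $\hat E_0^1$; since this lands within $\gamma'$ of $\hat E_n^2$, the corresponding image in $T_{g^n x}M$ lands within $\tfrac{\pi}{2}K_\cA^2\gamma'$ of $\mathrm{i}_{g^n x}^{f^n z}\cdot E_n^2$ by the angle-distortion bound of \cite[Lemma~2.7]{BV Annals}, which is $<\gamma$ by the choice of $\gamma'$.

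The main obstacle I expect is purely bookkeeping rather than conceptual: keeping the three small parameters — the $C^1$-budget $\eps$ (fixed first, from $\cV$), the radius $r$ (absorbing the nonlinearity of $f$ in charts and the non-affineness of $\phi_0$, so it must be chosen after $\eps$ but before $\gamma$), and the inner tolerance $\gamma'$ (chosen last, from $\gamma$ and $K_\cA$) — in the correct logical order, and making sure the measure-distortion of the charts and the Vitali packing in Lemma~\ref{l.choose U} are both charged against the same $\kappa$-slack consistently. The one genuinely substantive point is verifying that $g$ is globally symplectic: this is immediate because each local modification $A_i^{-1}\circ g_i$ is a symplectomorphism of $\R^{2N}$ that is the identity near the boundary of its support, and the charts $\phi_i$ are themselves symplectic, so the pieces glue to a symplectomorphism — no generating-function argument is needed.
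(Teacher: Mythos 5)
Your plan has the right overall shape — fix $\eps$ from $\cV$ via the $K_\cA^2$ conjugation bound, feed the prescribed domain $\phi_0(U)$ through Lemma~\ref{l.choose U}, glue the resulting symplectomorphisms of $\R^{2N}$ back onto $M$ through the charts, and recover the angle estimate via the norm bounds on $D\phi_0(z)$, $D\phi_n(f^n z)$. However, there is a genuine gap where you write that shrinking $r$ so that the chart transition maps $\phi_{i+1}\circ f\circ\phi_i^{-1}$ are $C^1$-close to the linear maps $A_i$ lets you ``replace the true nonlinear transition maps by $A_i$ \ldots at a cost of a fixed fraction of the error budget.'' This genuinely does not suffice for condition~1 of the lemma. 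The symplectomorphisms $g_i$ produced by flexibility equal $A_i$ precisely \emph{outside} $A_{i-1}\ccircs A_0(\hat U)$ (a set defined by iterating the \emph{linear} maps), whereas condition~1 demands $g=f$ precisely \emph{outside} $\bigsqcup_i f^i(U)$ (a set defined by iterating the \emph{nonlinear} map). When $f$ is merely approximately linear in charts, the set $\phi_i\big(f^i(U)\big)$ and the set $A_{i-1}\ccircs A_0(\hat U)$ are close but not equal, and no amount of sharpening the $C^1$-approximation makes them coincide; so the ``$g=f$ outside the tower'' requirement — which your own gluing argument for symplecticity hinges on (``identity near the boundary of its support'') — is not actually satisfied by your construction as written. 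This is not an error-budget issue that can be charged to $\eps$; it is a support-matching issue.

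The device used in the paper is to change charts \emph{before} invoking flexibility: set $\psi_i = A_{i-1}\ccircs A_0 \circ \phi_0 \circ f^{-i}$. One checks that $D\psi_i(f^i z)=D\phi_i(f^i z)$, so on small enough neighborhoods these still lie in the bounded atlas $\cA$, and the induced split sequence is the same; but now $\psi_{i+1}\circ f\circ\psi_i^{-1}=A_i$ \emph{exactly}, with no residual nonlinearity. Then $\psi_i^{-1}\big(A_{i-1}\ccircs A_0(\hat U)\big)=f^i(U)$ on the nose, the gluing $g=\psi_{i+1}^{-1}\circ g_i\circ\psi_i$ agrees with $f$ exactly outside $\bigsqcup_i f^i(U)$, and the ``approximately linear'' reduction you invoked is simply unnecessary. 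You should also note a bookkeeping slip in your gluing formula: $A_i^{-1}\circ g_i$ is an automorphism of the \emph{domain} copy of $\R^{2N}$ (it is the identity outside a set in the $\phi_i$-chart), so it cannot be ``post-composed in the target chart $\phi_{i+1}$'' as you wrote; you would need either $g_i\circ A_i^{-1}$ in the target chart, or $A_i^{-1}\circ g_i$ pre-composed in the domain chart. Once the $\psi_i$ substitution is in place, either formulation coincides with $\psi_{i+1}^{-1}\circ g_i\circ\psi_i$ and the issue evaporates.
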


\begin{proof}
Let $\eps = \eps(f, \cV)$ be small (to be specified later).

Let $z \in M$, $n\in \N$, $\kappa>0$,
and $T_{f^i z} M = E^1_i \oplus E^2_i$ be as in the assumptions of the lemma.
That is, there exist
charts $\phi_i\colon  V_i \to \R^{2N}$ (for $0 \le i \le n$) in the atlas $\cA$ such that
$V_i \ni f^i z$ and the split sequence $\{A_i, \hat{E}_i^*\}$ defined by
$$
A_i = D (\phi_{i+1} \circ f \circ \phi_i^{-1})(\phi_i(f^i z)), \qquad
\hat{E}^*_i = D\phi_i(f^i z) \cdot E^*_i
$$
is $(\eps, \kappa)$-flexible.
Without loss of generality, assume that $\phi_i (f^i z) = 0$ and
that $V_i = f^i (V_0)$.

We can also assume that \emph{the expression of $f$ in the charts is linear},
that is, $\phi_{i+1} \circ f \circ \phi_i^{-1}$ is the restriction
of the linear map $A_i$ to $\phi_i(V_i)$.
To see this, let $\psi_i = A_{i-1} \ccircs A_0 \circ \phi_0 \circ f^{-i}$,
for $0 \le i \le n$.
Then $\psi_i$ is a symplectomorphism from a neighborhood of $f^i z$
to a neighborhood of $0$ in $\R^{2N}$.
Also, it follows from the definition of the $A_i$'s that
$D\psi_i(f^i z) = D\phi_i(f^i z)$.
Therefore $\psi_i\colon  W_i \to \R^{2N}$ are charts
in the atlas $\cA$,
provided we choose sufficiently small neighborhoods $W_i$ of $f^i z$.
Moreover, $\psi_{i+1} \circ f \circ \psi_i^{-1}$ equals $A_i$ (where the former is defined).
So we just need to replace $\phi_i$ with $\psi_i$.

Now the proof becomes straightforward.
Let $\gamma>0$ be given.
Choose $r$ with $0< r< \eps$ such that
the closed ball $\bar{B}_r(z)$ is contained in $V_0$
and is disjoint from its first $n$ iterates.

Given a non-empty open set $U \subset B_r(z)$,
let $\hat{U} = \phi_0 (U)$.
Take $\gamma' \ll \gamma$.  
The flexibility of the split sequence $\{A_i, \hat{E}_i^{1,2}\}$,
together with Lemma~\ref{l.choose U},
implies that
there exist symplectomorphisms $g_i\colon  \R^{2N} \to \R^{2N}$ (for $0 \le i < n$)
such that:
\begin{enumerate}
\item $g_i$ equals $A_i$ outside $A_{i-1} \ccircs A_0(\hat{U}) = \phi_i(f^i(U))$;
\item $\|D (A_i^{-1} \circ g_i) - \Id \| < \eps$;
\item there is a set $\hat{G} \subset \hat{U}$ such that $\mu(\hat{G}) > (1-\kappa) \mu(\hat{U})$ and
$$
\angle \left( D(g_{n-1} \ccircs g_0) (\hat{x}) \cdot \hat{E}^1_0, \hat{E}^2_n \right) < \gamma'
\quad \forall \hat{x} \in \hat{G} \, .
$$
\end{enumerate}

Define $g\colon  M \to M$ by
$$
g(x) =
\begin{cases}
\phi_{i+1}^{-1} \circ g_i \circ \phi_i (x) &\text{if $x \in V_i = f^i(V_0)$, $0 \le i < n$,} \\
f(x) &\text{otherwise.}
\end{cases}
$$
Then $g$ is a symplectomorphism that equals $f$ outside $\bigsqcup_{i=0}^{n-1} f^i(U)$;
moreover if $\eps$ is small enough then $g$ is close to $f$, that is, $g \in \cV$.
Now, if $r$ is sufficiently small then
for every $x \in G = \phi_0^{-1}(\hat{G})$, the space
$D\phi_0(x) \circ \mathrm{i}_x^z \cdot E^1_0$ is close to $\hat{E}^1_0$,
while $D\phi_n (g^n x) \circ \mathrm{i}_{g^n x}^{f^n z} \cdot E^2_n$ is close to $\hat{E}^2_n$.
Then the second condition in the statement of the lemma follows.
\end{proof}

%%%%%%%%%%%%%%%%%%%%%%%%%%%%%%%%%%%%%%%%%%%%%%%%%%%%%%%%%%%%
\subsection[A special split sequence]{A Special Split Sequence}\label{ss.special seq}

Let us now focus on some specific split sequences that come from the Oseledets splitting.

\smallskip

Given $f \in \Diff^1_\omega(M)$ and $p \in \{1, \ldots, N\}$,
we define the invariant set
\begin{multline*}
\Sigma_p(f) = \big\{ z\in M ; \;
\text{$z$ is non-periodic, Oseledets regular,}\\
\text{and $\lambda_p(f,z) > \lambda_{p+1}(f,z)$} \big\}.
\end{multline*}
We consider the splitting
\begin{equation}\label{e.splitting}
T_{\Sigma_p(f)} M = E^u \oplus E^c \oplus E^s
\end{equation}
such that at each point $E^u$, $E^c$, and $E^s$
are the sum of the Oseledets spaces corresponding respectively to the sets of
Lyapunov exponents
$$
\{\lambda_1, \ldots, \lambda_p\}, \
\{\lambda_{p+1}, \ldots, \lambda_{2N-p} = -\lambda_{p+1}\}, \text{ and }
\{\lambda_{2N - p + 1} = -\lambda_p, \ldots, \lambda_{2N} = -\lambda_1\}.
$$
We also define bundles $E^{uc}$, $E^{us}$, $E^{cs}$ respectively
as $E^u \oplus E^c$ etc.

Two obvious remarks:
First, when we speak of $E^u$, $E^c$, $E^s$, the number $p$ is implicitly fixed.
Second, despite the notation, the splitting~\eqref{e.splitting} has no reason to be partially hyperbolic.

The splitting \eqref{e.splitting} has the following properties:
\begin{gather}
\text{$Df$-invariance: $Df(z) \cdot E^* (z) = E^*(f(z))$, $* = u$, $c$, $s$}  \label{e.condition splitting 1} \\
\dim E^u = \dim E^s = p, \quad \dim E^c = 2 (N-p)  \label{e.condition splitting 2}\\
\omega( E^{u} , E^{uc}) \equiv 0, \quad
\omega( E^{c} , E^{us}) \equiv 0, \quad
\omega( E^{s} , E^{cs}) \equiv 0. \label{e.condition splitting 3}
\end{gather}
The first two are completely obvious, while \eqref{e.condition splitting 3}
follows from the fact that if $v_i$, $v_j \in T_x M$ are vectors
with respective Lyapunov exponents $\lambda_i$, $\lambda_j$ such that $\lambda_i + \lambda_j \neq 0$
then $\omega (v_i, v_j) = 0$.

\smallskip

The split sequences on $TM$ that we will be interested in are those that come
from the splitting $E^u \oplus E^{cs}$, that is, those of the form
$$
\left\{ E^u (f^i z) \oplus E^{cs} (f^i z) \xrightarrow{Df(f^i z)}
E^u (f^{i+1} z) \oplus E^{cs} (f^{i+1} z) \right\}_{0\le i < m}
$$
where $z \in \Sigma_p(f)$.
To avoid such a cumbersome notation, we write the sequence as
$Df(f^i z)\colon E^u \oplus E^{cs} \hookleftarrow$ ($0 \le i < m$).

%%%%%%%%%%%%%%%%%%%%%%%%%%%%%%%%%%%%%%%%%%%%%%%%%%%%%%%%%%%%
\subsection[The Main Lemma: lack of dominance implies flexibility]
{The Main Lemma: Lack of Dominance Implies Flexibility}

If the splitting $E^u \oplus E^{cs}$ is dominated over the orbit of a point $z$,
then, due to the existence of a strictly invariant cone field,
no split sequence
$Df(f^i z)\colon E^u \oplus E^{cs} \hookleftarrow$ ($0 \le i < m$)
can be $(\eps,\kappa)$-flexible, provided $\eps>0$ is small enough.
A major part of this paper is devoted to proving the following converse to this fact:

\begin{main}
Given $f\in \Diff_\omega^1(M)$,
$\eps>0$, $\kappa>0$, and $p \in \{1,\ldots, N\}$,
there exist $m_1 \in \N$ with the following properties:

If $z \in \Sigma_p(f)$ and $m\in\N$ are such that $m \ge m_1$ and
\begin{equation}\label{e.nondominated}
\frac{\left\|Df^m (z) | E^{cs}(z) \right\|}{\m \left( Df^m (z) | E^u(z) \right)} \ge \frac{1}{2} \, ,
\end{equation}
then the split sequence 
$Df(f^i z): E^u \oplus E^{cs} \hookleftarrow$ ($0 \le i < m$)
is $(\eps, \kappa)$-flexible.
\end{main}

That is, lack of dominance expressed by \eqref{e.nondominated} implies flexibility.

\begin{rem}
In addition to \eqref{e.nondominated}, the only properties
about the splitting $E^u \oplus E^c \oplus E^s$ that we are going to use in the proof of the Main Lemma
are \eqref{e.condition splitting 1}, \eqref{e.condition splitting 2}, and \eqref{e.condition splitting 3}.
\end{rem}

The proof of the Main Lemma will occupy Sections~\ref{s.flex I} and \ref{s.flex II}.

%%%%%%%%%%%%%%%%%%%%%%%%%%%%%%%%%%%%%%%%%%%%%%%%%%%%%%%%%%%%%%%%%%%%%%
\section[The four types of non-dominance]{The Four Types of Non-dominance} \label{s.flex I}
%%%%%%%%%%%%%%%%%%%%%%%%%%%%%%%%%%%%%%%%%%%%%%%%%%%%%%%%%%%%%%%%%%%%%%

The aim of this section is to prove Lemma~\ref{l.4 types} below.
That proposition classifies the split sequences considered in the
Main Lemma in four types.
Each of these four types of sequences will be shown to be flexible in Section~\ref{s.flex II},
and this will prove the Main Lemma.

For the rest of this section, let $f\in \Diff_\omega(M)$ and $p \in \{1, \ldots , N\}$ be fixed.
Recall from \S\ref{ss.special seq}
the definition of the set $\Sigma_p(f)$ and the
splitting $T_{\Sigma_p(f)} M = E^u \oplus E^c \oplus E^s$.

%%%%%%%%%%%%%%%%%%%%%%%%%%%%%%%%%%%%%%%%%%%%%%%%%%%%%%%%
\subsection[The classification]{The Classification}

A set of the form $\{f^i z; \; 0\le i < n\}$, where
$z \in \Sigma_p(f)$ and $n \in \N$, will be called a \emph{segment of length $n$}.

A segment $\{z, \ldots, f^{n-1} z\}$
is called \emph{of type II} (\emph{with constant $\KII>1$}) if
$$\frac{\|Df^n | E^{cs} (z) \|}{\m(Df^n | E^u (z))} > \KII.$$

A segment $\{z, \ldots, f^{n-1} z\}$
is called \emph{of type III} (\emph{with constant $\KIII>1$})
if for $0 \le i \le n$ there exist symplectic linear maps $\cL_i \colon  T_{f^i z} M \to \R^{2N}$
(that is, that send $\omega$ to the standard symplectic form $\sum_i dp_i \wedge dq_i$ on $\R^{2N}$)
such that:
\begin{itemize}
\item $\|\cL_i^{\pm 1}\| \le \KIII$.
\item The images by $\cL_i^{-1}$ of the vectors $\vetor{p_1}$ and $\vetor{q_1}$ are contained respectively in the spaces $E^u (f^i z)$ and $E^s (f^i z)$.
\item The (symplectic linear) map
    $A_i = \cL_{i+1} \circ Df(f^i z) \circ \cL_i^{-1}\colon  \R^{2N} \to \R^{2N}$
    is the identity on the $2$-plane $p_1 q_1$.
\end{itemize}

A segment $\{z, \ldots, f^{n-1} z\}$
is called \emph{of type IV} (\emph{with constants $\KIV>1$, $\tau>1$})
if there exist symplectic linear maps $\cL_i \colon  T_{z_i} M \to \R^{2N}$, $0 \le i \le n-1$,
such that:
\begin{itemize}
\item $\|\cL_i^{\pm 1}\| \le \KIV$.
\item The images by $\cL_i^{-1}$ of the vectors $\vetor{p_1}$, $\vetor{p_2}$, $\vetor{q_1}$, and $\vetor{q_2}$ are contained respectively in the spaces $E^u$, $E^c$, $E^c$, and $E^s$.
\item The (symplectic linear) map
    $A_i = \cL_{i+1} \circ Df(z_i) \circ \cL_i^{-1}\colon  \R^{2N} \to \R^{2N}$
    preserves the $4$-plane $p_1 p_2 q_1 q_2$,
    where it is given by
    $$
    A_i \colon  (p_1, p_2,q_1, q_2) \mapsto (c_i p_1, c_i p_2, c_i^{-1} q_1, c_i^{-1} q_2), \quad
    \text{where $c_i> \tau$.}
    $$
\end{itemize}

Notice that segments of type IV do not exist if $p=N$,
because in that case $E^c=\{0\}$.
(That is why type IV does not appear in \cite{BV Annals}.)

\smallskip

Recall that the \emph{symplectic complement}
of a vector space $E$ is the space $E^\omega$ formed by vectors $w$ such that $\omega(v,w)=0$ for all
$v \in E$. If $L$ is a symplectic linear map then $(L(E))^\omega = L(E^\omega)$.
It follows that if $A_i$ is the linear map as in the definition of type III (resp.~IV) then
$A_i$ preserves the $(2N-2)$-plane $p_2 \cdots p_{2N} q_2 \cdots q_{2N}$
(resp.\ the $(2N-4)$-plane $p_3 \cdots p_{2N} q_3 \cdots q_{2N}$).

\begin{lemma}\label{l.4 types}
Let $\alpha > 0$, $\KII>1$, $m_0 \in \N$.
Then there exist numbers $\KIII$, $\KIV > 1$, $\tau >1$,
where $\KIII$ does not depend on $m_0$,
with the following properties:
Assume that $z \in \Sigma_p(f)$ and $m \ge m_0$ are such that
the non-dominance condition~\eqref{e.nondominated} is satisfied.
Then one of the following holds:
\begin{enumerate}
\item[I.] There exists $i$ with $0 \le i \le m$, such that
$\angle (E^u (f^i z), E^{cs} (f^i z)) < \alpha$.

\item[II.] There exist $i$ and $j$ with $0 \le i < j \le m$
such that the segment $\{f^i z, \ldots, f^j z\}$
is of type~II with constant $\KII$.

\item[III.] There is some $i$ with $0 \le i \le m-m_0$ such that the
segment $\{f^i z, \ldots, f^{i+m_0} z\}$
is of type~III with constant $\KIII$.

\item[IV.] The segment $\{z, \ldots, f^m z\}$ is of type~IV with constants $\KIV$, $\tau$.
\end{enumerate}
\end{lemma}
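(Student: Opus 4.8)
The plan is to dichotomize according to whether the Oseledets data along the orbit segment is ``uniformly controlled'' on the relevant bundles or not, and to extract one of the four normal forms by quantitative linear algebra at an appropriately chosen iterate. Concretely, first I would fix constants: given $\alpha$, $\KII$, $m_0$, choose a threshold $\KIII$ (depending only on the uniform bounds $\|Df^{\pm 1}\|$, on $N$, and on $\alpha$, $\KII$, but not on $m_0$) large enough to absorb the distortion produced by the Gram–Schmidt/symplectic normalization procedure below, and then choose $\KIV$, $\tau$ afterwards, allowing dependence on $m_0$. The rough idea is: if case~I fails, i.e.\ the angle $\angle(E^u, E^{cs})$ stays $\ge \alpha$ along the whole segment, then the splitting is ``geometrically nice'' and we can build uniformly bounded symplectic trivializations $\cL_i$ adapted to $E^u(f^i z)$ and $E^s(f^i z)$ (using the nondegeneracy $\omega(E^u, E^{uc})\equiv 0$, $\omega(E^s,E^{cs})\equiv 0$ from \eqref{e.condition splitting 3}, which forces $E^u$ and $E^s$ to be isotropic and in symplectic duality, so a Darboux-type basis can be chosen with $\vetor{p_1}\in \cL_i(E^u)$, $\vetor{q_1}\in\cL_i(E^s)$). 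Having these charts, the cocycle $A_i = \cL_{i+1}\circ Df(f^i z)\circ\cL_i^{-1}$ is a bounded symplectic cocycle that fixes (setwise) the line pairs coming from $E^u$, $E^s$.

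Second, I would analyze the scalar cocycle describing the action on the distinguished $2$-plane $p_1 q_1$ (equivalently, on the line $E^u$ and its symplectic partner in $E^s$): by symplecticity this action on a $2$-plane where $\omega$ is nondegenerate is conjugate in $\SL(2,\R)$, so it is governed by the norms $\|Df^n|E^u\|$. If along some subsegment $E^{cs}$ is expanded strictly more than $E^u$ is expanded below, by a definite factor, we are in case~II with constant $\KII$ — this is where the nondominance hypothesis \eqref{e.nondominated}, which says exactly that $\|Df^m|E^{cs}\|\ge \tfrac12 \m(Df^m|E^u)$, is fed in: either some subsegment already exhibits a type~II gap, or the growth of $E^u$ dominates that of $E^{cs}$ uniformly, which (combined with symplectic symmetry forcing contraction of $E^s$) pushes us toward the conformal/isometric regimes. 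In the remaining regime, one examines whether, on some window of length $m_0$, the restriction of the cocycle to the $p_1 q_1$-plane is (up to bounded conjugacy) an isometry; if so, after adjusting $\cL_i$ to make it literally the identity on that plane, we are in case~III with the $m_0$-fixed constant $\KIII$. If no such isometric window of length $m_0$ exists, then on the whole segment the line $E^u$ is exponentially expanded with a definite rate $\tau>1$ and conformally (this uses that we are not in cases I--III, so the only way the $E^u$ direction fails to be dominated while not giving a type-II or type-III window is steady conformal expansion); pairing $E^u$ with a direction in $E^c$ on which $\omega$ is nondegenerate, and its symplectic dual in $E^c$, and the $E^s$ partner, we obtain a $4$-plane $p_1p_2q_1q_2$ on which $A_i$ is the diagonal hyperbolic map $(c_ip_1,c_ip_2,c_i^{-1}q_1,c_i^{-1}q_2)$ with $c_i>\tau$ — this is case~IV, and it is here that the constants are allowed to depend on $m_0$ (and the case is empty when $p=N$ since then $E^c=\{0\}$).

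The main obstacle I expect is the bookkeeping of the ``not case~II and not case~III implies case~IV'' step: one must rule out all intermediate behaviors of the $\SL(2,\R)$-cocycle on the $E^u$-plane. The clean way is to use the standard trichotomy for bounded $\SL(2,\R)$ (or rather $\Sp(2N,\R)$) cocycles — a finite product of bounded symplectic matrices is, up to bounded conjugacy, either hyperbolic (type~IV after choosing the conformal direction), or has a ``slow'' subsegment that is elliptic/isometric of the prescribed length (type~III), or exhibits a definite norm gap between $E^u$ and $E^{cs}$ (type~II) — and to quantify each ``or'' with the prescribed constants. The delicate point is coordinating two scales: $\KIII$ must be fixed before $m_0$ (so the isometric-window test uses windows of length exactly $m_0$ but with distortion bounded independently of $m_0$), while $\KIV$ and $\tau$ come after. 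I would handle this by first proving a purely linear-algebraic lemma: any bounded symplectic cocycle of length $m\ge m_0$ on $\R^{2N}$ with a fixed isotropic expanding line, and satisfying the nondomination inequality, admits one of the four normal forms with constants arranged in this dependency order; then Lemma~\ref{l.4 types} follows by pulling this back through the charts $\cL_i$, using \eqref{e.condition splitting 1}–\eqref{e.condition splitting 3} to guarantee the charts exist with the stated bounds whenever case~I fails.
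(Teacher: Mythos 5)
Your overall strategy coincides with the paper's: exploit the failure of case~I to build uniformly bounded symplectic trivializations adapted to $E^u$ and $E^s$ (using \eqref{e.condition splitting 3} to set up the $p_1q_1$-pairing), then analyze the induced cocycle and decide, via window tests, whether a type~II gap, an isometric window of length $m_0$ (type~III), or global expansion (type~IV) occurs, with the constant dependencies ordered $\KIII$ before $m_0$, $\KIV,\tau$ after. The part that is genuinely missing is the step the paper isolates as Sublemma~4.5: in the type~IV case the one-step coefficients $c_i = \|Df^{i+1}v^u\|/\|Df^i v^u\|$ are \emph{not} all $>1$, only their $m_0$-block averages are; you need to twist the trivializations $\cL_i$ by suitable diagonal symplectic maps $D_i$ (so that $\hat c_i = e^{b_{i+1}-b_i}c_i > \tau$), which is a nontrivial renormalization of norms along the segment and not a consequence of the ``steady conformal expansion'' phrasing you offer. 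Without this smoothing step the segment you exhibit does not satisfy the $c_i > \tau$ clause in the definition of type~IV, so case~IV is not actually established.

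Two smaller points. First, you speak of ``pairing $E^u$ with a direction in $E^c$ on which $\omega$ is nondegenerate,'' but $\omega(E^u, E^c)\equiv 0$ by \eqref{e.condition splitting 3}; the paper instead takes $v^u\in E^u$ whose symplectic dual lies in $E^s$, and \emph{separately} a pair of symplectically dual vectors inside $E^c$ (namely $v^{cs}$ and $v^{cu}=(v^{cs})^\star$); the nondominance inequality then kicks in to show these $E^c$-vectors grow at essentially the same rate as $v^u$, making the $4$-plane approximately conformal. You should make this selection precise because it is where \eqref{e.nondominated} is actually used in case~IV. Second, when $p>1$ your remark that the charts ``fix the line pairs coming from $E^u$, $E^s$'' needs care: $\cL_i$ need only send a distinguished vector of $E^u$ to $\vetor{p_1}$ (the least-$Df^{m}$-expanded one, propagated along the orbit), not the whole bundle; the paper chooses these vectors by an extremality criterion at time $m$ or $m_0$ and transports them.
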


%%%%%%%%%%%%%%%%%%%%%%%%%%%%%%%%%%%%%%%%%%%%%%%%
\subsection{Proof}
We start with some generalities about symplectic and Riemannian structures on the manifold.

\smallskip

For each $x\in M$, let $\cJ_x\colon  T_x M \to T_x M$
be the isomorphism defined by $\omega(v,w)=\langle \cJ_x v,w \rangle$ for all
$v,w \in T_x M$.
Observe that the symplectic complement of a subspace $E \subset T_x M$ is
$E^\omega = (\cJ_x (E))^\perp$.

Denote
$$
K_\omega = \sup_{x \in M} \| \cJ_x^{\pm 1} \| \, .
$$
In particular, we have   
%\margem{{?`}posso no inicio fixar uma metrica na qual $\cJ_x$ seja isometria? sei la, boa pergunta}
\begin{equation} \label{e.K omega}
|\omega (v,w)|  \leq K_\omega \|v\| \, \|w\| \quad
\text{for all $v,w \in T_x M$.}
\end{equation}

\begin{lemma}\label{l.jota}
There are functions $\beta_1(B)>0$ and $B_1(\beta)>1$ with the following properties.

Let $x \in M$, and let $E$, $F \subset T_x M$ be vector spaces with the same dimension,
and such that $E^\omega \cap F = \{0\}$.

If $\angle(E^\omega, F) > \beta >0$   %\margem{relacionar com $\angle(E,F^\omega)$?}
then setting $B = B_1(\beta)$ we have that
\begin{equation}\label{e.iso}
\text{$\exists$ isomorphism $J\colon  E \to F$ s.t.~}
\left\{
\begin{array}{l}
\|J^{\pm 1}\| \le B \\
|\omega (v, J(v))| \ge B^{-1} \|v\|^2
\end{array}
\right.
\end{equation}
Conversely, if~\eqref{e.iso} holds for some $B>1$ then %\margem{converse necessario?}
$\angle (E^\omega, F) > \beta_1(B)$.
\end{lemma}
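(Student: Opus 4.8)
The plan is to prove the two implications separately, with the forward direction (from an angle bound on $E^\omega$ and $F$ to the existence of a well-behaved isomorphism $J$) being the substantive one. First I would normalize the setup: since $E^\omega \cap F = \{0\}$ and $\dim E^\omega = 2N - \dim E = 2N - \dim F$, the spaces $E^\omega$ and $F$ are complementary in $T_x M$, so $T_x M = E^\omega \oplus F$. The key structural fact is that the symplectic pairing $\omega$ restricted to $E \times F$ is \emph{nondegenerate}: if $v \in E$ paired trivially against all of $F$, then $v \in F^\omega$; but a short computation with the involution $\cJ_x$ shows $F^\omega \cap E$ measures exactly the failure of $E^\omega + F$ to be all of $T_xM$, which is zero here. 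Concretely, $\omega(v,w) = \langle \cJ_x v, w\rangle$, and $\cJ_x(E)$ is the orthogonal complement of $E^\omega$; the hypothesis $\angle(E^\omega, F) > \beta$ quantitatively says that the orthogonal projection of $F$ onto $\cJ_x(E) = (E^\omega)^\perp$ is bounded below, which is precisely the quantitative nondegeneracy of the pairing.

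Given this, I would \emph{construct} $J \colon E \to F$ explicitly. For $v \in E$, the functional $w \mapsto \omega(v,w)$ on $F$ is, via the metric, the functional $w \mapsto \langle \cJ_x v, w\rangle = \langle \Pi_F(\cJ_x v), w\rangle$ where $\Pi_F$ is orthogonal projection onto $F$; one natural choice is to define $J(v)$ to be a vector in $F$ representing this functional in a way that makes $\omega(v, J(v))$ manifestly a norm-like quantity. The cleanest route: identify, using $\cJ_x$, the map $E \to F$ that sends $v$ to (a rescaled version of) the component of $\cJ_x v$ along $F$ in the decomposition $T_xM = E^\omega \oplus F$; since $\cJ_x v \perp E^\omega$ is false in general I would instead decompose $\cJ_x v = u + f$ with $u \in E^\omega$, $f \in F$ and set $J(v) = f$. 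Then $\omega(v, J(v)) = \langle \cJ_x v, f\rangle = \langle u + f, f\rangle = \|f\|^2 + \langle u, f\rangle$; controlling the cross term requires the angle bound $\angle(E^\omega, F) > \beta$, which forces $|\langle u, f\rangle|$ to be a fixed fraction of $\|f\|\,\|u\|$ away from $\|f\|\,\|u\|$ and hence lets one bound $\|f\|$ from below by a constant times $\|v\|$ (using also $\|\cJ_x v\| \geq K_\omega^{-1}\|v\|$). Invertibility of $J$ and the bound $\|J^{\pm1}\| \leq B$ then follow from the same projection estimates together with $\|\cJ_x^{\pm1}\| \leq K_\omega$, with $B = B_1(\beta)$ depending only on $\beta$ (and on the fixed $K_\omega$).

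For the converse, suppose \eqref{e.iso} holds with constant $B$. I would argue by contradiction: if $\angle(E^\omega, F)$ were tiny, pick $v \in E$ with $J(v) \in F$ nearly parallel to some $u \in E^\omega$ of comparable norm; then $|\omega(v, J(v))| = |\langle \cJ_x v, J(v)\rangle| \leq |\langle \cJ_x v - (\text{its }E^\omega\text{-part}), J(v)\rangle| + (\text{small})$, and since the $E^\omega$-part of $\cJ_x v$ pairs trivially... — more carefully, $\omega(v, \cdot)$ annihilates $E^\omega$? No: $\omega(v, u) = \langle \cJ_x v, u\rangle$ for $u \in E^\omega$ need not vanish. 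The right statement is $\omega(E, E^\omega) = 0$ only when... actually $\omega(v,u)=0$ for $v\in E$, $u\in E^\omega$ \emph{by definition} of $E^\omega$. So $\omega(v, J(v)) = \omega(v, J(v) - u)$ for any $u \in E^\omega$, giving $|\omega(v,J(v))| \leq K_\omega \|v\| \cdot \mathrm{dist}(J(v), E^\omega)$ by \eqref{e.K omega}; if $\angle(E^\omega,F)$ is small this distance is small relative to $\|J(v)\| \geq B^{-1}\|v\|$, contradicting $|\omega(v,J(v))| \geq B^{-1}\|v\|^2$. This yields $\beta_1(B) > 0$ depending only on $B$ and $K_\omega$. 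The main obstacle I anticipate is organizing the quantitative projection estimates in the forward direction so that the constant $B$ depends only on $\beta$ uniformly over all $x$, $E$, $F$ — i.e., making precise that "$\angle(E^\omega, F) > \beta$" plus the universal bound $K_\omega$ is exactly the right amount of nondegeneracy — but this is linear algebra with the fixed endomorphism $\cJ_x$, so it should go through cleanly.
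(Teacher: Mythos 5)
Your construction of $J$ is the same as the paper's (project $\cJ_x v$ onto $F$ along $E^\omega$ in the decomposition $T_xM = E^\omega \oplus F$), and your converse argument is essentially the paper's. But the forward direction as written has a genuine gap, located exactly at the sentence ``since $\cJ_x v \perp E^\omega$ is false in general''. For $v \in E$ this orthogonality is \emph{true}: $E^\omega = (\cJ_x(E))^\perp$, so $\cJ_x v \perp E^\omega$; equivalently $\omega(v,u)=0$ for all $u \in E^\omega$ --- the very fact you later recall in the converse. Having set it aside, you try to bound $\omega(v,J(v)) = \|f\|^2 + \langle u,f\rangle$ from below using only the angle hypothesis $\angle(E^\omega,F)>\beta$, claiming it controls the cross term and ``lets one bound $\|f\|$ from below by a constant times $\|v\|$''. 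That inference fails: the angle bound gives $|\langle u,f\rangle| \le \cos\beta\,\|u\|\,\|f\|$ and an \emph{upper} bound on the oblique projections, but no lower bound on $\|f\|$ --- by itself it does not prevent $\cJ_x v$ from lying close to $E^\omega$, in which case $\|f\|$ is tiny; moreover $\|f\|^2+\langle u,f\rangle$ can even be negative when $\|u\|\gg\|f\|$ (take $\|u\|=10\|f\|$ and $\langle u,f\rangle=-\cos\beta\,\|u\|\,\|f\|$). The same missing lower bound on $\|f\|=\|J(v)\|$ also undercuts your bound on $\|J^{-1}\|$.

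The repair is precisely the discarded fact, and it makes the estimates immediate (this is the paper's proof): since $\cJ_x v - J(v) \in E^\omega$ and $v\in E$, one has $\omega(v,J(v)) = \omega(v,\cJ_x v) = \|\cJ_x v\|^2 \ge K_\omega^{-2}\|v\|^2$ with no angle needed; and since $\cJ_x v \perp u$, Pythagoras gives $\|J(v)\| \ge \|\cJ_x v\| \ge K_\omega^{-1}\|v\|$, hence $\|J^{-1}\| \le K_\omega$. The hypothesis $\angle(E^\omega,F)>\beta$ enters only through the upper bound $\|J\| \le \|p\|\,\|\cJ_x\| \le K_\omega/\sin\beta$, where $p$ is the projection onto $F$ parallel to $E^\omega$. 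With that correction your forward direction coincides with the paper's, and your converse --- take a unit $w\in F$ nearly realizing the angle, set $v=J^{-1}(w)$, and use $\omega(v,J(v))=\omega(v,J(v)-u)$ together with \eqref{e.K omega} and \eqref{e.iso} --- goes through as stated.
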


\begin{proof}
Assume that $\angle(E^\omega, F)>\beta$.
Let $p\colon  T_x M \to F$ be the projection parallel to $E^\omega$;
then $\|p\| < 1/\sin\beta$.
Let $J$ be the restriction of  $p \circ \cJ_x$ to $E$.
If $v \in E$ then
$|\omega (v, J(v))| = |\omega (v, \cJ_x(v))| = \|\cJ_x(v)\|^2  \ge K_\omega^{-2} \|v\|^2$.
Since $E^\omega = (\cJ_x(E))^\perp$, we have $\|J (v)\| \ge \|\cJ_x (v)\| \ge K_\omega^{-1} \|v\|$.
Therefore~\eqref{e.iso} holds for some appropriate $B = B_1(\beta)$.

On the other hand, if~\eqref{e.iso} holds then for any
unit vectors $v \in E^\omega$, $w\in F$ we have
$
|\omega(w-v, J^{-1}(w))| = |\omega(w, J^{-1}(w))| \ge B^{-1} \|J^{-1}(w)\|^2 \ge B^{-3}.
$
Using \eqref{e.K omega} we find a lower bound for $\|w-v\|$.
This shows that $\angle(E^\omega, F)$ is bigger than some $\beta_1(B)>0$.
\end{proof}

It follows from the lemma that   %\margem{certamente tem deducao mais simples (usou converse)}
there is a function ${\beta_2(\beta)>0}$ such that
\begin{equation}\label{e.betas}
\angle (E^\omega, F) > \beta \ \Rightarrow \ \angle(E, F^\omega) > \beta_2(\beta)
\end{equation}
(where $E$, $F \subset T_x M$ have the same dimension).

\smallskip

An (ordered) set $\{\mathbf{e}_1,\ldots, \mathbf{e}_\nu, \mathbf{f}_1, \ldots, \mathbf{f}_\nu\} \subset T_x M$
will be called \emph{orthosymplectic} if
$$
\omega(\mathbf{e}_i, \mathbf{e}_j) = \omega(\mathbf{f}_i, \mathbf{f}_j)=0, \quad
\omega(\mathbf{e}_i, \mathbf{f}_j) = \delta_{ij} \quad \text{for all $i$, $j$.}
$$
If $\nu = N$ then the set is called a \emph{symplectic basis} of $T_x M$.

\begin{lemma}\label{l.sympl basis}
For every $K_1>0$ there exist $K_2$, $K_3>0$ with the following properties.
Every orthosymplectic set
$\{\mathbf{e}_1,\ldots, \mathbf{e}_\nu, \mathbf{f}_1, \ldots, \mathbf{f}_\nu\} \subset T_x M$
such that
$$
\|\mathbf{e}_i\|, \ \|\mathbf{f}_i\| \le K_1 \quad\text{for $1 \le i \le \nu$}
$$
can be extended to a symplectic basis
$\{\mathbf{e}_1,\ldots, \mathbf{e}_N, \mathbf{f}_1, \ldots, \mathbf{f}_N\}$ such that
\begin{equation}\label{e.extended}
\|\mathbf{e}_i\|, \ \|\mathbf{f}_i\| \le K_2 \quad\text{for $\nu < i \le N$.}
\end{equation}
Furthermore, if $\cL\colon  T_x M \to \R^{2N}$ is the linear map that takes this basis
to the canonical symplectic basis
$\big\{ \vetor{p_1}, \ldots, \vetor{p_N}, \vetor{q_1}, \ldots, \vetor{q_N} \big\}$
of $\R^{2N}$ then $\|\cL^{\pm 1}\| \le K_3$.
\end{lemma}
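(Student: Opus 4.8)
The plan is to carry out a quantitative version of the symplectic Gram--Schmidt process, tracking all constants and using only the uniform bound $\|\cJ_x^{\pm 1}\| \le K_\omega$, so that every estimate is intrinsic to $T_x M$ and the resulting $K_2$, $K_3$ depend only on $K_1$ (besides the fixed data $N$ and $K_\omega$). First I would note that the orthosymplectic set is linearly independent: pairing a vanishing linear combination with each $\mathbf{e}_j$, $\mathbf{f}_j$ under $\omega$ kills every coefficient. Hence its span $V$ has dimension $2\nu$, and the linear map $\cL_0\colon V \to \R^{2\nu}$ sending the set to $\{\vetor{p_i}, \vetor{q_i}\}_{i\le\nu}$ is a symplectic isomorphism of $V$ onto $\R^{2\nu}$ with its standard form. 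From $\|\mathbf{e}_i\|, \|\mathbf{f}_i\| \le K_1$ one reads off $\|\cL_0^{-1}\| \le \sqrt{2N}\,K_1$, and transporting the standard form back through $\cL_0$ shows $\omega|_V$ is uniformly non-degenerate: writing $c_0 = (2NK_1^2)^{-1}$, one has $\sup_{w\in V,\,\|w\|=1}|\omega(v,w)| \ge c_0\|v\|$ for all $v\in V$. In particular $V$ is a symplectic subspace and $T_x M = V \oplus V^\omega$.

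The crux is the angle estimate $\sin\angle(V, V^\omega) \ge c_1$ for some $c_1 = c_1(N,K_1,K_\omega)>0$. Indeed, let $u\in V^\omega$ be a unit vector with $\operatorname{dist}(u,V)=\delta \le \tfrac12$ and write $u = u_V + u_\perp$ with $u_V\in V$, $u_\perp\perp V$, $\|u_\perp\|=\delta$; since $u\in V^\omega$, $|\omega(w,u_V)| = |\omega(w,u_\perp)| \le K_\omega\delta$ for every unit $w\in V$ by \eqref{e.K omega}, whereas the bound just obtained for $\omega|_V$ furnishes a unit $w\in V$ with $|\omega(w,u_V)| \ge c_0\|u_V\| \ge c_0\tfrac{\sqrt3}{2}$, and comparing pushes $\delta$ away from $0$. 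Combining this with $\|\cJ_x^{\pm1}\|\le K_\omega$ and the identity $(V^\omega)^\perp = \cJ_x(V)$ (recorded just before Lemma~\ref{l.jota}) one deduces likewise that $\omega|_{V^\omega}$ is uniformly non-degenerate, with $\sup_{w\in V^\omega,\,\|w\|=1}|\omega(v,w)| \ge c\,\|v\|$ for $c = c_1/K_\omega$: for unit $v\in V^\omega$ decompose $\cJ_x v = y_1 + y_2$ with $y_1\in V^\omega$, $y_2\in(V^\omega)^\perp = \cJ_x(V)$; then $\cJ_x^{-1}y_2\in V$, so $\operatorname{dist}(v,V) \le \|\cJ_x^{-1}y_1\| \le K_\omega\|y_1\|$, which forces $\|y_1\| \ge c_1/K_\omega$ by the angle bound, and $\|y_1\| = \sup_{w\in V^\omega,\,\|w\|=1}|\omega(v,w)|$.

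It then remains to choose a symplectic basis of the $2(N-\nu)$-dimensional symplectic space $(V^\omega, \omega|_{V^\omega})$ with controlled norms and append it to $\{\mathbf{e}_i,\mathbf{f}_i\}_{i\le\nu}$. I would do this by the obvious induction on dimension: take any unit $\mathbf{e}_{\nu+1}\in V^\omega$, pick a unit $u\in V^\omega$ with $|\omega(\mathbf{e}_{\nu+1},u)| \ge c$, and set $\mathbf{f}_{\nu+1} = u/\omega(\mathbf{e}_{\nu+1},u)$, so that $\omega(\mathbf{e}_{\nu+1},\mathbf{f}_{\nu+1}) = 1$ and $\|\mathbf{f}_{\nu+1}\| \le 1/c$; then $\{\mathbf{e}_1,\ldots,\mathbf{f}_\nu,\mathbf{e}_{\nu+1},\mathbf{f}_{\nu+1}\}$ is again orthosymplectic with bounded norms (all the new mixed pairings vanish because $\mathbf{e}_{\nu+1},\mathbf{f}_{\nu+1}\in V^\omega$), so the whole argument reapplies with slightly worse constants, and after $N-\nu\le N$ steps one reaches a symplectic basis with $\|\mathbf{e}_i\|,\|\mathbf{f}_i\| \le K_2(N,K_1,K_\omega)$ for $\nu<i\le N$. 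Finally, for the change of basis $\cL$: the bound $\|\cL^{-1}\| \le \sqrt{2N}\max(K_1,K_2)$ is immediate from the norm bounds on the basis vectors, and since $\cL$ is symplectic one has $\cJ_x = \cL^* J_0 \cL$ with $J_0$ the (orthogonal) standard structure on $\R^{2N}$, whence $\|\cL\| = \|\cL^*\| = \|\cJ_x \cL^{-1} J_0^{-1}\| \le K_\omega\sqrt{2N}\max(K_1,K_2)$; so $K_3 = K_\omega\sqrt{2N}\max(K_1,K_2)$ works.

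The only genuinely non-routine point — and hence the main obstacle — is preventing the inductive constants from depending on $x$ or collapsing as more conjugate pairs are adjoined; this is exactly what the estimate $\sin\angle(V,V^\omega)\ge c_1$ controls, and it is the sole place where the uniform bound $K_\omega$ on the symplectic form is used in an essential way, everything else being bookkeeping.
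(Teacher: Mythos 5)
Your argument is correct, and at heart it is the same quantitative symplectic Gram--Schmidt induction as in the paper: adjoin conjugate pairs lying in the symplectic complement of the span, with norms controlled only by $K_1$, $N$ and $K_\omega$, and then bound $\cL^{\pm 1}$ from the norm bounds on the basis. The difference is in how each new pair is produced. You first prove the quantitative transversality estimate $\sin\angle(V,V^\omega)\ge c_1$ and deduce from it that $\omega$ restricted to $V^\omega$ is uniformly non-degenerate, and only then pick $\mathbf{e}_{\nu+1},\mathbf{f}_{\nu+1}$ inside $V^\omega$; you single this angle estimate out as the crux. The paper bypasses that step entirely: it writes down the explicit projection $P(v)=\sum_i\big[\omega(v,\mathbf{f}_i)\mathbf{e}_i-\omega(v,\mathbf{e}_i)\mathbf{f}_i\big]$ onto $Y=V$ parallel to $Y^\omega$, whose norm is bounded by $K_\omega K_1^2$ directly from the bound on $\omega$, takes $\mathbf{e}_{\nu+1}=\hat{\mathbf{e}}-P(\hat{\mathbf{e}})$ for a unit $\hat{\mathbf{e}}\perp Y$ (so $\|\mathbf{e}_{\nu+1}\|\ge 1$ by Pythagoras), and obtains the partner from $\hat{\mathbf{f}}=\cJ_x(\mathbf{e}_{\nu+1})/\|\cJ_x(\mathbf{e}_{\nu+1})\|^2$, which automatically satisfies $\omega(\mathbf{e}_{\nu+1},\hat{\mathbf{f}})=1$ with $\|\hat{\mathbf{f}}\|\le K_\omega$, and is then corrected by $P$. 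So the paper's inductive step is shorter and needs no lower bound on $\angle(V,V^\omega)$, while your route, though longer, isolates a reusable quantitative fact (uniform non-degeneracy of $\omega|_{V^\omega}$ with constants depending only on $K_1$, $N$, $K_\omega$). For the final estimate on $\|\cL\|$ the paper simply reads off the coordinates $p_i=\omega(v,\mathbf{f}_i)$, $q_i=-\omega(v,\mathbf{e}_i)$ and bounds them by \eqref{e.K omega}, whereas you use the adjoint identity $\cJ_x=\cL^* J_0\cL$; both give the same kind of bound, the paper's being marginally more elementary.
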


\begin{proof}
Fix an orthosymplectic set
$\{\mathbf{e}_1,\ldots, \mathbf{e}_\nu, \mathbf{f}_1, \ldots, \mathbf{f}_\nu\} \subset T_x M$
composed of vectors of norm at most $K_1$.
Let $Y$ be the spanned space; it is a symplectic space (that is, $Y \cap Y^\omega = \{0\}$)
of dimension $2\nu$.
Let $P\colon  T_x M \to Y$ be the projection onto $Y$ parallel to $Y^\omega$.
It is given by the formula:
$$
P(v) = \sum_{i=1}^\nu \left[\omega(v, \mathbf{f}_i) \mathbf{e}_i - \omega(v, \mathbf{e}_i) \mathbf{f}_i \right] \, .
$$
By \eqref{e.K omega}, $\|P\| \le K_\omega K_1^2$.
Now assume $\nu< N$ and let us see how to extend the orthosymplectic set.
Take a unit vector $\hat{\mathbf{e}}$ orthogonal to $Y$,
and let $\mathbf{e}_{\nu+1} = \hat{\mathbf{e}} - P(\hat{\mathbf{e}})$.
Then $\mathbf{e}_{\nu+1}$ belongs to $Y^\omega$, and by Pythagoras'  Theorem,
its norm is at least $1$.
Consider the vector $\hat{\mathbf{f}} = \cJ_x(\mathbf{e}_{\nu+1}) / \|\cJ_x(\mathbf{e}_{\nu+1})\|^2$;
its norm is at most $K_\omega$, and
$\omega(\mathbf{e}_{\nu+1}, \hat{\mathbf{f}}) = 1$.
Let $\mathbf{f}_{\nu+1} = \hat{\mathbf{f}} - P(\hat{\mathbf{f}})$.
Then $\mathbf{f}_{\nu+1}$ belongs to $Y^\omega$ and
$\omega(\mathbf{e}_{\nu+1}, \mathbf{f}_{\nu+1}) = 1$,
so the enlarged set $\{\mathbf{e}_1,\ldots, \mathbf{e}_{\nu+1}, \mathbf{f}_1, \ldots, \mathbf{f}_{\nu+1}\}$
is orthosymplectic.
Also, we can bound $\|\mathbf{e}_{\nu+1}\|$ and $\|\mathbf{f}_{\nu+1}\|$ by functions of $K_1$.
Continuing by induction, we find the desired symplectic basis.

Now let $\cL$ be as in the statement of the lemma.
Obviously an upper bound for $\|\cL^{-1}\|$ can be found using \eqref{e.extended}.
On the other hand,
if $\cL(v) = (p_1, \ldots, p_N, q_1, \ldots q_N)$
then $p_i = \omega(v, \mathbf{f}_i)$ and $q_i = -\omega(v,\mathbf{e}_i)$.
So we can bound $\|\cL\|$ as well.
\end{proof}

\smallskip

Let us adopt the following notation:
If $A$ and $B$ are positive quantities then
$$
A \lesssim B \pmod{a, b, \ldots}
$$
means that $B/A$ is bigger than some positive quantity depending only on $a$, $b$, \ldots
(and maybe on $M$, $f$, and $p$, which are fixed).
Then $A \approx B$ and $A \gtrsim B \pmod{a,b,\ldots}$ are defined in the obvious ways.

\smallskip

Now we are ready to give the:

\begin{proof}[Proof of Lemma~\ref{l.4 types}]
Let $\alpha$, $K = \KII$, $m_0$ be given.
Let $z$ belong to $\Sigma_p(f)$, and let $z_i = f^i z$.
Assume that for some $m \ge m_0$,
the segment $\{z_0, \ldots, z_m\}$ is non-dominated, meaning that \eqref{e.nondominated} holds.

From now on, assume that
\begin{equation}\label{e.not I}
\angle (E^u_i, E^{cs}_i) \ge \alpha, \quad \text{for every $i$ with $0 \le i \le m$}
\end{equation}
and
\begin{equation}\label{e.not II}
\frac{\|Df^n | E^{cs}_i \|}{\m(Df^n | E^u_i)} \le K ,  \quad \text{for every $i$, $n$ with $0 \le i < i+n \le m$,}
\end{equation}
because otherwise we fall in one of the first two cases and there is nothing to prove.

We claim that:
\begin{equation}\label{e.improved not I}
\angle (E^u_i, E^{cs}_i), \
\angle (E^c_i, E^{us}_i), \
\angle (E^s_i, E^{uc}_i)  \gtrsim 1 \pmod{\alpha},  \quad \text{for every $i$.}
\end{equation}
From~\eqref{e.condition splitting 3} we see that
$E^{cs} = (E^s)^\omega$ and $(E^u)^\omega = E^{uc}$.
So using \eqref{e.not I} and \eqref{e.betas} we get that
$\angle (E^s_i, E^{uc}_i) >\beta_2(\alpha)$.
So we got two bounds in \eqref{e.improved not I}, and the third follows
(use for instance Lemma~2.6 from \cite{BV Annals}.)

\begin{subl}\label{sl.sympl symmetry}
Let $(\prime,\prime \prime)$ be either $(u,s)$, $(c,c)$, or $(s,u)$.
Let $i \in \{0,\ldots,m\}$.
\begin{enumerate}
\item[1.] For every unit vector $v$ in $E'_i$, there exists a unit vector $v^\star$ in $E''_i$
    such that
    $$
    |\omega (v, v^\star)| \gtrsim 1 \pmod{\alpha}.
    $$
\end{enumerate}
Moreover, if $n \in \Z$ is such that $i+n \in \{0,\ldots,m\}$ then:
\begin{enumerate}
\item[2.] If $v \in E_i'$ is a unit vector then
    $\|Df^ n (v) \| \, \|Df^n (v^\star)\| \gtrsim 1 \pmod{\alpha}$.
\item[3.] $\m(Df^n|E'_i) \, \|Df^n|E''_i\| \approx 1 \pmod{\alpha}$.
\item[4.] If $v$ is a unit vector in $E'_i$ such that $\| Df^n v \| = \m(Df^n|E_i')$ then
    $$
    \|Df^n (v^\star)\| \approx \|Df^n|E_i''\| \pmod{\alpha} .
    $$
    (That is, if $v$ is the unit vector that is most contracted by $Df^n|E'_i$,
    then $v^\star$ is a unit vector that is almost-the-most expanded by $Df^n|E''_i$.)
\end{enumerate}
\end{subl}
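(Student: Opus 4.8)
The plan is to obtain all four items in order, deducing everything from Lemma~\ref{l.jota}, the angle bound \eqref{e.improved not I}, and the $Df$-invariance of $\omega$. For item~1, the first point is that the dimension count \eqref{e.condition splitting 2} together with the isotropy relations \eqref{e.condition splitting 3} give, at every point of $\Sigma_p(f)$, the identities $(E^u)^\omega = E^{uc}$, $(E^c)^\omega = E^{us}$, $(E^s)^\omega = E^{cs}$ (in each case \eqref{e.condition splitting 3} gives the inclusion ``$\subseteq$'' and the two spaces have the same dimension). Hence in each of the three cases $(\prime,\prime\prime)\in\{(u,s),(c,c),(s,u)\}$ the space $(E'_i)^\omega$ is one of $E^{uc}_i$, $E^{us}_i$, $E^{cs}_i$, so $\angle\big((E'_i)^\omega, E''_i\big)$ is exactly one of the three angles bounded below in \eqref{e.improved not I}; in particular it is positive, so $(E'_i)^\omega \cap E''_i = \{0\}$, and since also $\dim E'_i = \dim E''_i$ I would apply Lemma~\ref{l.jota} at $x = f^i z$ with $E = E'_i$, $F = E''_i$. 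It yields an isomorphism $J\colon E'_i \to E''_i$ with $\|J^{\pm 1}\| \le B$ and $|\omega(v, Jv)| \ge B^{-1}\|v\|^2$, where $B = B_1(\beta)$ for some $\beta \gtrsim 1 \pmod{\alpha}$, hence $B$ depends only on $\alpha$ (and the fixed data $M$, $f$, $p$). For a unit $v \in E'_i$ I would set $v^\star := Jv/\|Jv\|$; since $B^{-1} \le \|Jv\| \le B$ this gives $|\omega(v, v^\star)| \ge B^{-2} \gtrsim 1 \pmod{\alpha}$, which proves item~1 and fixes the choice of $v^\star$ used in items~2 and~4.

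Item~2 is then immediate: since $f$ is symplectic, $\omega(Df^n v, Df^n v^\star) = \omega(v, v^\star)$, so by \eqref{e.K omega} we get $\|Df^n v\|\,\|Df^n v^\star\| \ge K_\omega^{-1}|\omega(v,v^\star)| \gtrsim 1 \pmod{\alpha}$ using item~1. For the lower bound in item~3, pick a unit $v \in E'_i$ realizing $\|Df^n v\| = \m(Df^n|E'_i)$ and let $v^\star$ be as in item~1; then $\m(Df^n|E'_i)\,\|Df^n|E''_i\| \ge \|Df^n v\|\,\|Df^n v^\star\| \gtrsim 1 \pmod{\alpha}$. For the matching upper bound I would use a duality trick: apply the lower bound just proved to the swapped pair $(\prime\prime,\prime)$ — which is again one of the three admissible pairs — at the index $i+n$ and with $n$ replaced by $-n$ (both $i$ and $i+n$ lie in $\{0,\dots,m\}$, and \eqref{e.improved not I} holds at every index). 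Since $Df^{-n}$ restricted to $E''_{i+n}$ (resp.\ $E'_{i+n}$) is the inverse of $Df^n|E''_i$ (resp.\ $Df^n|E'_i$), the relations $\m(L^{-1}) = \|L\|^{-1}$ and $\|L^{-1}\| = \m(L)^{-1}$ turn the resulting inequality $\m(Df^{-n}|E''_{i+n})\,\|Df^{-n}|E'_{i+n}\| \gtrsim 1$ into $\|Df^n|E''_i\|\,\m(Df^n|E'_i) \lesssim 1 \pmod{\alpha}$; combined with the lower bound this gives item~3. Item~4 then follows by combining items~2 and~3: the bound $\|Df^n v^\star\| \le \|Df^n|E''_i\|$ is trivial, while item~2 gives $\|Df^n v^\star\| \gtrsim \|Df^n v\|^{-1} = \m(Df^n|E'_i)^{-1} \approx \|Df^n|E''_i\|$ by item~3.

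I do not expect a genuinely hard step here: once Lemma~\ref{l.jota} and \eqref{e.improved not I} are available, the sublemma is essentially bookkeeping around a uniform symplectic pairing between the bundles. The two places that require care are the identification of the symplectic complements in item~1 (this is where \eqref{e.condition splitting 2} and \eqref{e.condition splitting 3} enter, and where one must match the correct angle from \eqref{e.improved not I} to each of the three pairs), and the reciprocal-co-norm bookkeeping in the upper bound of item~3, i.e.\ correctly passing to negative time via $\m(L^{-1})=\|L\|^{-1}$.
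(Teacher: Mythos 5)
Your proof is correct and follows essentially the same route as the paper: build $v^\star$ from the symplectic pairing of Lemma~\ref{l.jota} via the complement identifications $(E^u)^\omega = E^{uc}$, $(E^c)^\omega = E^{us}$, $(E^s)^\omega = E^{cs}$ and the angle bound \eqref{e.improved not I}; deduce item~2 from the $Df$-invariance of $\omega$ together with \eqref{e.K omega}; and obtain item~3 by applying the lower bound at $(i+n,-n)$, with item~4 as a combination. The only place you are more explicit than the paper is in item~3, where you pass to the swapped pair $(\prime\prime,\prime)$ before reversing time so that the resulting inequality is literally $\m(Df^n|E'_i)\,\|Df^n|E''_i\|\lesssim 1$, whereas the paper's terse ``replace $(i,n)$ by $(i+n,-n)$'' yields $\|Df^n|E'_i\|\,\m(Df^n|E''_i)\lesssim 1$ --- the same conclusion read for the swapped pair, admissible since the family $\{(u,s),(c,c),(s,u)\}$ is closed under swapping.
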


\begin{proof}
Let $\prime$, $\prime \prime$, $i$, $n$ be as in the statement.
By \eqref{e.improved not I},
$\angle ((E'_i)^\omega, E''_i)  \gtrsim 1 \pmod{\alpha}$.
Let $J_i'\colon E_i' \to E_i''$ be given by Lemma~\ref{l.jota}.
If $v \in E_i'$ is a unit vector, let $v^\star = J'_i(v) / \|J'_i v\|$.
Then $v^\star$ has the properties as in item 1.
Item 2 is evident:
$$
K_\omega \|Df^n (v)\| \, \|Df^n (v^\star)\| \ge
|\omega( Df^n (v), Df^n (v^\star))| =
|\omega(v, v^\star)| \gtrsim 1 \pmod{\alpha}.
$$
Now let $v$ be a unit vector in $E'_i$ such that $\| Df^n v \| = \m(Df^n|E_i')$.
By item 2,
$$
\m(Df^n|E'_i) \, \|Df^n|E''_i\| \ge \|Df^n (v)\| \, \|Df^n (v^\star)\|  \gtrsim 1 \pmod{\alpha} ,
$$
proving one inequality in item 3.
The other inequality
follows from the first, replacing $(i,n)$ by $(i+n,-n)$.
Item 4 follows from items 2 and~3:
$$
\|Df^n|E_i''\| \ge \|Df^n (v^\star)\| \gtrsim \frac{1}{\|Df^n (v)\|} = \frac{1}{\m(Df^n|E'_i)} \approx \|Df^n|E_i''\|
\pmod{\alpha}. \qedhere
$$
\end{proof}

Now we extract consequences from~\eqref{e.not II}:
\begin{subl}\label{sl.improved non II}
For any $i$, $n$ with $0 \le i < i+n \le m$, we have
$$
\xymatrix@=0pt{\|Df^n|E^s_i \| \ar@{-}@<-2pt> `d[r] `[rrrrrrrr]  &\lesssim
            &\m(Df^n|E^c_i) \ar@{-}@<+2pt> `d[r] `[rrrr]
            &\lesssim &1 &\lesssim
            &\|Df^n|E^c_i\| &\lesssim &\m(Df^n|E^u_i)}
\pmod{\alpha, K}
$$
% "=0pt" se refere a separacao minima entre as "colunas".  "@{-} significa que a flecha nao tem pontas.
% cada "`" significa uma curva. "d" significa que a flecha sai para baixo
Moreover, the matched pairs have product $\approx 1 \pmod{\alpha,K}$.
\end{subl}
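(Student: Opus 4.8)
The plan is to obtain every inequality in the chain directly from item~3 of Sublemma~\ref{sl.sympl symmetry} together with the hypothesis \eqref{e.not II}; no new analytic input will be needed, so the argument is essentially bookkeeping with the $\lesssim$-notation.

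First I would dispose of the ``matched pairs'' assertion. Applying item~3 of Sublemma~\ref{sl.sympl symmetry} with the choice $(u,s)$ gives $\m(Df^n | E^u_i)\,\|Df^n | E^s_i\| \approx 1 \pmod{\alpha}$, which is the product of the two extreme terms of the chain; applying it with the choice $(c,c)$ gives $\m(Df^n | E^c_i)\,\|Df^n | E^c_i\| \approx 1 \pmod{\alpha}$, which is the product of the second and fourth terms; and the product of the central term ``$1$'' with itself is trivially $\approx 1$. (So in fact $\pmod{\alpha}$ already suffices for this part.)

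Next I would handle the two inequalities surrounding the central term. Since always $\m(Df^n | E^c_i) \le \|Df^n | E^c_i\|$, the relation $\m(Df^n | E^c_i)\,\|Df^n | E^c_i\| \approx 1$ gives $\m(Df^n | E^c_i) \lesssim 1 \lesssim \|Df^n | E^c_i\| \pmod{\alpha}$ (take the geometric mean of $\m$ and $\|\cdot\|$). The rightmost inequality is the one and only place where \eqref{e.not II} enters: because $E^c_i \subset E^{cs}_i$ we have $\|Df^n | E^c_i\| \le \|Df^n | E^{cs}_i\|$, and \eqref{e.not II} bounds the right-hand side by $K\,\m(Df^n | E^u_i)$, whence $\|Df^n | E^c_i\| \lesssim \m(Df^n | E^u_i) \pmod{K}$. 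Finally, the leftmost inequality is simply the reciprocal of the rightmost one under symplectic symmetry: substituting $\m(Df^n | E^u_i) \approx \|Df^n | E^s_i\|^{-1}$ and $\|Df^n | E^c_i\| \approx \m(Df^n | E^c_i)^{-1}$ (both from the first step) into $\|Df^n | E^c_i\| \lesssim \m(Df^n | E^u_i)$ and inverting yields $\|Df^n | E^s_i\| \lesssim \m(Df^n | E^c_i) \pmod{\alpha, K}$, which closes the chain.

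I do not expect a genuine obstacle here: all the analytic weight sits in Sublemma~\ref{sl.sympl symmetry}, which itself rested on the angle estimates \eqref{e.improved not I} and on Lemma~\ref{l.jota}. The only matters requiring a little care are (i) checking that \eqref{e.not II} and item~3 of Sublemma~\ref{sl.sympl symmetry} are invoked only within their stated index ranges --- both are available for all $i$, $n$ with $0\le i<i+n\le m$ --- and (ii) tracking the implicit constants, which depend only on $\alpha$ for the symplectic-symmetry steps and acquire a dependence on $K$ solely through the single application of \eqref{e.not II}.
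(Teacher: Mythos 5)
Your argument is correct and follows essentially the same route as the paper: both derive the extreme inequality $\|Df^n|E^c_i\| \lesssim \m(Df^n|E^u_i)$ directly from~\eqref{e.not II} via $E^c \subset E^{cs}$, and then obtain all remaining inequalities (and the product relations) from item~3 of Sublemma~\ref{sl.sympl symmetry} with the choices $(u,s)$ and $(c,c)$, the leftmost inequality being the reciprocal of the rightmost. Your write-up simply makes explicit what the paper compresses into ``the other assertions follow easily.''
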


\begin{proof}
By~\eqref{e.not II},
$$
\m(Df^n | E^u_i) \gtrsim \|Df^n | E^{cs}_i\| \ge \|Df^n | E^c_i\| \pmod{K}.
$$
Then the other assertions follow easily from Sublemma~\ref{sl.sympl symmetry} (item~3).
\end{proof}

\begin{subl}\label{sl.case III}
If
\begin{equation}\label{e.III}
\frac{\big\|Df^{m_0} | E^s_k \big\|}{\m\big(Df^{m_0} | E^u_k\big)} \ge \frac{1}{2} \, , \quad
\text{for some $k$ with $0 \le k \le m-m_0$}
\end{equation}
then the segment $\{z_k, \ldots, z_{k+m_0}\}$ is of type III
(with some constant $\KIII$ that depends only on $\alpha$ and $K = \KII$).
\end{subl}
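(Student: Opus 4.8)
The plan is, for each $i$ with $0\le i\le m_0$, to build a symplectic linear chart $\cL_i\colon T_{z_i}M\to\R^{2N}$ out of two vectors $u_i\in E^u_i$ and $s_i\in E^s_i$ that (a) are carried by the derivative cocycle, i.e.\ $Df(z_i)u_i=u_{i+1}$ and $Df(z_i)s_i=s_{i+1}$, (b) satisfy $\omega(u_i,s_i)=1$, and (c) have $\|u_i\|,\|s_i\|$ bounded by a constant depending only on $\alpha$ and $K=\KII$. Completing $\{u_i,s_i\}$ to a symplectic basis by Lemma~\ref{l.sympl basis} produces $\cL_i$ with $\|\cL_i^{\pm1}\|$ bounded by such a constant and with $\cL_i(u_i)=\vetor{p_1}$, $\cL_i(s_i)=\vetor{q_1}$; then $\cL_i^{-1}\vetor{p_1}=u_i\in E^u_i$, $\cL_i^{-1}\vetor{q_1}=s_i\in E^s_i$, and $A_i=\cL_{i+1}\circ Df(z_i)\circ\cL_i^{-1}$ sends $\vetor{p_1}\mapsto\cL_{i+1}u_{i+1}=\vetor{p_1}$ and $\vetor{q_1}\mapsto\cL_{i+1}s_{i+1}=\vetor{q_1}$, i.e.\ is the identity on the $p_1q_1$-plane. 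So it all reduces to finding the vectors $u_0\in E^u_0$ and $s_0\in E^s_0$, the rest of the $u_i,s_i$ being then forced by (a). (To lighten notation we assume $k=0$; every use below of Sublemmas~\ref{sl.sympl symmetry} and~\ref{sl.improved non II} concerns index/length pairs inside $[0,m_0]\subseteq[0,m]$.)

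First I would read off the numerical content of~\eqref{e.III}. With $n=m_0$ the chain of Sublemma~\ref{sl.improved non II} collapses to $\m(Df^{m_0}|E^u_0)\gtrsim 1\gtrsim\|Df^{m_0}|E^s_0\|$, and since \eqref{e.III} says $\m(Df^{m_0}|E^u_0)\le 2\,\|Df^{m_0}|E^s_0\|$, both quantities are $\approx 1\pmod{\alpha,K}$. The same sublemma, applied to sub-blocks, gives $\m(Df^{m_0-i}|E^u_i)\gtrsim 1$ and $\|Df^{j}|E^s_i\|\lesssim 1$ for all relevant $i,j$.

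For the vectors themselves, let $u_0\in E^u_0$ be a unit vector attaining $\|Df^{m_0}u_0\|=\m(Df^{m_0}|E^u_0)$, and let $s_0=u_0^\star\in E^s_0$ be its symplectic partner supplied by Sublemma~\ref{sl.sympl symmetry}: it is a unit vector with $|\omega(u_0,s_0)|\gtrsim 1\pmod\alpha$, and by item~4 of that sublemma $\|Df^{m_0}s_0\|\approx\|Df^{m_0}|E^s_0\|\approx 1$. Setting $u_i=Df^iu_0$, $s_i=Df^is_0$, one then checks $\|u_i\|,\|s_i\|\approx 1$ for $0\le i\le m_0$: on the unstable side $\|u_i\|\ge\m(Df^i|E^u_0)\gtrsim 1$ and $\|u_i\|\le\|Df^{m_0}u_0\|/\m(Df^{m_0-i}|E^u_i)\lesssim 1$; on the stable side $\|s_i\|\le\|Df^i|E^s_0\|\,\|s_0\|\lesssim 1$ and $\|s_i\|\ge\|Df^{m_0}s_0\|/\|Df^{m_0-i}|E^s_i\|\gtrsim 1$ (using $s_i=Df^{-(m_0-i)}(Df^{m_0}s_0)$). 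Now rescale $s_0$ by $\omega(u_0,s_0)^{-1}$, whose modulus lies between a bound $\gtrsim 1$ and $K_\omega$ by~\eqref{e.K omega}, so that $\omega(u_0,s_0)=1$ while the norm estimates persist. Because $\omega$ is $Df$-invariant, $\{u_i,s_i\}$ is an orthosymplectic pair in $T_{z_i}M$ with both norms $\le K_1=K_1(\alpha,K)$, and Lemma~\ref{l.sympl basis} finishes the construction with $\KIII=K_3(K_1)$; since $K_1$, and hence $\KIII$, depends only on $\alpha$ and $K$ (and the fixed $M,f,p$) and \emph{not} on $m_0$, this is exactly the assertion.

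The main obstacle, in my view, is to get a \emph{single} pair $(u_0,s_0)$ that is simultaneously symplectically non-degenerate with uniform control \emph{and} uniformly norm-controlled along the whole block of length $m_0$: choosing good vectors separately in $E^u_0$ and in $E^s_0$ need not make $\omega(u_0,s_0)\neq 0$, and a pair with small $|\omega(u_0,s_0)|$ would blow up $\KIII$. Item~4 of Sublemma~\ref{sl.sympl symmetry} is precisely the tool that resolves this — the symplectic dual of the \emph{least}-expanded unstable direction is an \emph{almost-most}-expanded stable direction, and that is exactly the growth profile one needs in order to propagate, via the collapsed chain of Sublemma~\ref{sl.improved non II}, the norm bounds at every intermediate time.
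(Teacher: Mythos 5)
Your proof is correct and takes essentially the same route as the paper's: you pick the unit vector in $E^u_k$ least expanded by $Df^{m_0}$, pair it with its symplectic partner from item~4 of Sublemma~\ref{sl.sympl symmetry}, control the norms of the iterates, rescale to an orthosymplectic pair, and complete to a symplectic basis via Lemma~\ref{l.sympl basis}, exactly as in the paper. The only cosmetic difference is that you bound $\|u_i\|$ and $\|s_i\|$ separately through the sub-block estimates of Sublemma~\ref{sl.improved non II}, whereas the paper first bounds the ratio $\|Df^i(v^\star)\|/\|Df^i(v)\|$ using~\eqref{e.not II} and then applies the same sub-block bounds; the content is identical.
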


The interpretation of \eqref{e.III} is that the segment $\{z_k, \ldots, z_{k+m_0}\}$
is non-dominated in a stronger way: $E^u$ does not dominate $E^s$.

\begin{proof}
Together with Sublemma~\ref{sl.improved non II}, the assumption~\eqref{e.III} gives
$$
\|Df^{m_0} | E^s_k\| \approx 1 \approx \m(Df^{m_0} | E^u_k) \pmod{\alpha,K}.
$$

Let $v$ be a unit vector in $E_k^u$ that is least expanded by $Df^{m_0}$,
that is $\|Df^{m_0} v\| = \m(Df^{m_0} | E_k^u)$.
By Sublemma~\ref{sl.sympl symmetry}, the unit vector
$v^\star \in E_k^s$ satisfies
$\|Df^{m_0} (v^\star)\| \approx \| Df^{m_0} | E_k^s\| \pmod{\alpha}$.
Using~\eqref{e.not II} we get, for each $i = 0, \ldots, m_0$,
$$
K \ge
\frac{\|Df^i (v^\star)\|}{\|Df^i (v)\|} \ge
\frac{\|Df^{m_0} (v^\star)\| \, / \, \|Df^{m_0-i}| E^s_{k+i} \|}{\|Df^{m_0} (v)\| \, / \, \m(Df^{m_0-i}| E^u_{k+i})}
\gtrsim 1 \pmod{\alpha,K}.
$$
That is, $\|Df^i (v^\star)\| \approx \|Df^i (v)\|$.
In addition, both norms are $\approx 1$, by Sublemma~\ref{sl.improved non II}.
For each $i = 0, \ldots, m_0$, let
$$
\mathbf{e}_{1,i} = Df^i (v), \quad
\mathbf{f}_{1,i} = \frac{Df^i (v^\star)}{\omega(v, v^\star)} \, .
$$
Then $\{\mathbf{e}_{1,i}, \mathbf{f}_{1,i}\}$ is a orthosymplectic subset of $T_{z_{k+i}} M$.
By Lemma~\ref{l.sympl basis}, we can extend it to a symplectic basis
$\{\mathbf{e}_{1,i}, \mathbf{f}_{1,i}, \ldots, \mathbf{e}_{N,i}, \mathbf{f}_{N,i}\}$,
and furthermore if $\cL_i$ is the linear map that takes this basis to the
canonical symplectic basis of $\R^{2N}$ then
$\|\cL_i^{\pm 1}\| \lesssim 1 \pmod{\alpha,K}$.
The map $\cL_{i+1} \circ Df(z_{k+i}) \circ \cL_i^{-1}$
is the identity on the plane $p_1 q_1$.
This shows that the segment being considered is of type~III.
\end{proof}

Sublemma~\ref{sl.case III} says that if \eqref{e.III} holds then we are done.
Assume from now on that \eqref{e.III} does not hold, that is,
\begin{equation}\label{e.not III}
\frac{\|Df^{m_0} | E^s_k\|}{\m(Df^{m_0} | E^u_k)} < \frac{1}{2} \, , \quad
\text{for all $k$ with $0 \le k \le m-m_0$}
\end{equation}
\emph{From now on, all relations $\gtrsim$, $\lesssim$, $\approx$ will be meant mod $\alpha$, $K$, $m_0$.}

\begin{subl}
$E^u$ is uniformly expanding and $E^s$ is uniformly contracting.
That is, there exists $\lambda > 1$ and $C>1$ (depending on $\alpha$, $K$, $m_0$)
such that
\begin{equation}\label{e.uniform u and s}
\left.
\begin{array}{l}
\m(Df^n | E^u_i ) > C^{-1} \lambda^n \\
\|Df^n | E^s_i \| < C \lambda^{-n}
\end{array}
\right\}
\text{$\forall$ $i$, $n$ with $0 \le i < i+n \le m$.}
\end{equation}
\end{subl}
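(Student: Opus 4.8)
We want to show that under the standing assumptions \eqref{e.not I}, \eqref{e.not II}, and \eqref{e.not III}, the bundles $E^u$ and $E^s$ are uniformly hyperbolic over the segment. The idea is that \eqref{e.not III} says precisely that the splitting $E^u \oplus E^s$ (restricted to the symplectic $2p$-dimensional directions, so to speak) is $m_0$-dominated along every sub-segment of length $m_0$, hence dominated in the usual uniform-exponential sense; but since $E^u$ and $E^s$ are symplectically paired, $\m(Df^n|E^u)$ and $\|Df^n|E^s\|$ are reciprocal up to bounded constants (Sublemma~\ref{sl.sympl symmetry}, item~3), so domination of the pair forces genuine expansion of $E^u$ and contraction of $E^s$.

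\textbf{Step 1: Telescoping the $m_0$-domination.} Fix $i$ and write an arbitrary length $n$ as $n = q m_0 + s$ with $0 \le s < m_0$. Applying \eqref{e.not III} successively at the points $z_i, z_{i+m_0}, \ldots, z_{i+(q-1)m_0}$, we get
$$
\frac{\|Df^{qm_0} | E^s_i\|}{\m(Df^{qm_0} | E^u_i)} \le \frac{\|Df^{m_0}|E^s_{i+(q-1)m_0}\|}{\m(Df^{m_0}|E^u_{i+(q-1)m_0})} \cdots \frac{\|Df^{m_0}|E^s_i\|}{\m(Df^{m_0}|E^u_i)} \le 2^{-q},
$$
using submultiplicativity of the norm and supermultiplicativity of the co-norm. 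Absorbing the remainder of length $s < m_0$ into a bounded constant $C$ (depending on $\alpha$, $K$, $m_0$ via the uniform bound $\|Df\|, \m(Df)^{-1} \le \mathrm{const}$ on $M$), we obtain
$$
\frac{\|Df^n | E^s_i\|}{\m(Df^n | E^u_i)} \le C \lambda_0^{-n}
$$
for some $\lambda_0 = 2^{1/m_0} > 1$, valid for all admissible $i$, $n$. This is the uniform domination of the pair $E^u \oplus E^s$.

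\textbf{Step 2: Splitting the gap symmetrically.} By Sublemma~\ref{sl.sympl symmetry} item~3 applied with $(\prime,\prime\prime) = (u,s)$, we have $\m(Df^n|E^u_i)\,\|Df^n|E^s_i\| \approx 1$. Combining with Step~1, $\m(Df^n|E^u_i)^2 \gtrsim \m(Df^n|E^u_i)\,\|Df^n|E^s_i\|^{-1} \gtrsim \lambda_0^n / C$, hence $\m(Df^n|E^u_i) \gtrsim \lambda_0^{n/2}$; symmetrically $\|Df^n|E^s_i\| \lesssim \lambda_0^{-n/2}$. Setting $\lambda = \lambda_0^{1/2} > 1$ and enlarging $C$ gives exactly \eqref{e.uniform u and s}. (Strictly, one should also invoke Sublemma~\ref{sl.improved non II}, which already records that $\m(Df^n|E^u_i) \gtrsim 1$ and $\|Df^n|E^s_i\| \lesssim 1$; this pins down the correct sign of the exponent and guarantees the constant is uniform.)

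\textbf{Main obstacle.} The only delicate point is ensuring that all the implied constants in Step~2 are genuinely uniform over $i$, $n$, and $z$, and in particular that they depend on $m_0$ in a controlled way — here we rely on the fact established earlier (Sublemma~\ref{sl.improved non II} and its proof) that the relations $\approx$, $\lesssim$ are uniform $\pmod{\alpha, K, m_0}$, so there is nothing new to check. The genuinely substantive content is Step~1, the telescoping, and that is entirely routine submultiplicativity; everything else is bookkeeping with constants that the preceding sublemmas have already licensed.
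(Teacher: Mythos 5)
Your proof is correct and follows essentially the same route as the paper: telescoping the $m_0$-non-domination condition \eqref{e.not III} to get $\|Df^n|E^s_i\|/\m(Df^n|E^u_i) \le C\lambda_0^{-n}$, and then using the symplectic pairing $\m(Df^n|E^u_i)\,\|Df^n|E^s_i\| \approx 1$ (Sublemma~\ref{sl.sympl symmetry}, item~3) to split the exponential gap symmetrically between expansion of $E^u$ and contraction of $E^s$. The paper's own proof is just a compressed version of your two steps, so there is nothing to add.
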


\begin{proof}
It follows from~\eqref{e.not III} that
$$
\frac{\|Df^n | E^s_i \|}{\m(Df^n | E^u_i )} \le
\left(\sup_{x\in M} \frac{\|Df(x)\|}{\m(Df(x))} \right)^{m_0 - 1}
\left(\frac{1}{2}\right)^{\left\lfloor \frac{n}{m_0}\right\rfloor} \, .
$$
The right hand side is exponentially small with $n$.
Since $\|Df^n | E^s_i \| \approx 1 / \m(DF^n | E^u_i )$,
the lemma follows.
\end{proof}

For the first time, let us use the hypothesis of non-domination
of the segment $\{z_0, \ldots, z_m\}$:
\begin{equation}\label{e.nondominated again}
\frac{\|Df^m | E^{cs}_0 \|}{\m( Df^m | E^u_0)} \ge \frac{1}{2} \, .
\end{equation}

We claim that:
\begin{equation}\label{e.improved nondominated}
\|Df^m | E^s_0\|  \approx \m(Df^m | E^c_0) \quad \text{and} \quad
\|Df^m | E^c_0\|  \approx \m(Df^m | E^u_0).
\end{equation}
Since $\|Df^m | E^s_0\| \lesssim 1 \lesssim \|Df^m | E^c_0 \|$ and $\angle(E^s_0, E^c_0) \approx 1$,
we have $\|Df^m | E^{cs}_0 \| \approx \|Df^m | E^c_0 \|$.
So~\eqref{e.nondominated again}, together with Sublemma~\ref{sl.improved non II},
gives the second relation in~\eqref{e.improved nondominated}.
The first relation follows from the second.

Let $v^u \in E^u_0$ and $v^{cs} \in E^c_0$ be unit vectors such that
$$
\|Df^m v^u\| = \m(Df^m | E^u_0) \quad \text{and} \quad
\|Df^m v^{cs} \| = \m(Df^m | E^c_0).
$$
Let $v^s = (v^u)^\star \in E^s_0$ and $v^{cu} = (v^{cs})^\star \in E^c_0$.
Then, by Sublemma~\ref{sl.sympl symmetry},
$$
\|Df^m v^s\|  \approx \|Df^m | E_0^u \| \quad \text{and} \quad
\|Df^m v^{cu} \| \approx  \| Df^m | E^c_0 \|.
$$

\begin{subl}
If $0\le i \le m$ then
\begin{equation}\label{e.4 approx}
\|Df^i v^u\| \approx \|Df^i v^{cu}\| \approx \frac{1}{\|Df^i v^{cs}\|} \approx \frac{1}{\|Df^i v^s \|} \, .
\end{equation}
If $n>0$ and $i+n \le m$ then
\begin{equation}\label{e.bla}
\m(Df^n | E^u_i) \approx \frac{\|Df^{n+i} v^u\|}{\|Df^i v^u\|} \, .
\end{equation}
\end{subl}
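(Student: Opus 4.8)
The plan is to establish the four quantities pairwise comparable, in the order: first the two ``diagonal'' relations $\|Df^iv^u\|\approx\|Df^iv^s\|^{-1}$ and $\|Df^iv^{cu}\|\approx\|Df^iv^{cs}\|^{-1}$, then the ``cross'' relation $\|Df^iv^u\|\approx\|Df^iv^{cu}\|$, and finally \eqref{e.bla}. For the easy halves of the diagonal relations I would use that $Df$ preserves $\omega$: then $|\omega(Df^iv^u,Df^iv^s)|=|\omega(v^u,v^s)|\gtrsim 1$ and $|\omega(Df^iv^{cs},Df^iv^{cu})|=|\omega(v^{cs},v^{cu})|\gtrsim 1$ by item~1 of Sublemma~\ref{sl.sympl symmetry}, so \eqref{e.K omega} immediately gives $\|Df^iv^u\|\,\|Df^iv^s\|\gtrsim 1$ and $\|Df^iv^{cu}\|\,\|Df^iv^{cs}\|\gtrsim 1$, i.e.\ half of each diagonal relation.

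For the reverse inequalities, a first observation is that \eqref{e.not II} (taken with $n=i$) gives, for every unit $u\in E^{cs}_0$, the bound $\|Df^iu\|\le\|Df^i|E^{cs}_0\|\le K\,\m(Df^i|E^u_0)\le K\|Df^iv^u\|$; applied to $u=v^s,v^{cs},v^{cu}\in E^{cs}_0$ this shows that $\|Df^iv^u\|$ already dominates the other three up to the factor $K$. So everything reduces to checking that $v^u$ is not \emph{much more} expanded than those vectors, and the heart of the matter is the estimate that, for $0\le i\le m$,
$$
\|Df^iv^u\|\approx\m(Df^i|E^u_0)
\qquad\text{and}\qquad
\|Df^iv^{cs}\|\approx\m(Df^i|E^c_0),
$$
together with the auxiliary relation $\|Df^i|E^s_0\|\approx\m(Df^i|E^c_0)$ needed to align the chain. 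Granting these, item~4 of Sublemma~\ref{sl.sympl symmetry} applied on the interval $[0,i]$ and the matched-pair estimates of Sublemma~\ref{sl.improved non II} give $\|Df^iv^s\|\approx\|Df^i|E^s_0\|\approx\m(Df^i|E^u_0)^{-1}\approx\|Df^iv^u\|^{-1}$, completing the $(u,s)$ relation, and the $(c,c)$ relation follows identically; the cross relation then drops out because $\m(Df^i|E^u_0)\approx\|Df^i|E^s_0\|^{-1}\approx\m(Df^i|E^c_0)^{-1}\approx\|Df^i|E^c_0\|$, and $\|Df^iv^{cu}\|\approx\|Df^i|E^c_0\|$ by item~4 again. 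Finally \eqref{e.bla} follows by running the same estimate on the shifted window $[i,i+n]$, where $Df^iv^u/\|Df^iv^u\|$ takes the role of the least-expanded vector of $E^u_i$ (its ``$\le$'' direction being trivial).

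The main obstacle is the displayed estimate itself: for a general linear cocycle the vector least expanded by $Df^m$ need not be even approximately least expanded by $Df^i$, so the symplectic structure must enter. I would prove it by splitting the time interval $[0,m]=[0,i]\cup[i,m]$: if some $w\in E^u_0$ were expanded far less than $v^u$ over $[0,i]$, then, since $\|Df^mw\|\ge\m(Df^m|E^u_0)=\|Df^mv^u\|$, the vector $w$ would have to be expanded far more than $v^u$ over $[i,m]$; transporting this through the symplectic pairing, which identifies $E^s_j$ with $(E^u_j)^{*}$ up to bounded distortion (this bounded non-degeneracy of $\omega$ on $E^{us}$, and likewise on the symplectic subspace $E^c$, is itself a consequence of Sublemma~\ref{sl.improved non II} via a Jacobian computation using that $Df$ preserves symplectic volume), turns the hypothetical over-expansion into an over-contraction in $E^s$ that contradicts the uniform contraction \eqref{e.uniform u and s}. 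The estimate for $v^{cs}$ inside $E^c$, and the auxiliary relation $\|Df^i|E^s_0\|\approx\m(Df^i|E^c_0)$, are obtained by the same interval-splitting bootstrap, using in addition the non-domination hypothesis \eqref{e.nondominated again} to pin down the behavior over the full window.
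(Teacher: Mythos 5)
Your opening reductions are fine (the lower bounds $\|Df^iv^u\|\,\|Df^iv^s\|\gtrsim 1$, $\|Df^iv^{cu}\|\,\|Df^iv^{cs}\|\gtrsim 1$ from the pairing, and the upper bound from \eqref{e.not II}), but the ``heart of the matter'' estimate $\|Df^iv^u\|\approx\m(Df^i|E^u_0)$ is exactly where the proposal has a genuine gap: the mechanism you offer for it --- interval splitting plus the $E^u$--$E^s$ symplectic pairing plus the uniform contraction \eqref{e.uniform u and s} --- cannot produce it. The pairing of Sublemma~\ref{sl.sympl symmetry} only inverts singular values, so a hypothetical over-expansion of some direction of $E^u$ on $[i,m]$ translates into \emph{extra} contraction of the paired direction of $E^s$, which is perfectly compatible with \eqref{e.uniform u and s} (an upper bound on $\|Df^n|E^s\|$); nothing in \eqref{e.not II}, \eqref{e.uniform u and s} or the pairing bounds $\|Df^n|E^u\|$ from above, so no contradiction arises. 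Concretely, take a block-diagonal model with $E^c$ isometric, $E^u$ evolving by $\mathrm{diag}(a_j,b_j)$ and $E^s$ by $\mathrm{diag}(a_j^{-1},b_j^{-1})$, where $a_j=4$, $b_j=2$ for $j<m/2$ and $a_j=2$, $b_j=4$ for $j\ge m/2$ (tweaked so the $a$-axis is the strict minimizer $v^u$ at time $m$). All of \eqref{e.not I}, \eqref{e.not II}, \eqref{e.not III}, \eqref{e.uniform u and s} hold with constants independent of $m$, yet $\|Df^{m/2}v^u\|/\m(Df^{m/2}|E^u_0)=2^{m/2}$. So the estimate (and with it \eqref{e.bla}) is simply \emph{false} without the non-domination hypothesis \eqref{e.nondominated again}, which you reserve only for the center estimates; your derivation of \eqref{e.bla} ``on the shifted window'' suffers from the same problem, since it presupposes the unproved quasi-minimality of $Df^iv^u$ in $E^u_i$.

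The missing idea is that \eqref{e.nondominated again} must enter the $E^u$ estimate itself, routed through the center bundle, because the only quantity controlled from \emph{both} sides at every intermediate time is the ratio $\|Df^iv^{cu}\|/\|Df^iv^u\|$: it is $\le K$ by \eqref{e.not II}, and it is $\gtrsim 1$ because \eqref{e.improved nondominated} gives $\|Df^mv^u\|\approx\|Df^mv^{cu}\|$ while the complementary ratio over $[i,m]$ is again bounded via \eqref{e.not II} and Sublemma~\ref{sl.improved non II}. This is the paper's route: it first proves $\|Df^iv^u\|\approx\|Df^iv^{cu}\|$ and $\|Df^iv^s\|\approx\|Df^iv^{cs}\|$ by this two-sided pinching, then obtains \eqref{e.bla} from the sandwich $\m(Df^n|E^u_i)\le\|Df^{n+i}v^u\|/\|Df^iv^u\|\approx\|Df^{n+i}v^{cu}\|/\|Df^iv^{cu}\|\le\|Df^n|E^c_i\|\lesssim\m(Df^n|E^u_i)$, and only afterwards deduces the diagonal relations $\|Df^iv^u\|\approx 1/\|Df^iv^s\|$ from $\|Df^iv^u\|\approx\m(Df^i|E^u_0)$, $\|Df^iv^s\|\approx\|Df^i|E^s_0\|$ and the matched-pair identities. (A smaller repairable point: item~4 of Sublemma~\ref{sl.sympl symmetry} applied on $[0,i]$ would require the exact minimizer on that window; items~2 and~3 suffice there.)
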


\begin{proof}
From~\eqref{e.improved nondominated},
$\|Df^m  v^u\| \approx \|Df^m v^{cu} \|$.
Therefore
$$
K \ge \frac{\|Df^i v^{cu}\|}{\|Df^i v^u\|} \ge
\frac{\|Df^m v^{cu}\|/ \|Df^{m-i}| E^c_i \|}{\|Df^m v^u\| / \m(Df^{m-i}| E^u_0)} \gtrsim 1,
$$
that is, $\|Df^i v^u\| \approx \|Df^i v^{cu} \|$.
Analogously, $\|Df^i v^s\| \approx \|Df^i v^{cs}\|$.
Now, from Sublemma~\ref{sl.improved non II},
$$
\m(Df^n|E^u_i) \le
\frac{\|Df^{n+i} v^u\|}{\|Df^i v^u\|} \approx
\frac{\|Df^{n+i} v^{cu}\|}{\|Df^i v^{cu}\|} \le
\|Df^n| E^c_i \| \lesssim
\m(Df^n|E^u_i),
$$
proving~\eqref{e.bla}.
In particular, $\|Df^n v^u\| \approx \m(Df^n |E^u_0)$. 
Analogously, $\|Df^n v^s\| \approx \| Df^n |E^s_0 \|$.
Therefore $\|Df^n v^u\| \approx 1 / \|Df^n v^s\|$,
completing the proof of~\eqref{e.4 approx}.
\end{proof}

For $i=0,\ldots,m$, let
\begin{alignat*}{2}
\mathbf{e}_{1,i} &= \frac{Df^i v^u}{\|Df^i v^u\|}                           \, , &\quad
\mathbf{f}_{1,i} &= \frac{\|Df^i v^u\| \, Df^i v^s}{\omega(v^u, v^s)}\, , \\
\mathbf{e}_{2,i} &= \frac{Df^i v^{cu}}{\|Df^i v^u\|}                           \, , &\quad
\mathbf{e}_{2,i} &= \frac{\|Df^i v^u\| \, Df^i v^{cs}}{\omega(v^{cu}, v^{cs})}\, .
\end{alignat*}
Then $\{\mathbf{e}_{1,i}, \mathbf{f}_{1,i}, \mathbf{e}_{2,i}, \mathbf{f}_{2,i}\}$
is a orthosymplectic subset of $T_{z_i} M$.
By Lemma~\ref{l.sympl basis}, we can extend it to a symplectic basis
$\{\mathbf{e}_{1,i}, \mathbf{f}_{1,i}, \ldots, \mathbf{e}_{N,i}, \mathbf{f}_{N,i}\}$,
and furthermore if $\cL_i$ is the linear map that takes this basis to the
canonical symplectic basis of $\R^{2N}$ then
$\|\cL_i^{\pm 1}\| \lesssim 1$.
The restriction of the map $A_i = \cL_{i+1} \circ Df(z_i) \circ \cL_i^{-1}$
to the $4$-plane $p_1 p_2 q_1 q_2$ is given by
$$
A_i \colon  (p_1, p_2,q_1, q_2) \mapsto (c_i p_1, c_i p_2, c_i^{-1} q_1, c_i^{-1} q_2)
\quad \text{where } c_i =  \frac{\|Df^{i+1} v^u\|}{\|Df^i v^u\|} \, .
$$
Unfortunately, $c_i$ is not necessarily always bigger than $1$ as required in the definition of type IV.
To remedy that:

\begin{subl}
Given $C_1>0$, $\delta_1>0$, and $\ell \in \N$ there exist $C_2>0$, $\delta_2>0$
with the following properties:
Given a sequence $\{a_i\}_{i=0}^{m-1}$ with $|a_i| \le C_1$ for each $i$ and
$\sum_{j=i}^{i+\ell-1} a_j > \delta_1$
for $0 \le i \le m-\ell$,
there exists a sequence $\{b_i\}_{i=0}^m$ such that $|b_i| \le C_2$ and
$b_{i+1} + a_i - b_i > \delta_2$ for each $i$.
\end{subl}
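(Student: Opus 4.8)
The plan is to construct the sequence $\{b_i\}$ essentially as a running sum of the $a_j$'s, reset to stay bounded. First I would observe that the hypothesis $\sum_{j=i}^{i+\ell-1} a_j > \delta_1$ says that over every window of length $\ell$ the partial sums increase by a definite amount; so if one sets $s_k = \sum_{j=0}^{k-1} a_j$ (with $s_0 = 0$), the sequence $s_k$ drifts to $+\infty$ at a linear rate, more precisely $s_{k+\ell} > s_k + \delta_1$, while consecutive terms satisfy $|s_{k+1} - s_k| = |a_k| \le C_1$. The idea is to take $b_i$ to be $s_i$ measured relative to its running maximum, i.e. $b_i = s_i - \max_{0 \le k \le i} s_k$, possibly shifted down by a constant so that the required strict inequality holds.

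Next I would verify the two conclusions. For the upper bound on $|b_i|$: since $b_i \le 0$ by construction, I only need a lower bound, i.e. I must show $s_i$ never falls too far below its past maximum. If $s_i \le \max_{k \le i} s_k - M$ for $M$ large, then pick the index $k^* \le i$ where the maximum was attained; between $k^*$ and $i$ the partial sums dropped by at least $M$, but the window estimate forces an increase of at least $\delta_1$ over each block of $\ell$ steps, and within an incomplete block the drop is at most $(\ell - 1)C_1$; combining these gives $i - k^* \le \ell(\,(\ell-1)C_1 + M\,)/\delta_1$ or so — wait, that bounds time in terms of $M$, which is the wrong direction. The correct argument: over $i - k^*$ steps the sum changed by $s_i - s_{k^*} \ge$ (number of complete $\ell$-blocks)$\cdot \delta_1 - (\text{leftover})\cdot C_1 \ge \lfloor (i-k^*)/\ell \rfloor \delta_1 - \ell C_1$, which is bounded below by a constant depending only on $C_1, \delta_1, \ell$; hence $b_i = s_i - s_{k^*} \ge -C_2'$ for an explicit $C_2'$. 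For the step inequality: $b_{i+1} + a_i - b_i = (s_{i+1} - m_{i+1}) + (s_{i+1} - s_i) - (s_i - m_i) $ where $m_i = \max_{k\le i} s_k$; this is messy, so I would instead directly choose $b_i$ so that $b_{i+1} - b_i = a_i$ whenever we are not resetting, and handle reset steps separately — at a reset, $b_{i+1} = 0 \ge b_i + a_i$ could fail, so one needs the drift to guarantee resets are "upward". After the dust settles, subtracting a suitable constant $\delta_2 \le \delta_1/\ell$ from the naive telescoping choice will absorb the slack and give $b_{i+1} + a_i - b_i > \delta_2$ uniformly.

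The cleanest version, which I would actually write, avoids running maxima: define $b_i$ recursively by $b_0 = 0$ and $b_{i+1} = \min(0,\, b_i + a_i - \delta_2)$, where $\delta_2 > 0$ is chosen small (say $\delta_2 = \delta_1/(2\ell)$). Then $b_{i+1} \le b_i + a_i - \delta_2$ always, so the step inequality $b_{i+1} + a_i - b_i > \delta_2$ needs the \emph{reverse}; so instead I set $b_{i+1} = \max(-C_2,\, b_i - a_i + \delta_2)$ with the sign chosen to match the desired inequality $b_{i+1} + a_i - b_i > \delta_2$, i.e. $b_{i+1} > b_i - a_i + \delta_2$, which holds as equality unless the max clips at $-C_2$. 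Clipping at $-C_2$ happens only when $b_i - a_i + \delta_2 \le -C_2$, i.e. $b_i$ is already near $-C_2$ and $a_i$ is positive-ish; the window hypothesis, applied over the last $\ell$ steps, shows $\sum a_j > \delta_1$, which forces $b$ to have climbed by roughly $\delta_1 - \ell\delta_2 > 0$ over that window, contradicting $b$ staying pinned at $-C_2$ — hence for $C_2$ large enough (depending on $C_1, \ell, \delta_1$) the clip is never active, the recursion is pure telescoping, and both conclusions hold with $\delta_2 = \delta_1/(2\ell)$ and $C_2 = \ell C_1 + \delta_1$ or similar.

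The main obstacle is getting the signs and the direction of the clipping right so that the window hypothesis is used in the correct direction: the hypothesis bounds sums from \emph{below}, so it can only prevent $b_i$ from staying low, which is exactly what is needed to show the one-sided clip is inactive. Once that is pinned down the rest is bookkeeping. I expect this sublemma to be used immediately afterward by applying it with $a_i = \log c_i$ (where $c_i = \|Df^{i+1}v^u\|/\|Df^i v^u\|$), so that $b_i$ furnishes a bounded logarithmic coordinate change $\cL_i \rightsquigarrow e^{b_i}\cL_i$ making the new $c_i' = c_i e^{b_{i+1}-b_i} = e^{a_i + b_{i+1} - b_i} > e^{\delta_2} =: \tau > 1$, which is precisely the uniform-expansion condition required in the definition of a type IV segment.
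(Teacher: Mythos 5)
Your clipped-recursion construction is a different route from the paper's, which is a one-line explicit formula: extend the data by $a_i = C_1$ for $i \ge m$ (this extension preserves the window bound), set $b_i = \frac{1}{\ell}\sum_{j=0}^{\ell-1}(\ell-1-j)a_{i+j}$, and check by telescoping that $b_{i+1}+a_i-b_i = \frac{1}{\ell}\sum_{j=0}^{\ell-1}a_{i+j} > \delta_1/\ell$. Thus $\delta_2 = \delta_1/\ell$ and $C_2 = (\ell-1)C_1/2$ work with no case analysis at all.

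Your proposal, however, contains a genuine sign error that makes its central claim false. In the recursion $b_{i+1} = \max(-C_2,\ b_i - a_i + \delta_2)$ the term $a_i$ enters with a \emph{minus} sign, so the window hypothesis $\sum_{j=i}^{i+\ell-1}a_j > \delta_1$ gives an unclipped increment $b_{i+\ell}-b_i = -\sum_{j=i}^{i+\ell-1} a_j + \ell\delta_2 < -(\delta_1-\ell\delta_2) < 0$: the sequence $b$ \emph{falls} over every $\ell$-window, it does not climb as you assert. For $m$ large the unclipped partial sums therefore drift linearly to $-\infty$, so for any fixed $C_2$ the clip is necessarily active — the opposite of your pivotal claim that ``for $C_2$ large enough the clip is never active, the recursion is pure telescoping.'' With that claim gone you have established neither the step inequality (the unclipped case gives only equality $b_{i+1}+a_i-b_i = \delta_2$, not $>\delta_2$) nor the bound $|b_i| \le C_2$. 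The construction can in fact be salvaged: when the clip fires, $b_i - a_i + \delta_2 < -C_2 = b_{i+1}$ rearranges directly to $b_{i+1}+a_i-b_i > \delta_2$, and the downward drift over windows together with the per-step fluctuation bound $C_1+\delta_2$ yields $b_i \le (\ell-1)(C_1+\delta_2)$ — but this is precisely the bookkeeping you deferred, and the heuristic you offered for why it should go through (``the hypothesis can only prevent $b_i$ from staying low'') points in the wrong direction. Your closing paragraph, applying the sublemma with $a_i = \log c_i$ and $\hat{\cL}_i = D_i\circ\cL_i$ to get $\hat c_i > e^{\delta_2} =: \tau$, does match the paper's use of the result.
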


\begin{proof}
Let $a_i = C_1$ for $i \ge m$.
Let $b_i = \frac{1}{\ell} \sum_{j=0}^{\ell-1} (\ell-1-j) a_{i+j}$.
Then $b_{i+1}+a_i-b_i = \frac{1}{\ell} \sum_{j=0}^{\ell-1} a_{i+j} > \delta_1 / \ell$.
\end{proof}

Let $a_i = \log c_i$ and let $b_i$ be given by the sublemma.
Let $D_i \colon  \R^{2N} \to \R^{2N}$ be the symplectic linear map
defined by $D_i(\vetor{p_j}) = e^{b_i} \vetor{p_j}$, $D_i(\vetor{q_j}) = e^{-b_i} \vetor{q_j}$.
Consider the new map $\hat{\cL}_i = D_i \circ \cL_i$;
then the action of
$\hat{\cL}_{i+1} \circ Df(z_i) \circ \hat{\cL}_i^{-1}$
on the $4$-plane $p_1 p_2 q_1 q_2$ is given by
$$
(p_1, p_2,q_1, q_2) \mapsto (\hat{c}_i p_1, \hat{c}_i p_2, \hat{c}_i^{-1} q_1, \hat{c}_i^{-1} q_2)
\quad \text{where } \hat{c}_i = e^{b_{i+1}-b_i} c_i \, .
$$
We have $\hat{c}_i > \tau > 1$ where $\tau$ depends only on $\alpha$, $K$, and $m_0$.
This proves that the segment $\{z_0,\ldots,z_m\}$ is of type~IV,
completing the proof of Lemma~\ref{l.4 types}.
\end{proof}

%%%%%%%%%%%%%%%%%%%%%%%%%%%%%%%%%%%%%%%%%%%%%%%%%%%%%%%%%%%%%%%%%%%%%%
\section[Proof of flexibility]{Proof of Flexibility} \label{s.flex II}
%%%%%%%%%%%%%%%%%%%%%%%%%%%%%%%%%%%%%%%%%%%%%%%%%%%%%%%%%%%%%%%%%%%%%%

The goal of this section is to prove the Main Lemma.
Thus we will show that each of the cases I--IV from Lemma~\ref{l.4 types}
implies flexibility.

Let the diffeomorphism $f$, $p \in \{1,\ldots,N\}$, $\eps>0$, and $\kappa>0$
be fixed throughout this section.
For concision, we will say that a
segment $\{z, \ldots, f^{n-1} z\}$ (with $z\in \Sigma_p(f)$) is \emph{flexible}
if the split sequence
$Df(f^i z)\colon {E^u \oplus E^{cs} \hookleftarrow}$ ($0 \le i < n$)
is $(\eps,\kappa)$-flexible.

We now state four lemmas:

\begin{lemma}\label{l.case I}
There is $\alpha>0$ such that if $z \in \Sigma_p(f)$ satisfies
$\angle(E^u (z) , E^{cs} (z))< \alpha$
then the segment (of length $1$) $\{z\}$ is flexible.
\end{lemma}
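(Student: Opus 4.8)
The approach is the one used in \cite{BV Annals} for the analogous volume-preserving situation: I would perturb $f$ near $z$ by a tiny Hamiltonian perturbation that tilts $E^u$ just enough to move one of its directions into $E^{cs}$. Since the segment has length $1$, there is no iteration of the perturbation along the orbit, and the construction is elementary. The first point is to fix the order of the quantifiers: given $\eps>0$, $\kappa>0$ (and the fixed data $f$, $N$, $K_\cA$), the constant $\alpha>0$ is to be chosen only at the very end, small enough to absorb the finitely many smallness requirements below.

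Choose charts $\phi_0,\phi_1\in\cA$ with $\phi_0(z)=\phi_1(f(z))=0$. They induce on $\R^{2N}$ a split sequence of length $1$: a single symplectic linear map $A_0$ together with invariant splittings $\hat E^1_0\oplus\hat E^2_0$ and $\hat E^1_1\oplus\hat E^2_1$ (where $E^1=E^u$, $E^2=E^{cs}$) with $A_0\hat E^\ast_0=\hat E^\ast_1$. Because $\|D\phi_i^{\pm1}\|<K_\cA$, the hypothesis $\angle(E^u(z),E^{cs}(z))<\alpha$ gives $\angle(\hat E^1_0,\hat E^2_0)<\alpha'$ (cf.\ \cite[Lemma~2.7]{BV Annals}), where $\alpha'$ depends only on $\alpha$ and $K_\cA$ and tends to $0$ with $\alpha$. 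By definition it is enough to prove that this induced split sequence on $\R^{2N}$ is $(\eps,\kappa)$-flexible; in fact I will produce a single neighborhood $U$ of $0$ that works for every $\gamma>0$ at once.

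Linear step: pick unit vectors $v\in\hat E^1_0$, $w\in\hat E^2_0$ realizing the angle $\angle(v,w)<\alpha'$. The group $\Sp(2N,\R)$ acts transitively on $\R^{2N}\setminus\{0\}$, and the orbit map $R\mapsto Rv$ is a submersion at the identity: its derivative there, the map $X\mapsto Xv$ from the Lie algebra of $\Sp(2N,\R)$ to $\R^{2N}$, is onto and has a right inverse whose norm is bounded uniformly over unit vectors $v$ (by compactness of the sphere). Hence there is $R\in\Sp(2N,\R)$ with $Rv$ proportional to $w$ and $\|R-\Id\|\le C_0\alpha'$, $C_0=C_0(N)$; in particular $R\hat E^1_0\cap\hat E^2_0\neq\{0\}$. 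Realization step: write $R$ as the time-one map $\Phi^1_H$ of the Hamiltonian flow of the quadratic Hamiltonian $H$ that generates $R$ (its coefficients are $O(\alpha')$), and localize by setting $h_0=\Phi^1_{\chi H}$, where $\chi$ is a plateau function equal to $1$ on a box $G$ and supported on a slightly larger box $U$ with $\mu(U\setminus G)<\kappa\,\mu(U)$. Then $h_0$ is a symplectomorphism equal to the identity outside $U$; for $\alpha$ small the $R$-trajectories issued from $G$ stay in $\{\chi=1\}$, so $Dh_0\equiv R$ on $G$; and the standard estimate $\|Dh_0-\Id\|_{C^0}=O(\|\chi H\|_{C^2})=O(\alpha'\kappa^{-2})$ makes $\|Dh_0-\Id\|<\eps$ once $\alpha$ has been chosen small enough. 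Putting $g_0=A_0\circ h_0$: conditions (1)--(2) of $(\eps,\kappa)$-flexibility hold because $g_0=A_0$ off $U$ and $A_0^{-1}\circ g_0=h_0$, while for $x\in G$ the space $Dg_0(x)\hat E^1_0=A_0R\hat E^1_0$ meets $A_0\hat E^2_0=\hat E^2_1$ nontrivially, so the angle in condition (3) equals $0<\gamma$.

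The only real obstacle is the estimate in the realization step: the competition between keeping $h_0$ $C^1$-small and keeping the buffer $U\setminus G$ of small relative measure, since a thinner buffer forces larger second derivatives of $\chi$. It is precisely the freedom to choose $\alpha$ last that settles this, making the tilt $\|R-\Id\|=O(\alpha')$ beat $\eps\kappa^2$. Apart from that, this is the easiest of the four cases --- the genuine difficulty of the paper is case~IV.
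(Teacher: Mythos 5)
Your proof is correct, and in substance it is the paper's argument: the paper disposes of case~I by citing Lemma~\ref{l.alpha rot} (itself imported from Lemmas~5.7 and 5.12 of \cite{BV Annals}), which provides exactly the object you build by hand --- a symplectomorphism equal to the identity outside $U$, with $\|Dh-\Id\|<\eps$, whose derivative on a subset of $U$ of relative measure $>1-\kappa$ takes a prescribed nearly-parallel vector of $\hat E^1_0$ into $\hat E^2_0$ --- after the same chart transfer and with the same quantifier discipline ($\alpha$ chosen last, small compared with $\eps\kappa^2$). Where you differ is only in how that elementary perturbation is manufactured: instead of the explicit rotations about a codimension-two axis used in \cite{BV Annals}, you choose an abstract $R\in\Sp(2N,\R)$ with $Rv$ proportional to $w$ via the submersion/right-inverse argument, write $R=\exp X$, and cut off the associated quadratic Hamiltonian by a bump function $\chi$. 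This is slightly more general (it realizes any nearby symplectic linear map, not just a rotation) and self-contained, at the price of the cruder conclusion $Dh_0=R$ with $Rv\propto w$ rather than $Dh\cdot v=w$ --- which is enough here, since a nontrivial intersection of $Dg_0(x)\cdot\hat E^1_0$ with $\hat E^2_1$ makes the angle in condition~(3) equal to $0<\gamma$ for every $\gamma$, so a single $U$ indeed serves all $\gamma$. One small point to tidy: take $\{\chi=1\}$ to be a box strictly containing $G$, with the margin a definite fraction of the buffer, so that the trajectories of $\varphi^t_{\chi H}$ issued from $G$ stay where the Hamiltonian vector fields of $\chi H$ and $H$ agree; the displacement bound of Lemma~\ref{l.ode} is $O(\alpha'/\kappa)$, so this closes once $\alpha'\ll\kappa^2$, which your requirement $\alpha'\ll\eps\kappa^2$ already provides.
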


\begin{lemma}\label{l.case II}
There is $\KII>1$
such that if a segment $\{z, \ldots, f^{n-1} z\}$, with $z\in \Sigma_p(f)$ is
of type~II with constant $\KII$ then it is flexible.
\end{lemma}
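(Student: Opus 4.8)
The plan is to realize the perturbation by exactly two symplectic ``buffered rotations'', one supported near $z$ and one near $f^{n}z$, with nothing in between: the first tilts a suitable line of $E^{u}$, the dynamics amplifies the tilt by the large factor $\KII$, and the second completes the alignment with $E^{cs}$; the constant $\KII$ is chosen large in terms of $\eps,\kappa$ at the very end. First I would reduce to bounded angles: if $\angle(E^{u}(f^{i}z),E^{cs}(f^{i}z))<\alpha$ for some $0\le i\le n$, with $\alpha$ the constant from Lemma~\ref{l.case I}, then the length-one sub-segment $\{f^{i}z\}$ is flexible by Lemma~\ref{l.case I}, hence $\{z,\dots,f^{n-1}z\}$ is flexible by Lemma~\ref{l.short seq}; so we may assume $\angle(E^{u}_{i},E^{cs}_{i})\ge\alpha$ for all $i$, and then, exactly as in \eqref{e.improved not I} and Sublemma~\ref{sl.sympl symmetry}, all angles between $E^{u}$, $E^{c}$, $E^{s}$ and their sums are $\gtrsim1\pmod{\alpha}$. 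The observation to keep in mind is that flexibility only asks for \emph{some} line of $E^{u}(z)$ to be brought within $\gamma$ of $E^{cs}(f^{n}z)$, not the whole subspace.

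I would then put the relevant part of $Df^{n}(z)$ into a convenient bounded symplectic normal form. Put $a=\m(Df^{n}(z)|E^{u}(z))$, pick a most-contracted unit vector $v\in E^{u}(z)$, and let $v^{\star}\in E^{s}(z)$ be its symplectic partner from Sublemma~\ref{sl.sympl symmetry}, so that $|\omega(v,v^{\star})|\gtrsim1$ and $\|Df^{n}v^{\star}\|\approx\|Df^{n}|E^{s}(z)\|\approx 1/a$. Fix a small $\delta=\delta(\kappa)>0$ and split into two cases. If $a\le\delta\eps$: working in the symplectic plane $\langle v,v^{\star}\rangle$, the normalized pair $\{v,v^{\star}/\omega(v,v^{\star})\}$ and its $Df^{n}$-image both extend, by Lemma~\ref{l.sympl basis}, to symplectic frames of \emph{bounded} norm, in which $A_{n-1}\ccircs A_{0}$ acts on the $2$-plane $p_{1}q_{1}$ by $(p_{1},q_{1})\mapsto(a\,p_{1},a^{-1}q_{1})$, the $p_{1}$-axis lying in $E^{u}$ and the $q_{1}$-axis in $E^{s}\subset E^{cs}$. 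If $a>\delta\eps$: then $1/a$ is bounded, so type~II forces $\|Df^{n}|E^{cs}(z)\|\approx\|Df^{n}|E^{c}(z)\|\gtrsim\KII\,a$, whence $a'=\m(Df^{n}|E^{c}(z))\approx1/\|Df^{n}|E^{c}(z)\|$ is small (a negative power of $\KII$); taking a most-contracted unit vector $u\in E^{c}(z)$ and its partner $u^{\star}\in E^{c}(z)$, the symplectic $4$-plane $\langle v,u^{\star},v^{\star},u\rangle$ likewise extends to symplectic frames of bounded norm in which $A_{n-1}\ccircs A_{0}$ acts on the $4$-plane $p_{1}p_{2}q_{1}q_{2}$ by $(p_{1},p_{2},q_{1},q_{2})\mapsto(a\,p_{1},(a')^{-1}p_{2},a^{-1}q_{1},a'q_{2})$, with $p_{1}$ in $E^{u}$, $p_{2},q_{2}$ in $E^{c}$, and $q_{1}$ in $E^{s}$. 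In both cases the frames stay bounded precisely because symplecticity pairs each expanding direction with a contracting one; by Lemma~\ref{l.flex change coord} (at a bounded cost in $\eps$, depending only on $\alpha$ and $M$) it suffices to establish flexibility for the induced split sequence on $\R^{2N}$ written in this normal form.

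The perturbation is then explicit. In the first case: near time $0$ rotate the $2$-plane $p_{1}q_{1}$ by a fixed small angle $\theta$ (generated by the \emph{definite} Hamiltonian $\tfrac12(p_{1}^{2}+q_{1}^{2})$, which is exactly what can be flattened outside a small disk, keeping a thin buffer); then $Df^{n}$ sends the $p_{1}$-axis to a line whose $q_{1}$-component lies in $E^{cs}_{n}$ and beats its $p_{1}$-component by the factor $\theta/a^{2}$, and a second rotation of the $2$-plane $p_{1}q_{1}$ near time $n$, by the small angle $\approx a^{2}/\theta$, carries that line onto the $q_{1}$-axis $\subset E^{cs}_{n}$. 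In the second case: near time $0$ rotate the pair $\langle p_{1},p_{2}\rangle\oplus\langle q_{1},q_{2}\rangle$ simultaneously by a small angle $\theta$ on a bidisk (the symplectic way to tilt the isotropic direction $p_{1}$ towards $p_{2}$; the product structure of the bidisk again permits a thin buffer); after $Df^{n}$ the image of the $p_{1}$-axis has a $p_{2}$-component in $E^{c}_{n}\subset E^{cs}_{n}$ beating its $p_{1}$-component by $\theta/(aa')\gtrsim\theta\KII$, and a second such double rotation near time $n$ completes the alignment with $p_{2}$. The buffers are taken on (bi)disks of relative measure $O(\delta)<\kappa$; since $Df^{n}$ preserves volume, the push-forward of the effective region of the first rotation still has measure $(1-O(\delta))\mu(U)$, and a Vitali covering as in the proof of Lemma~\ref{l.choose U} makes the second rotation effective on all but a further $O(\delta)$ fraction. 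Choosing $\KII$ large in terms of $\eps,\kappa$ makes all the angles above $<\eps$ even after division by the buffer width, and the exceptional measure $<\kappa$, which yields the required $(\eps,\kappa)$-flexibility.

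The step I expect to be the real obstacle is the construction of the bounded normal form: one must extract from the single scalar inequality defining type~II a geometric picture exhibiting a large $Df^{n}$-expansion gap \emph{together with} frames of controlled size, and it is this that forces the dichotomy above and, in the second case, the use of a symplectic double rotation on a $4$-plane rather than a naive rotation (a plane $\langle v,w\rangle$ with $w\in E^{cs}$ realizing the expansion need not be symplectic). By contrast, in case~IV the expansion gap is only moderate, so a single finite ``tilt, amplify, finish'' no longer works; one is forced into iterated rotations, and the need to match their buffers along the orbit is exactly what the random-walk construction of \S\ref{ss.IV} is designed to overcome.
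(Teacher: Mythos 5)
Your proposal is correct in its essentials, and its engine is the same as the paper's: tilt a line of $E^u$ near $z$, let the type~II expansion gap amplify the tilt, and finish with a second small rotation near $f^nz$, choosing $\KII$ large in terms of $\eps,\kappa$ at the end. But the implementation differs markedly from the paper's, which is a two-line reduction: take the unit vector of $E^u(z)$ least expanded by $Df^n$ and the direction of $E^{cs}(z)$ most expanded by $Df^n$, and apply Lemma~\ref{l.alpha rot} twice (once to rotate the former toward the latter, once to correct the image at time $n$), citing \cite{BV Annals} for details. The key point you overlook is that Lemma~\ref{l.alpha rot} already produces, for \emph{any} two unit vectors at angle $<\alpha$ and \emph{any} open support, a symplectomorphism whose derivative carries one to the other on a $(1-\kappa)$-fraction of the support; it is indifferent to whether the plane they span is symplectic or isotropic. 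This makes your endpoint normal form, the dichotomy on $a=\m(Df^n|E^u(z))$, and the hand-built Hamiltonians unnecessary: one never needs to know whether the $E^{cs}$-expansion is carried by the $E^s$-partner of $v$ or by $E^c$. That said, your route does go through: the orthosymplectic frames at the two endpoints are bounded by Sublemma~\ref{sl.sympl symmetry} and Lemma~\ref{l.sympl basis} (this is the same computation the paper performs, at every time, inside the proof of Lemma~\ref{l.4 types}), and your flattened simultaneous rotation of the $p_1p_2$- and $q_1q_2$-planes on a \emph{round} bidisk is legitimate despite the paper's cautionary remark about case~IV, because $\|(p_1,p_2)\|^2$ and $\|(q_1,q_2)\|^2$ are first integrals of that flow, so a product cutoff keeps the inner bidisk rigidly rotated with $\|D^2 H\|=O(\theta/(1-\sigma)^2)$; the genuine obstruction in case~IV is the accumulation of many such rotations along elongated images, exactly as your last paragraph says.

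A few slips to repair, none fatal. First, your reduction via Lemma~\ref{l.case I} and Lemma~\ref{l.short seq} does not cover a small angle occurring at time $n$ (Lemma~\ref{l.short seq} needs $i_1\le n$); handle that case by a correcting rotation inside $g_{n-1}$, or simply drop the reduction as the paper does. Second, your appeal to Lemma~\ref{l.flex change coord} is not literal, since you only construct bounded conjugacies at times $0$ and $n$; take $C_i=\Id$ (in the charts) at intermediate times, which is harmless because $g_i=A_i$ there. Third, the threshold in your dichotomy must be allowed to depend on $\eps$ \emph{and} $\kappa$ (through $\alpha$ and the frame constants), not on $\kappa$ alone: in your first case the residual angle is of order $C(\alpha)a^2/\theta$ with $\theta\lesssim\alpha$, so you need $a^2\lesssim\alpha^2/C(\alpha)$, not merely $a\le\delta(\kappa)\eps$. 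Finally, to get symplectomorphisms of $\R^{2N}$ rather than of $\R^2$ or $\R^4$ you should invoke Lemma~\ref{l.dim reduction} (or just Lemma~\ref{l.alpha rot}, which is already stated in $\R^{2N}$).
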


\begin{lemma}\label{l.case III}
Given $\KIII>1$, there exists $m_0$  such that
if a segment $\{z, \ldots, f^{m_0-1} z\}$
is of type~III with constant $\KIII$
then it is flexible.
\end{lemma}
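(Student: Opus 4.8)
The plan is to reduce the statement, via Lemma~\ref{l.flex change coord}, to a purely linear model on $\R^{2N}$, and then to build the perturbation as a composition of $m_0$ \emph{nested rotations}, exactly as in the treatment of this case in \cite{BV Annals}.

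\textbf{Reduction.} By definition of type~III, the linear symplectic maps $\cL_i$ (with $\|\cL_i^{\pm1}\|\le\KIII$) conjugate the split sequence $Df(f^iz)\colon E^u\oplus E^{cs}\hookleftarrow$ to a split sequence $\{A_i,\hat{E}_i^{1,2}\}$ on $\R^{2N}$ in which each $A_i$ is the identity on the symplectic plane $P$ spanned by $\vetor{p_1}$ and $\vetor{q_1}$, with $\vetor{p_1}\in\hat{E}_i^1$ and $\vetor{q_1}\in\hat{E}_i^2$ (the latter since $\cL_i^{-1}(\vetor{q_1})\in E^s\subset E^{cs}$). By Lemma~\ref{l.flex change coord} it is enough to make this model split sequence $(\eps',\kappa)$-flexible for a suitable $\eps'=\eps'(\eps,\KIII)>0$. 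Being symplectic and the identity on $P$ forces $A_i$ to be block-diagonal, $A_i=\mathrm{id}_P\oplus A_i|_X$ with $X:=P^\omega$; moreover $\hat{E}_i^1$ is isotropic and $\hat{E}_i^2$ is the $\omega$-complement of a space containing $\vetor{q_1}$, so $\hat{E}_i^1\subset\{q_1=0\}$ and $\hat{E}_i^2\subset\{p_1=0\}$. Hence transporting the $P$-direction of $\hat{E}^1$ onto $\vetor{q_1}$ is the crux; the remaining directions of $\hat{E}^1$, which occur when $p>1$, are dealt with by the same mechanism in the extra symplectic planes that must be turned, as in \cite{BV Annals}.

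\textbf{The construction.} Fix $\alpha=\pi/(2m_0)$. Let $\Omega_0=D\times B$, with $D\subset P$ a disk of radius $r$ and $B\subset X$ a round ball of radius $R$, and $\Omega_0^\circ=D'\times B'$, with concentric $D'$, $B'$ of radii $r'<r$, $\rho'<R$. Define the nested tubes $\Omega_i:=A_{i-1}\ccircs A_0(\Omega_0)$ and $\Omega_i^\circ:=A_{i-1}\ccircs A_0(\Omega_0^\circ)$ (so $\Omega_i\subset A_{i-1}\ccircs A_0(U)$ with $U:=\Omega_0$). Let $h_i$ be a symplectomorphism supported on $\Omega_i$ that equals the rigid rotation of $P$ by the angle $\alpha$ (identity on $X$) on $\Omega_i^\circ$ and whose angle tapers smoothly to $0$ across the thin buffer $\Omega_i\setminus\Omega_i^\circ$; such an $h_i$ preserves $\omega$ precisely because $P$ is symplectic and $X=P^\omega$, a fact built into type~III. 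With $g_i=A_i\circ h_i$, the block structure $A_i=\mathrm{id}_P\oplus A_i|_X$ gives $g_i(\Omega_i^\circ)=\Omega_{i+1}^\circ$, so the cores are forward-invariant; consequently $g_{m_0-1}\ccircs g_0$ rotates the $P$-plane by the cumulative angle $m_0\alpha=\pi/2$ and sends $\hat{E}_0^1$ into $\hat{E}_{m_0}^2$ at every base point whose orbit stays in the cores. Since $h_i$ is a cut-off rotation by the small angle $\alpha=O(1/m_0)$, choosing $m_0=m_0(\eps,\KIII)$ large and the buffers appropriately gives $\|Dh_i-\Id\|<\eps'$, which is condition~2 of flexibility.

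\textbf{Main obstacle.} The delicate point is condition~3, the measure estimate. With $U=\Omega_0$, $G=\Omega_0^\circ$, a base point in $G$ stays in the cores and therefore receives the full rotation at every step, while a point in $U\setminus G$ simply starts in a buffer; hence $\mu(U\setminus G)=\mu(\Omega_0\setminus\Omega_0^\circ)$, which must be made $<\kappa\,\mu(U)$. In the $P$-directions this is cheap: the radial $P$-buffer need only have fractional width $O(\alpha/\eps')=O(1/(m_0\eps'))$, small for large $m_0$. In the axis $X$ it is subtler: keeping the cut-off of $h_i$ within $C^1$-size $\eps'$ forces the $X$-buffer to have \emph{absolute} width of order $\|(A_{i-1}\ccircs A_0)^{-1}\|\cdot\alpha$, which can be of size $(\mathrm{const})^{m_0}$; one circumvents this by choosing the radius $R$ of $B$ enormous, so that the inflated buffer still occupies only a negligible fraction of $B$ — legitimate because, by Lemma~\ref{l.choose U}, the domain can be replaced afterwards by any prescribed one. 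Making this bookkeeping rigorous, together with the parallel argument for the several rotations required when $p>1$, is precisely the content of the corresponding step of \cite{BV Annals}, which we follow.
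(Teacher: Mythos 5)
Your approach---nested rotations in the symplectic plane $P$ spanned by $\vetor{p_1}$ and $\vetor{q_1}$, with a very wide axis $X = P^\omega$ so that the cumulative buffers occupy a negligible fraction of the domain---is in essence the strategy of the paper's \S\ref{ss.III}, but packaged differently there: the rotation is produced as the time-one map of the compactly supported Hamiltonian $H(p_1,q_1) = \tfrac{\alpha}{2}\rho(p_1^2+q_1^2)$, and the extension to $\R^{2N}$ with the required support and $C^1$-smallness is delegated to the ``dimension reduction'' Lemma~\ref{l.dim reduction}, which is proved once and reused for case~IV. That lemma is precisely where your wide-axis trick ($a \gg 1$) and the resulting measure bookkeeping live.

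There is, however, a genuine gap in your construction. You assert that an $h_i$ which rotates $P$ rigidly on the core, fixes $X$, and tapers the rotation angle to zero across the buffer preserves $\omega$ ``precisely because $P$ is symplectic and $X = P^\omega$.'' This is not correct. A map of the form $(x_P, x_X) \mapsto (R_{\theta(x_P,x_X)}(x_P),\, x_X)$ is symplectic only when $\theta$ is a function of $|x_P|$ alone; as soon as $\theta$ depends on the axis coordinate $x_X$---which is unavoidable if the support is to be bounded---the pullback $h_i^*\omega$ acquires cross terms between the $P$- and $X$-differentials. The paper avoids writing $h_i$ down in closed form: it sets $h_i = \varphi^1_{\hat H_i}$ for a compactly supported Hamiltonian $\hat H_i(x,y) = H(x)\psi_i(y)$. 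The Hamiltonian flow is automatically symplectic; the price is that on the buffer the flow does \emph{not} fix the $X$-coordinates, which is harmless because $\|D^2\hat H_i\|$ (hence, via Lemma~\ref{l.ode}, the $C^1$-distance of $\varphi^1_{\hat H_i}$ to the identity) is controlled, while on the core, where $\psi_i\equiv 1$, the flow is exactly a rotation in $P$. The fact that $P$ is a symplectic plane is indeed essential, but for a slightly different reason than the one you give: it is what lets $\tfrac12(p_1^2+q_1^2)$ generate a rotation flow in $P$, and it is why the Hamiltonian can be flattened outside a disk. (In case~IV the analogous plane is Lagrangian, and this very step breaks---which is why that case requires an entirely different construction.) Deferring the bookkeeping to \cite{BV Annals} is also not quite right: there the maps need only preserve volume, not $\omega$, so the cut-offs there are built by a mechanism that is not directly available symplectically; Lemma~\ref{l.dim reduction} is the paper's symplectic substitute.

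Two smaller points. The $p>1$ case needs no separate treatment: flexibility only asks that the \emph{angle} $\angle(Dg^n(x)\cdot E^1_0, E^2_n)$ be small, which holds as soon as a single vector of $E^1_0$ lands in $E^2_n$; the paper exploits this directly by sending $\vetor{p_1}$ to $\vetor{q_1}$. And Lemma~\ref{l.choose U} is not needed here, since the definition of flexibility lets $U$ be any bounded open neighborhood of $0$, so the wide cylinder can simply be taken as $U$.
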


\begin{lemma}\label{l.case IV}
Given $\KIV>1$ and $\tau>1$ there exists $m_1$ such that
if a segment of length $m \ge m_1$ is of type~IV with constants $\KIV$, $\tau$
then it is flexible.
\end{lemma}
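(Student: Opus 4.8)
The plan is to reduce the statement to a standard model in $\R^{2N}$ and there carry out the random-walk construction sketched in \S\ref{ss.preview}. By definition the segment is flexible as soon as \emph{some} chart-induced split sequence on $\R^{2N}$ is $(\eps,\kappa)$-flexible. The symplectic maps $\cL_i$ supplied by type~IV satisfy $\|\cL_i^{\pm1}\|\le\KIV$, so they differ from the derivatives of charts of $\cA$ by linear symplectic maps $C_i$ with $\|C_i^{\pm1}\|\le\KIV K_\cA$, and one checks $C_{i+1}^{-1}\circ\bigl(\cL_{i+1}Df(f^iz)\cL_i^{-1}\bigr)=(\text{chart derivative})\circ C_i^{-1}$. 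Hence, by Lemma~\ref{l.flex change coord}, it is enough to prove that the \emph{model} split sequence
$$
\left\{\cL_i(E^u)\oplus\cL_i(E^{cs})\xrightarrow{A_i}\cL_{i+1}(E^u)\oplus\cL_{i+1}(E^{cs})\right\}_{0\le i<m}
$$
is $(\eps_0,\kappa)$-flexible, where $\eps_0=\eps/(\KIV K_\cA)^2$ and where each $A_i$ preserves the $4$-plane $P\oplus Q$ ($P$ the $p_1p_2$-plane, $Q$ the $q_1q_2$-plane), acting there by $(p_1,p_2,q_1,q_2)\mapsto(c_ip_1,c_ip_2,c_i^{-1}q_1,c_i^{-1}q_2)$ with $c_i>\tau$, and satisfies $\vetor{p_1}\in\cL_i(E^u)$, $\vetor{p_2},\vetor{q_1},\vetor{q_2}\in\cL_i(E^{cs})$.

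On this model I would proceed as in \S\ref{ss.preview}. The uniformly hyperbolic splitting $P\oplus Q$ of the $4$-plane carries a strictly invariant cone $\cC$ about $P$ which remains invariant under $C^1$-small perturbations; projecting $\cC$ onto $P$ along $Q$ and measuring oriented angles in $P$ gives the \emph{$p_1p_2$-angle} of directions in $\cC$, and since $A_i|_P=c_i\,\Id$ is conformal, the $A_i$ preserve $p_1p_2$-angles. Fix once and for all a symplectomorphism $h_0$ that is $\eps_0$-close to $\Id$, equals $\Id$ outside a small box $D\ni0$, and does \emph{not} preserve the constant linefield $v_0=\R\vetor{p_1}$, and set $g_0=A_0\circ h_0$. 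Inductively, having built $g_{i-1}$, cover $A_{i-1}\ccircs A_0(D)$ up to arbitrarily small relative measure by disjoint boxes on each of which the linefield $v_i=D(g_{i-1}\ccircs g_0)\cdot v_0$ is nearly constant; in each box insert the copy of $h_0$ conjugated by the symplectic affine map (a homothety composed with a linear symplectic rotation) that normalizes the box to $D$ and $v_i$ to $v_0$; and let $g_i=A_i\circ(\text{this map})$. Conjugation by a homothety leaves the derivative unchanged, so after normalizing Lebesgue measure on $D$ to a probability $\P$, the $p_1p_2$-angle turned at step $i$ becomes a random variable $X_i$ with the \emph{same law as $X_0$}, bounded by $O(\eps_0)$ and not a.s.\ zero; and since each step refines boxes inside boxes, $X_0,X_1,\dots$ are \emph{independent}. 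Thus $D(g_{n-1}\ccircs g_0)(x)$ turns $\vetor{p_1}$ within $P$ by the partial sum $S_n=X_0+\dots+X_{n-1}$ of a bona fide random walk.

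Now the probabilistic input: a random walk with i.i.d., bounded, non-degenerate increments a.s.\ leaves the compact interval $[-\tfrac\pi2,\tfrac\pi2]$; let $\sigma$ be its first exit time, so $\P\{\sigma>n\}\to0$. I would run the construction \emph{with absorbing barriers}: the first time a (box tracking an) orbit reaches $p_1p_2$-angle within $O(\eps_0)$ of $\pm\tfrac\pi2$, perform one explicit additional perturbation bringing the angle to within the prescribed $\gamma$ of $\pm\tfrac\pi2$, and leave that orbit untouched thereafter. Choosing $m_1=m_1(\eps,\kappa,\tau,\KIV)$ with $\P\{\sigma>m_1\}<\kappa/3$, for every $m\ge m_1$ the set $G\subset D$ of orbits absorbed by time $m$ has $\mu(G)>(1-\kappa)\mu(D)$ (the small losses of the successive Vitali coverings being absorbed, as in Lemma~\ref{l.choose U}, into the remaining fraction of $\kappa$). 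For $x\in G$ the image of $\R\vetor{p_1}$ under $D(g_{m-1}\ccircs g_0)(x)$ is $\gamma$-close to $\R\vetor{p_2}\subset\cL_m(E^{cs})$; running the same bookkeeping on a full basis of $\cL_0(E^u)$ — where one also uses the type~IV normal form, in particular that the $A_i$ preserve the symplectic complement of $P\oplus Q$, to carry the remaining directions of $E^u$ into $E^{cs}$ by a compatible rotation — gives $\angle\bigl(D(g_{m-1}\ccircs g_0)(x)\cdot\cL_0(E^u),\cL_m(E^{cs})\bigr)=O(\gamma)$. As $\gamma>0$ is arbitrary this is condition~3 of flexibility; conditions~1 and~2 are built in ($g_i=A_i$ off $A_{i-1}\ccircs A_0(D)$, and $\|D(A_i^{-1}\circ g_i)-\Id\|<\eps_0$ since the homotheties drop out). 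Undoing the reduction via Lemma~\ref{l.flex change coord} finishes the proof.

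The main obstacle is the construction of the perturbations themselves — the genuine novelty of the paper. Each $g_i$ must be simultaneously symplectic, $C^1$-close to $A_i$, and yet turn the linefield by a \emph{definite} (non-infinitesimal) angle over the whole segment of length $m$; this is impossible with the nested-rotations-and-buffers scheme of \cite{BV Annals}, because $P$ is a \emph{null} plane and the Hamiltonian $p_2q_1-p_1q_2$ generating the wanted rotation is sign-indefinite, hence cannot be flattened to $\Id$ outside $D$ through a small buffer. The self-similar box construction that converts the angle increments into honest i.i.d.\ random variables is the way around this, and making it compatible with symplecticity and with the homothety rescalings — and turning ``the random walk is a.s.\ absorbed'' into the quantitative ``a fraction $1-\kappa$ of the base is absorbed within a fixed time $m_1$'' — is where essentially all the work lies. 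Handling the directions of $E^u$ beyond the distinguished $p_1$-axis when $p>1$ is a further, more technical, wrinkle.
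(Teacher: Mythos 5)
Your proposal reproduces the paper's strategy faithfully in outline — reduction to a model split sequence via Lemma~\ref{l.flex change coord}, the invariant cone about $P$ and the $p_1p_2$-angle, a base perturbation $h_0$ propagated self-similarly through ever finer disjoint boxes so that the angle increments become i.i.d.\ random variables $X_i$, and Lemma~\ref{l.probabilistic} plus an absorbing barrier near $\pm\pi/2$ to choose $m_1$. Two points in your writeup, however, signal real gaps.

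First, you treat the case $p>1$ (``running the same bookkeeping on a full basis of $\cL_0(E^u)$ \dots a further, more technical, wrinkle'') as unresolved extra work. It isn't, because the quantity $\angle(E,F)$ in condition~3 of flexibility is defined as the \emph{minimum} of $\angle(v,w)$ over nonzero $v\in E$, $w\in F$. Thus $\angle\bigl(D(g_{m-1}\ccircs g_0)(x)\cdot F^u_0,\,F^{cs}_m\bigr)$ is already $<\gamma$ as soon as the single direction $D(g_{m-1}\ccircs g_0)(x)\cdot\vetor{p_1}$ is within $\gamma$ of $F^{cs}_m$; one never needs to carry the whole of $E^u$ into $E^{cs}$. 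The paper tracks only $\vetor{p_1}$ (via the vector fields $\v_n$ in its Step~3) and this suffices for any $p$.

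Second, and more substantively, you do not explain how the perturbation built on the $4$-plane $P\oplus Q$ is promoted to a genuine symplectomorphism of $\R^{2N}$ supported in $A_{i-1}\ccircs A_0(\hat U)$, and Lemma~\ref{l.flex change coord} is the wrong tool for this: that lemma only transports flexibility between split sequences of the \emph{same} ambient dimension related by bounded linear symplectic conjugacies. A naive product of a $4$-dimensional perturbation with the identity in the remaining $2N-4$ directions is not supported in a bounded set; cutting it off by a bump function in those directions is where the delicacy lies, because the derivative of the bump function along the transverse coordinates picks up factors $\|C_0^{-1}\cdots C_{i-1}^{-1}\|$ and one must make the cylinder radius $a$ large enough to crush them while still controlling $\|D^2\hat H_i\|$. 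This is exactly the content of Lemma~\ref{l.dim reduction}, and it is also why the perturbations are packaged as Hamiltonian time-$1$ maps $\varphi^1_{H_n}$ (so that the rescalings of Lemma~\ref{l.hamilton change coord} and the bump-function multiplication act on the Hamiltonian) rather than as ad hoc symplectomorphisms. Your sketch cannot ``finish the proof'' with Lemma~\ref{l.flex change coord} alone.

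A smaller imprecision: the normalizing maps sending a small box to $D$ and the direction $v_i$ to $v_0$ are not rotations composed with homotheties. A rotation $R_\theta$ of $P$ would also have to rotate $Q$, changing the $q_1q_2$-data, and in general there is no rotation-plus-homothety carrying a given unit vector in the cone $\cC_1$ to $\vetor{p_1}$. The paper instead uses the symplectic shear maps $L_v$ of Lemma~\ref{l.sheer}, which preserve the $q_1q_2$-plane and shift $\Theta$ by a constant; the latter property is precisely what makes the angle increments add up like a random walk when the base perturbation $h$ is conjugated by $L_{\v_n(\xi_i)}$ and rescaled.

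Finally, you place the last, ``closing'' rotation at the moment each orbit is absorbed. The paper instead freezes absorbed boxes (applying only the linear $B_n$ there, which preserves $\Theta$) and performs a single Vitali-covering rotation $\hat g_m$ at time $m$ using Lemma~\ref{l.alpha rot}. Both bookkeepings can be made to work, but deferring the closing perturbation to the end is cleaner because it decouples the random-walk estimate (which lives on the unperturbed circle-valued variables $S_n$) from the final $\gamma$-approximation.
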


Assuming Lemmas~\ref{l.case I}--\ref{l.case IV}, we can give the:

\begin{proof}[Proof of the Main Lemma]
Let $\alpha$ and $\KII$ be given by Lemmas~\ref{l.case I} and \ref{l.case II}, respectively.
Let $\KIII = \KIII(\alpha,\KII)$ be given by Lemma~\ref{l.4 types}.
Let $m_0 = m_0(\KIII)$ be given by Lemma~\ref{l.case III}.
Let $\KIV = \KIV(\alpha, \KII, m_0)$ and $\tau = \tau(\alpha, \KII, m_0)$ be given by Lemma~\ref{l.4 types}.
Finally, let $m_1 = m_1(\KIV,\tau)$ be given by Lemma~\ref{l.case IV}.
We can assume $m_1 \ge m_0$.

Now, if $m \ge m_1$ and the segment $\{z, \ldots, f^m z\}$
is non-dominated (meaning that \eqref{e.nondominated} is satisfied)
then one of the four alternatives in Lemma~\ref{l.4 types} hold.
Lemmas~\ref{l.case I}--\ref{l.case IV} imply that in each case the segment
contains a flexible subsegment.
So, by Lemma~\ref{l.short seq}, the whole segment is flexible.
\end{proof}

%%%%%%%%%%%%%%%%%%%%%%%%%%%%%%%%%%%%%%%%%%%%%%%%%%%%%%
\subsection[Dealing with cases I and II]{Dealing with Cases I and II}

\begin{lemma}\label{l.alpha rot}
Given $\eps>0$ and $\kappa>0$, there exists $\alpha>0$ with the following properties:
If $v$, $w$ are unit vectors in $\R^{2N}$ with $\angle(v,w)<\alpha$,
and $U \subset \R^{2N}$ is a non-empty open set,
then there exists $h \in \Diff_\omega^1(\R^{2N})$ that equals the identity
outside of $U$, $\|Dh -\Id\| < \eps$ uniformly,
and such that the set $G$ of points $x \in U$ such that $Dh(x) \cdot v = w$
has measure $\mu(G) > (1-\kappa)\mu(U)$.
\end{lemma}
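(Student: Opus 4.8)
The plan is to manufacture a single ``model'' perturbation supported on the unit ball and then disseminate rescaled copies of it over $U$ by a Vitali covering.

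\emph{The model.} Since the unitary group $\mathrm{U}(N)=\mathrm{O}(2N)\cap\Sp(2N,\R)$ acts transitively on the unit sphere of $\R^{2N}$, I would first fix $R\in\mathrm{U}(N)$ with $Rv=w$ and $\|R-\Id\|\le C\angle(v,w)$ for a universal constant $C$, obtained from a local Lipschitz section of the orbit map $\mathrm{U}(N)\to S^{2N-1}$. Writing $R=\exp(X)$ with $X$ in the Lie algebra of $\mathrm{U}(N)$ and $\|X\|_{C^1}\le C'\angle(v,w)$, let $H$ be the quadratic Hamiltonian whose time-one flow is $R$ (so $\|H\|_{C^2}\lesssim\|X\|$). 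Now fix a small parameter $\eta>0$ and a smooth cutoff $\chi\colon\R^{2N}\to[0,1]$ that is identically $1$ on $B_{1-\eta}(0)$ and supported in $B_{1-\eta/2}(0)$, and define $h_0$ to be the time-one map of the Hamiltonian flow of $\chi H$. This flow is complete, and $h_0$ is a $C^\infty$ symplectomorphism equal to $\Id$ outside $B_{1-\eta/2}(0)$, hence near $\partial B_1(0)$. On the region $\{\chi\equiv1\}$ the flow of $\chi H$ coincides with the linear flow $t\mapsto\exp(tX)$; provided $\|X\|$ is small compared with $\eta$, a short bootstrap argument shows that any trajectory issued from $B_{1-2\eta}(0)$ remains in $B_{1-\eta}(0)$ for all $t\in[0,1]$, so that $Dh_0\equiv R$ on $B_{1-2\eta}(0)$ and in particular $Dh_0\cdot v=w$ there. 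A routine estimate for flows gives $\|Dh_0-\Id\|\lesssim\|X\|/\eta^2$ uniformly.

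\emph{Globalization.} Given $\eps$ and $\kappa$, I would choose $\eta=\eta(\kappa)$ so small that $\mu(B_{1-2\eta}(0))>(1-\kappa/2)\,\mu(B_1(0))$, and then choose $\alpha=\alpha(\eps,\kappa)$ small enough that $\angle(v,w)<\alpha$ forces $\|Dh_0-\Id\|<\eps$. For arbitrary $U$, the Vitali Covering Lemma provides finitely many pairwise disjoint closed balls $\bar B_{r_j}(x_j)\subset U$ with $\mu\bigl(U\setminus\bigsqcup_j\bar B_{r_j}(x_j)\bigr)<(\kappa/2)\mu(U)$. Put $\sigma_j(y)=x_j+r_j y$ and let $h$ be equal to $\sigma_j\circ h_0\circ\sigma_j^{-1}$ on each $\bar B_{r_j}(x_j)$ and to the identity elsewhere. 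The linear part of $\sigma_j$ is a scalar matrix, so it commutes with every derivative of $h_0$; hence $D(\sigma_j\circ h_0\circ\sigma_j^{-1})=Dh_0\circ\sigma_j^{-1}$, which shows at once that $h$ is a symplectomorphism, that $\|Dh-\Id\|<\eps$, and that $Dh(x)\cdot v=w$ for every $x$ in $\bigsqcup_j\sigma_j\bigl(B_{1-2\eta}(0)\bigr)$. The latter set has measure greater than $(1-\kappa/2)^2\mu(U)>(1-\kappa)\mu(U)$, and $h$ equals the identity off $U$ by construction, so the lemma follows.

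\emph{Main obstacle.} The crux is the construction of $h_0$: one needs to promote the full linear rotation $R$ to a compactly supported symplectomorphism while keeping simultaneously a thin buffer — so that the ``effective'' ball $B_{1-2\eta}(0)$ still carries almost all of the measure of $B_1(0)$ — and a small $C^1$-norm. Simply multiplying $R$ by a bump function is not allowed, as it ruins the symplectic condition; and because the plane $\operatorname{span}(v,w)$ may be $\omega$-null, the Hamiltonian generating $R$ need not be semidefinite, so it cannot be ``flattened'' outside the ball in the manner of Lemma~5.5 of \cite{BV Annals}. This is precisely why the localization must be carried out at the level of the Hamiltonian and why the turning angle $\angle(v,w)$ has to be controlled by a power of the buffer width $\eta=\eta(\kappa)$ — that is, why the constant $\alpha$ must be allowed to depend on $\kappa$ in addition to $\eps$.
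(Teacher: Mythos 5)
Your argument is correct, and what it really does is supply in full the construction that the paper itself delegates to the citation of Lemmas~5.7 and 5.12 of \cite{BV Annals}; so the comparison is between your self-contained proof and the cited one, not with anything written out in the text. Both of your steps belong to the same circle of ideas used there and elsewhere in this paper: the redistribution over an arbitrary $U$ by a finite Vitali family of balls together with conjugation by homothety--translations (which indeed preserve symplecticity, since they scale $\omega$ by a constant factor) is exactly the mechanism of Lemma~\ref{l.choose U}; and your local model --- cut off, by a bump function, the quadratic Hamiltonian generating a unitary (i.e.\ orthogonal and symplectic) matrix $R$ with $Rv=w$ and $\|R-\Id\|\lesssim\angle(v,w)$ --- is the natural symplectic localization, with the flow estimates being precisely Lemma~\ref{l.ode}. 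Your cost accounting is the right one and is the genuinely important point: the brutal cutoff loses a factor $\eta^{-2}$ in $\|D^2(\chi H)\|$, which is affordable only because the turning angle is at your disposal; this is exactly why $\alpha$ must be allowed to depend on $\kappa$ (through the buffer $\eta$) as well as on $\eps$, and it is consistent with the remark in \S\ref{ss.preview} that for a turning angle of fixed size, on an $\omega$-null plane, no such flattening is possible --- the obstruction that forces the random-walk scheme of \S\ref{ss.IV}. Two minor observations: since $R\in\mathrm{U}(N)$, the linear flow $\exp(tX)$ acts by isometries, so trajectories issued from $B_{1-2\eta}(0)$ remain in $B_{1-\eta}(0)$ with no smallness condition on $\|X\|$ relative to $\eta$; and your use of a \emph{finite} Vitali family tacitly assumes $\mu(U)<\infty$, which is harmless because in every application of the lemma (as in Lemma~\ref{l.choose U}, where boundedness is assumed explicitly) the set $U$ is bounded --- note that passing to countably many balls instead would endanger $C^1$-ness at accumulation points, which is precisely why finite families are used.
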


\begin{proof}
This follows from Lemmas~5.7 and 5.12 from~\cite{BV Annals}.
\end{proof}

\begin{proof}[Proof of Lemma~\ref{l.case I}]
It follows easily from Lemma~\ref{l.alpha rot}.
\end{proof}

\begin{proof}[Proof of Lemma~\ref{l.case II}]
It follows from Lemma~\ref{l.alpha rot} applied twice.
More precisely, one takes the unit vector in $E^u(z)$ that is least expanded by $Df^n$,
and rotates it (using Lemma~\ref{l.alpha rot}) towards the direction in $E^{cs}(z)$, 
which is most expanded by $Df^n$.
The image of the rotated vector by $Df^n$ then gets close to
$E^{cs}(f^n z)$, so with another rotation we are done.
The reader can either fill the details for himself, or else see \cite[p.~1449]{BV Annals}.
\end{proof}

%%%%%%%%%%%%%%%%%%%%%%%%%%%%%%%%%%%%%%%%%%%%%%%%%%%%%%
\subsection[Hamiltonians and dimension reduction]{Hamiltonians and Dimension Reduction}

Let us see a procedure that
will permit us to essentially reduce the proofs of
Lemmas~\ref{l.case III} and \ref{l.case IV} to dimensions $2$ and $4$,
respectively.

For $\nu<N$, let
$$
\R^{2\nu} = \big \{(p_1,\ldots,p_N, q_1, \ldots, q_N) \in \R^{2N} ; \; p_i = q_i = 0  \text{ for } i>\nu \big\}
$$
Notice the standard symplectic form on $\R^{2N}$ restricted to $\R^{2\nu}$ coincides
with the standard symplectic form on $\R^{2\nu}$.
Also, $(\R^{2\nu})^\omega = \{ p_i = q_i = 0  \text{ for } i \le \nu \}$,
so $\R^{2N} = \R^{2\nu} \oplus (\R^{2\nu})^\omega$.
In what follows, we write
$$
\R^{2N} = \big \{(x,y) ; \; x\in \R^{2\nu}, \ y\in(\R^{2\nu})^\omega \big\} .
$$
If a symplectic map $A\colon \R^{2N} \to \R^{2N}$ preserves $\R^{2\nu}$
then it also preserves the symplectic complement $(\R^{2\nu})^\omega$,
so $A$ can be written as $A(x, y) = (B(x), C(y))$,
where $B$ and $C$ are symplectic maps on $\R^{2\nu}$ and $(\R^{2\nu})^\omega$, respectively.

If $H$ is a smooth (ie, $C^\infty$) function on $\R^{2N}$, then we let $\varphi_H^t$ denote the
Hamiltonian flow generated by $H$.

\begin{lemma} \label{l.ode}
Let $H\colon \R^{2N} \to \R$ be a smooth function that is constant outside a compact set.
Then the associated Hamiltonian flow $\varphi_H^t \colon \R^{2N} \to \R^{2N}$
is defined for every time $t\in\R$, and
$$
\| \varphi_H^t(\xi) - \xi \|   \leq |t| \sup \|DH\|, \qquad
\| D(\varphi_H^t)(\xi) - \Id \| \leq \exp \big(|t| \sup \|D^2 H\|\big)-1.
$$
for every $\xi \in \R^{2N}$ and $t\in\R$.
\end{lemma}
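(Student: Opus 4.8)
The plan is to treat this as a routine ODE estimate for the Hamiltonian vector field $X_H$. The first observation is that, since $\omega$ is the \emph{standard} symplectic form, $X_H = J\nabla H$ where $J$ is the standard symplectic matrix, which is orthogonal; hence pointwise $\|X_H(\xi)\| = \|\nabla H(\xi)\| = \|DH(\xi)\|$ and $\|DX_H(\xi)\| = \|D^2 H(\xi)\|$. Because $H$ is constant outside a compact set, $DH$ — and therefore $X_H$ — is supported in that compact set, so $X_H$ is a compactly supported smooth vector field; in particular it is complete, which is why $\varphi_H^t$ is defined for all $t\in\R$. (Completeness is what will actually matter when this flow is used to build the perturbations in the following sections.)

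For the displacement bound I would integrate the flow equation $\frac{d}{dt}\varphi_H^t(\xi) = X_H(\varphi_H^t(\xi))$, so that $\varphi_H^t(\xi)-\xi = \int_0^t X_H(\varphi_H^s(\xi))\,ds$ and
$$
\|\varphi_H^t(\xi) - \xi\| \le |t|\, \sup\|X_H\| = |t|\, \sup\|DH\| .
$$

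For the derivative bound, set $\Phi(t) = D\varphi_H^t(\xi)$, which solves the variational equation $\dot\Phi(t) = DX_H(\varphi_H^t(\xi))\,\Phi(t)$ with $\Phi(0) = \Id$, and let $L = \sup\|D^2 H\| = \sup\|DX_H\|$. Put $u(t) = \|\Phi(t) - \Id\|$. From $\Phi(t) = \Id + \int_0^t \dot\Phi(s)\,ds$ and $\|\dot\Phi(s)\| \le L\,\|\Phi(s)\| \le L\,(1+u(s))$ we get, for $t\ge 0$, the integral inequality $u(t) \le L\int_0^t (1+u(s))\,ds$; applying Gr\"onwall's lemma to $1+u$ gives $1+u(t) \le e^{Lt}$, i.e.\ $u(t) \le e^{Lt}-1$. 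For $t<0$ the identical argument applied to the time-reversed flow (the flow of $-H$, whose Hessian has the same sup-norm $L$) yields $u(t) \le e^{L|t|}-1$, which is the claimed estimate.

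There is no genuine obstacle here: the only points requiring care are the elementary identification of $\|X_H\|$ and $\|DX_H\|$ with $\|DH\|$ and $\|D^2H\|$ (using that the symplectic form is standard, so the musical isomorphism is the orthogonal $J$), the remark that compact support of $X_H$ forces completeness of the flow, and the handling of negative times by time reversal; the rest is Gr\"onwall.
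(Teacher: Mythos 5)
Your argument is correct and follows the same route as the paper: the paper's (one-line) proof says precisely to apply Gr\"onwall to the Lipschitz function $1 + \sup\|D\varphi_H^t - \Id\|$, which is exactly what you do pointwise via the variational equation, while the completeness, displacement bound, and the identifications $\|X_H\|=\|DH\|$, $\|DX_H\|=\|D^2H\|$ are the routine details the paper leaves unstated.
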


\begin{proof}
The last assertion follows from a Gronwall inequality applied to the Lipschitz
function $u(t) = 1 + \sup \| D \varphi_H^t - \Id \|$.
\end{proof}

\begin{lemma}\label{l.dim reduction}
Given $\nu \in \{1,\ldots,N-1\}$, $\delta>0$, $\kappa>0$, and also:
\begin{itemize}
\item symplectic linear maps $A_0$, \ldots, $A_{m-1}\colon  \R^{2N} \to \R^{2N}$ preserving $\R^{2\nu}$,
so that we can write $A_i(x,y) = (B_i(x), C_i(y))$,
for $x \in \R^{2\nu}$, $y \in (\R^{2\nu})^\omega$;
\item for each  $i=0,\ldots, m-1$, a smooth function $H_i \colon  \R^{2 \nu} \to \R$
such that $\|D^2 H_i\| < \delta$ uniformly and $H_i$ is constant outside of $B_{i-1} \ccircs B_0(U)$,
where $U$ is the open unit ball in~$\R^{2\nu}$.
\end{itemize}
Then there exist:
\begin{itemize}
\item a cylinder $\hat U = \{(x,y) \in \R^{2N}; \; \|x\|<1, \ \|y\|< a \}$, where $a>0$;
\item smooth functions $\hat H_i \colon  \R^{2N} \to \R$ such that
$\|D^2 \hat H_i\| < 2\delta$ uniformly and
$\hat H_i$ is constant outside of $A_{i-1} \ccircs A_0(\hat U)$;
\item a set $\hat{G} \subset \hat{U}$ with $\mu(\hat{G}) > (1-\kappa) \mu(\hat{U})$
such that if $(x,y) \in \hat{G}$ then
\begin{multline} \label{e.hamilton skew}
A_{m-1} \circ \varphi_{\hat H_{m-1}}^t \ccircs A_0 \circ \varphi_{\hat H_0}^t (x,y) =\\
\big(B_{m-1} \circ  \varphi_{H_{m-1}}^t \ccircs B_0 \circ \varphi_{H_0}^t (x) , \,
C_{m-1} \ccircs C_0(y) \big) \, .
\end{multline}
\end{itemize}
\end{lemma}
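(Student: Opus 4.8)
The plan is to build the $\hat H_i$ by extending each $H_i$ so that it does not depend on the symplectic-complement coordinates, and then cutting it off in those coordinates to make it compactly supported. Since adding a constant to $H_i$ changes neither $\varphi_{H_i}^t$ nor $D^2 H_i$, I may assume each $H_i$ vanishes identically outside $W_i := B_{i-1}\ccircs B_0(U)$. Fix a smooth profile $\sigma\colon \R \to [0,1]$ with $\sigma \equiv 1$ on $[0,c_1]$ and $\sigma \equiv 0$ on $[1,\infty)$, where $0 < c_1 < 1$ will be pinned down later, and for $\xi \in (\R^{2\nu})^\omega$ put $\tilde\rho_a(\xi) = \sigma(\|\xi\|/a)$; this is smooth because $\tilde\rho_a \equiv 1$ near $0$, where $\|\mathord{\cdot}\|$ is not. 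Writing $\Gamma_i = C_{i-1}\ccircs C_0$ (so $\Gamma_0 = \Id$), I set
$$
\hat H_i(x,y) = H_i(x)\, \tilde\rho_a\bigl(\Gamma_i^{-1}(y)\bigr), \qquad x \in \R^{2\nu},\ y \in (\R^{2\nu})^\omega ,
$$
and take $\hat U = \{(x,y) : \|x\| < 1,\ \|y\| < a\}$ with $a>0$ to be chosen large.

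First I would check support and regularity. If $(x,y) \notin A_{i-1}\ccircs A_0(\hat U) = W_i \times \Gamma_i\bigl(\{\|y\|<a\}\bigr)$, then either $x \notin W_i$, forcing $H_i(x)=0$, or $\|\Gamma_i^{-1}(y)\| \ge a$, forcing $\sigma$ to vanish; in either case $\hat H_i(x,y)=0$. Hence $\hat H_i$ is constant outside the bounded set $A_{i-1}\ccircs A_0(\hat U)$, so Lemma~\ref{l.ode} applies and the flows $\varphi_{\hat H_i}^t$ are complete. For the Hessian, the product and chain rules give a bound of the form
$$
\|D^2 \hat H_i\| \le \|D^2 H_i\| + C\bigl( M' \|\Gamma_i^{-1}\| a^{-1} + M \|\Gamma_i^{-1}\|^2 a^{-2} \bigr),
$$
where $M = \max_i \sup |H_i|$ and $M' = \max_i \sup \|DH_i\|$ are finite (each $H_i$ being smooth and eventually constant) and $C$ depends only on $\sigma$ and $\nu$. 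Since $m$ is finite and the $\Gamma_i$ are fixed, choosing $a$ large makes the bracketed term smaller than $\delta$, so $\|D^2\hat H_i\| < 2\delta$ uniformly; no upper bound on $a$ is required, so this is costless.

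Next I would verify the skew-product identity \eqref{e.hamilton skew} on $\hat G := \{(x,y): \|x\|<1,\ \|y\|<c_2 a\}$, where I now fix $c_2 < c_1$ in $(0,1)$ close enough to $1$ that $c_2^{2(N-\nu)} > 1-\kappa$; then $\mu(\hat G) = c_2^{2(N-\nu)}\,\mu(\hat U) > (1-\kappa)\,\mu(\hat U)$. Writing $\hat H_i = H_i\,\rho_i$ with $\rho_i = \tilde\rho_a \circ \Gamma_i^{-1}$, the Hamiltonian vector field of $\hat H_i$ at $(x',y')$ has $x$-component $\rho_i(y')\, X_{H_i}(x')$ and $y$-component a multiple of $H_i(x')\, D\rho_i(y')$; therefore, wherever $\rho_i \equiv 1$ on a neighborhood of $y'$ (so that $D\rho_i(y')=0$), the curve $t \mapsto (\varphi_{H_i}^t(x'), y')$ solves the defining ODE, and by uniqueness $\varphi_{\hat H_i}^t(x',y') = (\varphi_{H_i}^t(x'), y')$. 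Now I induct on $k$: for $(x,y)\in \hat G$, the first $k$ compositions carry $(x,y)$ to $\bigl(B_{k-1}\circ\varphi_{H_{k-1}}^t\ccircs B_0\circ\varphi_{H_0}^t(x),\ \Gamma_k(y)\bigr)$, and since $\|y\| < c_2 a < c_1 a$ the second coordinate lies in the open set $\Gamma_k\bigl(\{\|y\|<c_1 a\}\bigr)$, on which $\rho_k \equiv 1$; applying the displayed identity with $i=k$ and then $A_k$ advances the induction, and $k=m$ gives \eqref{e.hamilton skew}.

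The main obstacle is the uniform bound $\|D^2\hat H_i\| < 2\delta$. The cut-off $\rho_i$ is forced to be supported in the ellipsoid $\Gamma_i\bigl(\{\|y\|<a\}\bigr)$, which can be extremely thin when the $C_j$ are far from isometries --- exactly the situation in the type~III and, especially, type~IV applications of this lemma --- so its derivatives grow like $\|\Gamma_i^{-1}\|$ and $\|\Gamma_i^{-1}\|^2$. The resolution is that these quantities are fixed once the data is fixed, while the cylinder radius $a$ is free and the measure ratio $c_2^{2(N-\nu)}$ does not see $a$; taking $a$ large enough therefore absorbs the blow-up. Everything else --- completeness via Lemma~\ref{l.ode}, the vector-field computation, the ODE-uniqueness argument, and the measure bookkeeping --- is routine.
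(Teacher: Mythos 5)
Your proof is correct and follows essentially the same approach as the paper's: after normalizing $H_i$ to vanish outside its support, one sets $\hat H_i(x,y)=H_i(x)\,\psi_i(y)$ where $\psi_i$ is a radial cut-off in the $(\R^{2\nu})^\omega$ coordinates, pulled back through $\Gamma_i^{-1}=C_0^{-1}\cdots C_{i-1}^{-1}$ and scaled by $a$, and one lets $a\to\infty$ to beat the fixed constants $\|\Gamma_i^{-1}\|$ in the Hessian estimate. The paper's cut-off $\zeta$ and your $\sigma$, and its $\psi_i$ and your $\tilde\rho_a\circ\Gamma_i^{-1}$, are the same construction up to notation, and the measure and ODE-uniqueness arguments coincide.
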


\begin{proof}
Let $\cB_0$, $\cC_0$ be the open unit balls in $\R^{2\nu}$, $(\R^{2\nu})^\omega$, respectively.
Let $\cB_i = B_{i-1} \ccircs B_0 (\cB_0)$,
$\cC_i = C_{i-1} \ccircs C_0 (\cC_0)$.
Let $0<\sigma<1$ be such that $\sigma^{2(N-\nu)} > 1-\kappa$.
Let $\zeta\colon  \R \to [0,1]$ be a smooth function such that:
$$
\zeta(t)=1 \text{ for } t\leq \sigma, \quad
\zeta(t)=0 \text{ for } t\geq 1, \quad
|\zeta'(t)| \leq \frac{10}{1-\sigma} \, , \quad
|\zeta''(t)| \leq \frac{10}{(1-\sigma)^2} \, .
$$
Let $a \gg 1$ (to be specified later).
Define $\psi_i\colon  (\R^{2\nu})^\omega \to \R$ by
$\psi_i (y) = \zeta \left(a^{-1} \|C_0^{-1} \cdots C_{i-1}^{-1} (y)\| \right)$.
Then
$$
\psi_i(y)=1 \text{ for } y \in \sigma a\cC_i,
\quad\text{and}\quad
\psi_i(y)=0 \text{ for } y \notin a\cC_i.
$$
Letting $c = c(\sigma)$ be an upper bound for the norms of
the first and second derivatives of the function
$y\in (\R^{2\nu})^\omega \mapsto \zeta(\|y\|)$, we can write
$$
\|D\psi_i\| \le c a^{-1} \|C_0^{-1} \cdots C_{i-1}^{-1}\|
\quad\text{and}\quad
\|D^2\psi_i\| \le c a^{-2} \|C_0^{-1} \cdots C_{i-1}^{-1}\|^2.
$$
So if $a$ is large enough, $\|D \psi_i \|$ and $\|D^2\psi_i\|$ are both uniformly small,
for every~$i$.

There is no loss in generality if we assume that each $H_i$ is zero outside~$\cB_i$.
Define $\hat{H}_i(x,y) = H_i(x) \psi_i(y)$.
Writing $v =( v_x, v_y) \in \R^{2\nu} \oplus (\R^{2\nu})^\omega$ and
analogously for $w$, we compute:
\begin{multline*}
D^2 \hat{H}_i(x,y)(v,w) = H_i(x) \cdot  D^2\psi_i(y) (v_y,w_y) +
DH_i(x)(w_x) \cdot D\psi_i(y)(v_y)  + \\
DH_i(x)(v_x) \cdot D\psi_i(y)(w_y)  + D^2 H_i(x) (v_x,w_x) \cdot \psi_i(y) .
\end{multline*}
Therefore $\| D^2 \hat H_i \| < 2\delta$ for every $i$,
provided $a$ is chosen sufficiently large.

Define the subsets of $\R^{2N}$:
$$
\hat{U} = \cB_0 \oplus (a\cC_0) \quad \text{and} \quad
\hat{G} = \cB_0 \oplus (\sigma a\cC_0) .
$$
The choice of $\sigma$ implies that $\mu(\hat{G}) >(1-\kappa) \mu(\hat{U})$.
We have $\hat H_i(x,y) = 0$ if $x\not\in \cB_i$ or $y \not\in a\cC_i$,
that is, if $(x,y) \not\in A_{i-1} \ccircs A_0(\hat{U})$.
Moreover, if
$(x,y) \in \cB_i \oplus (\sigma a \cC_i)$ then
$\varphi_{\hat H_i}^t(x,y) = \big( \varphi_{H_i}^t(x), y \big)$.
So~\eqref{e.hamilton skew} follows.
\end{proof}

In \S\ref{ss.IV} we will use the following lemma about
change of coordinates in hamiltonians.
The easy proof is left to the reader.

\begin{lemma}\label{l.hamilton change coord}
Let $H$ be a hamiltonian on $\R^{2N}$, $a>0$,
and $M\colon \R^{2N} \to \R^{2N}$ be a symplectic linear map.
Define hamiltonians $H_1(x) = a^{-2} H(ax)$ and $H_2(x) = H(M(x))$.
Then
\begin{alignat*}{2}
D^2 H_1 (x) \cdot (v,w) &= D^2 H(ax) \cdot (v,w),     &\qquad
\varphi_{H_1}^t(x)      &= a^{-1} \varphi_H^t(ax), \\
D^2 H_2(x) \cdot (v,w)  &= D^2 H(M(x))  \cdot (Mv, Mw),   &\qquad
\varphi_{H_2}^t(x)      &= M^{-1} \circ \varphi_H^t \circ M (x).
\end{alignat*}
\end{lemma}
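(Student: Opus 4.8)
The plan is to split the four identities into two easy halves. The two Hessian formulas are nothing but the chain rule: since both $x\mapsto ax$ and $M$ are linear, one computes $DH_1(x) = a^{-1}DH(ax)$ and $DH_2(x) = DH(Mx)\circ M$, and differentiating a second time gives $D^2H_1(x)\cdot(v,w) = D^2H(ax)\cdot(v,w)$ and $D^2H_2(x)\cdot(v,w) = D^2H(Mx)\cdot(Mv,Mw)$ (the symmetry of the Hessian being what lets one write $Mv$ before $Mw$). There is nothing to watch out for here.

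For the two flow identities I would appeal to uniqueness of solutions of the defining equation $\dot\xi = X_H(\xi)$, where $X_H$ is the Hamiltonian vector field, i.e.\ $X_H = \cJ^{-1}\nabla H$ with $\cJ$ the standard isomorphism satisfying $\omega(v,w) = \langle\cJ v,w\rangle$ on $\R^{2N}$. The point is to track how $H\mapsto X_H$ behaves under the two operations. For the rescaling, from $\nabla H_1(x) = a^{-2}\cdot a\cdot\nabla H(ax) = a^{-1}\nabla H(ax)$ and linearity of $\cJ^{-1}$ one gets $X_{H_1}(x) = a^{-1}X_H(ax)$; hence $\gamma(t) := a^{-1}\varphi_H^t(ax)$ has $\gamma(0)=x$ and $\dot\gamma(t) = a^{-1}X_H(a\gamma(t)) = X_{H_1}(\gamma(t))$, so $\gamma(t) = \varphi_{H_1}^t(x)$. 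For the linear symplectic change of coordinates, the relevant fact is the naturality of Hamiltonian vector fields under symplectomorphisms: if $M$ preserves $\omega$ then $X_{H\circ M} = M^{-1}\circ X_H\circ M$ as vector fields (for $M$ linear the differential is $M$ itself, so the pullback of $X_H$ sends $z$ to $M^{-1}X_H(Mz)$, and pulling back the identity $\iota_{X_H}\omega = dH$ by $M$ turns it into $\iota_{z\mapsto M^{-1}X_H(Mz)}\omega = d(H\circ M)$). Then $\gamma(t) := M^{-1}\varphi_H^t(Mx)$ has $\gamma(0)=x$ and $\dot\gamma(t) = M^{-1}X_H(M\gamma(t)) = X_{H_2}(\gamma(t))$, so $\gamma(t) = \varphi_{H_2}^t(x)$.

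I do not expect any real obstacle: the whole lemma is bookkeeping. The one genuine ingredient is the naturality identity $X_{H\circ M} = M^{-1}\circ X_H\circ M$, and this is precisely the step that uses that $M$ is symplectic rather than merely linear; everything else is the chain rule and the uniqueness theorem for ODEs, which is why the paper defers the verification to the reader.
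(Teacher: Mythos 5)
Your proof is correct, and it is exactly the verification the paper has in mind — the paper gives no argument at all (``the easy proof is left to the reader''), and the intended route is precisely yours: chain rule for the Hessians, and uniqueness of ODE solutions after checking $X_{H_1}(x)=a^{-1}X_H(ax)$ (the $a^{-2}$ compensating for the scaling map being only conformally symplectic) and the naturality $X_{H\circ M}=M^{-1}\circ X_H\circ M$ for symplectic $M$.
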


%%%%%%%%%%%%%%%%%%%%%%%%%%%%%%%%%%%%%%%%%%%%%%%%%%%%%%
\subsection[Dealing with case III]{Dealing with Case III}\label{ss.III}

\begin{proof}[Proof of Lemma~\ref{l.case III}]
We will assume $2N > 2$.
(The reader can adapt the arguments for the simpler $2$-dimensional case,
if he desires to reobtain the results of \cite{B GZLE}.)

Let $\KIII$ (and also $\eps$, $\kappa$) be given.
Let $\eps' = (K_\cA \KIII)^{-2} \eps$.
(Recall the definition of $K_\cA$ from \S\ref{ss.flex TM}.)
Let $\delta$ be such that $e^{2\delta}-1 = \eps'$.
Let $\sigma = 1 - \kappa/2$.
Take a smooth function $\rho\colon  \R_+ \to \R$ such that
$$
\rho(t)= t \text{ for } 0\le t \le \sigma , \quad
\rho(t)= 1 \text{ for } t \ge 1, \quad
0\le \rho'(t) \le 1, \quad
|\rho''(t)| \le \frac{10}{1-\sigma}
$$
Let $\alpha>0$ and define $H (p_1, q_1) =\frac{\alpha}{2} \rho\left(p_1^2+q_1^2 \right)$.
Notice that $\varphi_H^t$ restricted to the disk $p_1^2+q_1^2 \le \sigma$
is a rotation of angle $t\alpha$.
Choose $m$ big enough so that setting $\alpha = \frac{\pi}{2m}$
we have $\|D^2 H\|< \delta$ uniformly.
Let us  see that $m_0=m$ has the desired properties.

Take a segment $\{z,\ldots,f^{m-1}z\}$ of type~III with constant $\KIII$.
Let $\cL_i \colon  T_{f^i z} M \to \R^{2N}$
and $A_i  = \cL_{i+1} \circ Df(f^i z) \circ \cL_i$
be as in the definition of type III.
Our aim is to show that the split sequence
$Df(f^i z)\colon {E^u \oplus E^{cs} \hookleftarrow}$ ($0 \le i < m$)
is $(\eps, \kappa)$-flexible.
Because of Lemma~\ref{l.flex change coord}, it suffices to show that the split sequence on $\R^{2N}$
$$
\left\{ F_i^u \oplus F_i^{cs} \xrightarrow{A_i} F_{i+1}^u \oplus F_{i+1}^{cs} \right\}_{0\le i <m}
\quad \text{where}  \quad
F_i^* = \cL_i(E^*(f^i z))
$$
is $(\eps', \kappa)$-flexible.

The maps $A_i$ are the identity on the plane $\R^2$ spanned by
$\vetor{p_1}$ and $\vetor{q_1}$.
So we can write $A_i(x,y) = (x, C_i(y))$ for
$x \in \R^2$, $y \in (\R^2)^\omega$.
Apply Lemma~\ref{l.dim reduction}
with $\nu=2$, $H_i = H$ for $0 \le i <m$,
and $\kappa/2$ in the place of $\kappa$.
We obtain a cylinder $\hat{U}$,
hamiltonians $\hat{H}_i$ that are constant outside $A_{i-1} \ccircs A_0(\hat{U})$
and satisfy $\|D^2 \hat{H}_i\|< 2\delta$,
and a set $\hat G \subset \hat U$ with measure $> (1-\kappa/2) \mu(\hat{U})$ where
$$
A_{n-1} \circ \varphi_{\hat H_{n-1}}^1 \ccircs A_0 \circ \varphi_{\hat H_0}^1 (x,y) =
\big(\varphi_H^m (x) , \, C_{n-1} \ccircs C_0(y) \big) \, .
$$

Let $g_i = A_i \circ \varphi^1_{\hat{H}_i}$.
We check that the maps $g_i$ have the properties demanded by flexibility
(for any $\gamma>0$, in fact):
\begin{enumerate}
\item $g_i = A_i$ outside  $A_{i-1} \ccircs A_0(\hat{U})$.
\item By Lemma~\ref{l.ode}, $\|D(A_i^{-1} \circ g_i)-\Id\| < e^{2\delta}-1 = \eps'$.
\item The cylinder $\hat{U} \cap \{p_1^2+q_1^2 < \sigma\}$ has measure $\sigma \mu(\hat{U})$;
let $G$ be its intersection with $\hat{G}$.
Then $\mu(G) / \mu(\hat{U}) > \sigma - \kappa/2  = 1-\kappa$.
If $\xi=(x,y) \in G$
then
$$
g_{m-1} \ccircs g_0 (\xi) = \big( R_{\pi/2} (x), C_{n-1} \ccircs C_0(y) \big)
$$
and therefore
$D (g_{m-1} \ccircs g_0) (\xi) \cdot \vetor{p_1} = \vetor{q_1}$.
In particular the angle
between $D (g_{m-1} \ccircs g_0) (\xi) \cdot F^u_0$ and $F^{cs}_n$ is zero.\qedhere
\end{enumerate}
\end{proof}

%%%%%%%%%%%%%%%%%%%%%%%%%%%%%%%%%%%%%%%%%%%%%%%%%%%%%%
\subsection[Dealing with case IV]{Dealing with Case IV}\label{ss.IV}

As already mentioned, the proof of Lemma~\ref{l.case IV} will be essentially reduced to dimension $4$.
Let us fix some notation.
For $t\in \R$, define the following symplectic linear map on $\R^4 = \{(p_1, p_2, q_1, q_2)\}$:
\begin{equation}\label{e.rotation}
R_t = \begin{pmatrix}
\cos t & -\sin t & 0      & 0       \\
\sin t & \cos t  & 0      & 0       \\
0      & 0       & \cos t & -\sin t \\
0      & 0       & \sin t & \cos t
\end{pmatrix} .
\end{equation}

For $t$ in the circle $\nicefrac{\R}{\pi \Z}$, let us
indicate $\nor{t} = \min_{k\in\Z}|t-k\pi|$.

If $v=(p_1,p_2,q_1,q_2)$ is a vector in $\R^4$ such that $(p_1,p_2) \neq (0,0)$
then we let $\Theta(v)$ be such that $(p_1,p_2) = \pm (r \cos\Theta(v), r \sin\Theta(v))$,
where $r = (p_1^2 + p_2^2)^{1/2}$;
thus $\Theta(v)$ is uniquely defined in $\circulo$.

For $\beta>0$, define cones
$$
\cC_ \beta = \{(p_1, p_2, q_1, q_2) \in \R^4; \; \|(q_1, q_2)\| < \beta \|(p_1, p_2)\|\}.
$$

\begin{lemma} \label{l.sheer}
For every $v \in \cC_1$ there is a symplectic linear map $L_v \colon  \R^4 \to \R^4$ such that:
\begin{enumerate}
\item $L_v$ preserves the plane spanned by $\vetor{q_1}$ and $\vetor{q_2}$.
\item $L_v\big(\vetor{p_1}\big)$ is collinear to $v$.
\item $\Theta(L_v(w)) =  \Theta(w) + \Theta(v)$ for all $v$, $w \in \cC_1$.
\item $\|L_v\| = \|L_v^{-1}\| \le K_L$ for all $v\in\cC_1$, where $K_L>1$ is an absolute constant.
\end{enumerate}
\end{lemma}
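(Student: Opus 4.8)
The plan is to exhibit $L_v$ explicitly as the composition of the rotation $R_{\Theta(v)}$ from~\eqref{e.rotation} with a symplectic shear. Decompose $\R^{4}=P\oplus Q$, where $P$ is the $p_1p_2$-plane and $Q$ is the plane spanned by $\vetor{q_1}$ and $\vetor{q_2}$, and write a vector $v\in\cC_1$ as $v=(a,b)$ with $a=(p_1,p_2)\in\R^{2}$ and $b=(q_1,q_2)\in\R^{2}$; membership in $\cC_1$ says precisely that $\|b\|<r:=\|a\|$, so in particular $a\neq 0$. Choose an honest real number $\theta$ with $a=r(\cos\theta,\sin\theta)$, so that $\Theta(v)=\theta$ in $\circulo$. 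For a symmetric $2\times 2$ matrix $S$ (to be chosen below) define the shear $\Phi_S\colon\R^{4}\to\R^{4}$ by $\Phi_S(p,q)=(p,Sp+q)$; a one-line computation using $S=S^{T}$ and the shape of the standard symplectic product shows $\Phi_S$ is symplectic, and it visibly preserves $Q$ while leaving $P$-components unchanged. Since $R_\theta$ is symplectic and also preserves $Q$, the map $L_v:=\Phi_S\circ R_\theta$ is symplectic and preserves $Q$; this is condition~1, for any choice of $S$.

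Condition~3 is then automatic: $\Theta$ reads only the $P$-component of a vector, the $P$-component of $R_\theta w$ is that of $w$ rotated by the angle $\theta$, and $\Phi_S$ does not touch $P$-components, so $\Theta(L_v w)=\Theta(w)+\theta=\Theta(w)+\Theta(v)$ for every $w$ with non-zero $P$-component, in particular for every $w\in\cC_1$ (and $L_v w$ then also has non-zero $P$-component, so the left-hand side is defined). Condition~2 reduces to a single linear equation: since $R_\theta(\vetor{p_1})=(a/r,0)$, one computes $L_v(\vetor{p_1})=\Phi_S(a/r,0)=\tfrac1r(a,Sa)$, which is collinear to $v=(a,b)$ if and only if $Sa=b$.

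Thus the whole problem comes down to choosing a symmetric $S$ with $Sa=b$ whose norm is bounded by an absolute constant --- and this is the only step that is not purely mechanical, and the only place the cone hypothesis is used. I would take
$$
S\;=\;\frac{1}{r^{2}}\bigl(b\,a^{T}+a\,b^{T}\bigr)\;-\;\frac{\langle a,b\rangle}{r^{4}}\,a\,a^{T},
$$
which is symmetric, satisfies $Sa=b+\tfrac{\langle a,b\rangle}{r^{2}}a-\tfrac{\langle a,b\rangle}{r^{2}}a=b$, and obeys $\|S\|\le\tfrac{2\|b\|}{r}+\tfrac{\|b\|}{r}\le 3$ precisely because $\|b\|<r$. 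With this $S$ all of condition~1, condition~2, and condition~3 hold; and for condition~4, $\|L_v\|\le\|\Phi_S\|\le 1+\|S\|\le 4$ (using $\|R_\theta\|=1$), while $L_v^{-1}=R_{-\theta}\circ\Phi_{-S}$ gives the same bound for $\|L_v^{-1}\|$, so one may take $K_L=4$ (and in fact $\|L_v^{-1}\|=\|L_v\|$ automatically, since $L_v$ is symplectic for the standard structure). The only subtlety worth flagging is that $\theta$ must be picked as a genuine real number with $a=r(\cos\theta,\sin\theta)$, not merely modulo $\pi$; this removes the sign ambiguity in $R_\theta(\vetor{p_1})=(a/r,0)$ and makes the collinearity in condition~2 land cleanly.
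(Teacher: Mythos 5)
Your proof is correct and takes essentially the same approach as the paper: both factor $L_v$ as a composition of the rotation $R_\theta$ (with $\theta=\Theta(v)$) and a symplectic shear $\begin{pmatrix} I & 0 \\ S & I\end{pmatrix}$ with $S$ symmetric of bounded norm. The paper normalizes $v$ so its $P$-component is a unit vector, rotates it to $(1,0,a,b)$ and then writes down a one-line shear with $S=\begin{pmatrix} a & b\\ b & 0\end{pmatrix}$, whereas you avoid the normalization by exhibiting a symmetric $S$ with $Sa=b$ directly; the difference is cosmetic.
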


\begin{proof}
Let $v = (p_1, p_2, q_1, q_2) \in \cC_1$.
Assume that $p_1^2 + p_2^2 = 1$.
Let $\theta = \Theta (v)$.
Then $R_{-\theta} (v) = (1,0,a,b)$ for certain $a$ and $b$ with $a^2+b^2\le 1$.
(Recall~\eqref{e.rotation}.)
The matrix
$$
M =
\begin{pmatrix}
1 & 0 & 0 & 0 \\
0 & 1 & 0 & 0 \\
a & b & 1 & 0 \\
b & 0 & 0 & 1
\end{pmatrix}
$$
is symplectic and preserves $\Theta$.
Then $L_v = R_\theta \circ M$ has the required properties.
\end{proof}

\smallskip

The following well-known fact about random walks will play a important role in the proof:

\begin{lemma}\label{l.probabilistic}
Let $X_0$, $X_1$, \ldots be independent identically distributed random variables,
with $\mathbb{E} |X_0|< \infty$ and $0< \mathbb{E} X_0^2 < \infty$.
Let $S_n = X_0 + \cdots + X_{n-1}$.
For any fixed $K>0$, the probability that $|S_n| \le K$ for all $n$
is zero.
\end{lemma}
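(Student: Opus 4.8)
The statement to prove is that for i.i.d.\ $X_0, X_1, \dots$ with $\mathbb{E}|X_0| < \infty$ and $0 < \mathbb{E} X_0^2 < \infty$, the random walk $S_n = X_0 + \cdots + X_{n-1}$ satisfies $\P\{|S_n| \le K \text{ for all } n\} = 0$ for every fixed $K > 0$. The cleanest route is to separate two cases according to the drift $\mu := \mathbb{E} X_0$.

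If $\mu \ne 0$, the strong law of large numbers gives $S_n / n \to \mu$ almost surely, so $|S_n| \to \infty$ almost surely, and in particular $|S_n| \le K$ for all $n$ has probability zero. If $\mu = 0$, the walk is recurrent in the weak sense that $\limsup_n S_n = +\infty$ and $\liminf_n S_n = -\infty$ almost surely; this is the classical Chung--Fuchs theorem, whose hypothesis here is guaranteed since $S_n/n \to 0$ a.s.\ (again the strong law). Either oscillation alone already forces $\P\{\sup_n |S_n| \le K\} = 0$. Since the assumption $\mathbb{E} X_0^2 > 0$ rules out the degenerate case $X_0 \equiv 0$, these two cases exhaust all possibilities, and the lemma follows.

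An alternative self-contained argument, avoiding Chung--Fuchs, handles the zero-drift case by a second-moment / exit-time estimate: on the event $E_K = \{|S_n| \le K \ \forall n\}$ the walk stays bounded, so by the martingale property $\mathbb{E}[S_n^2 \mathbbm{1}_{E_K}]$ is bounded by $K^2$, while $\mathbb{E}[S_n^2] = n\,\mathbb{E} X_0^2 \to \infty$; comparing these via a stopping-time decomposition at the first exit from $[-K,K]$ shows that the exit time is a.s.\ finite, hence $\P(E_K) = 0$. One can also simply cite the recurrence/oscillation dichotomy for one-dimensional random walks directly from a standard reference (e.g.\ Durrett, \emph{Probability: Theory and Examples}), since only the elementary consequence $\P\{\sup_n|S_n| < \infty\} = 0$ for non-degenerate walks is needed.

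The only point requiring care — and the one I would flag as the ``main obstacle,'' though it is minor — is that one must invoke a genuine recurrence-type result in the zero-drift case rather than the law of large numbers alone: $S_n/n \to 0$ does not by itself preclude $S_n$ from staying in a bounded interval. The square-integrability hypothesis $\mathbb{E} X_0^2 < \infty$ (in fact finite first moment plus the Chung--Fuchs criterion) is exactly what is needed to close this gap, and the condition $\mathbb{E} X_0^2 > 0$ is used only to discard the trivial walk. No delicate estimates are involved; the result is standard and I would present it by reduction to the drift dichotomy as above.
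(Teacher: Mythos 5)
Your proof is correct, but it follows a different route from the paper. The paper's argument is lighter: it fixes a single time $n$, observes that $\P[\,|S_n|\le K\,]\le\P[\,|S_m|\le K\,]$ is not even needed beyond the trivial inclusion $\{|S_n|\le K\ \forall n\}\subset\{|S_m|\le K\}$ for each fixed $m$, and then uses the Central Limit Theorem to show $\P[\,|S_m|\le K\,]\to 0$ as $m\to\infty$ (in the zero-mean case the walk spreads on scale $\sqrt{m}$, in the nonzero-mean case it drifts away), which immediately forces the intersection event to have probability zero. In particular, your flagged ``main obstacle'' --- that in the zero-drift case one must invoke a genuine recurrence-type result --- is not actually an obstacle: no almost-sure statement about paths (Chung--Fuchs, the oscillation trichotomy) is required, only convergence in distribution at fixed times, precisely because the hypothesis here is $\mathbb{E}X_0^2<\infty$ and the event to be excluded is contained in every single-time event. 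Your dichotomy (SLLN for $\mathbb{E}X_0\neq 0$, Chung--Fuchs/oscillation for $\mathbb{E}X_0=0$) is valid and even yields the stronger conclusion $\sup_n|S_n|=\infty$ a.s.; it also handles cleanly the degenerate case $X_0\equiv c\neq 0$, which the paper's ``of course $\sigma>0$'' quietly skips (harmlessly, since that case is trivial). One caveat: your alternative second-moment sketch is not a proof as stated --- $\mathbb{E}[S_n^2\mathbbm{1}_{E_K}]\le K^2$ together with $\mathbb{E}[S_n^2]\to\infty$ is no contradiction, since the growing mass may live off $E_K$; to make that route work you need the genuine optional-stopping computation $\mathbb{E}[S_{n\wedge T}^2]=\mathbb{E}[X_0^2]\,\mathbb{E}[n\wedge T]$ at the exit time $T$ from $[-K,K]$, with some care about the overshoot at time $T$. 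Since your primary argument stands on its own, this is only a remark about the alternative.
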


\begin{proof}
Let $a$ and $\sigma$ be respectively the mean and the variance of $X_0$.
Of course, $\sigma>0$.
By the Central Limit Theorem, $Y_n = (S_n - an)/(\sigma \sqrt{n})$ converges in distribution to a
standard normal random variable.
That is, %(Billingsley pag 339)
$$
\lim_{n \to \infty} \P[\alpha \le Y_n \le \beta] =
\frac{1}{\sqrt{2\pi}} \int_\alpha^\beta e^{-t^2/2} \, dt
\qquad \forall \alpha<\beta.
$$
Fix $K>0$.
If $a=0$ then
${\P[|S_n|\le K]} = \P \big[|Y_n|\le \frac{K}{\sigma\sqrt{n}} \big]$.
If $a\neq 0$ then
${\P[|S_n|\le K]} \le \P \big[|Y_n|\ge \frac{|a|n - K}{\sigma\sqrt{n}} \big]$.
In either case, we have
$\lim_{n \to \infty} \P[|S_n|\le K] = 0$.
In particular, $\P[|S_n|\le K \ \forall n] = 0$.
\end{proof}

\begin{proof}[Proof of Lemma~\ref{l.case IV}]

\medskip\noindent\emph{Step 1. Preparation.}
Let $\eps$, $\kappa$, $\KIV$, $\tau$ be given.
Let $\eps'>0$ be so that
$\eps' < (K_\cA \KIV)^{-2} \eps$ (recall the definition of $K_\cA$ from \S\ref{ss.flex TM})
and
$M(\cC_1) \subset \cC_{\tau^2}$ for all linear $M\colon  \R^4 \to \R^4$ with $\|M - \Id\|< \eps'$.
Let $\delta$ be given by $e^{2K_L^2 \delta} -1 = \eps'$
(where $K_L$ comes from Lemma~\ref{l.sheer}).
Let $\alpha>0$ be given by Lemma~\ref{l.alpha rot}
applied with $\eps'$ and $\kappa/10$ in the place of $\eps$ and $\kappa$,
respectively.

Let $\D$ be the open unit ball in $\R^4$.\footnote{A ``box'' as
in \S\ref{ss.preview} would work equally well.}
Let $\bar \mu$ be Lebesgue measure on $\R^4$ normalized so that $\bar{\mu}(\D)=1$.

Choose a  smooth function $H\colon \R^4 \to \R$
not identically zero that vanishes outside of $\D$, and such that $\|D^2 H\| < \delta$.
Let $h\colon \R^4 \to \R^4$ be the associated time $1$ map, that is, $h = \varphi_H^1$.
Let $\nu$ be the probability measure on the circle $\circulo$ defined by
$$
\nu(A) = \bar\mu \big\{x\in \D;\; \Theta \big( Dh (x) \cdot \vetor{p_1} \big) \in A \big\}
\text{, for each Borel set $A \subset \circulo$}\, .
$$
We assume that $H$ was chosen so that the support of $\nu$
is contained in the interval
$\{t; \; \nor{t} < \alpha/20\}$.

Let $X_0, X_1, \ldots$ be independent circle-valued random variables,
all distributed according to the measure~$\nu$.\footnote{It is interesting, although unimportant, to see that $\mathbb{E}(\tan X_0) = 0$.}
Consider the random walk $S_n = X_0 + \cdots + X_{n-1}$.
By Lemma~\ref{l.probabilistic}, there exists
$m_1$ such that
\begin{equation}\label{e.def m}
\text{the probability that
$\nor{S_n - \tfrac{\pi}{2}} > \tfrac{\alpha}{20}$  for all  $n \le m_1$
is less than $\tfrac{\kappa}{20}$.}
\end{equation}
We will show that $m_1$ has the desired properties.

Take $m \ge m_1$ and assume that $\{z, \ldots, f^m z\}$
is a segment of type~IV with constants $\KIV$, $\tau$.
Let $\cL_i \colon  T_{f^i z} M \to \R^{2N}$
and $A_i  = \cL_{i+1} \circ Df(f^i z) \circ \cL_i$
be as in the definition of type IV.
We want to prove that the split sequence
$Df(f^i z)\colon {E^u \oplus E^{cs} \hookleftarrow}$ ($0 \le i < m$)
is $(\eps, \kappa)$-flexible.
Bearing in mind Lemma~\ref{l.flex change coord},
it suffices to show that the split sequence on~$\R^{2N}$
\begin{equation}\label{e.split seq}
\left\{ F_i^u \oplus F_i^{cs} \xrightarrow{A_i} F_{i+1}^u \oplus F_{i+1}^{cs} \right\}_{0\le i \le m}
\quad \text{where}  \quad
F_i^* = \cL_i(E^*(f^i z))
\end{equation}
is $(\eps', \kappa)$-flexible.

By definition of type~IV,
$$
A_i (x,y) = \big(B_i(x), C_i(y)\big), \quad
B_i(p_1,p_2,q_1,q_2) = (c_i p_1, c_i p_2, c_i^{-1} q_1, c_i^{-1} q_2), \quad
c_i >\tau \, .
$$
Also, for all $i$,
\begin{equation}\label{e.4 vectors}
\vetor{p_1} \in F_i^u, \quad
\vetor{p_2}, \  \vetor{q_1}, \ \vetor{q_2} \in F_i^{cs}
\end{equation}

\medskip\noindent\emph{Step 2. Reduction to $\R^4$.}
Let $U_0 = \D$ and
$U_n = B_{n-1} \ccircs  B_0 (\D)$ for $1 \le n \le m$.

\begin{subl}\label{sl.reduction}
There exist symplectomorphisms $g_n \colon  \R^4 \to \R^4$, for $0 \le n < m$ with the following properties:
\begin{enumerate}
\item $g_n$ equals $B_n$ outside $U_n$ and $\|D(B_n^{-1} \circ g_n) - \Id \| < \eps'$ at each point;
\item for each $n$, there is a smooth function $H_n\colon  \R^4 \to \R$ constant outside $U_n$ such that
$\|D^2 H\| < K_L^2 \delta$ and the time $1$ map $\varphi^1_{H_n}$ equals $B_n^{-1} \circ g_n$;
\item there is a set $G \subset U_0$
with (normalized) measure $\bar \mu(G) > 1- \kappa/10$
such that
\begin{equation}\label{e.90 deg}
\bignor{ \Theta \big( D (g_{m-1} \ccircs g_0)(x) \cdot \vetor{p_1} \big) - \tfrac{\pi}{2} } < \tfrac{\alpha}{2}
\quad \text{for all $x \in G$.}
\end{equation}
\end{enumerate}
\end{subl}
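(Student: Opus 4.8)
The plan is to construct $g_0,\dots,g_{m-1}$ by induction on $n$, implementing the ``random walk with an absorbing barrier'' of \S\ref{ss.preview}. At stage $n$ one carries along the line field $v_n=D(g_{n-1}\ccircs g_0)\cdot\langle\vetor{p_1}\rangle$ on $U_n$. Since each $g_i$ will equal $B_i$ composed with a map whose derivative is $\eps'$-close to $\Id$, hence (by the choice of $\eps'$ in Step~1) maps $\cC_1$ into $\cC_{\tau^2}$, while $B_i$ carries $\cC_{\tau^2}$ into $\cC_{\tau^2 c_i^{-2}}\subset\cC_1$, the cone $\cC_1$ will be $Dg_i$-invariant; so $v_n$ takes values in $\cC_1$, the angle $\Theta(v_n)$ is always defined, and Lemma~\ref{l.sheer} applies along the orbit.

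\emph{The step at stage $n$.} Fix $H$, $h=\varphi^1_H$, $\nu$ as in Step~1. Given $g_0,\dots,g_{n-1}$, split $U_n$ (up to a null set) into an \emph{absorbed} region --- a finite union of forward $g$-images of boxes on which $\Theta$ was already set $\approx\tfrac{\pi}{2}$ at an earlier stage, where we put $g_n:=B_n$ ---, a \emph{barrier} region $\{\nor{\Theta(v_n)-\tfrac{\pi}{2}}<\alpha/20\}$ with the absorbed part removed, and a \emph{far} region $\{\nor{\Theta(v_n)-\tfrac{\pi}{2}}\ge\alpha/20\}$ (automatically disjoint from the absorbed one). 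On the far region use the Vitali covering lemma to cover all but an arbitrarily small fraction by finitely many disjoint ellipsoids $D_{n,j}=T_{n,j}^{-1}(\D)$, where $T_{n,j}$ is a homothety composed with $L_{\bar v_{n,j}}^{-1}$ (from Lemma~\ref{l.sheer}, $\bar v_{n,j}\in\cC_1$ being the nearly constant value of $v_n$ on $D_{n,j}$) and a translation, and declare $B_n^{-1}\circ g_n$ equal to $T_{n,j}^{-1}\circ h\circ T_{n,j}$ on $D_{n,j}$; by Lemma~\ref{l.hamilton change coord} this is the time-one flow of a Hamiltonian with second derivative $<K_L^2\delta$. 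On the barrier region cover all but a small fraction by finitely many disjoint boxes $D'_{n,j}$ with $v_n\approx\bar v$ almost constant, and on each perform the localized small rotation of \cite{BV Annals} that carries $\bar v$ onto $R_\theta\bar v$ on a $(1-\kappa/10)$-fraction of the box, where $\theta$ is the representative of $\tfrac{\pi}{2}-\Theta(\bar v)$ with $|\theta|\le\alpha/20$, so that $\angle(\bar v,R_\theta\bar v)=|\theta|<\alpha$ and $\Theta(R_\theta\bar v)=\tfrac{\pi}{2}$; this too is a small Hamiltonian flow. As all these boxes are pairwise disjoint, the $H_n$ so assembled are genuine smooth functions, constant outside $U_n$, with $\|D^2 H_n\|<K_L^2\delta$ (condition~2), whence $\|D(B_n^{-1}\circ g_n)-\Id\|\le e^{K_L^2\delta}-1<\eps'$ by Lemma~\ref{l.ode} (condition~1). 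The boxes $D'_{n,j}$ and their forward images become absorbed from stage $n+1$ on.

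\emph{Bookkeeping and conclusion.} On $D_{n,j}$ one has $Dg_n(y)=B_n\circ L_{\bar v_{n,j}}\circ Dh(T_{n,j}y)\circ L_{\bar v_{n,j}}^{-1}$, so Lemma~\ref{l.sheer}(2)--(3) and the $\Theta$-invariance of $B_n$ give
$$
\Theta\big(v_{n+1}(g_n y)\big)=\Theta\big(v_n(y)\big)+\Theta\big(Dh(T_{n,j}y)\cdot\vetor{p_1}\big)+(\text{error}),
$$
the error being controlled by the oscillation of $v_n$ on $D_{n,j}$, hence as small as desired. Since $T_{n,j}$ has constant Jacobian it sends normalized Lebesgue measure on $D_{n,j}$ to that on $\D$, so the increment $\Theta(Dh(T_{n,j}y)\cdot\vetor{p_1})$ is $\nu$-distributed, \emph{independently of which box contains $y$}, i.e.\ of the past. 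Thus, along the orbits never caught by a discarded small-measure piece, $\Theta(v_n)$ follows the random walk $S_n=X_0+\dots+X_{n-1}$ with $X_k$ i.i.d.\ $\nu$, up to a total error $<\alpha/100$ accumulated over the $m$ (by now fixed) stages. By \eqref{e.def m}, with probability $>1-\kappa/20$ some $S_n$ with $n\le m_1\le m$ has $\nor{S_n-\tfrac{\pi}{2}}<\alpha/20$; at the first such $n$ the orbit lies in the barrier region, is snapped to $\Theta=\tfrac{\pi}{2}$ (up to the small error), and is frozen there by the subsequent $B$-maps. Taking $G$ to be the set of $x\in\D$ that are never discarded, reach the barrier by time $m$, and land in the good part of the relevant snap, and splitting the $\kappa/10$ budget of \eqref{e.90 deg} among the random-walk miss, the Vitali and freezing losses, and the snap failures, we obtain $\bar\mu(G)>1-\kappa/10$ and \eqref{e.90 deg}.

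\emph{Main obstacle.} The delicate point is to run this induction so that the angle increments are genuinely i.i.d.\ --- which is exactly why one conjugates a \emph{fixed} $h$ by the $\Theta$-additive maps $L_v$ of Lemma~\ref{l.sheer} and by exactly measure-normalizing affine maps, rather than rotating by a fixed angle as in cases~I--III --- while simultaneously keeping $\cC_1$ invariant so that $\Theta$ stays defined, keeping every perturbation $\eps'$-small in $C^1$ in spite of the hyperbolic distortion of the $B_i$ on the $4$-plane, and making the measure lost over the $m$ separate stages sum to less than the budget. The last point forces the order of quantifiers --- one fixes $m_1$, and then takes $m\ge m_1$, from Lemma~\ref{l.probabilistic} \emph{before} choosing the box sizes --- and uses that each $g_i$ is volume-preserving, so that each stage's loss can be made $<\kappa/(100m)$.
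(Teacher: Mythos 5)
Your overall strategy is the paper's: conjugate a \emph{fixed} localized Hamiltonian time-one map $h$ by the $\Theta$-additive affine maps $T_{n,j}$ built from Lemma~\ref{l.sheer}, exploit cone invariance so $\Theta$ stays defined, use the Vitali cover and measure preservation to make the angle increments i.i.d.\ with law $\nu$, and stop once a box center has $\Theta$ near $\pi/2$. The one genuine deviation, and it is a gap, is your ``barrier snap'': you insert an extra perturbation on the barrier boxes (a BV-Annals-style rotation carrying $\bar v$ to $R_\theta\bar v$, $|\theta|\le\alpha/20$) to force $\Theta$ to be exactly $\pi/2$.

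This is both unnecessary and unjustified. Unnecessary because the Sublemma only asks for $\nor{\Theta(\cdot)-\tfrac{\pi}{2}}<\tfrac{\alpha}{2}$; the paper gets this for free by putting $g_n=B_n$ (no perturbation at all) on every box once it is tagged ``arrived'': $B_n$ preserves $\Theta$ exactly, so the only post-arrival drift is the intra-box oscillation, totaling $mK\eta<\alpha/100$ by \eqref{e.def eta}. Unjustified because condition~2 of the Sublemma demands that $B_n^{-1}\circ g_n=\varphi^1_{H_n}$ for some $H_n$ with $\|D^2 H_n\|<K_L^2\delta$ (these Hamiltonians are the input to Lemma~\ref{l.dim reduction}, where the Hessian bound is essential). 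The far-region pieces $T_{n,j}^{-1}\circ h\circ T_{n,j}$ satisfy this by Lemma~\ref{l.hamilton change coord}, but your snap is a different kind of perturbation: you only assert ``this too is a small Hamiltonian flow.'' Nothing in Lemma~\ref{l.alpha rot} provides a Hessian bound, and in fact the generator of the $R_\theta$-rotation is $\theta(p_2q_1-p_1q_2)$, which is indefinite; the paper's own discussion in \S\ref{ss.preview} explains that this is precisely why such rotations cannot be cut off to a box the way the case~I--III rotations can, and why the random-walk construction was introduced in the first place. So your snap step reintroduces exactly the obstacle the construction is meant to evade. Dropping the snap entirely --- freezing at the barrier and absorbing the resulting $O(\alpha)$ error into \eqref{e.90 deg} --- repairs the argument and matches the paper.

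A more minor point: you state the i.i.d.\ claim (``the increment $\Theta(Dh(T_{n,j}y)\cdot\vetor{p_1})$ is $\nu$-distributed independently of the past'') without any verification. The paper makes it precise by coding pseudo-orbits by elements of the product space $(\D^m,\bar\mu^m)$ via \eqref{e.dict} and computing $\P(W_\ivec)=\bar\mu(D_{i_m})$. You have the right intuition (affine maps with the right Jacobian push normalized Lebesgue measure to $\bar\mu$ on $\D$, and $g_i$ is volume preserving), but the claim as stated, together with the error bookkeeping \eqref{e.recurrence 1}--\eqref{e.never arrive}, needs the explicit probability space construction before \eqref{e.def m} can be invoked.
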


Let us assume the sublemma for a while and see how to
conclude the proof of Lemma~\ref{l.case IV}.
Let $\gamma>0$ be given (as in the definition of flexibility).
We will assume $2N > 4$, leaving for the reader the easy adaptation for the $4$-dimensional case.
Consider the hamiltonians $H_n$ given by Sublemma~\ref{sl.reduction}, and
apply Lemma~\ref{l.dim reduction} with $2\nu=4$, $K_L^2 \delta$ in the place of $\delta$,
and $\kappa/10$ in the place of $\kappa$.
We obtain a cylinder $\hat{U} \subset \R^{2N}$ and
hamiltonians $\hat{H}_n \colon  \R^{2N} \to \R$ such that
writing $\hat{g}_n = A_n \circ \varphi_{\hat{H}_n}^1$ we have:
\begin{itemize}
\item $\hat{g}_n$ equals $A_n$ outside of $A_{n-1} \ccircs A_0( \hat{U} )$;
\item $\|D^2 \hat{H}_n\| < 2K_L^2 \delta$ and hence, by Lemma~\ref{l.ode}, $\|D(A_n^{-1} \circ \hat{g}_n) - \Id\| < \eps'$;
\item there is a set $\hat{G} \subset \hat{U}$ with $\mu(\hat{G}) > (1-\kappa/10) \mu(\hat{U})$
such that if $\xi = (x,y) \in \hat{G}$ then
$$
\hat{g}_{m-1}  \ccircs \hat{g}_0 (\xi) =\\
\big(g_{m-1} \ccircs g_0 (x) , \,
C_{m-1} \ccircs C_0(y) \big) \, .
$$
\end{itemize}
Since $\hat{U}$ is a cylinder,
the set $\{(x,y) \in \hat{U}; \; x \in G\}$ has measure $> (1-\kappa/10) \mu(\hat{U})$;
let $G_1$ be its intersection with $\hat{G}$.
Then $\mu(G_1) > (1-2\kappa/10)\mu(\hat{U})$.
If $\xi = (x,y) \in G_1$ then by \eqref{e.90 deg},
the angle between the vector $D(g_{m-1}  \ccircs g_0) (x) \cdot \vetor{p_1}$ in $\R^4$
and the space spanned by $\vetor{p_2}$, $\vetor{q_1}$, $\vetor{q_2}$ is at most $\alpha$.
Using~\eqref{e.4 vectors}
we conclude that
$$
\angle \left(D(\hat{g}_{m-1}  \ccircs \hat{g}_0) (\xi) \cdot F^u_0, F^{cs}_m \right) < \tfrac{\alpha}{2}
\quad \text{for all $\xi \in G_1$.}
$$
We need to perform a last perturbation $\hat{g}_m$ to make the angle smaller than~$\gamma$.

\smallskip

Let $\gamma'$ be very small.
By Vitali's Lemma, we can find a finite family of
disjoint small euclidian balls $D_\ell$ contained in the open set $G_1$
and whose union leaves out a set of measure at most $(1-\kappa/10)\mu(\hat{U})$.
In fact, the balls are taken small enough so that
the variation of the angle
$\angle \left(D(\hat{g}_{m-1}  \ccircs \hat{g}_0) (\xi) \cdot F^u_0, F^{cs}_m \right)$
is less than $\gamma'$ when $\xi$ runs over $D_\ell$.
For each $\ell$, let $\xi_\ell$ be the center of the ball $D_\ell$,
and let $v_\ell$ be the vector $D(\hat{g}_{m-1}  \ccircs \hat{g}_0) (\xi_\ell) \cdot \vetor{p_1}$.

We now use the definition of $\alpha$.
For each $\ell$, Lemma~\ref{l.alpha rot} applied to the set $D'_\ell = \hat{g}_{m-1} \ccircs \hat{g}_0 (D_\ell)$
gives a symplectomorphism $h_\ell\colon  \R^{2N}\to\R^{2N}$ with the following properties:
\begin{itemize}
\item $h_\ell$ equals the identity outside of $D'_\ell$;
\item $\|Dh_\ell - \Id \|< \eps'$;
\item there is a set $G'_\ell \subset D'_\ell$  with $\mu(G'_\ell) > (1-\kappa/10) \mu(D'_\ell)$ such that
for every $\xi' \in G'_\ell$, the vector $Dh_\ell(\xi') \cdot v_\ell$ belongs to $F^{cs}_m$.
\end{itemize}

Let $G_0 = \bigsqcup_\ell (\hat{g}_{m-1} \ccircs \hat{g}_0)^{-1} (G'_\ell)$.
Then $\mu(G_0) > (1-\kappa)\mu(\hat{U})$.
Finally, define the perturbation $\hat{g}_m$
as equal to $A_m \circ h_\ell$ in each $D'_\ell$, and equal
to $A_m$ outside.
If $\gamma'$ was chosen sufficiently small then
for every $\xi \in G_0$ we have
$$
\angle \left(D(\hat{g}_m \ccircs \hat{g}_0) (\xi) \cdot F^u_0, F^{cs}_{m+1} \right) < \gamma.
$$
This shows that the split sequence~\eqref{e.split seq} is $(\eps', \kappa)$-flexible.
Hence to complete the proof of Lemma~\ref{l.case IV} we are left to prove
Sublemma~\ref{sl.reduction}.

\medskip\noindent\emph{Step 3. Definition of perturbations in $\R^4$.}
Before starting the proof of the sublemma,
notice the first condition there implies that
\begin{equation}\label{e.inv cone}
Dg_n(x) (\cC_1) \subset \cC_1 \quad \forall x \, ,
\end{equation}
due to the definition of $\eps'$ and the fact that $B_n (\cC_{\tau^2} ) \subset \cC_1$.

Let $\cN(v)$ indicate $v / \|v\|$.
Fix a constant $K>1$
such that for all unit vectors $v$, $w \in \cC_1$ we have:
\begin{equation} \label{e.const chata}
\begin{aligned}
\nor{\Theta(v)-\Theta(w)} &\le K \|v-w\| \\
\big \|\cN(Dg_n(x) \cdot v)  - \cN(Dg_n(x) \cdot w) \big\| &\le K \|v-w\| \quad
\forall x\,
\end{aligned}
\end{equation}
(provided $g_n$ complies with the first condition in Sublemma~\ref{sl.reduction}).
Let
\begin{equation}\label{e.def eta}
\eta = \min \big( \tfrac{\alpha}{100 K^2 m}, \tfrac{\kappa}{20 m} \big) \, .
\end{equation}

\smallskip

For each $n = 0$, \ldots, $m$,
we are also going to define a finite family $\{D_i\}_{i \in I_n}$ of disjoint subsets of $U_n$.
Also, the sets of indices $I_0$, \ldots, $I_m$ will be disjoint,
and each $I_n$ will be partitioned as
$I_n = I_n^\text{arrived} \sqcup I_n^\text{not yet}$.

Start defining $g_0 = B_0 \circ h$
(recall the definition of $h$ in step~1).
Then, by Lemma~\ref{l.ode},
$\|D(B_0^{-1} \circ g_0) - \Id \| < e^\delta - 1 < \eps' $, as required.
Also define $I_0 = I_0^\text{not yet} = \{0\}$, $D_0 =\D$.

By induction, assume that $g_0$, \ldots, $g_{n-1}$ and $\{D_i\}_{i \in I_{n-1}}$
are already defined, for some $n$ with $0 < n \le m$,
and let us proceed to define $g_n$ (if $n < m$) and $\{D_i\}_{i \in I_n}$.
First define a vector field $\v_n$ on $\R^4$ by
$$
\v_n ( g_{n-1} \ccircs g_0 (x)) = \cN \left( D(g_{n-1} \ccircs g_0)(x) \cdot \vetor{p_1} \right) \, .
$$
Then $\v_n$ takes values on the cone $\cC_1$, because~\eqref{e.inv cone} holds for $g_0$, \ldots, $g_{n-1}$.

Let $V_{n-1} = \bigsqcup_{i \in I_{n-1}} D_i \subset U_{n-1}$,
so that $g_{n-1}(V_{n-1}) \subset U_n$.
For $x \in g_{n-1}(V_{n-1})$ and $r>0$, define a neighborhood of $x$ by
$$
\tilde D(x, r, n) = \{x + r L_{\v_n(x)} (y); \; y \in \D\}
$$
(where the $L$'s come from Lemma~\ref{l.sheer}).
These neighborhoods are ``quasi-round'', in the sense that
$B_{K_L^{-1} r}(x) \subset \tilde D(x,r,n) \subset B_{K_L r}(x)$.
Now consider the family of sets $\tilde D(x,r,n)$
with $r$ sufficiently small so that the variation of $\v_n$ in each $\tilde D(x,r,n)$
is less than $\eta$.
This family constitutes a Vitali cover
of the set $g_{n-1}(V_{n-1})$.
Therefore we can find a finite subfamily
$\{D_i = \tilde D (\xi_i, r_i, n)\}_{i \in I_n}$
whose disjoint union covers most of the set, that is,
\begin{equation}\label{e.eta loss}
\bar\mu \big(g_{n-1}(V_{n-1}) \setminus V_n \big) < \eta, \quad \text{where} \quad
V_n = {\textstyle \bigsqcup_{i\in I_n} D_i} \, .
\end{equation}

So we have defined the set of indices $I_n$ and the family of sets $\{D_i\}_{i \in I_n}$.
Let $I_n^\text{arrived}$ be the set of $i \in I_n$
such that at least one of the following two properties is satisfied:
\begin{itemize}
\item $\nor{\Theta(\v_n(\xi_i)) - \tfrac{\pi}{2}} < \tfrac{\alpha}{10}$;
\item if $i'$ denotes the unique index in $I_{n-1}$ such that $D_i \subset g_{n-1}(D_{i'})$ 
then $i'$ already belongs to $I_{n-1}^\text{arrived}$.
\end{itemize}
Let $I_n^\text{not yet} = I_n \setminus I_n^\text{arrived}$.

Next we define $g_n$ (in the case $n<m$).
Let $g_n$ be equal to $B_n$ outside of $\bigsqcup_{i \in I_n^\text{not yet}} D_i$.
Inside each domain $D_i$ with $i\in I_n^\text{not yet}$, let
$g_n = {B_n \circ T_i^{-1} \circ h \circ T_i}$,
where
$$
T_i \colon  D_i \to \D \quad \text{is given by} \quad
T_i(x) =  L_{\v_n(\xi_i)}^{-1} \big((x-\xi_i) / r_i \big) \, .
$$
Since $T_i$ is an affine map that expands the symplectic form by a constant factor,
$g_n$ is a well-defined symplectomorphism of $\R^4$.

Let us see that $g_n$ satisfies parts (1) and (2) from Sublemma~\ref{sl.reduction}.
Let
\begin{equation} \label{e.random hamiltonians}
H_n (x) =
\begin{cases}
r_i^{-2} H(T_i(x)) &\text{if $x\in D_i$ with $i \in I_n^\text{not yet}$,}\\
0 &\text{otherwise.}
\end{cases}
\end{equation}
It follows from Lemma~\ref{l.hamilton change coord} that
the time $1$ map $\varphi_{H_n}^1$ is precisely $B_n^{-1} \circ g_n$.
The lemma also gives that $\|D^2 H_n \| \le K_L^2 \|D^2 H\| < K_L^2 \delta$.
This shows part (2) of Sublemma~\ref{sl.reduction}.
Recalling Lemma~\ref{l.ode}, one sees that the first part follows from the second.

\smallskip

To summarize, we have defined the maps $g_n$ (together with other objects)
and have verified that they satisfy properties~(1) and (2) of Sublemma~\ref{sl.reduction}.
Next we will show that property~(3) also holds.

\medskip\noindent\emph{Step 4. Random walk behavior.}
Recall that we have defined in step~1 circle-valued random variables $X_n$.
We will only be interested in the first $m$ of them.
Let us choose a probability space for these variables
(as well as their sums $S_n = X_0 + \cdots + X_{n-1}$)
to ``live in'':
it is $(\Omega, \P)$, where $\Omega = \D^m$ and $\P = \bar\mu^m$.
Let now each random variable $X_n$ be the function
$$
X_n\colon  \Omega \to \circulo \quad \text{given by} \quad
X_n(\omega_0,\ldots,\omega_{m-1}) = \Theta\big(Dh(\omega_n) \cdot \vetor{p_1}\big).
$$

In imprecise words, we will see that the angles $\Theta(\v_n(\cdot))$ behave approximately
like the random walk $S_n$, with an absorbing barrier around $\pi/2$.
This and \eqref{e.def m} will permit us to
show the third part of Sublemma~\ref{sl.reduction}.

\smallskip

In what follows,
let $\mathbf{L}(c)$ stand for an unspecified $t \in \circulo$ with $\nor{t} < c$.
By construction, if $x$ and $x'$ both belong to the same $D_i$ with $i\in I_n$ then
$\| \v_n(x_n) - \v_n(x_n') \| < \eta$
and so \eqref{e.const chata} implies
$\Theta(\v_n(x)) = \Theta(\v_n(x')) + \mathbf{L}(K\eta)$.

\smallskip

An \emph{itinerary} is a sequence $\ivec = (i_0, i_1, \ldots, i_m) \in I_0 \times \cdots \times I_m$
such that $D_{i_{n+1}} \subset g_n(D_{i_n})$ for $0 \le n < m$.
(In fact, $\ivec$ is uniquely determined by $i_m$.)
A \emph{pseudo-orbit with itinerary $\ivec = (i_n)$}
is a sequence $(x_1, \ldots, x_m)$ such that
$x_n \in D_{i_n}$ for each $n$.
One example is the orbit $(x_n) = (g_{n-1} \ccircs g_0(x_0))$
of a point $x_0$ in $(g_{m-1}\ccircs g_0)^{-1}(D_{i_m})$.
Other example of pseudo-orbit is $(\xi_{i_1}, \ldots, \xi_{i_m})$.
(Recall $\xi_i$ is the ``center'' of $D_i$).

All pseudo-orbits with itinerary $\ivec = (i_n)$ are of the form
\begin{equation}\label{e.dict}
(x_1, \ldots, x_m) =
\big( g_0(\omega_0), g_1(T_{i_1}^{-1}(\omega_1)), \ldots, g_{m-1}( T_{i_{m-1}}^{-1} (\omega_{m-1})) \big)
\end{equation}
for some $\omega = (\omega_n) \in \D^m = \Omega$.
With this writing, we claim that
\begin{align}
\Theta \big( \v_1(x_1) \big) &= X_0(\omega)  \label{e.recurrence 1} \\
\Theta \big(\v_{n+1}(x_{n+1}) \big) &=
\begin{cases}
\Theta(\v_n(x_n)) + \mathbf{L}(K \eta)                  &\text{if $i_n \in I_n^\text{arrived}$,}\\
\Theta(\v_n(x_n)) + X_n(\omega) + \mathbf{L}(2 K^2\eta) &\text{if $i_n \in I_n^\text{not yet}$.}
\end{cases} \label{e.recurrence n}
\end{align}

The proof of~\eqref{e.recurrence 1} is immediate:
$$
\Theta \big( \v_1(x_1) \big) =
\Theta \big( Dg_0 ( g_0^{-1}(x_1)) \cdot \vetor{p_1} \big) =
\Theta \big( Dg_0 ( \omega_0 ) \cdot \vetor{p_1} \big) =
X_0(\omega).
$$
Now take $n$ with $1 \le n \le m -1$.
We have
$$
\v_{n+1}(x_{n+1}) =
\cN \big( Dg_n (g_n^{-1}(x_{n+1})) \cdot \v_n(g_n^{-1}(x_{n+1})) \big) \, .
$$
Notice that the point $g_n^{-1}(x_{n+1})$  
belongs to $D_{i_n}$.
If $i_n \in I_n^\text{arrived}$ then $g_n$ restricted to $D_{i_n}$
equals $B_n$, which preserves $\Theta$, therefore
$$
\Theta \big( \v_{n+1}(x_{n+1}) \big)
= \Theta \big(\v_n(g_n^{-1}(x_{n+1})) \big)
= \Theta \big(\v_n(x_n) \big) + \mathbf{L}(K \eta),
$$
proving the first part of~\eqref{e.recurrence n}.
For $i_n \in I_n^\text{not yet}$
we have
$$
Dg_n(g_n^{-1}(x_{n+1})) =
B_n \circ L_{\v_n(\xi_{i_n})} \circ Dh(\omega_n) \circ L_{\v_n(\xi_{i_n})}^{-1} \, .
$$
Lemma~\ref{l.sheer}
leads therefore to
$$
\Theta \big(Dg_n (g_n^{-1}(x_{n+1})) \cdot \v_n(\xi_{i_n})\big)
= \Theta \big(Dh(\omega_n) \cdot \vetor{p_1}\big) + \Theta (\v_n(\xi_{i_n}))
= X_n (\omega) + \Theta (\v_n(\xi_{i_n}))
\, .
$$
Therefore, using that the points $g_n^{-1}(x_{n+1})$, $\xi_{i_n}$, and $x_n$ belong to the same $D_{i_n}$,
we can write:
\begin{align*}
\Theta \big( \v_{n+1}(x_{n+1}) \big)
&= \Theta \big(Dg_n (g_n^{-1}(x_{n+1})) \cdot \v_n(g_n^{-1}(x_{n+1})) \big)   \\
&= \Theta \big(Dg_n (g_n^{-1}(x_{n+1})) \cdot \v_n(\xi_{i_n})\big) + \mathbf{L}(K^2\eta) \\
&= X_n(\omega) + \Theta (\v_n(\xi_{i_n})) + \mathbf{L}(K^2 \eta) \\
&= X_n(\omega) + \Theta (\v_n(x_n)) + \mathbf{L}(2K^2 \eta) \, ,
\end{align*}
This completes the proof of the claim~\eqref{e.recurrence n}.

\smallskip

Still assuming $(x_n)$ and $(\omega_n)$ as in \eqref{e.dict},
we now claim that:
\begin{alignat}{2}
\text{if} \quad i_m \in I_m^\text{arrived} \quad
&\text{then} &\quad
&\Theta(\v_m(x_m)) = \tfrac{\pi}{2} + \mathbf{L}(\tfrac{\alpha}{2}) \label{e.arrive}\\
&\text{else} &\quad
&\nor{S_n(\omega)- \tfrac{\pi}{2}} > \tfrac{\alpha}{20} \ \text{for all $n$.}\label{e.never arrive}
\end{alignat}
If $i_m \in I_m^\text{arrived}$ then let $n_0$ be the least such that
$i_{n_0} \in I_{n_0}^\text{arrived}$.
It follows from the definitions that
$$
\Theta(\v_{n_0}(\xi_{i_{n_0}})) = \tfrac{\pi}{2} + \mathbf{L}(\tfrac{\alpha}{10}) \quad \text{and} \quad
i_n \in I_n^\text{arrived} \text{ for all $n\ge n_0$.}
$$
Using repeatedly \eqref{e.recurrence n}, together with~\eqref{e.def eta},
the claim~\eqref{e.arrive} follows.
On the other hand, if $i_m \in I_m^\text{not yet}$ then $i_n \in I_n^\text{not yet}$ for all $n$.
Using \eqref{e.recurrence 1} and \eqref{e.recurrence n} for the pseudo-orbit $(\xi_n)$,
and also~\eqref{e.def eta}, we obtain
$$
\Theta(\v_n(\xi_{i_n})) = S_n(\omega) + \mathbf{L}(\tfrac{\alpha}{50}) \, .
$$
The fact that  $i_n \in I_n^\text{not yet}$  also implies that
$\nor{\Theta(\v_n(\xi_{i_n})) - \tfrac{\pi}{2}} \ge \tfrac{\alpha}{10}$,
so~\eqref{e.never arrive} follows.

\smallskip

Next, for each itinerary $\ivec = (i_n)$,
define the following subset of $\Omega$:
$$
W_\ivec =
g_0^{-1}(D_{i_1}) \times T_{i_1}(g_1^{-1}(D_{i_2})) \times  \cdots \times
T_{i_{m-1}}(g_{m-1}^{-1}(D_{i_m})) .
$$
Let us evaluate its probability.
Using that $g_n$'s preserve $\bar\mu$ and that
the affine maps $T_i \colon  D_i \to \D$ expand $\bar\mu$ by the factor $\det T_i = 1/\bar\mu(D_i)$,
we get:
$$
\P (W_\ivec) = \bar\mu(D_{i_1}) \det(T_{i_1}) \bar\mu(D_{i_2}) \cdots \det(T_{i_{m-1}}) \bar\mu(D_{i_m}) = \bar\mu(D_{i_m}).
$$
Summing over the itineraries such that $i_m \in I_m^\text{not yet}$,
using~\eqref{e.never arrive} and \eqref{e.def m},
we obtain:
\begin{align}
{\textstyle \sum_{i_m \in I_m^\text{not yet}} } \bar\mu(D_{i_m})
&=   \P \big( {\textstyle \bigsqcup_{i_m \in I_m^\text{not yet}} W_\ivec } \big) \notag \\
&\le \P \left[\nor{S_n - \tfrac{\pi}{2}} > \tfrac{\alpha}{20} \text{ for all } n \le m \right] \notag \\
&<   \tfrac{\kappa}{20} \, . \label{e.punch}
\end{align}

Consider the union of all $D_{i_m}$ with $i_m \in I_m$,
that is, the set $V_m$.
It follows from~\eqref{e.eta loss} and \eqref{e.def eta} that
$$
\bar{\mu}(V_m) > 1-m\eta > 1-\tfrac{\kappa}{20} \, .
$$
Hence \eqref{e.punch} implies that
the union $G'$ of all $D_{i_m}$ with $i_m \in I_m^\text{arrived}$
has measure $\bar\mu(G') > 1-\tfrac{\kappa}{10}$.
Let
$$
G = (g_{m-1} \cdots g_0)^{-1}(G'), \quad \text{so that }
\bar\mu(G) = \bar\mu(G') > 1-\tfrac{\kappa}{10}.
$$
If $x \in G$
then \eqref{e.arrive} applied to the orbit $(x_n) = (g_{n-1} \ccircs g_0)(x)$
gives
$\nor{\Theta ( \v_m(x_m)) - \tfrac{\pi}{2}} < \tfrac{\alpha}{2}$,
which is precisely~\eqref{e.90 deg}.
This proves part~(3) of Sublemma~\ref{sl.reduction} and hence Lemma~\ref{l.case IV}
(and the Main Lemma).
\end{proof}

%%%%%%%%%%%%%%%%%%%%%%%%%%%%%%%%%%%%%%%%%%%%%%%%%%%%%%%%%%%%%%%%%%%
\section[Exploiting flexibility]{Exploiting Flexibility} \label{s.flex III}
%%%%%%%%%%%%%%%%%%%%%%%%%%%%%%%%%%%%%%%%%%%%%%%%%%%%%%%%%%%%%%%%%%%

With the Main Lemma,
Theorem~\ref{t.continuity} is proven
following \cite{BV Annals}.
For the first part of the proof,
we explain in \S\ref{ss.lowering} how the arguments from \cite{BV Annals} can be adapted.
The second part could be done repeating parts of \cite{BV Annals} almost word for word.
However, we present (\S\ref{ss.globalization}) a new and significantly simpler proof,
following suggestions by A.~Avila.

\smallskip

Given $f \in \Diff_\omega^1(M)$, $p \in \{1,\ldots,N\}$ and $m \in \N$,
let $\Gamma_p (f, m)$ be the (open) set of points $x$ such that there is no
$m$-dominated splitting of index $p$ along the orbit of $x$.

The symplectomorphism $f$ is called \emph{aperiodic}
if the measure of the set of its periodic points is zero.
By Robinson's~\cite{Robinson} symplectic version of the Kupka--Smale Theorem,
the generic $f$ has countably many periodic points and in particular is aperiodic.

\smallskip

Let
$$
\Lambda_p(f,x) = \sum_{i=i}^p \lambda_i(f,x)
= \lim_{n \to \infty} \frac{1}{n} \log \|\mathord{\wedge}^p (Df^n(x)) \| \, .
$$
(The reader should recall relations between exterior products
and Lyapunov exponents, see e.g.~\cite[{\S}2.1.2]{BV Annals}.)

\smallskip

%%%%%%%%%%%%%%
\subsection[Lowering the norm along an orbit segment]{Lowering the Norm along an Orbit Segment} \label{ss.lowering}

As consequence of the Main Lemma, we can perturb the map $f$ on
a neighborhood of an orbit segment of length $n$ in such a way that $\|\wedge^p Df^n\|$ drops.
In precise terms:

\begin{lemma}
\label{l.lower}
Let $f \in \Diff_\omega^1(M)$ be aperiodic, $\cV$ be a neighborhood of $f$,
$\delta>0$, and $0< \kappa <1$.
If $m \in \N$ is sufficiently large,
then there exists a measurable function $N\colon  \Gamma_p(f,m) \to \N$ with the following properties.

For a.~e.~$x \in \Gamma_p(f,m)$ and every $n \geq N(x)$,
there exists $r = r(x,n)>0$
such that the following holds:
First, the iterates
$f^j(\bar{B}_r(x))$, for $0 \le j \le n$, are pairwise disjoint.
Second, for any $ 0<r'<r$
there exists $g \in \cV$
such that:
\begin{enumerate}
\item $g$ equals $f$ outside $\bigsqcup_{j=0}^{n-1} f^j(B_{r'}(x))$;
\item there is a set $G \subset B_{r'}(x)$ such that
$\mu(G) > (1-\kappa)\mu(B_{r'}(x))$ and
$$
\frac{1}{n} \log \| \mathord{\wedge}^p (Dg^n(y) ) \|  \, \le \,
\frac{\Lambda_{p-1}(f,x) + \Lambda_{p+1}(f,x)}{2} + \delta \quad
\text{for all $y \in G$.}
$$
\end{enumerate}
\end{lemma}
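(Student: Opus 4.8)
### Proof Plan for Lemma~\ref{l.lower}

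\textbf{Overall strategy.} The plan is to reduce the statement to an application of the Main Lemma via Lemma~\ref{l.realization}, using Oseledets regularity to control the norms along the orbit segment. Fix $x$ in $\Gamma_p(f,m)$; by definition there is no $m$-dominated splitting of index $p$ along its orbit. Since $f$ is aperiodic, we may assume $x$ is non-periodic and Oseledets regular. At such a point the Oseledets splitting groups into $E^u \oplus E^c \oplus E^s$ with $\dim E^u = p$ exactly when $\lambda_p(f,x) > \lambda_{p+1}(f,x)$, i.e. when $x \in \Sigma_p(f)$; when $\lambda_p = \lambda_{p+1}$ one first splits off the top $p$ exponents more carefully, but the relevant point is that the non-$m$-dominance hypothesis furnishes, for a positive-density set of iterates $z = f^k(x)$ (or at least for arbitrarily large $k$), an orbit segment starting at $z$ along which the domination inequality \eqref{e.nondominated} fails. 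The first step is therefore to record: for a.e.\ $x \in \Gamma_p(f,m)$ and every large enough length $\ell$, some iterate $z = f^k(x)$ with $k$ bounded by a function of $\ell$ carries a split sequence $Df(f^i z)\colon E^u \oplus E^{cs}\hookleftarrow$ ($0 \le i < \ell$) with $\ell \ge m_1$ and satisfying \eqref{e.nondominated}, hence $(\eps,\kappa)$-flexible by the Main Lemma, where $\eps = \eps(f,\cV)$ is the constant from Lemma~\ref{l.realization}.

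\textbf{Choosing the scales.} Given $\delta>0$ and $\kappa$, first fix $m$ large (the "sufficiently large" in the statement) so that the Main Lemma applies with its $m_1 \le m$; choose $\eps$ from Lemma~\ref{l.realization} for the neighborhood $\cV$. Then, for a.e.\ $x\in\Gamma_p(f,m)$, use the Oseledets and Birkhoff theorems to pick $N(x)$ so large that for all $n \ge N(x)$:
\begin{enumerate}
\item $\tfrac1n\log\|\wedge^{p}(Df^n(x))\| $ is within $\delta/4$ of $\Lambda_p(f,x)$, and similarly $\tfrac1n\log\|\wedge^{p-1}Df^n\|$, $\tfrac1n\log\|\wedge^{p+1}Df^n\|$ are within $\delta/4$ of their limits (along the whole orbit, so this also controls tails);
\item the frequency of visits of the orbit to the "non-dominated zone" is close to its asymptotic value, so that within the first $n$ iterates there is an iterate $z = f^k(x)$, with $k$ a definite fraction of $n$, starting a flexible segment of length a definite fraction of $n$ (this is exactly the mechanism of \cite{BV Annals}).
\end{enumerate}
The point of making $z$ sit roughly in the middle of the orbit segment — with a positive fraction of the iterates before it and after it — is that the perturbation provided by Lemma~\ref{l.realization} collides $E^u$ into $E^{cs}$ at time (length of the flexible segment); thereafter the $p$-dimensional space $\wedge^p(Dg^n(y)\cdot \mathrm{i}_y^x E^u_0)$ is no longer the top $p$-dimensional exterior power, and the growth rate of $\|\wedge^p Dg^n(y)\|$ along that trajectory is governed not by $\Lambda_p(f,x)$ but by the average of the growth rates on $E^{us}$ restricted appropriately — which, using the Oseledets estimates from (1), is bounded by $\tfrac12(\Lambda_{p-1}(f,x)+\Lambda_{p+1}(f,x)) + \delta$.

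\textbf{Assembling the perturbation.} Apply Lemma~\ref{l.realization} to the flexible split sequence at $z$ with $\gamma$ chosen small (depending on $\delta$ and on the Oseledets constants of $x$): this yields $r>0$ such that $\bar B_r(z)$ has its first iterates disjoint, and, for any $U \subset B_r(z)$, a perturbation $g \in \cV$ supported on the tower over $U$ with a set $G_z \subset U$, $\mu(G_z) > (1-\kappa)\mu(U)$, on which $Dg^{(\text{segment length})}$ sends $\mathrm{i}^z E^u_0$ to within angle $\gamma$ of $\mathrm{i}^{\cdot} E^{cs}$. Pull back through the first $k$ iterates (which $g$ leaves equal to $f$): given $0 < r' < r_0$ for a suitable $r_0$, set $U$ to be the $f^k$-image of $B_{r'}(x)$ (shrinking $r'$ if needed so $f^k(B_{r'}(x)) \subset B_r(z)$), obtain $g$, and let $G = f^{-k}(G_z) \cap B_{r'}(x)$. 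Then $\mu(G) > (1-\kappa)\mu(B_{r'}(x))$, $g$ equals $f$ outside $\bigsqcup_{j=0}^{n-1} f^j(B_{r'}(x))$ (the support is within the tower of height $n$), and for $y \in G$ one estimates $\tfrac1n\log\|\wedge^p(Dg^n(y))\|$ by decomposing $Dg^n$ across the three stages (before $z$, across the flexible segment, after) and using that at the end of the middle stage the $p$-plane has partly fallen out of the unstable directions; the small angle $\gamma$ plus continuity of the exterior-power norm under $C^0$-small changes of the plane, together with the Oseledets bounds, gives the claimed inequality after shrinking $\gamma$ and increasing $N(x)$ if necessary.

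\textbf{Main obstacle.} The delicate point is the final norm estimate: converting "a $p$-dimensional subspace has been tilted by Lemma~\ref{l.realization} so that it meets $E^{cs}$ at small angle" into the quantitative bound $\tfrac1n\log\|\wedge^p Dg^n(y)\| \le \tfrac12(\Lambda_{p-1}+\Lambda_{p+1}) + \delta$, uniformly over the large set $G$ and over all $n \ge N(x)$. This requires (i) knowing that the relevant orbit segments really do see a positive fraction of their length in the non-dominated zone, so the collision happens "early enough" that the post-collision growth dominates the accounting; (ii) controlling the non-uniformity of the Oseledets convergence — the constants $\gamma$ and $N(x)$ genuinely depend on $x$, which is why $N(\cdot)$ is only a measurable function and why the statement is "for a.e.\ $x$"; and (iii) handling the $\wedge^p$ bookkeeping when $\dim E^c > 0$, i.e.\ keeping track of how much of the center spectrum enters. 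All three are carried out in \cite{BV Annals}; here I would cite the relevant computation there (the proof of its Lemma on lowering $\|\wedge^p Df^n\|$) and indicate only the modifications needed because our flexibility statement allows $x \mapsto Dg^n(x)$ to vary over $G$ rather than being nearly constant — which is harmless since the exterior-power norm bound is required only pointwise on $G$.
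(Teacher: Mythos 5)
Your plan follows essentially the same route as the paper's proof: locate an iterate $z=f^\ell x$ in the non-dominated set, invoke the Main Lemma to get an $(\eps,\kappa)$-flexible segment of length $m$ there, realize the collision of $E^u$ into $E^{cs}$ over a large-measure subset of a tower via Lemma~\ref{l.realization} with a small-angle relaxation by continuity, pull the good set back to $B_{r'}(x)$, and delegate the quantitative $\wedge^p$ accounting to the computation in \cite{BV Annals} (the paper encapsulates exactly this citation in its Sublemma on the norm estimate, whose proof is taken from \cite[Proposition~4.2]{BV Annals}). The only small divergence is the degenerate case $\lambda_p(f,x)=\lambda_{p+1}(f,x)$, which you treat vaguely but which the paper resolves trivially by taking $g=f$, since there $\Lambda_p(f,x)$ already equals $\tfrac12\left(\Lambda_{p-1}(f,x)+\Lambda_{p+1}(f,x)\right)$.
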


We remark that the lemma corresponds to \cite[Proposition~4.2]{BV Annals},
giving at the same stroke the conclusions of \cite[Lemma~4.13]{BV Annals}.

\begin{proof}
Denote
\begin{equation}\label{e.Phi}
\Phi(x)   
= \frac{\Lambda_{p-1}(f,x) + \Lambda_{p+1}(f,x)}{2} \, .
\end{equation}
Let $\eps = \eps(f,\cV)$ be given by Lemma~\ref{l.realization}.
Let $m \in \N$ be sufficiently large so that the conclusion of the Main Lemma holds
(with $\kappa/2$ in the place of~$\kappa$).

For the points $x \in \Gamma_p(f,m)$ that are non periodic, Oseledets regular, and
have $\lambda_p(f,x) = \lambda_{p+1}(f,x)$,
the conclusion of the lemma is trivial:
first take $N(x)$ large so that if $n \ge N(x)$ then
$\frac{1}{n} \log \| \mathord{\wedge}^p (Df^n(x) ) \|$ is $\delta/2$-close to $\Lambda_p(f,x) = \Phi(x)$.
Then for each $n \ge N(x)$, take $r=r(x,n)$ small so that
the ball $\bar{B}_r(x)$ is disjoint from its $n$ first iterates and
$Df^n(y)$ is close to $Df^n(x)$ for all  $y \in B_r(x)$.
Letting $g=f$, all the desired conclusions of the lemma hold.

Next consider the set $\Gamma$ formed by the points $x \in \Gamma_p(f,m)$
that are non-periodic, Oseledets regular, and such that $\lambda_p(f,x) > \lambda_{p+1}(f,x)$.
That is, $\Gamma$ is the intersection of $\Gamma_p (f, m)$
with the set $\Sigma_p (f)$ introduced in \S\ref{ss.special seq}.
Assume that $\mu(\Gamma)>0$, otherwise there is nothing left to prove.
Let $A \subset \Sigma_p(f)$ be the set of points such that the non-domination
condition~\eqref{e.nondominated} holds.
Then
$\Gamma = \bigcup_{n \in \Z} f^n (A)$
(because the splitting $E^u \oplus E^{cs}$ over the set $\Sigma_p(f) \setminus \bigcup_{n \in \Z} f^n (A)$
is $m$-dominated of index~$p$).
Fix $C > \sup_{g \in\cV, \ x \in M} \| Dg(x) ^{\pm 1}\|$.

\begin{subl}\label{sl.dirty work}
There exists a measurable function $N\colon  \Gamma \to \N$
such that for a.~e.~$x \in \Gamma$ and for every $n \ge N(x)$,
there exists $\ell$ with $0 < \ell < n-m$ such that $z = f^\ell x$ belongs to $A$
and the following holds:
If $L_i \colon  T_{f^i z} M \to T_{f^{i+1} z}M$, where $0\le i \le m-1$,
are linear maps such that $\|L_i^{\pm 1} \| \le C$
and
\begin{equation}\label{e.collide}
L_{m-1} \cdots L_0 \cdot E^u(z) \cap E^{cs}(f^m z) \neq \{0\}
\end{equation}
then
$$
\frac{1}{n}
\log \left\| \mathord{\wedge}^p \big[
Df^{n- \ell - m}(f^{\ell+m} x) \, L_{m-1} \cdots L_0 \, Df^\ell(x) \big] \right\|
< \Phi(x) + \frac{\delta}{2} \, .
$$
\end{subl}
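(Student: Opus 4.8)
The plan is to reduce the bound to a linear-algebra inequality for the composition, and to a Birkhoff argument placing the perturbation site $\ell$ near the middle of the orbit segment. Abbreviate $D=Df^\ell(x)$, $L=L_{m-1}\cdots L_0$, $F=Df^{\,n-\ell-m}(f^{\ell+m}x)$, and $k=n-\ell-m$, so the quantity to estimate is $\|\wedge^p(FLD)\|$ subject to $L\cdot E^u(z)\cap E^{cs}(f^mz)\neq\{0\}$. First I would record the identity $\Phi(x)=\tfrac12\Lambda_p(x)+\tfrac12\big(\Lambda_{p-1}(x)+\lambda_{p+1}(x)\big)$, immediate from $\Lambda_{p+1}=\Lambda_{p-1}+\lambda_p+\lambda_{p+1}$ and $\Lambda_p=\Lambda_{p-1}+\lambda_p$; thus $\Phi(x)$ is the midpoint of $\Lambda_p(x)$ and $\Lambda_{p-1}(x)+\lambda_{p+1}(x)$, and $\Lambda_p(x)>\Phi(x)>\Lambda_{p-1}(x)+\lambda_{p+1}(x)$ since $\lambda_p(x)>\lambda_{p+1}(x)$ on $\Gamma$. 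This is exactly what forces the choice $\ell\approx k\approx n/2$.

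For the linear algebra, let $\xi_x$, $\xi_z$ be unit $p$-vectors spanning $E^u(x)$, $E^u(z)$, and write $\wedge^pT_xM=\R\,\xi_x\oplus W_x$ for the $\wedge^pDf$-invariant splitting into the top Oseledets $p$-direction and the span of the remaining Oseledets $p$-vectors. Since $x$ is regular, $\tfrac1\ell\log\|\wedge^pDf^\ell(x)\,\xi_x\|\to\Lambda_p(x)$, $\tfrac1\ell\log\|\wedge^pDf^\ell(x)|W_x\|\to\Lambda_{p-1}(x)+\lambda_{p+1}(x)$ (the second-largest sum of $p$ exponents), and the angle between $\R\,\xi_x$ and $W_x$ is bounded below by some $c(x)^{-1}>0$. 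Hence any unit $p$-vector $\eta$ at $x$ decomposes as $\wedge^pDf^\ell(x)\,\eta=a\,\xi_z+\rho$ with $\rho\in\wedge^pDf^\ell(x)(W_x)$, $|a|\le c(x)e^{\ell(\Lambda_p(x)+o(1))}$ and $\|\rho\|\le c(x)e^{\ell(\Lambda_{p-1}(x)+\lambda_{p+1}(x)+o(1))}$; here I use $Df$-invariance of Oseledets spaces to see that $\wedge^pDf^\ell(x)\,\xi_x$ is collinear with $\xi_z$. Now apply $\wedge^pL$ (of norm $\le C^{mp}$) and $\wedge^pF$. For the $a\,\xi_z$ term, $\wedge^pL\,\xi_z$ spans $L\cdot E^u(z)$, which by hypothesis contains a nonzero $u\in E^{cs}(f^mz)$; splitting off the $u$-direction and bounding the remaining $(p-1)$-volume by $\wedge^{p-1}$ gives $\|\wedge^pF\,\wedge^pL\,\xi_z\|\le C^{mp}\,\|Df^k(f^mz)|E^{cs}(f^mz)\|\cdot\|\wedge^{p-1}Df^k(f^mz)\|\le C^{mp}e^{k(\lambda_{p+1}(x)+\Lambda_{p-1}(x)+o(1))}$. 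For the $\rho$ term, crudely $\|\wedge^pF\,\wedge^pL\,\rho\|\le C^{mp}\|\wedge^pDf^k(f^mz)\|\,\|\rho\|\le C^{mp}c(x)e^{k(\Lambda_p(x)+o(1))+\ell(\Lambda_{p-1}(x)+\lambda_{p+1}(x)+o(1))}$. Summing,
\[
\|\wedge^p(FLD)\|\ \le\ 2c(x)C^{2mp}e^{o(n)}\max\!\big(e^{\ell\Lambda_p+k(\Lambda_{p-1}+\lambda_{p+1})},\ e^{k\Lambda_p+\ell(\Lambda_{p-1}+\lambda_{p+1})}\big),
\]
and with $\ell/n,k/n\to\tfrac12$ both exponents in the maximum equal $n(\Phi(x)+o(1))$. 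Note the estimate is uniform over admissible $L$: the hypothesis on $L$ enters only through the displayed bound on $\|\wedge^pF\,\wedge^pL\,\xi_z\|$, whose right side does not involve $L$.

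For the selection of $\ell$: since $\mu(\Gamma)>0$ and $\Gamma=\bigcup_{j\in\Z}f^j(A)$, we have $\mu(A)>0$. Fix $\eps'=\eps'(\delta)>0$ small. By Oseledets' theorem and Egorov there are a set $M^*$ with $\mu(M\setminus M^*)<\min(\eps',\mu(A))$ and an integer $K$ such that for $y\in M^*$ and $k\ge K$ the finite-time exponents $\tfrac1k\log\|\wedge^qDf^k(y)\|$ ($q=p-1,p$) and $\tfrac1k\log\|Df^k(y)|E^{cs}(y)\|$ lie within $\eps'$ of their Lyapunov values. Then $A\cap f^{-m}(M^*)$ has positive measure, so by the Birkhoff ergodic theorem there is a measurable function $N\colon\Gamma\to\N$ such that for a.e.\ $x\in\Gamma$ and every $n\ge N(x)$ one may choose $\ell$ with $|\ell-n/2|<\eps'n$ and $f^\ell x\in A\cap f^{-m}(M^*)$; put $z=f^\ell x$. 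Then $f^{\ell+m}x=f^mz\in M^*$ and $k=n-\ell-m\ge K$, so the $o(1)$'s at the base point $f^mz$ are $<\eps'$, while the $o(1)$'s at $x$, the term $\tfrac1n\log(2c(x)C^{2mp})$, and $|\ell/n-\tfrac12|$ are made $<\delta/4$ by further enlarging $N(x)$; one also arranges $0<\ell<n-m$ this way. Combined with the previous display, $\tfrac1n\log\|\wedge^p(FLD)\|<\Phi(x)+\delta/2$ for every admissible $L$, which is the claim. The step I expect to require the most care is precisely this uniformity of the Oseledets/Kingman convergences at the \emph{moving} base point $f^{\ell+m}x$ (whose location drifts with $n$), which is what the Lusin set $M^*$ together with the Birkhoff selection of $\ell$ inside $A\cap f^{-m}(M^*)$ is designed to control; this bookkeeping is carried out, essentially in this form, in \cite{BV Annals}.
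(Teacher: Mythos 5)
The paper itself gives no proof of this sublemma, deferring to Proposition~4.2 of \cite{BV Annals}. Your reconstruction follows the same line, and its linear-algebra core is correct and cleanly laid out: decomposing $\wedge^p Df^\ell(x)\eta$ into the component along the unit $p$-vector $\xi_z$ spanning $\wedge^p E^u(z)$ and a remainder in the complementary Oseledets span, bounding $\|\wedge^pF\,\wedge^pL\,\xi_z\|$ by $C^{mp}\,\|Df^k|E^{cs}\|\cdot\|\wedge^{p-1}Df^k\|$ via the collision hypothesis on $L$, identifying $\Phi(x)$ as the midpoint of $\Lambda_p(x)$ and $\Lambda_{p-1}(x)+\lambda_{p+1}(x)$, and placing $\ell\approx k\approx n/2$ so that the two error terms match at $n\Phi(x)$. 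The remark that the resulting estimate is uniform over admissible $L$ is also correct and worth keeping.

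There is, however, a genuine gap in the Birkhoff selection of $\ell$. From $\mu\bigl(A\cap f^{-m}(M^*)\bigr)>0$ you cannot conclude that \emph{almost every} $x\in\Gamma$ visits $A\cap f^{-m}(M^*)$ with positive frequency: the Birkhoff average $\overline{\mathbbm{1}}_{A\cap f^{-m}(M^*)}$ may vanish on a positive-measure invariant subset of $\Gamma$, namely the union of those ergodic components of $\mu|_{\Gamma}$ that $M^*$ barely charges. Your requirement $\mu(M\setminus M^*)<\eps'$ only constrains the size of this exceptional set to be less than $\eps'$, not zero, whereas the sublemma is asserted for a.e.\ $x\in\Gamma$ (and this full-measure claim is actually used downstream). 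Note also that one cannot sidestep $M^*$ using regularity of $x$ alone: estimating $\|\wedge^pDf^{k}(f^{\ell+m}x)\|$ by the chain rule through $\wedge^pDf^n(x)$ and $\wedge^pDf^{-(\ell+m)}(f^{\ell+m}x)$ yields the useless bound $e^{(n+\ell+m)\Lambda_p+o(n)}$ rather than $e^{k\Lambda_p+o(n)}$, so control at the moving base point really is needed. The standard repair is to take an increasing sequence of Lusin sets $M^*_1\subset M^*_2\subset\cdots$ with $\mu(M\setminus M^*_k)\to 0$; then the invariant sets $G_k=\{\overline{\mathbbm{1}}_{A\cap f^{-m}(M^*_k)}>0\}$ increase to $\{\overline{\mathbbm{1}}_A>0\}=\Gamma$ (mod~$0$), and one defines $N$ piecewise on $G_k\setminus G_{k-1}$ with the Egorov threshold $K_k$. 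You do flag this as the delicate step and point to \cite{BV Annals}, which is exactly where the paper defers; with that repair your proposal is a faithful reconstruction of the argument being cited.
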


\begin{proof}
It is contained in the proof of \cite[Proposition~4.2]{BV Annals}.
\end{proof}

Let $x\in \Gamma$ be fixed from now on, and let $n \ge N(x)$, $\ell = \ell(x,n)$, and $z=f^\ell x$
be as in Sublemma~\ref{sl.dirty work}.
By mere continuity, we can weaken the requirement~\eqref{e.collide}
to a small angle condition.
More precisely, there exists $\gamma = \gamma(x,n) > 0$ with the following properties:
Given points $y_0, \ldots, y_n \in M$ such that
$$
d(y_i , f^i x) < \gamma \ \forall i \quad \text{and} \quad
f(y_i) = y_{i+1}  \ \forall i \in \{0,\ldots,n-1\} \setminus \{\ell,\ldots,\ell+m-1\},
$$
and given linear maps $\tilde L_i \colon  T_{y_{\ell+i}} M \to T_{y_{\ell+i+1}}M$, for $0 \le i \le m-1$,
such that $\|\tilde L_i^{\pm 1}\| \le C$ and
\begin{equation}\label{e.small angle hyp}
\angle \big( \tilde L_{m-1} \cdots \tilde L_0 \cdot \mathrm{i}_{y_\ell}^{z} \cdot E^u(z) \, ,
\mathrm{i}_{y_{\ell+m}}^{f^m z} \cdot E^{cs}(f^m z) \big) < \gamma \, ,
\end{equation}
(recall \eqref{e.i})
then
\begin{equation}\label{e.small angle conseq}
\frac{1}{n}
\log \left\| \mathord{\wedge}^p \big[
Df^{n- \ell - m}(y_{\ell+m}) \, \tilde L_{m-1} \cdots \tilde L_0  \, Df^\ell(y_0) \big] \right\|
< \Phi(x) + \delta \, .
\end{equation}

Since $z$ belongs to $A$, the Main Lemma says that
the split sequence
$Df(f^i z) \colon {E^u \oplus E^{cs} \hookleftarrow}$ ($0\le i < m$)
is $(\eps, \kappa)$-flexible.
Let $r_0$ be the radius $r(z,\gamma)$
given by Lemma~\ref{l.realization}.
Since $z$ is not periodic, there is $r>0$ be such that for $0 \le j \le n$,
$f^j(\bar{B}_r(x))$ is contained in $B_\gamma(f^i x)$
and does not intersect $\bar{B}_r(x)$.
Let us see that $r$ has the required properties.

Given $r'$ with $0<r'<r$,
let $U = f^\ell( B_{r'}(x))$.
By Lemma~\ref{l.realization},
there exist $g \in \cV$ and $\hat G \subset U$
such that:
\begin{enumerate}
\item $g$ equals $f$ outside $\bigsqcup_{j=0}^{n-1} f^j(U)$;
\item $\mu(\hat G) > (1-\kappa)\mu(U)$;
\item $\angle \left( Dg^n(\xi) \cdot \mathrm{i}_\xi^z \cdot E^u(z), \,
              \mathrm{i}_{g^n \xi}^{f^n z} \cdot E^{cs}(f^m z) \right)< \gamma$ for every $\xi \in \hat{G}$.
\end{enumerate}

Now let $G = f^{-\ell}(\hat{G}) \subset B_{r'}(x)$.
For any $y \in G$, if we define
$y_i = g^i y$ for $0 \le i \le n$,
and $\tilde L_i = Dg(y_{\ell+i})$ for $0 \le i < m$
then relation~\eqref{e.small angle hyp} holds.
Therefore so does~\eqref{e.small angle conseq}, that is,
$\frac{1}{n} \log \| \mathord{\wedge}^p (Dg^n(y) ) \|
\le \Phi(x) + \delta$,
as we wanted to show.
\end{proof}

%%%%%%%%%%%%%%
\subsection{Globalization}\label{ss.globalization}

The next step in the proof is to construct a global perturbation of $f$
that exhibits a drop in some integrated exponent
$\LE_p(f) = \int \Lambda_p(f)$.
Let $\Gamma_p (f, \infty)$ be the set of points $x$ such that there is no dominated splitting of index $p$
along the orbit of $x$; that is,
$\Gamma_p (f, \infty) = \bigcap_{m\in \N} \Gamma_p(f, m)$.

\begin{prop}
\label{p.jump}
Given an aperiodic diffeomorphism $f \in \Diff_\omega^1(M)$, let
$$
J_p(f) = \int_{\Gamma_p(f,\infty)} \frac{\lambda_p(f,x) - \lambda_{p+1}(f,x)}{2} \ d\mu(x).
$$
Then for any neighborhood $\cV$ of $f$ and any $\delta>0$,
there exists $g\in\cV$ such that
\begin{equation}\label{e.jump}
\LE_p(g) < \LE_p(f) - J_p(f) + \delta.
\end{equation}
\end{prop}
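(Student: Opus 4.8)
The plan is to deduce this from Lemma~\ref{l.lower} by the standard ``globalization'': cover most of $M$ by pairwise disjoint towers, and perturb via Lemma~\ref{l.lower} on those towers lying over the open, $f$-invariant set $\Gamma_p(f,m)$. The argument is essentially that of \cite{BV Annals}; the simplification (due to A.~Avila) lies mainly in organizing the final estimate. First, observe the arithmetic behind the statement: $\Lambda_j(f,\cdot)$ and the Oseledets exponents are $f$-invariant measurable functions, hence so is $\Phi$ from \eqref{e.Phi}, and since $\Lambda_{p-1}=\Lambda_p-\lambda_p$, $\Lambda_{p+1}=\Lambda_p+\lambda_{p+1}$, one has
\begin{equation*}
\Phi(x)=\Lambda_p(f,x)-\frac{\lambda_p(f,x)-\lambda_{p+1}(f,x)}{2}\ \le\ \Lambda_p(f,x),
\end{equation*}
with equality off $\Sigma_p(f)$. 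Writing $\Gamma=\Gamma_p(f,\infty)$, this gives $\LE_p(f)-J_p(f)=\int_{M\setminus\Gamma}\Lambda_p(f)\,d\mu+\int_\Gamma\Phi\,d\mu$, so it is enough to find $g\in\cV$ with $\LE_p(g)<\int_{M\setminus\Gamma}\Lambda_p(f)\,d\mu+\int_\Gamma\Phi\,d\mu+\delta$. Since $\Phi\le\Lambda_p(f)$ everywhere, it will be harmless to lower the $p$-norm down to the level $\Phi$ over the whole of $\Gamma_p(f,m)$ (which is $f$-invariant, and possibly strictly larger than $\Gamma$).

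Fixing $\cV$ and $\delta$, I would choose small auxiliary constants $\kappa,\eta,\delta',\delta_1>0$ to be tuned later, take $m$ large enough for Lemma~\ref{l.lower} with parameters $\kappa,\delta_1$, and let $N\colon\Gamma_p(f,m)\to\N$ be the resulting function. Using the a.e.\ convergence $\tfrac1h\log\|\wedge^pDf^h(x)\|\to\Lambda_p(f,x)$ together with Egorov's and Lusin's theorems, pick $n_0\in\N$ and a compact $K$ with $\mu(M\setminus K)<\eta$ such that $N\le n_0$ on $K\cap\Gamma_p(f,m)$, the functions $\Lambda_p(f,\cdot)$ and $\Phi$ are continuous on $K$, and $\bigl|\tfrac1h\log\|\wedge^pDf^h(x)\|-\Lambda_p(f,x)\bigr|<\delta'$ whenever $x\in K$, $h\ge n_0$. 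Shrinking $K$ by Lebesgue density, I may further assume every small ball $B$ centered in $K$ satisfies either $B\subset\Gamma_p(f,m)$ or $\mu(B\cap\Gamma_p(f,m))<\kappa\,\mu(B)$.

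Next, since $f$ is aperiodic, I would build a Kakutani--Rokhlin tower structure for $f$: finitely many disjoint towers $T_i=\bigsqcup_{0\le j<h_i}f^j(B_i)$ with disjoint levels, heights $h_i\ge n_0$, $\mu\bigl(M\setminus\bigsqcup_iT_i\bigr)<\eta$, each $B_i$ a small ball centered at a point $x_i\in K$, and such that, for the $i$ with $B_i\subset\Gamma_p(f,m)$, one has $B_i=B_{r_i'}(x_i)$ with $r_i'$ below the radius $r(x_i,h_i)$ furnished by Lemma~\ref{l.lower}. Constructing such a structure — reconciling the abstract tower combinatorics with the geometric radii $r(x_i,h_i)$ via a Vitali covering — is the technical core, and the place I expect the real difficulty; it is carried out in \cite{BV Annals}. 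For each tower with $B_i\subset\Gamma_p(f,m)$, apply Lemma~\ref{l.lower} (with $x=x_i$, $n=h_i$, $r'=r_i'$) to get a perturbation supported in $T_i$ and a set $G_i\subset B_i$ with $\mu(G_i)>(1-\kappa)\mu(B_i)$ on which $\tfrac1{h_i}\log\|\wedge^pDg^{h_i}\|\le\Phi(x_i)+\delta_1$. As the $T_i$ are disjoint, these perturbations assemble into a single $g\in\cV$ equal to $f$ off $\bigsqcup_iT_i$; on each unperturbed tower $g=f$, and (after shrinking $B_i$ so that $Df^{h_i}$ is nearly constant there) $\tfrac1{h_i}\log\|\wedge^pDg^{h_i}(y)\|\le\Lambda_p(f,x_i)+\delta'$ for a.e.\ $y\in B_i$, since $x_i\in K$ and $h_i\ge n_0$.

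It remains to estimate $\LE_p(g)$; set $C=p\log\sup_{\varphi\in\cV}\|D\varphi^{\pm1}\|$. By Kingman, $\LE_p(g)\le\tfrac1n\int_M\log\|\wedge^pDg^n\|\,d\mu$ for all $n$; taking $n\gg\max_ih_i$ and using that a $g$-orbit traverses each tower it meets from base to top (interspersed with excursions in $M\setminus\bigsqcup_iT_i$, which occupy a fraction $<\eta$ of time), a routine counting estimate built on the $g$-invariance of $\mu$ gives
\begin{equation*}
\LE_p(g)\ \le\ \sum_i\int_{B_i}\log\|\wedge^pDg^{h_i}(y)\|\,d\mu(y)\ +\ C\eta\ +\ o(1)\qquad(n\to\infty).
\end{equation*}
Using the bounds above, the $f$-invariance of $\Phi$ and of $\Lambda_p(f)$ (so $h_i\!\int_{B_i}\Phi\,d\mu=\int_{T_i}\Phi\,d\mu$, etc.), their continuity on $K$, and $|\Phi|,|\Lambda_p(f)|\le C$ to absorb the sets left uncovered, one obtains for each tower $\int_{B_i}\log\|\wedge^pDg^{h_i}\|\le\int_{T_i}\Phi\,d\mu+\mu(T_i)(\delta_1+\delta'+C\kappa)$ if perturbed, and $\le\int_{T_i}\Lambda_p(f)\,d\mu+\mu(T_i)\delta'$ if not. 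Since $\bigsqcup_iT_i$ is all of $M$ up to measure $<\eta$, the perturbed towers fill $\Gamma_p(f,m)$ up to a small set and the others lie in $M\setminus\Gamma_p(f,m)$ up to a small set; summing (recall $\sum_i\mu(T_i)\le1$) and then using $\Phi\le\Lambda_p(f)$,
\begin{equation*}
\LE_p(g)\ \le\ \int_{\Gamma_p(f,m)}\Phi\,d\mu+\int_{M\setminus\Gamma_p(f,m)}\Lambda_p(f)\,d\mu+\mathrm{Err}\ \le\ \int_{\Gamma}\Phi\,d\mu+\int_{M\setminus\Gamma}\Lambda_p(f)\,d\mu+\mathrm{Err},
\end{equation*}
where $\mathrm{Err}$ is a fixed combination of $\delta_1,\delta',\eta,\kappa$ and the covering losses (with factors $C$ and $1$). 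Choosing these small enough that $\mathrm{Err}<\delta$, the first paragraph identifies the right-hand side with $\LE_p(f)-J_p(f)+\mathrm{Err}<\LE_p(f)-J_p(f)+\delta$, which is \eqref{e.jump}.
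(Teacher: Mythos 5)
Your argument is correct in substance, but its globalization follows the older route of \cite{BV Annals} rather than the simplified one given in the paper, so it is worth comparing the two. The paper keeps the function $N$ pointwise: it constructs (by the transfinite-induction sublemma) a measurable set $B$ met by a.e.\ orbit of $\Gamma_p(f,m)$ whose return times exceed $N$, takes the Kakutani castle over compact pieces where the return time is constant, Vitali-refines the bases into balls compatible with Lemma~\ref{l.lower} and with the density condition \eqref{e.density}, and then bounds the \emph{pointwise} exponent $\Lambda_p(g,x)$ for a.e.\ $x$ by cutting the $g$-orbit at its returns to the castle base and invoking Birkhoff (the sublemma $m'_{k(n)}/n \to 1$), integrating only at the end. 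You instead uniformize $N \le n_0$ on a large compact set by Egorov--Lusin, cover most of $M$ by finitely many disjoint ball-based towers of height at least $n_0$, and bound $\LE_p(g)$ by finite-time subadditivity plus an integrated count of complete tower passages; your Lusin-continuity treatment of $\Phi$ plays the role of \eqref{e.density}, and the arithmetic reduction to an inequality of the type \eqref{e.global} is the same. Your final estimate is, if anything, more elementary (no first-return map, no transfinite induction), at the price of the tower construction, which you defer to \cite{BV Annals}. Two caveats. First, that construction is easier than your ``technical core'' worry suggests: once all heights are taken equal to $n_0$ (which suffices for your count), it follows from Rokhlin's lemma for the aperiodic $f$, a compact subset of the base whose first $n_0$ iterates are disjoint (hence have disjoint neighborhoods), and a Vitali cover of it by balls of radius below the $r(x_i,n_0)$ of Lemma~\ref{l.lower}; fixing the height in advance removes the circularity between heights and radii that makes the variable-height version delicate. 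Second, your counting (``a $g$-orbit traverses each tower it meets from base to top'') uses that $g$ maps each tower level onto the next, $g(f^j(B_i)) = f^{j+1}(B_i)$: this is not part of the statement of Lemma~\ref{l.lower}, but it does follow from its proof (the perturbation of Lemma~\ref{l.realization} composes $f$ with symplectomorphisms equal to the identity off each level), and the paper's castle argument uses the same fact tacitly; you should state it explicitly, since without it an orbit could re-enter a tower mid-level and those times would not be controlled by your $C\eta$ term.
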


\begin{proof}
Let $f$ and $\delta$ be given.
Let $\Phi$ be given by \eqref{e.Phi}.
We are going to show that there exists $m \in \N$ and $g$ arbitrarily $C^1$-close to $f$
that equals $f$ outside the open set $\Gamma_p(f,m)$ and such that
\begin{equation}\label{e.global}
\int_{\Gamma_p(f,m)} \Lambda_p(g) <
\delta +
\int_{\Gamma_p(f,m)} \Phi.
\end{equation}
Let us postpone the proof and see how \eqref{e.global} implies the proposition.
We have:
\begin{alignat*}{2}
\int \Lambda_p(g)
&=   \int_{\Gamma_p(f,m)} \Lambda_p(g) +  \int_{M \setminus \Gamma_p(f,m)} \Lambda_p(f)
&\quad&\text{(because $g=f$ outside $\Gamma_p(f,m)$)} \\
&\le \delta + \int_{\Gamma_p(f,m)} \Phi +  \int_{M \setminus \Gamma_p(f,m)} \Lambda_p(f)
&\quad&\text{(by \eqref{e.global})} \\
&\le \delta + \int_{\Gamma_p(f,\infty)} \Phi +  \int_{M \setminus \Gamma_p(f,\infty)} \Lambda_p(f)
&\quad&\text{(since $\Gamma_p(f,m) \supset \Gamma_p(f, \infty)$ and $\Phi \le \Lambda_p(f)$)} \\
&=   \delta - J_p(f) + \int_{M} \Lambda_p(f)  \, ,
\end{alignat*}
which is \eqref{e.jump}.

Let us see how to construct $g$.
Let $\kappa=\delta$.
Take $m \in \N$ large enough so that Lemma~\ref{l.lower} applies and gives a function
$N\colon \Gamma_p(f,m) \to \N$.
For simplicity, write $\Gamma = \Gamma_p(f,m)$.

\begin{subl}
There is a measurable set $B \subset \Gamma$ such that:
\begin{itemize}
\item The orbit of almost every point in $\Gamma$ visits $B$.
\item for each $x \in B$ and $j$ with $1 \le j \le N(x)$
we have $f^j(x) \not\in B$.
\end{itemize}
\end{subl}

\begin{proof}
Take some positive measure set $C^{(0)}$ of $\Gamma^{(0)} = \Gamma$ where $N$ is constant, say equal to $n_0$.
Since $f$ is aperiodic, we can select a positive measure subset $B^{(0)}$ of $C^{(0)}$
that is disjoint from its first $n_0$ iterates.
Next consider the (invariant) set $\Gamma^{(1)}$ of points in $\Gamma^{(0)}$ whose $f$-orbits never visit $B^{(0)}$.
If $\Gamma^{(1)}$ has zero measure, then we take $B =B^{(0)}$ and we are done.
Otherwise we take a positive measure subset $C^{(1)}$ of $\Gamma^{(1)}$ where $N$ is constant,
and choose $B^{(1)} \subset C^{(1)}$ of positive measure 
that is disjoint from its first $n_1 = N|C^{(1)}$ iterates.
If the set $\Gamma^{(2)}$ formed by the points that never visit $B^{(1)}$
has zero measure then we take $B = B^{(0)} \cup B^{(1)}$ and stop;
otherwise we continue analogously and define $B^{(2)}$ etc.
If this process does not end after finitely many steps then
we define $\Gamma^{(\omega)} = \bigcap_{n < \omega} \Gamma^{(n)}$
and proceed as before, using transfinite induction.
Since a disjoint class of positive measure sets is countable, 
the process will terminate at some countable ordinal.
Taking a union, we find the desired measurable set $B$.
\end{proof}

Let $B$ be given by the sublemma.
For $x \in B$, let $H(x)$ be the minimal positive integer $n$ such that $f^n(x) \in B$.
Then for a.e.\ $x\in B$ we have $N(x) < H(x) < \infty$.

Take $\ell_0 \in \N$ large, and
for $1 \le n \le \ell_0$, take compact sets
$K_n \subset \{x \in B; \; H(x) = n\}$
in a way such that the set
$\Gamma \setminus \bigsqcup_{n=1}^{\ell_0} \bigsqcup_{j=0}^{n-1} f^j (K_n)$
has measure less than $\delta$.
Take open sets $U_n \supset K_n$, all contained in the open set $\Gamma$,
and such that the union $\bigsqcup_{n=1}^{\ell_0} \bigsqcup_{j=0}^{n-1} f^j (U_n)$ is still disjoint.

Let $K = \bigcup_{n=1}^{\ell_0} K_n$.
For each $x \in K$, say with $x \in K_n$,
since $n > N(x)$ we can apply Lemma~\ref{l.lower}
and get a radius $r = r(x)>0$.
If necessary, we reduce $r(x)$  so that $\bar B_{r(x)}(x)$ is contained in the open set $U_n$.
Since $\Phi$ is a measurable $f$-invariant function,
for a.e.~$x$, we can reduce $r(x)$ further and ensure that
\begin{equation}\label{e.density}
\left.
\begin{array}{r}
0 < r < r(x) \\
0 \le j < H(x)
\end{array}
\right\}
\ \Rightarrow \
\frac{1}{\mu (B_r(x))} \mu \big(\{y \in B_r(x) ; \; |\Phi(f^j y) - \Phi(x)| \ge \delta \}\big)
< \frac{\delta}{H(x)} \, .
\end{equation}

Consider the Vitali cover of $K$ by the balls
$\bar B_{r'}(x)$, with $0< r'< r(x)$.
By the Vitali Covering Lemma, there is a countable family of disjoint balls
$\bar B_{r_i}(x_i)$ with $0<r_i < r(x_i)$
that covers the set $K$ mod~$0$.
Write $n_i = H(x_i)$.
By construction, the union
$\bigsqcup_i \bigsqcup_{j=0}^{n_i-1} f^j (\bar B_{r_i}(x_i))$ is still disjoint.

Applying Lemma~\ref{l.lower} for each ball $B_{r_i}(x_i)$,
we get a diffeomorphism $g_i$ close to $f$ such that:
\begin{itemize}
\item $g_i$ equals $f$ outside $\bigsqcup_{j=0}^{n_i-1} f^j(B_{r_i}(x_i))$;
\item there is a set $G_i \subset B_{r_i}(x_i)$ such that
$\mu(G_i) > (1-\delta)\mu(B_{r_i}(x_i))$ and
$$
\frac{1}{n_i} \log \| \mathord{\wedge}^p (Dg_i^{n_i}(y) ) \|  \, \le \,
\Phi(x_i) + \delta \quad
\text{for all $y \in G_i$.}
$$
\end{itemize}
Let us define the global perturbation $g$ of $f$ as follows:
$g$ is equal to $g_i$ in each corresponding $\bigsqcup_{j=0}^{n_i-1} f^j(B_{r_i}(x_i))$,
and equal to $f$ outside.
Then $g$ is a symplectomorphism $C^1$-close to $f$.
We will prove that $g$ has the required properties.

By \eqref{e.density}, for each $j = 0,\ldots, n_i-1$,
$$
\mu \big\{y \in B_{r_i}(x_i) ; \; |\Phi(g^j y) - \Phi(x_i)| \ge \delta \big\}
= \mu \big\{y \in B_{r_i}(x_i) ; \; |\Phi(f^j y) - \Phi(x_i)| \ge \delta \big\}
< \frac{\delta}{n_i} \mu (B_{r_i}(x_i)) \, .
$$
Let $G'_i$ be the set of $y \in G_i$ such that
$|\Phi(g^j (y)) - \Phi(x_i)| < \delta$ for all $j$ with $0 \le j < n_i$.
Then $\mu(G_i') > (1-2\delta)\mu(B_{r_i}(x_i))$.
If $y \in G'_i$ then
\begin{equation} \label{e.sum bound}
\frac{1}{n_i} \log \| \mathord{\wedge}^p (Dg^{n_i}(y) ) \|  \, \le \,
\sum_{j=0}^{n_i-1}\Phi(g^j y) + 2\delta \, .
\end{equation}

Define sets $D_\text{b} = \bigsqcup_i G'_i$ and
$D = \bigsqcup_i \bigsqcup_{j=0}^{n_i-1} g^j(G'_i)$.
(The set $D$ is called a castle with base $D_\text{b}$.)
Let us see that $D$ covers most of $\Gamma$.
Indeed, $\Gamma \setminus D$ is contained mod~$0$ in
$$
\left(\Gamma \setminus \bigsqcup_{n=1}^{\ell_0} \bigsqcup_{j=0}^{n-1} f^j (K_n) \right)
\cup
\left(\bigsqcup_i \bigsqcup_{j=0}^{n_i-1} f^j (B_{r_i}(x_i)) \setminus D \right)
= (\mathrm{I}) \cup (\mathrm{II}) \, .
$$
Recall that $\mu (\mathrm{I}) < \delta$.
On the other hand, $(\mathrm{II}) = \bigsqcup_i \bigsqcup_{j=0}^{n_i-1} g^j(B_{r_i}(x_i) \setminus G'_i)$,
therefore $\mu(\mathrm{II}) < 2\delta$.
This shows that $\mu(\Gamma\setminus D)< 3\delta$.

Let $\hat{D} = \bigcup_{n \ge 0} g^{-n}(D)$.
Almost every $x \in \hat D$ visits $D_\text{b}$ infinitely many times.
Fix one such point $x$, and let
$$
\{m_1 < m_2 < \cdots\} = \{n \ge 0; \; g^n(x) \in D_\text{b} \} \, .
$$
Each $g^{m_j}(x)$ belongs to some ball $B_{r_i}(x_i)$;
consider the corresponding $n_i$ and let $m_j'= m_j + n_i$.
So we have defined numbers $m_1< m_1'\le m_2 < m_2'\le \cdots$
such that $g^n(x)$ is in $D$ if $m_j \le n < m_j'$,
and is not in $D$ if $m_j' \le n < m_{j+1}$.

Given a (large) integer $n$, let $k=k(n)$ be the biggest index such that $m_k' \le n$.
We want to estimate $\|\mathord{\wedge}^p Dg^n (x) \|$;
we start with the following upper bound:
$$
\|\mathord{\wedge}^p Dg^{m_1} (x) \| \;
\|\mathord{\wedge}^p Dg^{m'_1-m_1} (g^{m_1} x) \| \;
\|\mathord{\wedge}^p Dg^{m_2-m'_1} (g^{m_1'} x) \| \cdots
\|\mathord{\wedge}^p Dg^{n-m'_k} (g^{m_k'} x) \| \, .
$$
To estimate some of these factors, we use \eqref{e.sum bound}:
$$
\log \|\mathord{\wedge}^p Dg^{m_j' - m_j} (g^{m_j}x) \|
\le \sum_{i=m_j}^{m_j'-1} \Phi (g^i x) = \sum_{i=m_j}^{m_j'-1} (\Phi \mathbbm{1}_D)(g^i x)
\text{ for each $j=1, \ldots, k-1$.}
$$
To estimate the other factors, let $e^C$ be an upper bound for $\|\mathord{\wedge}^p Dg\|$;
then:
\begin{align*}
\log \|\mathord{\wedge}^p Dg^{m_1} (x) \| &\le Cm_1 \, , \\
\log \|\mathord{\wedge}^p Dg^{m_{j+1} - m_j'} (g^{m_j'}x) \| &\le C(m_{j+1} - m_j')
= \sum_{i=m_j}^{m_j'-1} (C \mathbbm{1}_{\Gamma \setminus D})(g^i x)  \, , \\
\log \|\mathord{\wedge}^p Dg^{n - n'_k} (x) \| &\le C(n - m'_k) \, .
\end{align*}
Putting things together,
\begin{equation}\label{e.almost there}
\log \|\mathord{\wedge}^p (Dg^n (x) )\| \le
C(n - m_k' + m_1) +
\sum_{i=m_1}^{m_k'} \big(C \mathbbm{1}_{\Gamma \setminus D} + \Phi \mathbbm{1}_D\big)(g^i x) \, .
\end{equation}
Now we use:
\begin{subl}
For a.e.~$x \in \hat{D}$, we have that $m_{k(n)}'/n \to 1$ as $n \to \infty$.
\end{subl}

\begin{proof}
It suffices to consider points $x \in D_\text{b}$.
Let $\hat{g}: D_\text{b} \to D_\text{b}$ be the first return map,
and $T: D_\text{b} \to \N$ be return time.
Then $\hat{g}$ preserves $\mu$ restricted to $D_\text{b}$
and $T$ is integrable.
Since $m_{j+1}$ equals the Birkhoff sum $\sum_{i=0}^{j-1} T(\hat g ^i x)$,
for a.e.~$x \in D_\text{b}$,
the $\lim_{j \to \infty} m_j/j$ exists and is positive.
Now, if $k = k(n)$ then $n < m_{k+1}' < m_{k+2}$.
Hence
$$
\frac{m_k}{m_{k+2}} \le  \frac{m'_k}{n} \le 1 \, .
$$
As $n$ goes to infinity, $k=k(n) \to \infty$ and $m_k/m_{k+2} \to 1$.
This proves the sublemma.
\end{proof}

It follows from \eqref{e.almost there} and the sublemma that for a.e.~$x \in \hat{D}$,
$$
\Lambda_p(g,x) = \lim_{n \to \infty} \frac 1n \log \|\mathord{\wedge}^p (Dg^n (x) )\| \le
\lim_{n \to \infty} \frac 1n \sum_{i=0}^{n-1} \big(C \mathbbm{1}_{\Gamma \setminus D} + \Phi \mathbbm{1}_D\big)(g^i x) \, .
$$
The same is obviously true if $x\in \Gamma \setminus \hat D$.
Integrating over $x \in \Gamma$, we obtain
$$
\int_{\Gamma} \Lambda_p(g) \le \int (C \mathbbm{1}_{\Gamma \setminus D} + \Phi \mathbbm{1}_D)
\le  3C\delta + \int_D \Phi
\le  3C\delta + \int_\Gamma \Phi \, .
$$
This gives \eqref{e.global} (replace $\delta$ with $\delta/(3C)$ everywhere),
and therefore the proposition is proved.
\end{proof}

\begin{rem}
With some additional work one can show that the aperiodicity hypothesis is not necessary
for the validity of Proposition~\ref{p.jump};
indeed it does not appear in \cite{BV Annals}.
\end{rem}

Now it is done:

\begin{proof}[Proof of Theorem~\ref{t.continuity}]
Let $\cA$ be the residual subset of $\Diff^1_\omega(M)$ formed by aperiodic diffeomorphisms.
Consider the semi-continuous maps $\LE_p: \cA \to \R$.
Since $\cA$ is also a Baire space,
it follows that there is a residual subset $\cR$ of $\cA$
(and hence also residual as a subset $\Diff^1_\omega(M)$)
such that every $f \in \cR$ is a point of continuity of  each $\LE_p$, with $p = 1, \ldots, N$.
Fix one such $f$; by Proposition~\ref{p.jump}, each $J_p(f)$ vanishes.
This implies that for almost every regular point $x\in M$,
if $p \le N$ is such that $\lambda_p(f,x) > \lambda_{p+1}(f,x)$
then $x$ does not belong to $\Gamma(f,\infty)$.
That is, there is a dominated splitting $E^u \oplus F$ of index $p$ along the orbit of $x$.
Theorem~\ref{t.DS is PH} implies that $E^u \oplus F$
can be refined to a partially hyperbolic splitting $E^u \oplus E^c \oplus E^s$, with
$\dim E^s = \dim E^u = p$.
Thus $E^u$, $E^c$, and $E^s$ must be the sum of the Oseledets spaces
associated to the Lyapunov exponents $\lambda_i(f,x)$
respectively with
$$
1\le i \le p, \quad  p < i \le 2N-p, \quad  2N-p < i \le 2N.
$$
All this holds whenever $\lambda_p(f,x) > \lambda_{p+1}(f,x)$,
so proving that the Oseledets splitting is dominated along the orbit of $x$.
\end{proof}

Theorem~\ref{t.main} is an immediate consequence of Theorem~\ref{t.continuity}.

%%%%%%%%%%%%%%%%%%%%%%%%%%%%%%%%%%%%%%%%%%%%%%%%%%%%%%%%%%%%%%%%%%%
\section[Results for partially hyperbolic maps]{Results for Partially Hyperbolic Maps} \label{s.PH proof}
%%%%%%%%%%%%%%%%%%%%%%%%%%%%%%%%%%%%%%%%%%%%%%%%%%%%%%%%%%%%%%%%%%%

We will obtain Theorem~\ref{t.global PH} as a corollary of the slightly more technical
Theorem~\ref{t.finest} below.

First of all, we need the following two results about
the well-known accessibility property from partially hyperbolic theory:

\begin{othertheorem}[Dolgopyat and Wilkinson \cite{Dolgo Wilk}] \label{t.DW}
There is an open and dense set $\cA \subset \PH^1_\omega (M)$
formed by accessible symplectomorphisms.
\end{othertheorem}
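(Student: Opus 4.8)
The statement is the symplectic case of the Dolgopyat--Wilkinson theorem, so the plan is to follow their strategy while keeping every perturbation symplectic. First, declare a map in $\PH^1_\omega(M)$ to be \emph{stably accessible} if it is accessible and admits a $C^1$-neighbourhood inside $\PH^1_\omega(M)$ consisting of accessible maps, and let $\cA$ be the set of stably accessible symplectomorphisms. Then $\cA$ is $C^1$-open by construction, so the whole content is that $\cA$ is $C^1$-dense. To prove density I would reduce the \emph{global} accessibility we want to a \emph{local} statement: by Brin's argument, an accessibility class with non-empty interior is open, and since accessibility classes are $su$-saturated and permuted by $f$, one concludes --- using that the $C^1$-generic map in $\PH^1_\omega(M)$ is transitive, \cite{Arnaud BC} --- that such a class must be all of $M$. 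Hence it suffices to exhibit, arbitrarily $C^1$-close to a given $f$, a symplectomorphism for which some accessibility class has non-empty interior, robustly under further small symplectic perturbations.

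The mechanism for producing an open accessibility class is the analysis of short \emph{$su$-loops}. Fix a point $q$ and a combinatorial type of $su$-path --- say stable, unstable, stable, unstable --- and parametrise the loops of that type issuing from $q$ by their leg lengths, obtaining a map $\Psi_f$ from a neighbourhood of the origin in the parameter space into $M$ whose image lies in the accessibility class of $q$. By the Brin quadrilateral estimate the relevant displacements occur at second order and lie, to leading order, along the center bundle; the accessibility class of $q$ contains an open set as soon as a suitable jet of $\Psi_f$ has full rank at some parameter, and this is a $C^1$-open condition on $f$. Thus the heart of the matter is the perturbation lemma: given $f$, find an $\eps$-small $g \in \Diff^1_\omega(M)$ so that $\Psi_g$ has full rank somewhere. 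I would realise $g$ as the composition of $f$ with the time-one map of a Hamiltonian supported in a small Darboux box meeting only one leg of the loop (using the chart technology of \S\ref{ss.flex TM}), compute the first-order effect of such a perturbation on the local stable and unstable holonomies, and feed this into a parameter-transversality argument showing the rank of $\Psi$ can be raised. The symplectic constraint costs nothing here, since a Hamiltonian time-one map is automatically a symplectomorphism; the only extra bookkeeping is a dimension count, which goes through because $\dim E^c$ is even and the center exponents come in $\pm$ pairs.

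The step I expect to be the main obstacle is precisely this analytic core: the stable and unstable holonomies of a partially hyperbolic map are, transversally, only H\"older continuous, and one must control how they respond, to first order, to a $C^1$-small perturbation of $f$ --- extracting enough regularity and enough genuine movement to win the transversality argument --- all while keeping the perturbation symplectic, $C^1$-small and supported away from the remaining legs of the $su$-loop. This is the technical heart of \cite{Dolgo Wilk}; the reduction to a finite-dimensional problem, the Brin-type propagation to the whole manifold, and the symplectic adaptation are comparatively routine.
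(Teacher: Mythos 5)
The first thing to note is that the paper does not prove this statement at all: Theorem~\ref{t.DW} is imported verbatim from Dolgopyat--Wilkinson \cite{Dolgo Wilk} and used as a black box (in the proof of Proposition~\ref{p.generic Brin}). So there is no internal proof to compare yours with; what you have written is in effect an outline of the cited paper, and judged on its own terms it is a roadmap rather than a proof. You explicitly defer the analytic core --- how the stable and unstable holonomies, which are transversally only H\"older, respond to first order to a $C^1$-small Hamiltonian perturbation, and how this feeds a transversality argument for $\Psi_f$ --- to \cite{Dolgo Wilk} itself. In particular the pivot of your local step, ``the accessibility class of $q$ contains an open set as soon as a suitable jet of $\Psi_f$ has full rank at some parameter,'' is not literally meaningful: $\Psi_f$ need not be differentiable in the leg-length parameters, so there is no jet to speak of. Formulating a workable substitute for this nondegeneracy condition and producing it by a $C^1$-small (symplectic) perturbation is the substance of the theorem, not routine bookkeeping; as it stands your argument assumes the theorem's hardest part.

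There is also a concrete gap in your globalization. From one accessibility class with non-empty interior (hence open, as you correctly say) together with transitivity of the $C^1$-generic map in $\PH^1_\omega(M)$ you cannot conclude that this class is all of $M$: the union of the $f$-iterates of that class is an open invariant set, and transitivity only makes such a set dense, not equal to $M$ nor of full measure, and its complement can still be a nontrivial union of accessibility classes. Moreover, transitivity from \cite{Arnaud BC} is only a residual property, so even if this step worked it would produce accessibility along a dense set of maps whose accessibility relies on a non-robust global ingredient; openness of your set $\cA$ would then not follow from the robustness of the rank condition alone. The argument of \cite{Dolgo Wilk} avoids both problems: by a perturbation assembled from finitely many localized ones, it makes \emph{every} accessibility class open with uniform estimates, so that the classes, which partition the connected manifold $M$, reduce to a single one by connectedness alone --- no transitivity is needed --- and the uniformity of the estimates is precisely what makes the accessibility stable, yielding the open and dense set $\cA$.
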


\begin{othertheorem}[Brin~\cite{Brin}]\label{t.Brin}
If $f$ is a $C^2$ volume-preserving partially hyperbolic diffeomorphism
with the accessibility property then almost every point has a dense orbit.
\end{othertheorem}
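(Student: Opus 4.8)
The approach is the classical one due to Brin: deduce the statement from the fact that every $f$-invariant open set of positive $\mu$-measure has full measure, exploiting the strong stable and strong unstable foliations together with their absolute continuity.

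First I would make the reduction. Fix a countable basis $\{U_k\}_{k\in\N}$ for the topology of $M$. For each $k$, the set $A_k = \bigcup_{n\in\Z} f^n(U_k)$ is open and $f$-invariant, and $\mu(A_k) \ge \mu(U_k) > 0$ because $\mu$, being the Riemannian volume, is positive on non-empty open sets. A point $x$ has a dense orbit if and only if its orbit meets every $U_k$, that is, if and only if $x \in \bigcap_k A_k$. Hence it suffices to show that each $A_k$ has full measure; then $\mu\big(\bigcap_k A_k\big) = 1$ and a.e.\ point has a dense orbit. So from now on let $A$ be an open, $f$-invariant set with $\mu(A) > 0$, and the goal is $\mu(A) = 1$.

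Next I would run a Hopf-type argument. Since $f$ is partially hyperbolic it carries strong stable and strong unstable foliations $W^{ss}$, $W^{uu}$, tangent to $E^s$ and $E^u$, whose leaves are uniformly contracted (resp.\ expanded) by $f$. Approximating the invariant function $\mathbbm{1}_A$ in $L^1(\mu)$ by continuous functions and replacing those by their Birkhoff averages, one obtains a representative of $\mathbbm{1}_A$ that is constant along $W^{ss}$-leaves; running the argument in backward time gives a representative constant along $W^{uu}$-leaves; and by Birkhoff's ergodic theorem the two representatives agree $\mu$-almost everywhere. The essential input at this point is the Brin--Pesin absolute continuity theorem: the foliations $W^{ss}$ and $W^{uu}$ are absolutely continuous. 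This is precisely where the hypothesis that $f$ is $C^2$ — and not merely $C^1$ — is used. A Fubini-type argument with Lebesgue density points along the leaves (made easier by the fact that $A$, being open, is relatively open on each leaf) then upgrades ``constant a.e.\ along leaves'' to the existence of a genuine measurable function $\bar\varphi$, equal to $\mathbbm{1}_A$ $\mu$-a.e., that is constant along \emph{every} strong stable leaf and along \emph{every} strong unstable leaf, hence along every $su$-path.

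Finally I would invoke accessibility, which says that $M$ consists of a single accessibility class: any two of its points are joined by an $su$-path. Therefore $\bar\varphi$ is constant on all of $M$, so $\mathbbm{1}_A$ is $\mu$-a.e.\ equal to a constant; since $\mu(A) > 0$ this forces $\mu(A) = 1$, completing the argument. The main obstacle is the step just before: passing from the ``almost everywhere'' invariance of $A$ under the strong foliations to honest leaf-saturation of a full-measure subset, and propagating that saturation along $su$-paths. This ``essential $su$-saturation'' is not a soft statement — it genuinely relies on the absolute continuity of the strong foliations (which is why $C^2$ regularity is needed and the naive $C^1$ argument fails) and on a careful density-point analysis, since the exceptional null sets must be controlled simultaneously for all the leaf-arcs making up an $su$-path.
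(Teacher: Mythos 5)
The paper does not prove this statement: it cites Brin~\cite{Brin}, and remarks that Burns--Dolgopyat--Pesin \cite[Lemma~5]{Burns Dolgo Pesin} (see also \cite[\S7.2]{Hass Pesin}) give another proof that extends to pointwise partial hyperbolicity once one invokes the absolute continuity of the strong foliations in that generality \cite{AV flavors}. So there is no ``paper's proof'' to compare against. Your overall plan -- reduce to showing that every open $f$-invariant set of positive volume has full volume, then run a Hopf-type argument using the strong foliations, absolute continuity, and accessibility -- is the correct one, and your countable-basis reduction is exactly right.

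The gap is the step you yourself flag: passing from ``$\mathbbm{1}_A$ is a.e.\ $W^{ss}$- and a.e.\ $W^{uu}$-saturated'' to the existence of a full-measure set genuinely saturated by \emph{both} foliations, and then propagating constancy along arbitrary $su$-paths. As written, this is only asserted, and the mechanism you suggest (Lebesgue density points along leaves) is not by itself adequate: the $W^{ss}$- and $W^{uu}$-holonomies between transversals are only H\"older, and H\"older homeomorphisms need not send Lebesgue density points to Lebesgue density points -- this is precisely the difficulty that led Pugh and Shub to julienne density points in the ergodicity setting. For the weaker dense-orbit statement one should lean much harder on the \emph{openness} of $A$ together with Poincar\'e recurrence: an $f$-invariant open set $A$ of positive measure is genuinely $W^{ss}$- and $W^{uu}$-saturated at every bi-recurrent point of $A$ (contracting a stable leaf into a fixed ball $B(x,\eps)\subset A$ that $f^{n_k}x$ revisits), which gives honest leaf-containment -- not merely a.e.\ containment -- on a full-measure subset. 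Absolute continuity is then used once, to transfer ``full $\mu$-measure'' to ``full leaf-measure'' inside a local product box, and the propagation along $su$-paths exploits this local product structure rather than a density-point computation. You should either supply that argument or cite \cite[Lemma~5]{Burns Dolgo Pesin} directly for this step; in addition, since the paper's Theorem~E is stated for (pointwise) partially hyperbolic maps, you should note that absolute continuity in that generality is the content of \cite{AV flavors}, as the paper itself does.
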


In fact, Brin proved the result for \emph{absolute} partially hyperbolic maps (recall Remark~\ref{r.definitions PH}).
Another proof was given by Burns, Dolgopyat, and Pesin, see \cite[Lemma 5]{Burns Dolgo Pesin}
(or \cite[\S7.2]{Hass Pesin}).
Their proof also applies to \emph{relative} partially hyperbolic maps:
the only necessary modification is to use
the property of absolute continuity of stable and unstable foliations in the relative case,
which is proven by Abdenur and Viana in \cite{AV flavors}.

In order to extract from Theorem~\ref{t.Brin} consequences for $C^1$ maps,
we need the following well-known result:

\begin{othertheorem}[Zehnder~\cite{Zehnder}] \label{t.Z}
$C^\infty$ diffeomorphisms form a dense subset of $\Diff^1_\omega(M)$.
\end{othertheorem}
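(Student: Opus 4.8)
The plan is to approximate a given $f\in\Diff^1_\omega(M)$ in the $C^1$ topology by a $C^\infty$ symplectomorphism, doing all of the work locally through generating functions so that the approximants are symplectic by construction. This is the content of \cite{Zehnder}, which I would organize as follows.

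First I would reduce to a local statement. Cover $M$ by finitely many open sets $W_i\Subset V_i\Subset U_i$ ($1\le i\le N$), with the $W_i$ still covering $M$, chosen fine enough that each $U_i$ and each $f(U_i)$ lies in a Darboux chart. Then modify $f$ in $N$ steps: given a symplectomorphism $f_{i-1}$ that is $C^1$-close to $f$, agrees with $f$ outside $V_1\cup\cdots\cup V_{i-1}$, and is $C^\infty$ on a neighbourhood of $\overline{W_1\cup\cdots\cup W_{i-1}}$, I would modify it only inside $V_i$ so as to make it $C^\infty$ on a neighbourhood of $\overline{W_i}$ as well, keeping it a symplectomorphism, keeping it $C^1$-close to $f$, and leaving it unchanged near $\partial V_i$. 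After $N$ steps $g:=f_N$ is $C^\infty$ (its smooth locus contains $\overline{W_1\cup\cdots\cup W_N}=M$), symplectic, and $C^1$-close to $f$, the total error being a sum of $N$ individually controllable pieces.

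For the passage from $f_{i-1}$ to $f_i$ I would read $f_{i-1}|_{U_i}$, through the two relevant Darboux charts, as a $C^1$ symplectic map between open subsets of $\R^{2N}$; choosing the charts (equivalently, a suitable Lagrangian splitting, hence a suitable type of generating function) so that the twist condition holds --- an open condition, inherited from $f$ on a small enough ball --- the map admits a generating function $S$, which is of class $C^2$ because generating functions carry exactly one extra degree of regularity, and the assignment $S\mapsto(\text{map generated by }S)$ is continuous from $C^2$ to $C^1$. I would then pick $\tilde S\in C^\infty$ with $\|\tilde S-S\|_{C^2}$ as small as desired, fix once and for all a cut-off $\rho$ equal to $1$ near $\overline{W_i}$ and to $0$ near $\partial V_i$, and set $S_{\mathrm{new}}=\rho\tilde S+(1-\rho)S$. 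Then $\|S_{\mathrm{new}}-S\|_{C^2}$ is small (the cut-off being fixed), $S_{\mathrm{new}}$ is $C^\infty$ near $\overline{W_i}$, equals $S$ near $\partial V_i$, and for $\|\tilde S-S\|_{C^2}$ small enough it still generates a diffeomorphism, necessarily symplectic, which is $C^1$-close to $f_{i-1}|_{U_i}$ and matches it near $\partial V_i$. Transporting back to $M$ and gluing with $f_{i-1}$ outside $V_i$ yields $f_i$; since on any open set where $S_{\mathrm{new}}$ already agrees with a smooth generating function the generated map is smooth, the smooth locus grows as claimed.

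The point that makes this work --- and the reason I would avoid the more obvious route --- is the one-degree regularity buffer of generating functions: smoothing $S$ in $C^2$ produces a $C^\infty$ map that is \emph{automatically} symplectic and merely $C^1$-close. This sidesteps the loss-of-derivative difficulty one meets if one instead smooths $f$ directly to a $C^\infty$ diffeomorphism $g$ and tries to kill the error form $g^*\omega-\omega$ (which is exact and $C^0$-small, but in general not $C^1$-small) by a Moser isotopy: that isotopy needs a $C^1$-small primitive of $g^*\omega-\omega$, which smoothing does not supply. Thus the hardest part of the write-up is bookkeeping rather than conceptual --- arranging the twist condition (for an appropriate generating-function type) simultaneously with the geometry of the cover, and checking that the $N$ successive local modifications accumulate to a globally $C^1$-small perturbation; both are carried out in \cite{Zehnder}.
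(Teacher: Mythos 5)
Theorem~\ref{t.Z} is quoted from Zehnder~\cite{Zehnder} and the paper itself gives no proof, so there is nothing internal to compare against; your sketch is essentially Zehnder's own argument — localize over a fine Darboux cover, use that a $C^1$ symplectic map satisfying a twist condition has a $C^2$ generating function, smooth and cut off at the level of the generating function so the approximant is automatically symplectic and only $C^1$-error is incurred, and glue inductively so the smooth locus grows. Your closing observation about why naive smoothing plus a Moser isotopy loses a derivative is exactly the obstruction that left the volume-preserving analogue open until Avila's regularization theorem~\cite{Avila smoothening}, which the paper cites immediately after the statement.
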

We remark that the volume-preserving analogue of Theorem~\ref{t.Z}
was recently obtained by Avila~\cite{Avila smoothening}.

As a consequence of the above theorems, we obtain:

\begin{prop}\label{p.generic Brin}
For a generic $f$ in $\PH^1_\omega (M)$,
the orbit of almost every point is dense in $M$.
\end{prop}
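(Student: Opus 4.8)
The plan is a standard Baire-category argument resting on Theorems~\ref{t.DW}, \ref{t.Brin} and~\ref{t.Z}. Fix a countable basis $\{V_j\}_{j\in\N}$ of nonempty open subsets of $M$. For $f\in\PH^1_\omega(M)$ and $j\in\N$, set
\[
D_j(f)=\mu\bigl(\{x\in M;\ f^n(x)\notin V_j\ \text{for all}\ n\ge 0\}\bigr),
\]
the $\mu$-measure of the set of points whose forward orbit misses $V_j$. Since $\{V_j\}$ is a basis, the orbit of $x$ is dense exactly when its forward orbit meets every $V_j$ (up to a $\mu$-null set the distinction between forward orbit and full orbit is harmless, by Poincar\'e recurrence); hence ``$\mu$-almost every orbit is dense'' is equivalent to ``$D_j(f)=0$ for all $j$''. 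So it suffices to show that $\cR=\bigcap_{j\in\N}\{f\in\PH^1_\omega(M);\ D_j(f)=0\}$ is residual in the Baire space $\PH^1_\omega(M)$.

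First I would verify that each $D_j$ is upper semicontinuous. For each fixed $N$, the map $f\mapsto\mu\bigl(\bigcup_{n=0}^{N}f^{-n}(V_j)\bigr)$ is lower semicontinuous: writing the set as $\{x;\ \max_{0\le n\le N}\mathbbm{1}_{V_j}(f^n x)=1\}$ and using that $(f,x)\mapsto\mathbbm{1}_{V_j}(f^n x)$ is lower semicontinuous (an open-set indicator composed with a continuous map), Fatou's lemma gives the claim. Taking the supremum over $N$ preserves lower semicontinuity, so $D_j(f)=1-\sup_N\mu\bigl(\bigcup_{n=0}^{N}f^{-n}(V_j)\bigr)$ is upper semicontinuous. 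Consequently $\{D_j<1/m\}$ is open for every $m$, and $\cR=\bigcap_{j,m}\{D_j<1/m\}$.

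Next I would show that $\{D_j=0\}$ is dense, by exhibiting a dense family of maps to which Theorem~\ref{t.Brin} applies. By Theorem~\ref{t.DW} the accessible symplectomorphisms form an open dense set $\cA\subset\PH^1_\omega(M)$; by Theorem~\ref{t.Z} the $C^\infty$ symplectomorphisms are dense in $\Diff^1_\omega(M)$, hence in its open subset $\PH^1_\omega(M)$; and the intersection of a dense set with an open dense set is dense, so $\cA\cap\{f\ \text{of class}\ C^\infty\}$ is dense in $\PH^1_\omega(M)$. Any such $f$ is a $C^2$ volume-preserving (for the volume $\omega^{\wedge N}$) partially hyperbolic diffeomorphism with the accessibility property, so by Theorem~\ref{t.Brin} (in the relative version discussed right after its statement) $\mu$-almost every point of $M$ has a dense orbit, whence $D_j(f)=0$ for every $j$. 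Thus each $\{D_j<1/m\}$ is open and dense, $\cR$ is residual, and for $f\in\cR$ the orbit of $\mu$-almost every point is dense in $M$.

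The only genuinely delicate point is the semicontinuity in the second step: one cannot expect $f\mapsto\mu(f^{-n}(V_j))$ to be continuous, since $\mu(\partial f^{-n}(V_j))$ need not vanish, so one works with the complementary quantity $D_j$ and claims only upper semicontinuity. Everything else---the identification of ``dense orbit'' with ``forward orbit hits every $V_j$'' modulo a null set, and the fact that an open subset of a Baire space is Baire---is routine.
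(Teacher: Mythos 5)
Your argument is correct and follows essentially the same route as the paper: write density of orbits via a countable basis, show the relevant set is $G_\delta$ by an openness/upper-semicontinuity argument, and establish density by combining Zehnder's $C^\infty$-density, Dolgopyat--Wilkinson accessibility, and Brin's theorem in its relative form. The paper states the openness of its sets $\cA(U,V,k)$ without detail; your Fatou argument supplies the same fact, so there is no substantive difference between the two proofs.
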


\begin{proof}
Given $f \in \PH_\omega^1(M)$, let $D(f)$ be the set of points in $M$ whose $f$-orbits are dense.
Let $\cR$ be set of $f \in \PH_\omega^1(M)$ such that $m(D(f))=1$.
Theorems~\ref{t.Z}, \ref{t.DW}, and \ref{t.Brin} together imply that $\cR$ is dense in $\PH_\omega^1(M)$.
We will complete the proof showing that $\cR$ is a $G_\delta$ set.

Let $\cB$ be a countable basis of (non-empty) open sets of $M$.
Then
$$
D(f) = \bigcap_{U,V\in \cB} G(U,V,f) \, \quad \text{where} \quad
G(U,V,f) = (M\setminus U) \cup \bigcup_{n\in \N} f^{-n}(V) \, .
$$
For $k \in \N$, let
$\cA(U,V,k)$ be the set of $f \in \PH_\omega^1(M)$
such that $m(G(U,V,f)) > 1 -1/k$.
Then each $\cA(U,V,k)$ is open.
Their intersection is precisely the set of $f\in \PH^1_\omega(M)$ such that
$m(D(f))=1$, that is, $\cR$.
\end{proof}

A dominated splitting $TM= E^1 \oplus \cdots \oplus E^k$ (into non-zero bundles)
for a  diffeomorphism $f \colon M \to M$
is called the \emph{finest dominated splitting}
if there is no dominated splitting defined over all $M$ with more than $k$ (non-zero) bundles.
For any $f$, either
there is no dominated splitting over $M$,
or there is a unique finest dominated splitting (and moreover it refines every dominated splitting on $M$).
See~\cite{BDV livro}.

Now we can state and prove the:

\begin{thm}\label{t.finest}
For a generic $f$ in $\PH^1_\omega(M)$,
the Oseledets splitting at almost every point
coincides with the finest dominated splitting of $f$.
In particular, the multiplicities of the Lyapunov exponents are a.e.\ constant.
\end{thm}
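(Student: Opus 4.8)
The plan is to combine three ingredients: (i) the generic improvement of Oseledets' theorem already obtained in Theorem~\ref{t.continuity} (equivalently, Theorem~\ref{t.main}), which tells us the Oseledets splitting is a.e.\ either trivial or dominated \emph{along the orbit}; (ii) a continuity/upper-semicontinuity argument at generic $f$ forcing the orbitwise dominated splitting to come from a \emph{global} dominated splitting on $M$; and (iii) Proposition~\ref{p.generic Brin}, which gives that for generic $f\in\PH^1_\omega(M)$ almost every orbit is dense. The first step would be to fix a generic $f\in\PH^1_\omega(M)$ lying simultaneously in the residual set of Theorem~\ref{t.main}, in the residual set of Proposition~\ref{p.generic Brin}, and in the residual set where all the maps $\LE_p$ (hence all $J_p$) are continuous, so that by Proposition~\ref{p.jump} each $J_p(f)=0$.

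Next I would argue that for such $f$ there is a global dominated splitting refining the orbitwise one almost everywhere. Since $f\in\PH^1_\omega(M)$, there is already a global partially hyperbolic (hence dominated) splitting $TM=E^u\oplus E^c\oplus E^s$; let $TM=E^1\oplus\cdots\oplus E^k$ be the finest dominated splitting of $f$ on $M$ (it exists since a dominated splitting exists). The bundles $E^j$ vary continuously and the splitting is defined everywhere. I claim the Oseledets splitting at $\mu$-a.e.\ point is \emph{finer than or equal to} the finest dominated splitting restricted to the orbit — this is immediate because the finest dominated splitting, being $Df$-invariant and dominated, forces the Lyapunov exponents within each $E^j$ to lie in an interval disjoint from those of the neighbouring bundles, so each Oseledets space is contained in some $E^j$. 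The content is the reverse inclusion: that a.e.\ Oseledets space is not strictly finer, i.e.\ that within each $E^j$ there is only one Oseledets exponent a.e. Suppose not; then for some index $p$ the set of points where $\lambda_p(f,x)>\lambda_{p+1}(f,x)$ and the $p$-dimensional sum of top Oseledets spaces is a \emph{proper} $Df$-invariant sub-bundle inside some $E^j$ has positive measure. By Theorem~\ref{t.main} this splitting of index $p$ is dominated along a.e.\ such orbit, so that positive-measure set is disjoint mod~$0$ from $\Gamma_p(f,\infty)$; hence $J_p(f)$ would only ``see'' points outside this set — so vanishing of $J_p(f)$ alone is not yet the contradiction. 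Instead the contradiction comes from density of orbits: by Proposition~\ref{p.generic Brin} a.e.\ orbit is dense, so a dominated splitting of index $p$ along one such orbit extends (by uniform continuity of dominated splittings, \cite{BDV livro}) to a dominated splitting of index $p$ over \emph{all} of $M$; this would be a dominated splitting with a bundle breaking some $E^j$, contradicting finest-ness — \emph{unless} that index-$p$ dominated splitting is already subordinate to the finest one, i.e.\ unless the corresponding Oseledets sum equals $E^1\oplus\cdots\oplus E^j$ for the appropriate $j$. Iterating over all relevant $p$ gives that a.e.\ Oseledets splitting coincides with the finest dominated splitting.

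Finally, since the finest dominated splitting has fixed bundle dimensions, the multiplicities $\dim E^j(x)$ of the Lyapunov exponents are constant $\mu$-a.e., proving the last sentence. The main obstacle I anticipate is Step~two: carefully justifying that an orbitwise dominated splitting \emph{along a single dense orbit} can be promoted to a globally defined dominated splitting on $M$ with the \emph{same index}. This uses that a dominated splitting over an invariant set extends continuously to its closure (stated in \S\ref{ss.review}) together with the fact that, for a dense orbit, that closure is all of $M$; one must check that the extended splitting is still $Df$-invariant and still satisfies the domination inequality everywhere (which follows from the inequality being closed). A secondary technical point is ensuring the chosen generic $f$ simultaneously enjoys aperiodicity (needed to invoke Proposition~\ref{p.jump}), which holds on a residual set by Robinson's symplectic Kupka--Smale theorem as recalled in \S\ref{ss.globalization}.
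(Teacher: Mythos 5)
Your proposal is correct and follows essentially the same route as the paper: combine Theorem~\ref{t.main} (Oseledets splitting dominated along a.e.\ orbit) with Proposition~\ref{p.generic Brin} (a.e.\ orbit dense), extend the orbitwise dominated splitting to its closure $=M$, and invoke maximality of the finest dominated splitting — the paper just phrases this by counting bundles ($\ge k(f)$ always, $\le k(f)$ from the global extension) rather than by contradiction at a single index $p$. The detour through continuity of $\LE_p$, $J_p(f)=0$ and aperiodicity is redundant (you correctly note it yields no contradiction and never use it), since Theorem~\ref{t.main} already packages that information.
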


\begin{proof}
Let $k(f)$ denote the number of bundles in the finest dominated splitting of a map ${f \colon M \to M}$.
Then the Oseledets splitting at any regular point for $f$ has at least $k(f)$ bundles.
Now let $f\in \PH_\omega^1(M)$
satisfy the generic properties from Proposition~\ref{p.generic Brin} and Theorem~\ref{t.main}.
That is, for almost every $x \in M$, the orbit of $x$ is dense
and the Oseledets splitting along it is (non-trivial and) dominated.
The Oseledets splitting along the orbit of any such point
extends to a dominated splitting over $M$, and hence must have exactly $k(f)$ bundles.
\end{proof}

As a consequence:

\begin{proof}[Proof of  Theorem~\ref{t.global PH}]
If $f$ belongs to the residual set given by Theorem~\ref{t.finest}
then the Oseledets space corresponding to zero exponents (if they exist)
coincides a.e.\ with the ``middle'' bundle of the finest dominated splitting,
which by Theorem~\ref{t.DS is PH} is
the center bundle of a partially hyperbolic splitting.
\end{proof}

%%%%%%%%%%%%%%%%%%%%%%%%%%%%%%%%%%%%%%%%%%%%%%%%%%%%%%%%%%%%%%%%%%%%

\bigskip

\begin{ack}
I thank Artur Avila for pointing that the earlier version of the proof of Proposition~\ref{p.jump} 
could be significantly simplified,
and the referee for his/her careful reading.
This work started during my visit to
the Morningside Center of Mathematics in Beijing.
My sincere thanks for the hospitality.
\end{ack}

%%%%%%%%%%%%%%%%%%%%%%%%%%%%%%%%%%%%%%%%%%%%%%%%%%%%%%%%%%%%%%%%%%%%

\vfill

\noindent PUC (Rio de Janeiro).

%Address:
%R.\ Mq.\ S.\  Vicente, 225
%22453-900 Rio de Janeiro, RJ -- Brazil.

\noindent www.mat.puc-rio.br/$\sim$jairo

\noindent jairo@mat.puc-rio.br

\end{document}